\documentclass{article}
\usepackage[margin=1in]{geometry}
\usepackage[T1]{fontenc}
\usepackage[utf8]{inputenc}
\usepackage{lmodern}
\usepackage{amsmath,amssymb,amsthm,mathtools,mathrsfs}
\usepackage[cal=euler]{mathalpha}
\usepackage{cochineal}
\usepackage{microtype}
\usepackage{hyperref}
\usepackage[numbers,sort&compress]{natbib}
\usepackage{enumitem}
\usepackage{booktabs}
\usepackage{array}
\usepackage{graphicx}
\usepackage{xcolor}

\numberwithin{equation}{section}

\theoremstyle{plain}
\newtheorem{theorem}{Theorem}[section]
\newtheorem{proposition}[theorem]{Proposition}
\newtheorem{lemma}[theorem]{Lemma}
\newtheorem{corollary}[theorem]{Corollary}

\theoremstyle{definition}
\newtheorem{conjecture}[theorem]{Conjecture}

\DeclareMathOperator{\KL}{D}
\DeclareMathOperator{\Var}{Var}
\DeclareMathOperator{\Cov}{Cov}

\DeclareMathOperator*{\esssup}{ess\,sup}
\newcommand{\E}{\mathbb{E}}
\newcommand{\PP}{\mathbb{P}}
\newcommand{\R}{\mathbb{R}}
\newcommand{\N}{\mathbb{N}}
\newcommand{\cF}{\mathcal{F}}

\newcommand{\cG}{\mathcal{G}}
\newcommand{\cX}{\mathcal{X}}
\newcommand{\cB}{\mathcal{B}}
\newcommand{\cT}{\mathcal{T}}
\newcommand{\cC}{\mathcal{C}}
\newcommand{\cP}{\mathcal{P}}

\newcommand{\ind}{\mathbf{1}}

\newcommand{\dd}{\,\mathrm{d}}
\newcommand{\Amart}[1]{A^{#1}}
\newcommand{\clock}[1]{F^{#1}}

\title{Information on trajectories:\\
martingales and random times}
\author{Akshay Balsubramani \\ {\small \texttt{akshay@vac.bio}}}
\date{}

\providecommand{\authcmt}[2]{\textcolor{#1}{#2}}
\providecommand{\akshay}[1]{\authcmt{red}{[AB: #1]}}
\providecommand{\vac}[1]{\authcmt{blue}{[VAC: #1]}}
\renewcommand{\akshay}[1]{}
\renewcommand{\vac}[1]{}
\makeatletter
\@ifundefined{diam}{}{}
\@ifundefined{conjecture}{}{}
\@ifundefined{problem}{}{}
\makeatother
\providecommand{\cA}{\mathcal{A}}
\providecommand{\cE}{\mathcal{E}}

\providecommand{\cR}{\mathcal{R}}

\providecommand{\cV}{\mathcal{V}}

\makeatletter
\def\bvapx@title{}
\AtBeginDocument{\providecommand{\phantomsection}{}}
\let\bvapx@maketitle\maketitle
\def\maketitle{\global\let\bvapx@title\@title\bvapx@maketitle}
\newcommand{\appendixtitle}{%
  \clearpage
  \suppressfloats[t]%
  \phantomsection
  \begin{center}
    {\LARGE\bfseries Appendices\ifx\bvapx@title\@empty\else\space to ``\bvapx@title''\fi\par}%
  \end{center}
  \vspace{1.5\baselineskip}%
}
\makeatother

\allowdisplaybreaks[2]
\AtBeginDocument{%
  \setlength{\abovedisplayskip}{6pt plus 2pt minus 2pt}%
  \setlength{\belowdisplayskip}{6pt plus 2pt minus 2pt}%
  \setlength{\abovedisplayshortskip}{3pt plus 1pt minus 1pt}%
  \setlength{\belowdisplayshortskip}{3pt plus 1pt minus 1pt}%
}

\providecommand{\akshay}[1]{}\renewcommand{\akshay}[1]{}%
\providecommand{\vac}[1]{}\renewcommand{\vac}[1]{}%
\begin{document}
\maketitle

% ======================================================================
\begin{abstract}
Accounting for information flow on the path space of trajectories of a nonnegative martingale yields exact variational identities for it, even at arbitrary random times. 
This recovers the widely used classical concentration inequalities, from Ville to PAC-Bayes, and measures what each one discards. 
The tail a bound controls is itself a relative entropy, resolved by the chain rule into per-step conditional divergences. 
The discarded slack has a closed form in each of three geometries: the crossing itself for Ville's inequality and for pooled tests, a Gibbs tilt for the Azuma--Hoeffding and PAC-Bayes bounds, and a dominating certificate for the $L^p$ maximal bound.
On a path-time space, the same identity gains one factor that measures anticipation: an arbitrary random time incurs an e-process ``peeking penalty.''
The partition function can be read as a coalescent---a prefix-sharing probability of independent copies---and geometric mixtures of test martingales gain a pooling benefit for multi-model safe testing.
\end{abstract}

% ======================================================================
\section{Introduction}
\label{sec:intro}
% ======================================================================

A martingale concentration inequality is proved by dropping nonnegative terms,
thereby discarding information.
This paper accounts for that information.
An identity of relative entropy, placed on the path space of a nonnegative martingale, names what each of the classical bounds treated here throws away.
The tail a bound controls is itself a relative entropy, of which the bound captures one part and discards the rest (Figure~\ref{fig:decompositions}); the discarded part is written in closed form for the bounds from Ville's inequality through the Azuma--Hoeffding and PAC-Bayes families to Doob's $L^p$ maximal bound.

The \emph{mixed coincidence identity}
\cite{balsubramani2026information} is an elementary manipulation of relative entropy:
the logarithm of a mixed partition function, built from products of powers of nonnegative factors, equals a variational optimum over distributions, and the gap at any candidate distribution is its relative entropy to the Gibbs optimizer.
Every nonnegative martingale of unit mean is a product of one-step conditional likelihood ratios.
Placing the mixed coincidence identity on the path space of such a martingale therefore turns it into a \emph{sequential} variational equality for the martingale's R\'enyi moments.
The log of an arbitrary product of powers of the one-step ratios equals an explicit supremum over path measures, with residual the relative entropy to a Gibbs path-tilt, and the chain rule resolves that residual into per-step conditional divergences.  Every concentration bound,
line-crossing inequality, PAC-Bayes statement, and random-time peeking penalty below is Theorem~\ref{thm:pathspace} read at a particular choice of factors: the path-space calculus states equalities where the classical theory gives one-sided bounds.

The bounds this covers are standard tools of concentration and statistical inference, anytime-valid and fixed-sample alike---Ville's maximal inequality, Azuma--Hoeffding, the variance-sensitive Bennett--Bernstein--Freedman family, parameter-mixture boundaries, PAC-Bayes---each classically proved by its own ad hoc argument, and all recovered here by the same route \cite{balsubramani2026information}.
Doob's $L^p$ maximal inequality is the exception, and gets its own residual in the geometry of power means $\|X\|_p=\E[|X|^p]^{1/p}$, where the extremal object is a dominating certificate function in place of a Gibbs tilt.
The same residual then measures the two phenomena a static bound cannot see: the anticipation carried by a random time, and the disagreement within a pool of competing tests.

\begin{figure}[!ht]
\centering
\includegraphics[width=0.98\textwidth]{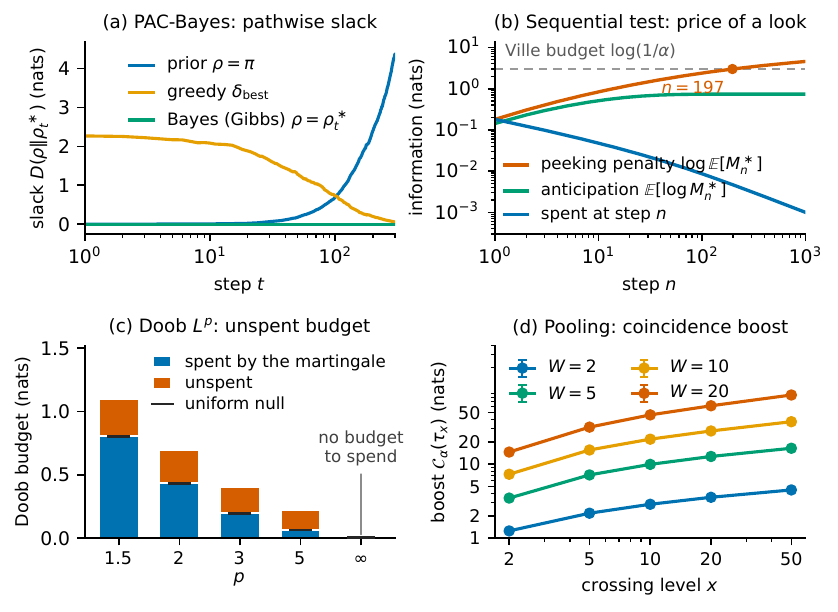}
\caption{\textbf{Each classical inequality leaves a residual the path-space calculus names and measures.}
All four residuals are drawn in nats on a common axis.
The pathwise PAC-Bayes slack \textbf{(a)} is the divergence $\KL(\rho\|\rho^\ast_t)$ to the running Gibbs tilt: identically zero at the Bayes posterior at every step, growing for a prior-anchored posterior as the tilt departs the prior, and decaying for a greedy one as the tilt concentrates.
A Gaussian test martingale evaluated at the time of its running maximum $M^\ast_n:=\max_{t\le n}M_t$ incurs the peeking penalty $\log\E[M^\ast_n]$ \textbf{(b)}.
It passes the entire Ville budget $\log(1/\alpha)=2.996$ nats at $\alpha=0.05$ by step $197$ and keeps growing, while the anticipation it measures saturates at $0.739$ nats---below the one nat a continuous path would incur, short by the overshoot of a discrete walk.
Taking logarithms through Doob's $L^p$ inequality \textbf{(c)} turns its sharp constant into an additive budget of $\log\tfrac{p}{p-1}$ nats.
The martingale spends the logarithm of its maximal-to-terminal norm ratio and leaves the rest; the budget vanishes as $p\to\infty$.
This is the one residual here that is \emph{not} a relative entropy, and Section~\ref{sec:doob-bellman} resolves it.
Pooling $W$ test martingales \textbf{(d)} tightens Ville's bound by the crossing form of the multi-way coincidence divergence, $\cC^{\mathrm{cross}}_\alpha(x)=\log[(1/x)/\PP(\tau^{(\alpha)}_x<\infty)]$, with $\tau^{(\alpha)}_x$ the first time the pooled process reaches $x$.
It grows in both the crossing level $x$ and the bettor count $W$ (logarithmic axes), reaching tens of nats as independent disagreement accumulates.
Bootstrap $95\%$ intervals lie within the markers except in the deepest tail.}
\label{fig:decompositions}
\end{figure}

\subsection{Main contributions}
\label{sec:contributions}

Placed on filtered path spaces, the mixed coincidence identity becomes a sequential variational equality for the R\'enyi moments of likelihood-ratio martingales (Theorem~\ref{thm:seqMCI}).
It is a reading of the finite-measure Donsker--Varadhan formula on path space (Theorem~\ref{thm:pathspace}), and every inequality below descends from it.
For integer exponents its partition function has a literal probabilistic meaning, the probability that independent copies share a common path prefix; that free energy decomposes into one-step predictive-coincidence costs, and for finite-state Markov families its growth rate is a Perron--Frobenius spectral radius.

The classical tails then become equalities.
Ville's and the Azuma--Hoeffding tails are relative-entropy equalities, the latter resolved by the chain rule into a sum of per-step conditional divergences of the conditioned law.
The gap to each classical bound is then attributed: the mean overshoot at first passage (Theorem~\ref{thm:firstpassage-exact}) for Ville, and for Azuma--Hoeffding the event relaxation, the sub-optimality of the exponential tilt, and the cumulant majorant.
One master identity stands behind them.
The exponential martingale of the running conditional log-CGF obeys an exact entropic equality (Theorem~\ref{thm:master}).
Replacing the exact cumulant by a predictable majorant---the supermartingale relaxation (Proposition~\ref{prop:supermart-relax})---accounts for the replacement exactly, and yields the entropic inequality behind Ville, Azuma--Hoeffding, Bennett--Bernstein--Freedman, parameter-mixture boundaries and PAC-Bayes.
The PAC-Bayes bound is itself an identity, discarding exactly the divergence from the posterior to the Gibbs tilt the loss induces \cite{balsubramani2026information}, and the remaining families follow as the entropic inequality specialized to their cumulant bounds.
The same accounting crosses a boundary of arbitrary form.
The discrete-time crossing law (Theorem~\ref{thm:curved-crossing-discrete}) leaves a quadrature residual and an overshoot that continuous time sets to zero, and the classical continuous statement is that identity with both vanishing.

Doob's $L^p$ inequality is the exception treated here, its residual a power-mean quantity sharpened in that geometry.
The role the Gibbs tilt plays there is played by the Az\'ema--Yor family of certificates indexed by an arbitrary concave function.
Each member is affine in the running value and is flat to first order when the running maximum advances, and its optional-stopping deficit resolves step by step into Bregman divergences between consecutive records (Proposition~\ref{prop:ay-certificate}), the second-order remainder the smooth fit leaves.
Specialized to the conjugate power and combined with H\"older's inequality, the family writes Doob's slack exactly as three pieces: a H\"older deficit, an optional-stopping deficit, and the initial value $M_0^p/(p-1)$ (Corollary~\ref{cor:doob-residual}).
The per-step resolution the chain rule supplies in the entropic geometry is therefore available in the power-mean geometry too, with a Bregman divergence in place of a relative entropy.
Each classical inequality thus belongs to whichever of the three geometries---Gibbs, power-mean, optional stopping---makes its extremizer the optimizer.

Disagreement among several tests is itself evidence.
A geometric mixture of test martingales is a supermartingale whose Ville bound is tightened by the multi-way coincidence divergence, a dynamical benefit of model disagreement.
Set that benefit beside the mass that never reaches the level and the overshoot at the crossing, and the inequality goes.
The pooled crossing probability is then closed form (Corollary~\ref{cor:pooling-crossing-exact}).

The cost of a random time closes the account.
An arbitrary random time splits into a hazard clock the history already determines and a martingale factor that absorbs everything it does not (Theorem~\ref{thm:surv-factor}).
That factor is trivial exactly for stopping times, and more generally for pseudo-stopping times, so it is the sole carrier of anticipation; expectations at the random time are expectations against the hazard clock, reweighted by it.
On path-time space the identity gains exactly one further factor, and that factor measures the anticipation (Theorem~\ref{thm:pathtime}).
Reading any evidence process at an arbitrary random time then costs a peeking penalty in nats (Theorem~\ref{thm:peeking}).
The penalty is at most zero at a stopping time and exactly zero at a pseudo-stopping time for uniformly integrable martingales, while a time that looks ahead can incur an unbounded one.
Between those ends the worst case over e-processes has an exact value, the supremum norm of the time's anticipation index (Theorem~\ref{thm:peeking-radius}), and it is zero precisely on the pseudo-stopping class.

% ======================================================================
\section{Information-theoretic preliminaries}
\label{sec:info-background}
% ======================================================================

Every information quantity used below is a reading of the mixed coincidence identity \cite{balsubramani2026information}; this section fixes notation and recalls the identity and the specializations the path-space sections use.

\subsection{Entropies and divergences}

Let $(\cX, \cB, \nu)$ be a $\sigma$-finite measure space.
A \emph{distribution} on $\cX$ is a probability measure $P$ absolutely continuous with respect to $\nu$; we write $p := \mathrm{d}P/\mathrm{d}\nu$ for its density.
More generally, a nonnegative \emph{factor} (or unnormalized density) $\pi$ is a measurable map $\pi \colon \cX \to [0,\infty)$ with $0 < \|\pi\|_1 := \int \pi\,\mathrm{d}\nu < \infty$.

For a distribution $P$ with density $p$ and a strictly positive factor $\pi$, the \emph{entropy} and \emph{cross-entropy} are $H(p) := -\E_{X\sim P}[\log p(X)]$ and $H(p, \pi) := -\E_{X\sim P}[\log \pi(X)]$.
The \emph{relative entropy} of $p$ relative to $\pi$, written $\KL(p\|\pi)$, is $
  \KL(p\|\pi) \;:=\; H(p,\pi) - H(p)
  \;=\; \E_{X\sim P}\!\left[\log\frac{p(X)}{\pi(X)}\right]
$.
When $\pi$ is itself a probability density, $\KL(p\|\pi) \geq 0$, with equality if and only if $p = \pi$ $\nu$-almost everywhere (Gibbs' inequality).

Relative entropy is also needed against a finite positive measure that need not be a probability.
For $\mu$ a finite positive measure on $(\cX,\cB)$ and $Q$ a probability measure on $(\cX,\cB)$ with $Q \ll \mu$,
\begin{equation}\label{eq:finite-entropy}
  \KL(Q\|\mu)
  := \int_\cX \log\!\left(\frac{\mathrm{d}Q}{\mathrm{d}\mu}\right) \mathrm{d}Q
  \;\in\; [-\log \mu(\cX),\; +\infty]
\end{equation}
and $\KL(Q\|\mu) := +\infty$ if $Q \not\ll \mu$.

This is the usual relative entropy shifted by $-\log \mu(\cX)$, reducing to the probability-measure case when $\mu(\cX) = 1$; it is the form needed against the \emph{path law} $P|_{\cF_T}$, of total mass $1$, and against the \emph{path-time measure} $\widehat P$ of Section~\ref{sec:pathtime}, of total mass at most one and in general strictly less.

For $\alpha > 0$, $\alpha \neq 1$, and distributions $P_1,P_2$ with densities $p_1,p_2$, the \emph{R\'enyi entropy} and \emph{R\'enyi divergence} are
\begin{align}
  H_\alpha(p)
  &:= \frac{1}{1-\alpha}\log\int_\cX p(x)^\alpha\,\mathrm{d}\nu(x)
     \label{eq:renyi-ent} \\
  D_\alpha(p_1\|p_2)
  &:= \frac{1}{\alpha-1}\log\int_\cX p_1(x)^\alpha\,p_2(x)^{1-\alpha}\,
     \mathrm{d}\nu(x)
     \label{eq:renyi-div}
\end{align}
In the limit $\alpha \to 1$, $H_\alpha(p) \to H(p)$ and $D_\alpha(p_1\|p_2) \to \KL(p_1\|p_2)$.
The R\'enyi entropy is nonincreasing in $\alpha$; the R\'enyi divergence is nondecreasing in $\alpha$, nonnegative, and satisfies the data-processing inequality: $D_\alpha(P_1 K \,\|\, P_2 K) \leq D_\alpha(P_1\|P_2)$ for any Markov kernel $K$.

\subsection{The mixed coincidence identity}
\label{sec:mci}

The identity below is due to~\cite{balsubramani2026information}; it is recalled in the notation the path-space sections use, and the path-space identity of the next section is this one placed on a filtered space.
The name \emph{coincidence} marks the free energy and the variational problem coinciding; a distinct sense of the word surfaces on path space, where the \emph{coincidence divergence} of the coalescent reading (Proposition~\ref{prop:coalescence}) measures the probability that independent copies share a common path prefix.
Stating the identity for a mixture of factors raised to arbitrary powers lets one statement specialize to Donsker--Varadhan, R\'enyi, and PAC-Bayes alike.

Fix nonnegative factors $\pi_1,\ldots,\pi_W$ on $(\cX,\nu)$ and an exponent vector $\alpha = (\alpha_1,\ldots,\alpha_W) \in \R^W$.
The \emph{mixed partition function} is
\begin{equation}\label{eq:Zalpha}
  Z(\alpha) := \int_\cX \prod_{i=1}^W \pi_i(x)^{\alpha_i}\,\dd\nu(x)
             = \E_{X\sim\nu}\!\left[\prod_{i=1}^W \pi_i(X)^{\alpha_i}\right],
\end{equation}
assumed to satisfy $0 < Z(\alpha) < \infty$.
The \emph{geometric-mixture density} is
\begin{equation}\label{eq:pstar}
  p^*_\alpha(x) := \frac{\prod_{i=1}^W \pi_i(x)^{\alpha_i}}{Z(\alpha)}.
\end{equation}
Write $\bar\alpha := \sum_{i=1}^W \alpha_i$ and, when $\bar\alpha > 0$,
$\widetilde\alpha := \alpha/\bar\alpha \in \Delta([W])$ for the normalized weights.

Sequential settings reuse this object with $\pi_i$ replaced by one-step likelihood ratios and the exponents by a time-indexed vector; that is the bridge to martingale theory.

\begin{theorem}[Mixed coincidence identity \cite{balsubramani2026information}]
\label{thm:mci}
Let $\pi_1,\ldots,\pi_W$ be nonnegative factors on $(\cX,\nu)$, let $\alpha \in \R^W$ satisfy $0 < Z(\alpha) < \infty$ for $Z$ of~\eqref{eq:Zalpha}, and let $p^*_\alpha$ be as in~\eqref{eq:pstar}.
For \emph{every} distribution $p$ on $(\cX,\nu)$ with $\E_p[\log \pi_i(X)]$ finite for all $i$,
\begin{equation}\label{eq:mci}
  -\log Z(\alpha) + \KL(p\|p^*_\alpha)
  = \sum_{i=1}^W \alpha_i\,\KL(p\|\pi_i)
    + (\bar\alpha - 1)\,H(p).
\end{equation}
Consequently,
\begin{equation}\label{eq:mci-var}
  \log Z(\alpha)
  = \max_{p \in \Delta(\cX)}
    \left[
      H(p) - \sum_{i=1}^W \alpha_i\,H(p,\pi_i)
    \right]
  = -\min_{p \in \Delta(\cX)}
    \left[
      \sum_{i=1}^W \alpha_i\,\KL(p\|\pi_i)
      + (\bar\alpha - 1)H(p)
    \right],
\end{equation}
with unique optimum $p = p^*_\alpha$.
\end{theorem}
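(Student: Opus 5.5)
The plan is to expand $\KL(p\|p^*_\alpha)$ directly from the definition of $p^*_\alpha$ in~\eqref{eq:pstar}. By definition,
\[
  \KL(p\|p^*_\alpha) = \E_p[\log p(X)] - \sum_{i=1}^W \alpha_i\,\E_p[\log \pi_i(X)] + \log Z(\alpha).
\]
This step uses $0<Z(\alpha)<\infty$, so that $\log Z(\alpha)$ is a finite constant. It also uses the integrability hypothesis: $\E_p[\log\pi_i(X)]$ is finite for every $i$, which justifies splitting the expectation of the logarithm of a product into a sum of expectations.

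Next I would rewrite each term using the definitions of the preliminaries. First, $\E_p[\log p]=-H(p)$. Second, $-\E_p[\log\pi_i]=H(p,\pi_i)=\KL(p\|\pi_i)+H(p)$. Substituting gives
\[
  -\log Z(\alpha)+\KL(p\|p^*_\alpha) = -H(p) + \sum_i \alpha_i\bigl(\KL(p\|\pi_i)+H(p)\bigr),
\]
which is exactly~\eqref{eq:mci} after collecting the $H(p)$ terms into $(\bar\alpha-1)H(p)$.

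A small care point arises when $H(p)$ is infinite. In that case both sides of the expansion are $+\infty$ or $-\infty$ consistently, and the identity is read in the extended sense.

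For the variational statement~\eqref{eq:mci-var}, I rearrange~\eqref{eq:mci} using the first form $H(p)-\sum_i\alpha_i H(p,\pi_i)$:
\[
  \log Z(\alpha) = H(p)-\sum_i\alpha_i H(p,\pi_i) + \KL(p\|p^*_\alpha).
\]
Since $p^*_\alpha$ is a genuine probability density, Gibbs' inequality gives $\KL(p\|p^*_\alpha)\ge 0$, with equality iff $p=p^*_\alpha$ $\nu$-a.e. Hence $\log Z(\alpha)\ge H(p)-\sum_i\alpha_iH(p,\pi_i)$ for every admissible $p$.

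It remains to show the bound is attained, which means checking that $p^*_\alpha$ itself is admissible. For that I need $\E_{p^*_\alpha}[\log\pi_i]$ finite for each $i$. This is the main obstacle. The hypothesis $0<Z(\alpha)<\infty$ alone does not obviously guarantee these integrals are finite: for $W>1$, positive and negative parts could cancel in the sum but not individually.

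I would handle this by interpreting the maximum over the class of $p$ for which the objective is defined. Concretely, I would verify directly that at $p=p^*_\alpha$ the combination $\sum_i\alpha_i\log\pi_i = \log p^*_\alpha+\log Z$ makes the objective equal $\log Z$. This holds whenever $H(p^*_\alpha)$ is finite, and otherwise one approximates $p^*_\alpha$ by truncations of it and uses the identity above.

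The second form in~\eqref{eq:mci-var} is then just the sign flip $-\min$ of~\eqref{eq:mci}'s right-hand side. Uniqueness of the optimum follows from the equality case of Gibbs' inequality.
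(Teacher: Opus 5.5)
Your argument is the paper's own: expand $\KL(p\|p^*_\alpha)$ from the definition of $p^*_\alpha$, substitute $H(p,\pi_i)=\KL(p\|\pi_i)+H(p)$, and read the variational form off Gibbs' inequality. The paper simply assumes the admissibility of $p^*_\alpha$ that you flag. If you want to treat it, do not use truncations, which would only show the value is a supremum and not the claimed maximum; instead write the objective as the single expectation $\E_p[\log(\prod_i\pi_i^{\alpha_i}/p)]=\log Z(\alpha)-\KL(p\|p^*_\alpha)$, which equals $\log Z(\alpha)$ at $p^*_\alpha$ directly, with no finiteness of $H(p^*_\alpha)$ needed.
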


Equation~\eqref{eq:mci} is an equality at \emph{every} distribution $p$,
including the non-optimal ones: there the gap $\KL(p\|p^*_\alpha)$ is the information cost of using $p$ instead of the optimal geometric mixture.

\subsection{Donsker--Varadhan, R\'enyi and multi-prior specializations}

Each classical information-theoretic object below is Theorem~\ref{thm:mci} read at a particular choice of factors and exponents, and each reappears on path space later.
The consequence used most, and the one the peeking penalty of Section~\ref{sec:pathtime} rests on, is the following.

\begin{corollary}[Donsker--Varadhan / finite-measure Gibbs formula\cite{balsubramani2026information}]
\label{cor:DV}
Let $\mu$ be a finite positive measure on $(\cX,\cB)$ and let $g \colon \cX \to [0,\infty)$ be measurable with $0 < \int g\,\dd\mu < \infty$.
Let $Q^\star$ be the Gibbs measure $\dd Q^\star/\dd\mu = g / \int g\,\dd\mu$, a probability measure supported on $\{g > 0\}$.
Then for every probability measure $Q \ll \mu$ with $Q(\{g = 0\}) = 0$,
\begin{equation}\label{eq:DV}
  \log \int g\,\dd\mu
  = \E_Q[\log g] - \KL(Q\|\mu) + \KL(Q\|Q^\star).
\end{equation}
Consequently,
\begin{equation}\label{eq:DV-var}
  \log \int g\,\dd\mu
  = \sup_{Q \ll \mu}\bigl\{\E_Q[\log g] - \KL(Q\|\mu)\bigr\},
\end{equation}
with unique maximizer $Q = Q^\star$ (a $Q$ charging $\{g = 0\}$ has $\E_Q[\log g] = -\infty$ and is never optimal, so the supremum is unchanged by restricting to $Q \ll Q^\star$).
\end{corollary}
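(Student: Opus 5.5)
The plan is a direct change-of-measure computation. I would also indicate how the corollary is Theorem~\ref{thm:mci} read at particular factors, for the reader who wants the citation route.

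\textbf{Setup.} Write $Z := \int g\,\dd\mu \in (0,\infty)$. Fix a probability measure $Q \ll \mu$ with $Q(\{g=0\})=0$.

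Since $\dd Q^\star/\dd\mu = g/Z$ is strictly positive on $\{g>0\}$, and $Q$ is concentrated on $\{g>0\}$, we get $Q \ll Q^\star$. The chain rule for Radon--Nikodym derivatives then gives, $Q$-almost surely,
\[
\log\frac{\dd Q}{\dd\mu} = \log\frac{\dd Q}{\dd Q^\star} + \log g - \log Z .
\]
Integrating against $Q$ would give~\eqref{eq:DV} formally:
\[
\KL(Q\|\mu) = \KL(Q\|Q^\star) + \E_Q[\log g] - \log Z .
\]

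\textbf{Integrability (the main obstacle).} The three terms may be infinite, so the integration needs justification. I would split $\log g = \log^+ g - \log^- g$ and proceed in three steps.

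\begin{enumerate}
\item Jensen's inequality gives $\E_Q[\log g - \log\tfrac{\dd Q}{\dd\mu}] \le \log \int_{\{g>0\}} g\,\dd\mu = \log Z$. Hence the positive part of $\log g - \log \tfrac{\dd Q}{\dd\mu}$ is $Q$-integrable.
\item $\KL(Q\|Q^\star) \ge 0$ is always well defined in $[0,\infty]$.
\item $\KL(Q\|\mu) \ge -\log\mu(\cX) > -\infty$, as in~\eqref{eq:finite-entropy}.
\end{enumerate}

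From these, the identity holds in $(-\infty,+\infty]$ with the convention that both sides are $+\infty$ together. It is a genuine equality of finite numbers whenever $\E_Q[\log g]$ is finite and $\KL(Q\|\mu)<\infty$. I would state the corollary in that reading: $\E_Q[\log g] - \KL(Q\|\mu)$ is understood as $\E_Q[\log g - \log \tfrac{\dd Q}{\dd\mu}]$, which takes values in $[-\infty,\log Z]$.

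\textbf{Variational form.} Since $\KL(Q\|Q^\star)\ge 0$, with equality iff $Q=Q^\star$ (Gibbs' inequality), the identity yields
\[
\log Z \ge \E_Q[\log g] - \KL(Q\|\mu)
\]
for every admissible $Q$. Equality holds exactly at $Q^\star$, which is admissible because $Q^\star \ll \mu$ and $Q^\star(\{g=0\})=0$. For $Q$ charging $\{g=0\}$, the bracket is $-\infty$. This gives~\eqref{eq:DV-var} with the unique maximizer $Q^\star$.

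\textbf{Alternative via Theorem~\ref{thm:mci}.} Take $\nu=\mu$, $W=2$, $\pi_1 = g$ with $\alpha_1=1$, and $\pi_2 \equiv 1$. The density of $Q$ with respect to $\mu$ plays the role of $p$. Then $p^*_\alpha = g/Z$ and $\bar\alpha-1=0$, so~\eqref{eq:mci} reads
\[
-\log Z + \KL(Q\|Q^\star) = \KL(Q\|g) = \KL(Q\|\mu) - \E_Q[\log g],
\]
which is the same identity. The one adjustment needed is that $\nu$ here is finite rather than merely $\sigma$-finite, and that $\mu$ need not be a probability. Both are harmless because the identity is purely algebraic in the densities.
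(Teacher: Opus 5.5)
Your proposal is correct and is essentially the paper's proof: the paper computes $\KL(Q\|Q^\star)=\E_Q\bigl[\log\frac{\dd Q/\dd\mu}{g/\int g\,\dd\mu}\bigr]=\KL(Q\|\mu)-\E_Q[\log g]+\log\int g\,\dd\mu$, rearranges it, and gets the variational form from $\KL(Q\|Q^\star)\ge0$ with equality iff $Q=Q^\star$. Your integrability bookkeeping is extra care the paper skips, and the arrangement that is always well defined is $\KL(Q\|Q^\star)=\log\int g\,\dd\mu-\E_Q\bigl[\log g-\log\tfrac{\dd Q}{\dd\mu}\bigr]$ with both sides in $[0,\infty]$, since the other arrangements, including your reading of~\eqref{eq:DV}, meet $-\infty+\infty$ when $\E_Q[\log g]=-\infty$; for the citation route, $W=1$, $\pi_1=g$, $\alpha_1=1$ already suffices without the auxiliary factor $\pi_2\equiv1$.
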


Corollary~\ref{cor:DV} is the ``one-factor'' specialization of Theorem~\ref{thm:mci}: take $\nu = \mu$, $W = 1$, $\pi_1 = g$,
$\alpha_1 = 1$.
The strictly positive case $g \colon \cX \to (0,\infty)$ is recovered when $\{g=0\}$ is $\mu$-null; the indicator case $g = \ind_A$ (with $\mu(A) > 0$) recovers the conditional form $\log \mu(A) = \sup_{Q : Q(A) = 1}\{-\KL(Q\|\mu)\}$, optimized by $Q^\star = \mu(\cdot \mid A)$.
It is the DV principle for finite measures, so \emph{every} downstream result in this paper can be viewed as an application of the DV formula to a carefully chosen $(\mu, g)$ on path space.

Two named R\'enyi quantities are the same identity read at a different exponent.
One density at a free power gives $Z(\alpha)=\int p^\alpha\,\dd\nu$ and the R\'enyi entropy $H_\alpha(p)=(1-\alpha)^{-1}\log Z(\alpha)$, whose variational form $-(1-\alpha)H_\alpha(p)=\min_w\{\alpha\KL(w\|p)+(\alpha-1)H(w)\}$ is minimized by the \emph{escort distribution} $w^*_\alpha\propto p^\alpha$.
Two densities at complementary powers $(\alpha,1-\alpha)$ give the R\'enyi divergence as a relative-entropy \emph{barycenter} --- the distribution minimizing a weighted sum of divergences to the given ones --- $-(\alpha-1)D_\alpha(p_1\|p_2)=\min_w\{\alpha\KL(w\|p_1)+(1-\alpha)\KL(w\|p_2)\}$, minimized by $w^*\propto p_1^\alpha p_2^{1-\alpha}$~\cite{balsubramani2026information}.

\begin{proposition}[Multi-prior PAC-Bayes / multi-way coincidence\cite{balsubramani2026information}]
\label{prop:multi-PAC}
Let $\pi_1,\ldots,\pi_W$ be probability distributions and $\alpha \in \Delta([W])$.
Define the \emph{multi-way coincidence divergence}
$\cC_\alpha(\pi_{1:W})
  := -\log Z(\alpha)
  = \min_p \sum_{i=1}^W \alpha_i\,\KL(p\|\pi_i)$. 
Then for any $\rho$:
$\KL(\rho\|p^*_\alpha)
  = \sum_{i=1}^W \alpha_i\,\KL(\rho\|\pi_i) - \cC_\alpha(\pi_{1:W})
$. 
\end{proposition}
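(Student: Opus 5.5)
The plan is to read Proposition~\ref{prop:multi-PAC} directly off Theorem~\ref{thm:mci}, specialized to probability factors and simplex weights. Take $\pi_1,\ldots,\pi_W$ to be probability densities with respect to $\nu$ and $\alpha\in\Delta([W])$, so that $\bar\alpha=1$. The entropy coefficient $(\bar\alpha-1)H(p)$ in~\eqref{eq:mci} then vanishes. For every $\rho$ with $\E_\rho[\log\pi_i]$ finite for all $i$, the identity becomes
\[
  -\log Z(\alpha) + \KL(\rho\|p^*_\alpha) = \sum_{i=1}^W \alpha_i\,\KL(\rho\|\pi_i).
\]
Rearranging, and using the definition $\cC_\alpha(\pi_{1:W}):=-\log Z(\alpha)$, gives the displayed equation.

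The variational characterization $-\log Z(\alpha)=\min_p\sum_i\alpha_i\KL(p\|\pi_i)$ is the second equality of~\eqref{eq:mci-var} at $\bar\alpha=1$, with unique minimizer $p^*_\alpha$. Equivalently, it follows from the displayed identity because $\KL(\rho\|p^*_\alpha)\ge 0$, with equality exactly at $\rho=p^*_\alpha$ by Gibbs' inequality.

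Before applying Theorem~\ref{thm:mci}, I will check its hypothesis $0<Z(\alpha)<\infty$. By the weighted AM--GM inequality, $\prod_i\pi_i^{\alpha_i}\le\sum_i\alpha_i\pi_i$ pointwise, so $Z(\alpha)\le 1$. Hence $\cC_\alpha\ge 0$, as befits a divergence. Positivity of $Z(\alpha)$ needs the common support $\bigcap_i\{\pi_i>0\}$ to have positive $\nu$-measure. If it does not, $Z(\alpha)=0$, $\cC_\alpha=+\infty$, and I read the identity with both sides infinite.

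The main obstacle is the qualifier ``for any $\rho$'', which hides integrability: Theorem~\ref{thm:mci} needs each $\E_\rho[\log\pi_i]$ finite. I will handle the remaining $\rho$ by cases. If $\rho$ is not absolutely continuous with respect to some $\pi_i$ with $\alpha_i>0$, then the right-hand side is $+\infty$. In that case $\rho$ also charges $\{p^*_\alpha=0\}$, so $\KL(\rho\|p^*_\alpha)=+\infty$ and both sides agree in the extended sense. Indices with $\alpha_i=0$ can be dropped from the start.

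Otherwise, $\log(\rho/p^*_\alpha)=\sum_i\alpha_i\log(\rho/\pi_i)+\log Z(\alpha)$ holds pointwise on the support of $\rho$. Integrating this against $\rho$ gives the identity whenever the terms $\KL(\rho\|\pi_i)$ are not simultaneously $+\infty$ and $-\infty$. That cannot happen, since each is a divergence between probability measures and hence nonnegative. The integration step is legitimate because the negative parts of $\log(\rho/\pi_i)$ are $\rho$-integrable, as $\int(\pi_i/\rho)\,\dd\rho\le 1$. This integrability check is the only point that needs care beyond quoting the theorem.
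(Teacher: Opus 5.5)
Your proof is correct and is the paper's own argument---set $\bar\alpha=1$ in \eqref{eq:mci} so the entropy term drops, then rearrange. The AM--GM bound $Z(\alpha)\le 1$ and the direct pointwise integration (controlling negative parts via $\int(\pi_i/\rho)\,\dd\rho\le 1$) are extra care that extends the identity to every $\rho$, beyond the finite-cross-entropy hypothesis of Theorem~\ref{thm:mci}. The one slip is your degenerate-case remark: if $Z(\alpha)=0$ then $p^*_\alpha$ is undefined and the right side is $\infty-\infty$ (every $\rho$ then fails $\rho\ll\pi_i$ for some $i$ with $\alpha_i>0$), so that case is excluded by the standing hypothesis $0<Z(\alpha)<\infty$ rather than read as both sides infinite.
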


The last specialization runs the other way: a relative-entropy budget determines how far a change of measure can move a mean.
For $Z$ integrable under a reference $P$ and a convex $\phi$ dominating its centered cumulant, $\log\E_P[e^{\lambda(Z-\E_P[Z])}]\le\phi(\lambda)$ on $(0,b)$, the transportation lemma reads $\E_Q[Z]-\E_P[Z]\le\phi^{*,-1}(\KL(Q\|P))$ for every $Q\ll P$, with $\phi^*$ the convex conjugate and $\phi^{*,-1}$ its generalized inverse; in the sub-Gaussian case $\phi(\lambda)=v\lambda^2/2$ this is $\sqrt{2v\,\KL(Q\|P)}$~\cite{balsubramani2026information}.

% ======================================================================
\section{The path-space mixed coincidence identity}
\label{sec:pathspace}
% ======================================================================
The identity of the previous section is static---it lives on one measure space and knows nothing of time.
The focus of this paper, however, is dynamic: nonnegative martingales, the supermartingales that certify sequential tests, and the random times at which one might stop and look.
Placing the identity on path space is a matter of choosing the right substrate among them; no new machinery is required.
The machinery that zeros in on the time of evaluation---optional stopping, first passage, the value at the ultimate maximum---enters with the inequalities of Section~\ref{sec:master}; up to that point everything is algebra that holds at any fixed time.

We work throughout in discrete time.
Fix a filtered probability space $(\Omega,\cF,(\cF_t)_{t \in \N_0}, P)$ with $\cF_0 \subseteq \cF_1 \subseteq \cdots \subseteq \cF$.
All processes are real-valued and adapted unless stated otherwise, and a process is \emph{predictable} when its value at time $t$ is $\cF_{t-1}$-measurable, so that the history strictly before $t$ determines it.
Notation is collected in Appendix~\ref{app:glossary}.

An adapted process $(M_t)_{t \in \N_0}$ is a \emph{martingale} if $\E[|M_t|] < \infty$ and $\E[M_t \mid \cF_{t-1}] = M_{t-1}$ for all $t \geq 1$; a \emph{supermartingale} if $\E[M_t \mid \cF_{t-1}] \leq M_{t-1}$; a \emph{nonnegative supermartingale} or \emph{test supermartingale} if additionally $M_t \geq 0$ a.s.  A nonnegative martingale with $M_0 = 1$ is a \emph{test martingale} (or \emph{likelihood-ratio martingale}).

A random variable $\tau \colon \Omega \to \N \cup \{\infty\}$ is a \emph{stopping time} (for $(\cF_t)$) if $\{\tau \leq t\} \in \cF_t$ for all $t$.
A \emph{random time} is any positive-integer-valued random variable (not necessarily adapted to $(\cF_t)$).
A \emph{pseudo-stopping time}~\cite{NY05} is a random time at which every bounded martingale keeps its initial mean, $\E[B_\tau] = \E[B_0]$; the stopping times are the familiar instance, and Section~\ref{sec:survival} identifies the class exactly.

Two ingredients extend the static identity of Section~\ref{sec:mci} to path space:
\begin{itemize}[leftmargin=2em]
\item \emph{Path-space substrate.}  Take $(\cX,\nu) = (\Omega, P|_{\cF_T})$ itself, the path measure.
Now ``factors'' $\Pi_{i,T}$ are $\cF_T$-measurable random variables on the path space.
\item \emph{Likelihood-ratio factors.}  In the sequential setting the natural choice is $\Pi_{t,T} = R_t$, the one-step likelihood ratio of an alternative path measure against $P$ at time $t$.
Taking exponents $\alpha_t$ indexed by time produces, by the chain rule, the identity for the cumulative likelihood ratio raised to a time-dependent power.
\end{itemize}

\paragraph{A one-step example.}
Take $T = 1$, $W = 1$, exponent $\alpha = 1$, with binary state space $\{ \omega_+ , \omega_- \}$,
$P(\omega_+) = P(\omega_-) = 1/2$, and $Q$ with $Q(\omega_+) = 2/3$,
$Q(\omega_-) = 1/3$, so the likelihood ratio $R_1 = \dd Q/\dd P$ takes values $4/3$ and $2/3$ and $\E_P[R_1] = 1$.
The static identity (Theorem~\ref{thm:mci}) with $\pi_1 = R_1$, $\alpha_1 = 1$ reads $
  \log\E_P[R_1] = 0
  = \sup_{p \in \Delta(\cX)}\bigl\{\E_p[\log R_1] - \KL(p\|P)\bigr\}
$. 
The supremum is attained at $p^* = Q$, where $\E_Q[\log R_1] - \KL(Q\|P) = 0$ since $\E_Q[\log(\dd Q/\dd P)] = \KL(Q\|P)$.
The form of the variational problem matters: the optimal tilt is the alternative measure $Q$, and the supremum value $0$ records the fact that $Q$ is normalized.
Placed on a path space of length $T$ with i.i.d.\ binary increments and one-step likelihood ratios $R_t$
(Section~\ref{sec:pathspace}), the same identity decomposes additively in $t$ via the chain rule, producing per-step variational tradeoffs.
Each step's tradeoff is the input to one of the concentration bounds proved later.

\subsection{Path measures and likelihood ratios}

Fix a horizon $T \in \N$.  A \emph{path} is $\omega = (x_1,\ldots,x_T)$;
the law of the path under $P$ has density $p(\omega) = \prod_{t=1}^T p_t(x_t \mid x_{1:t-1})$ with respect to the product reference measure $\nu^{\otimes T} := \nu_1 \otimes \cdots \otimes \nu_T$, where $p_t(\cdot \mid x_{1:t-1})$ is the conditional density of $X_t$ given the history under $P$.

For an alternative path measure $Q \ll P$ with conditional densities $q_t(\cdot \mid x_{1:t-1})$, the \emph{one-step likelihood ratio} at time $t$ is
\begin{equation}\label{eq:one-step-LR}
  R_t(\omega) := \frac{q_t(x_t \mid x_{1:t-1})}
                       {p_t(x_t \mid x_{1:t-1})},
\end{equation}
and the \emph{cumulative likelihood ratio} is $L_T := \prod_{t=1}^T R_t = \dd Q / \dd P$ on $\cF_T$.
The process $(L_t)_{t=0}^T$ with $L_0 = 1$ is a nonnegative unit-mean $P$-martingale (the Radon--Nikodym martingale).

Conversely, any nonnegative martingale $(M_t)$ with $M_0 = 1$ and $\E_P[M_T] = 1$ defines an absolutely continuous measure $Q$ on $\cF_T$ via $\dd Q / \dd P |_{\cF_T} = M_T$, with one-step ratios $R_t = M_t/M_{t-1}$.
The path-space theory below therefore applies to \emph{all} nonnegative unit-mean martingales.
Throughout, $M$ denotes a generic such martingale; the particular martingale attached to a random time $\tau$ in Section~\ref{sec:survival} is written $\Amart{\tau}$, naming the time it is built from.

\begin{lemma}[Chain rule for relative entropy]
\label{lem:chain-rule}
For path measures $Q \ll P$ on $(\Omega, \cF_T)$,
\[
  \KL(Q\|P)
  = \sum_{t=1}^T \E_Q\!\left[
      \KL\!\bigl(Q_t(\cdot | X_{1:t-1}) \,\|\, P_t(\cdot | X_{1:t-1})\bigr)
    \right]
  = \sum_{t=1}^T \E_Q[\log R_t].
\]
\end{lemma}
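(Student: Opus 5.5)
The plan is to take logarithms of the product factorization of the Radon--Nikodym derivative and integrate against $Q$, then recognize each summand as a conditional divergence by the tower property.

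\textbf{Step 1: pointwise identity.} On $\cF_T$ we have $\dd Q/\dd P = L_T = \prod_{t=1}^T R_t$, with $R_t$ as in~\eqref{eq:one-step-LR}. On the event $\{L_T>0\}$, which carries full $Q$-mass, this gives
\[
  \log\frac{\dd Q}{\dd P} = \sum_{t=1}^T \log R_t ,
\]
because each $q_t>0$ $Q$-a.s. Integrating against $Q$ yields $\KL(Q\|P)=\E_Q\bigl[\sum_t \log R_t\bigr]$.

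\textbf{Step 2: splitting the expectation.} To split this into $\sum_t \E_Q[\log R_t]$ we need each term to be quasi-integrable. The negative part of $\log R_t$ is harmless: conditionally on $X_{1:t-1}$, the bound $\E_Q[(\log R_t)^-\mid X_{1:t-1}] \le \int q_t\,(p_t/q_t - 1)^+\,\dd\nu_t \le 1$ follows from $\log x \ge 1 - 1/x$. Hence every $(\log R_t)^-$ is $Q$-integrable, and linearity holds in $(-\infty,+\infty]$. If $\KL(Q\|P)=+\infty$, both sides are $+\infty$.

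\textbf{Step 3: conditioning.} For each $t$, apply the tower property conditionally on $\cF_{t-1}=\sigma(X_{1:t-1})$. The inner integral is
\[
  \E_Q[\log R_t \mid X_{1:t-1}] = \int q_t(x\mid X_{1:t-1})\log\frac{q_t(x\mid X_{1:t-1})}{p_t(x\mid X_{1:t-1})}\,\dd\nu_t(x),
\]
which is exactly $\KL\bigl(Q_t(\cdot|X_{1:t-1})\,\|\,P_t(\cdot|X_{1:t-1})\bigr)$. This quantity is nonnegative by Gibbs' inequality, so the tower property applies to the nonnegative integrands without any integrability caveat. Summing over $t$ gives the first equality.

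\textbf{Main obstacle.} The only delicate point is measure-theoretic: justifying the interchange of sum and expectation when some terms may be infinite, and handling the zeros of $p_t$ and $q_t$. Absolute continuity $Q\ll P$ makes $p_t>0$ $Q$-a.s., and the lower bound from Step 2 controls the negative parts. I would handle both with the $\log x\ge 1-1/x$ argument above, using the regular conditional densities already fixed in the path-measure setup.
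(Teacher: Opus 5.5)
Your proposal is correct and follows the paper's own route: take logarithms of $L_T=\prod_t R_t$, integrate under $Q$, split the sum, and identify each $\E_Q[\log R_t\mid X_{1:t-1}]$ as the conditional divergence via the tower property. Your Step~2 bound on $(\log R_t)^-$ supplies the quasi-integrability that the paper's one-line argument uses tacitly. That bound, and not nonnegativity of the integrand (since $\log R_t$ itself can be negative), is what licenses the tower property in your Step~3.
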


\subsection{The path-space partition function and identity}

For a horizon $T$, strictly positive $\cF_T$-measurable factors $\Pi_{1,T},\dots,\Pi_{W,T}$, and an exponent vector $\alpha = (\alpha_1,\dots,\alpha_W) \in \R^W$, define
\begin{equation}\label{eq:pathZ}
  Z_T(\alpha)
  := \E_P\!\left[\prod_{i=1}^W \Pi_{i,T}^{\alpha_i}\right],
\end{equation}
assumed to satisfy $0 < Z_T(\alpha) < \infty$.
In the likelihood-ratio case, taking $\Pi_{t,T} = R_t$ and an exponent vector $\alpha \in \R^T$ indexed by time yields
\begin{equation}\label{eq:seqZ}
  Z_T(\alpha) := \E_P\!\left[\prod_{t=1}^T R_t^{\alpha_t}\right]
              = \E_P\!\bigl[L_T^{(\alpha)}\bigr],
  \qquad
  L_T^{(\alpha)} := \prod_{t=1}^T R_t^{\alpha_t}.
\end{equation}

The free energy $\log Z_T(\alpha)$ is readable from every path measure at once.
Each candidate $Q$ reads it as a tilted expectation of the factors net of the entropy cost of using $Q$ in place of $P$, and the amount by which that reading falls short is the relative entropy separating $Q$ from the Gibbs path law.

\begin{theorem}[Path-space mixed coincidence identity]
\label{thm:pathspace}
For $Z_T$ of~\eqref{eq:pathZ} with $0 < Z_T(\alpha) < \infty$, for every horizon $T$, every collection of positive $\cF_T$-measurable factors $(\Pi_{i,T})_{i=1}^W$, and every path measure $Q \ll P|_{\cF_T}$,
\begin{equation}\label{eq:pathspace}
  \log Z_T(\alpha)
  = \sum_{i=1}^W \alpha_i\,\E_Q[\log \Pi_{i,T}]
    - \KL(Q\|P|_{\cF_T})
    + \KL(Q\|Q_T^\alpha),
\end{equation}
where $Q_T^\alpha$ is the Gibbs law
\begin{equation}\label{eq:Qtstar}
  \frac{\dd Q_T^\alpha}{\dd P|_{\cF_T}}
  = \frac{\prod_{i=1}^W \Pi_{i,T}^{\alpha_i}}{Z_T(\alpha)}.
\end{equation}
The residual $\KL(Q\|Q_T^\alpha) \geq 0$ vanishes if and only if $Q = Q_T^\alpha$, so the identity is equivalent to the variational form $\log Z_T(\alpha) = \sup_{Q \ll
P|_{\cF_T}}\bigl\{\sum_i \alpha_i\,\E_Q[\log \Pi_{i,T}] - \KL(Q\|P|_{\cF_T})\bigr\}$,
attained uniquely at $Q_T^\alpha$.
\end{theorem}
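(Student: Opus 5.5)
The plan is to obtain the identity as Corollary~\ref{cor:DV} applied on path space. I take $\mu := P|_{\cF_T}$, a probability measure on $(\Omega,\cF_T)$, and $g := \prod_{i=1}^W \Pi_{i,T}^{\alpha_i}$. This $g$ is strictly positive and $\cF_T$-measurable because every factor is strictly positive. The hypothesis $0<Z_T(\alpha)<\infty$ is exactly $0<\int g\,\dd\mu<\infty$. The Gibbs measure $Q^\star$ of Corollary~\ref{cor:DV} is then $Q_T^\alpha$ of~\eqref{eq:Qtstar}. Since $g>0$ everywhere, the side condition $Q(\{g=0\})=0$ holds automatically for every $Q\ll P|_{\cF_T}$.

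Corollary~\ref{cor:DV} then gives
\[
\log Z_T(\alpha)=\E_Q[\log g]-\KL(Q\|P|_{\cF_T})+\KL(Q\|Q_T^\alpha).
\]
The remaining step is the expansion $\E_Q[\log g]=\sum_i\alpha_i\E_Q[\log\Pi_{i,T}]$, and this is where the one real obstacle sits: integrability. Linearity of $\E_Q$ needs each $\E_Q[\log \Pi_{i,T}]$ to be finite, or at least needs no $\infty-\infty$ to occur. Theorem~\ref{thm:mci} assumes this finiteness explicitly, but the statement here leaves it implicit.

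I would handle it directly rather than rely on Corollary~\ref{cor:DV} as a black box. Writing $q=\dd Q/\dd P$, one has pointwise on $\{q>0\}$
\[
\log\frac{q}{\dd Q_T^\alpha/\dd P}=\log q-\sum_i\alpha_i\log\Pi_{i,T}+\log Z_T(\alpha).
\]
Integrating against $Q$ yields~\eqref{eq:pathspace} whenever the terms are finite. When some term is infinite, both sides are infinite and agree as extended reals under the convention that~\eqref{eq:pathspace} is read with the $\E_Q[\log\Pi_{i,T}]$ finite, mirroring Theorem~\ref{thm:mci}. A useful check is that $\KL(Q\|Q_T^\alpha)\ge -\log Q_T^\alpha(\Omega)=0$, since the Gibbs law is a probability measure, so the right side is never $-\infty-(+\infty)$ in a problematic way. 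I will state this finiteness convention explicitly at the start of the proof.

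For the variational form, the residual $\KL(Q\|Q_T^\alpha)$ is a relative entropy between two probability measures. By Gibbs' inequality it is nonnegative and vanishes iff $Q=Q_T^\alpha$. Rearranging~\eqref{eq:pathspace} shows that the bracketed functional is at most $\log Z_T(\alpha)$, with equality exactly at $Q_T^\alpha$. I must also check that $Q_T^\alpha$ is admissible, i.e.\ that its terms are finite. This follows from the identity itself at $Q=Q_T^\alpha$: $\KL(Q_T^\alpha\|P)=\E_{Q_T^\alpha}[\log g]-\log Z_T(\alpha)$, and $\E_{Q_T^\alpha}[\log g]=\E_P[g\log g]/Z_T$, which I take to be finite under the admissibility convention. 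This gives attainment and uniqueness of the supremum. Conversely, the variational form together with the Gibbs-residual formula recovers the identity, so the two are equivalent.
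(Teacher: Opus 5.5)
Your proposal is correct and follows the paper's proof: Corollary~\ref{cor:DV} with $\mu=P|_{\cF_T}$ and $g=\prod_i\Pi_{i,T}^{\alpha_i}$, whose Gibbs measure is $Q_T^\alpha$, followed by the expansion $\log g=\sum_i\alpha_i\log\Pi_{i,T}$. Your added finiteness convention on the $\E_Q[\log\Pi_{i,T}]$, and on $\KL(Q_T^\alpha\|P|_{\cF_T})$ for attainment, covers points the paper's one-line proof leaves implicit, and it is the right way to make the linearity step and the attainment claim precise. You should drop the aside that ``both sides are infinite'' when a term diverges, since $\log Z_T(\alpha)$ is finite by hypothesis.
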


\subsection{The sequential identity}

The sequential case is this theorem read at one particular choice of factors: take $W = T$ factors $\Pi_{t,T} := R_t$, the one-step likelihood ratios, with exponents $\alpha \in \R^T$ indexed by time.
Then $\prod_t \Pi_{t,T}^{\alpha_t} = L_T^{(\alpha)}$, and the Gibbs law $Q_T^\alpha$ of~\eqref{eq:Qtstar} is the tilt $R^*_\alpha$ with $\dd R^*_\alpha/\dd P = L_T^{(\alpha)}/Z_T(\alpha)$.

\begin{theorem}[Sequential mixed coincidence identity]
\label{thm:seqMCI}
Let the one-step likelihood ratios $R_1,\dots,R_T$ be strictly positive, and let $\alpha \in \R^T$ satisfy $0 < Z_T(\alpha) < \infty$ for $Z_T$ of~\eqref{eq:seqZ}.
Then for every path measure $\widetilde Q \ll P|_{\cF_T}$,
\begin{equation}\label{eq:seqMCI}
  \log Z_T(\alpha)
  = \sum_{t=1}^T \alpha_t\,\E_{\widetilde Q}[\log R_t]
    - \KL(\widetilde Q\|P|_{\cF_T})
    + \KL(\widetilde Q\|R^*_\alpha)
  \;=\; \max_{\widetilde Q \ll P}
    \Bigl\{\textstyle\sum_{t=1}^T \alpha_t\,\E_{\widetilde Q}[\log R_t]
      - \KL(\widetilde Q\|P|_{\cF_T})\Bigr\},
\end{equation}
the maximum attained uniquely at $\widetilde Q = R^*_\alpha$.
\end{theorem}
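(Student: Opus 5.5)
Theorem~\ref{thm:seqMCI} is Theorem~\ref{thm:pathspace} specialized to the factors named in the preceding paragraph, so the plan is a direct substitution followed by a check that the hypotheses transfer. Take $W=T$ and $\Pi_{t,T}:=R_t$ for $t=1,\dots,T$. Each $R_t$ is $\cF_t\subseteq\cF_T$-measurable and strictly positive by assumption, so it is an admissible positive $\cF_T$-measurable factor. Take the exponent vector $\alpha\in\R^T$ as given. Then $\prod_{t}\Pi_{t,T}^{\alpha_t}=L_T^{(\alpha)}$, so the partition function~\eqref{eq:pathZ} coincides with~\eqref{eq:seqZ}, and the hypothesis $0<Z_T(\alpha)<\infty$ is exactly the hypothesis of Theorem~\ref{thm:pathspace}. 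The Gibbs law~\eqref{eq:Qtstar} becomes $\dd Q_T^\alpha/\dd P=L_T^{(\alpha)}/Z_T(\alpha)$, which is by definition $R^*_\alpha$.

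Substituting into~\eqref{eq:pathspace} with $Q=\widetilde Q$ gives the first equality of~\eqref{eq:seqMCI} term by term, since $\sum_t\alpha_t\E_{\widetilde Q}[\log\Pi_{t,T}]=\sum_t\alpha_t\E_{\widetilde Q}[\log R_t]$. For the second equality, use $\KL(\widetilde Q\|R^*_\alpha)\ge 0$, with equality iff $\widetilde Q=R^*_\alpha$. This holds because $R^*_\alpha$ is a probability measure equivalent to $P|_{\cF_T}$, as its density is strictly positive; Gibbs' inequality then applies, and $\widetilde Q\ll P$ implies $\widetilde Q\ll R^*_\alpha$. The bracket is therefore at most $\log Z_T(\alpha)$. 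The bound is attained at $\widetilde Q=R^*_\alpha$, provided $R^*_\alpha$ is itself admissible, and attainment turns the supremum of Theorem~\ref{thm:pathspace} into a maximum. Uniqueness follows from the equality case.

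The main obstacle is integrability. The decomposition~\eqref{eq:pathspace} separates $\E_{\widetilde Q}[\log L_T^{(\alpha)}]$ into the individual terms $\alpha_t\E_{\widetilde Q}[\log R_t]$. This is legitimate only when those terms are finite, or at least not of the form $\infty-\infty$. I would handle this in two steps:

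\begin{itemize}
\item \emph{Restricting the class.} State the identity for $\widetilde Q$ with $\E_{\widetilde Q}|\log R_t|<\infty$ for all $t$ and $\KL(\widetilde Q\|P)<\infty$, mirroring the finiteness hypothesis of Theorem~\ref{thm:mci}. Outside this class, interpret the bracket as $-\infty$, which does not affect the maximum.
\item \emph{Admissibility of the maximizer.} Check that $R^*_\alpha$ lies in this class. The key quantity is $\E_{R^*_\alpha}[\log L_T^{(\alpha)}]=\KL(R^*_\alpha\|P)+\log Z_T(\alpha)$, and the right-hand side is bounded above by $Z_T(\alpha)^{-1}\E_P[L^{(\alpha)}\log L^{(\alpha)}]$. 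If that expectation is infinite, I would argue by truncation: approximate $L_T^{(\alpha)}$ by its truncations, apply Corollary~\ref{cor:DV} to each, and pass to the limit by monotone convergence, so that the supremum still equals $\log Z_T(\alpha)$.
\end{itemize}
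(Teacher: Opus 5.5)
Your proposal is correct and is the paper's own proof: the paper obtains Theorem~\ref{thm:seqMCI} by exactly the substitution $\Pi_{t,T}=R_t$ into Theorem~\ref{thm:pathspace}, using strict positivity and $\cF_T$-measurability of the $R_t$, with $\log L_T^{(\alpha)}=\sum_t\alpha_t\log R_t$ identifying the linear term. Your integrability paragraph goes beyond the paper, which does not address integrability, but the check you propose does not establish the membership your restricted class requires: finiteness of $\E_{R^*_\alpha}[\log L_T^{(\alpha)}]$ does not give $\E_{R^*_\alpha}|\log R_t|<\infty$ for each $t$ (at $\alpha=(1,-1)$ the two per-step terms can be $-\infty$ and $+\infty$ while their sum is finite), so it is cleaner to read the linear term as $\E_{\widetilde Q}[\log L_T^{(\alpha)}]$ and restrict to $\KL(\widetilde Q\|P|_{\cF_T})<\infty$.
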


Read at $T$ factors, the identity resolves a martingale's R\'enyi moments into an exact per-step trade of likelihood-ratio gain against information cost; the concentration theory of Section~\ref{sec:master} onward reads the same identity at the single factor $\cE_t(\lambda)$.

\subsection{Conditional decomposition and one-step Gibbs tilts}

The identity of the previous subsection accounts for a whole trajectory at once, through a single divergence $\KL(\widetilde Q\|P)$ on $\cF_T$.
A sequential problem needs that accounting step by step, and the chain rule supplies it: the path-space divergence is the sum of one conditional divergence per time step.
The per-step form that results turns the path-space accounting into a sequential one.

\begin{corollary}[Conditional form]
\label{cor:seqMCI-cond}
\begin{equation}\label{eq:seqMCI-cond}
  \log Z_T(\alpha)
  = \max_{\widetilde Q \ll P}
    \left\{
      \sum_{t=1}^T \E_{\widetilde Q}\!\left[
        \alpha_t \log R_t - \KL(\widetilde Q_t \| P_t)
      \right]
    \right\}
\end{equation}
where $\widetilde Q_t = \widetilde Q_t(\cdot \mid X_{1:t-1})$ and $P_t = P_t(\cdot \mid X_{1:t-1})$.
The maximizer $R^*_\alpha$ has one-step conditional densities
\begin{equation}\label{eq:Rstar-cond}
  r^*_{\alpha,t}(x_t \mid x_{1:t-1})
  \propto p_t(x_t \mid x_{1:t-1}) \,
          R_t(x_t; x_{1:t-1})^{\alpha_t}\,
          h_t(x_{1:t}),
  \qquad
  h_t(x_{1:t}) := \E_P\!\Bigl[{\textstyle\prod_{s > t}} R_s^{\alpha_s}
                    \,\Big|\, \cF_t\Bigr],
\end{equation}
where $h_t$ is the forward factor.
\end{corollary}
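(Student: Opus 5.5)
The plan is to combine Theorem~\ref{thm:seqMCI} with the chain rule (Lemma~\ref{lem:chain-rule}) to get the conditional form of the objective, and then to identify the conditionals of the maximizer $R^*_\alpha$ by disintegrating its density against $P$.

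\textbf{Step 1 (objective).} Start from the variational form in~\eqref{eq:seqMCI}. Applying Lemma~\ref{lem:chain-rule} to the pair $\widetilde Q \ll P$ on $\cF_T$ gives
\[
\KL(\widetilde Q\|P|_{\cF_T})=\sum_{t=1}^T \E_{\widetilde Q}\bigl[\KL(\widetilde Q_t\|P_t)\bigr],
\]
where $\widetilde Q_t$ and $P_t$ are the regular conditional laws of $X_t$ given $X_{1:t-1}$. Substituting this and merging the two sums gives the bracketed objective in~\eqref{eq:seqMCI-cond}. The maximum value and its unique maximizer are then inherited from Theorem~\ref{thm:seqMCI}. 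One point needs care: each summand should be well defined. When the divergence is finite each summand is, and when it is infinite the objective equals $-\infty$ on both sides. I will handle this by restricting to $\widetilde Q$ with $\KL(\widetilde Q\|P)<\infty$ and $\E_{\widetilde Q}|\log R_t|<\infty$. This does not change the supremum, because $R^*_\alpha$ itself satisfies these conditions whenever the maximum is finite.

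\textbf{Step 2 (maximizer's conditionals).} I would compute the marginal density of $R^*_\alpha$ on $\cF_t$ directly. By the tower property, and because $R_1,\dots,R_t$ are $\cF_t$-measurable,
\[
\frac{\dd R^*_\alpha}{\dd P}\Big|_{\cF_t}=\E_P\!\left[\frac{L_T^{(\alpha)}}{Z_T(\alpha)}\,\Big|\,\cF_t\right]=\frac{1}{Z_T(\alpha)}\prod_{s\le t}R_s^{\alpha_s}\;h_t(x_{1:t}).
\]
The same formula holds at $t-1$ with $h_{t-1}$. Taking the ratio of the densities with respect to $\nu^{\otimes t}$ at times $t$ and $t-1$ gives the one-step conditional:
\[
r^*_{\alpha,t}(x_t\mid x_{1:t-1})=p_t(x_t\mid x_{1:t-1})\,R_t^{\alpha_t}\,\frac{h_t(x_{1:t})}{h_{t-1}(x_{1:t-1})}.
\]
The factor $h_{t-1}$ does not depend on $x_t$, so this is exactly the proportionality~\eqref{eq:Rstar-cond}. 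The tower property also yields the normalizing identity $h_{t-1}=\E_P[R_t^{\alpha_t}h_t\mid\cF_{t-1}]$, which confirms that $h_{t-1}$ is the correct normalizer.

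\textbf{Main obstacle.} The difficulty is measure-theoretic: making sure the $h_t$ are finite and positive, so that the ratio is legitimate. Positivity is immediate because the $R_s$ are strictly positive. Finiteness holds almost surely because $Z_T(\alpha)=\E_P\bigl[\prod_{s\le t}R_s^{\alpha_s}h_t\bigr]<\infty$ and the prefactor is positive. The conditional law is therefore well defined off a null set, and I will fix the version of $h_t$ by choosing a regular conditional expectation.
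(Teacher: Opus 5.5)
Your proposal is correct and follows the paper's route: substitute the chain rule (Lemma~\ref{lem:chain-rule}) into the variational form of Theorem~\ref{thm:seqMCI}, then read the maximizer's one-step conditionals off $\dd R^*_\alpha/\dd P\propto\prod_t R_t^{\alpha_t}$; your tower-property computation of the $\cF_t$-marginal density $\prod_{s\le t}R_s^{\alpha_s}h_t/Z_T(\alpha)$, with normalizer $h_{t-1}=\E_P[R_t^{\alpha_t}h_t\mid\cF_{t-1}]$, is exactly the detail the paper's one-line proof leaves implicit. One side remark is off: finiteness of $\log Z_T(\alpha)$ does not force $\KL(R^*_\alpha\|P)<\infty$ (already at $T=1$, $\alpha_1=1$ one has $R^*_\alpha=Q$, and $\E_P[R_1\log R_1]$ can be infinite), so that integrability should be taken as a standing assumption---one the paper's ``max'' over $\widetilde Q$ also makes implicitly---rather than derived.
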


The per-step summand $\alpha_t \log R_t - \KL(\widetilde Q_t\|P_t)$ of the conditional identity (Corollary~\ref{cor:seqMCI-cond}) is a sequential likelihood-ratio gain net of an information cost.
Its optimizer $R^*_\alpha$ tilts each conditional law toward the alternative in the proportion $\alpha_t$, up to the forward $h$-transform factor of Corollary~\ref{cor:seqMCI-cond}---absent when the tilted increments are conditionally independent.
That is the Neyman--Pearson tradeoff resolved one step at a time: the sequential identity supplies as an exact per-step accounting what the sequential-probability-ratio test and the likelihood-ratio martingales of Wald and Robbins state as an inequality,
identifying $R^*_\alpha$ as the sequentially Neyman--Pearson-optimal alternative for the exponent schedule $\alpha$.

The forward factor keeps the one-step conditional from being local.
Reweighting the transitions of a process by a nonnegative function of the current state, and renormalizing, yields another Markov process; when the weight is the conditional probability of some future event, the reweighted process is that process conditioned on the event.
This reweighting is the Doob $h$-transform, and $h$ is the harmonic function that folds the future back into the present transition.
Here $h_t$ is the conditional expectation of the tilt still to come, $\prod_{s>t} R_s^{\alpha_s}$, so the optimal step at time $t$ already anticipates what the later exponents will ask of the path.

When $h_t$ does not depend on $x_t$---in particular when the tilted increments are conditionally independent, so that $\E_P[\prod_{s>t} R_s^{\alpha_s}\mid\cF_t]$ is $\cF_{t-1}$-measurable---the forward factor is constant in $x_t$, and the normalization in~\eqref{eq:Rstar-cond} absorbs it, leaving $r^*_{\alpha,t}\propto p_t\,R_t^{\alpha_t}$.
Substituting the one-step likelihood ratio $R_t=q_t/p_t$ of~\eqref{eq:one-step-LR} gives $r^*_{\alpha,t} \propto p_t^{1-\alpha_t}\,q_t^{\alpha_t}$, the local $\alpha_t$-geometric mixture of the null conditional $P_t$ and the alternative conditional $Q_t$.
The law being mixed is $Q_t$; the variational $\widetilde Q_t$ of~\eqref{eq:seqMCI-cond} has already been resolved by the maximization.

When the exponents $\alpha_t$ are positive integers, the likelihood-ratio partition function $Z_T(\alpha) = \E_P[\prod_t R_t^{\alpha_t}]$ of~\eqref{eq:seqZ} is a moment functional of the step likelihood ratios (R\'enyi or $\chi^2$-type), and routinely exceeds one (already $\E_P[R_t^2] = 1 + \chi^2(Q_t\|P_t) \ge 1$ at one step).
The genuine step-by-step \emph{coincidence probability} is instead carried by the probability-factor normalization $\Pi_{t,T}=P_t^{(i)}$ of Proposition~\ref{prop:coalescence}.
There $Z_t(\alpha)$ is the probability that, at each time $t$, $\alpha_i$ independent copies drawn from $P_t^{(i)}$ realize the same prefix, a temporal coalescent that decomposes additively by time.
Under the probability-factor normalization, $- \log Z_t(\alpha)$ is a nonnegative quantity, the \emph{coalescent free energy}.

The chain rule splits the single path-space divergence into one conditional divergence per step, so the sum over $t$ here plays the part the sum over priors plays in the static identity (Theorem~\ref{thm:mci}): each step contributes a gain $\alpha_t \log R_t$ against a cost $\KL(\widetilde Q_t\|P_t)$.

\subsection{Barycentric and convex forms}
\label{sec:barycentric}

Two readings of the same identity round out its geometry.
Applied to a family of tilted path laws in place of raw factors, it takes a barycentric form; as a function of the exponents it inherits the convexity of classical log-partition functions.

Against a family of tilted path laws $P_T^{(1)},\dots,P_T^{(W)}$ on $\cF_T$ dominated by $P|_{\cF_T}$, with $\Lambda_{i,T}:=\dd P_T^{(i)}/\dd P|_{\cF_T}$ and $\bar\alpha:=\sum_i\alpha_i$, the identity is a barycenter:
$\log Z_T(\alpha)=-\inf_{Q\ll P|_{\cF_T}}\{\sum_i\alpha_i\KL(Q\|P_T^{(i)})-(\bar\alpha-1)\KL(Q\|P|_{\cF_T})\}$, which at $\alpha\in\Delta([W])$ reduces to $-\log Z_T(\alpha)=\inf_Q\sum_i\alpha_i\KL(Q\|P_T^{(i)})$ with minimizer $Q_T^\alpha$ of~\eqref{eq:Qtstar}, the path-space reading of the multi-way coincidence divergence.
It follows from Theorem~\ref{thm:pathspace} on expanding each $\KL(Q\|P_T^{(i)})=\KL(Q\|P|_{\cF_T})-\E_Q[\log\Lambda_{i,T}]$ and collecting terms.

\begin{proposition}[Gradient and Hessian]
\label{prop:grad}
Assume differentiation under the integral is justified.
Set $\Phi_T(\alpha) := \log Z_T(\alpha)$.
Then $\Phi_T$ is convex on its domain, and
\begin{equation}\label{eq:grad}
  \frac{\partial \Phi_T}{\partial \alpha_i}(\alpha)
  = \E_{Q_T^\alpha}[\log \Pi_{i,T}],
  \qquad
  \frac{\partial^2 \Phi_T}{\partial \alpha_i \partial \alpha_j}(\alpha)
  = \Cov_{Q_T^\alpha}(\log \Pi_{i,T}, \log \Pi_{j,T}).
\end{equation}
\end{proposition}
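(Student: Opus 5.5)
Write $Y_i := \log \Pi_{i,T}$, which is finite because the factors are strictly positive. Then $\prod_i \Pi_{i,T}^{\alpha_i} = \exp(\langle \alpha, Y\rangle)$, and
\[
  Z_T(\alpha) = \E_P\!\left[e^{\langle \alpha, Y\rangle}\right].
\]
So $\Phi_T$ is the cumulant generating function of the random vector $Y = (Y_1,\dots,Y_W)$ under $P|_{\cF_T}$. The plan is to prove convexity and the two derivative formulas directly from this representation, reading every expectation against the Gibbs law $Q_T^\alpha$ of~\eqref{eq:Qtstar}.

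\textbf{Convexity.} I would show first that the domain $\{\alpha : Z_T(\alpha) < \infty\}$ is convex and that $\Phi_T$ is convex on it. Both follow from H\"older's inequality. For $\lambda \in (0,1)$, apply H\"older with exponents $1/\lambda$ and $1/(1-\lambda)$ to get
\[
  \E_P\!\left[e^{\lambda\langle\alpha,Y\rangle}\,e^{(1-\lambda)\langle\beta,Y\rangle}\right]
  \le Z_T(\alpha)^{\lambda}\, Z_T(\beta)^{1-\lambda}.
\]
Taking logarithms gives $\Phi_T(\lambda\alpha+(1-\lambda)\beta) \le \lambda\Phi_T(\alpha) + (1-\lambda)\Phi_T(\beta)$, with no differentiability needed. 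As a cross-check, the variational form in Theorem~\ref{thm:pathspace} shows the same thing: $\Phi_T$ is a supremum over $Q$ of functions affine in $\alpha$.

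\textbf{Gradient.} Differentiating under the integral, which the statement assumes is justified, gives
\[
  \partial_i Z_T(\alpha) = \E_P\!\left[Y_i\, e^{\langle\alpha,Y\rangle}\right].
\]
Dividing by $Z_T(\alpha)$ and using $\dd Q_T^\alpha/\dd P = e^{\langle\alpha,Y\rangle}/Z_T(\alpha)$, this becomes $\partial_i\Phi_T(\alpha) = \E_{Q_T^\alpha}[Y_i]$.

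\textbf{Hessian.} A second differentiation and the quotient rule give
\[
  \partial_j\partial_i \Phi_T
  = \frac{\E_P\!\left[Y_iY_j\, e^{\langle\alpha,Y\rangle}\right]}{Z_T}
    - \frac{\E_P\!\left[Y_i\, e^{\langle\alpha,Y\rangle}\right]\,\E_P\!\left[Y_j\, e^{\langle\alpha,Y\rangle}\right]}{Z_T^2}.
\]
This is $\E_{Q_T^\alpha}[Y_iY_j] - \E_{Q_T^\alpha}[Y_i]\,\E_{Q_T^\alpha}[Y_j] = \Cov_{Q_T^\alpha}(Y_i, Y_j)$. A covariance matrix is positive semidefinite, which gives a second proof of convexity wherever $\Phi_T$ is twice differentiable.

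\textbf{Main obstacle.} The only substantive step is justifying differentiation under the integral, and the statement assumes it. If I had to verify it, I would do so at interior points of the domain. Near $\alpha$, the bound $|Y_i|^k e^{\langle\alpha',Y\rangle} \le C_\varepsilon\bigl(e^{\langle\alpha+\varepsilon e,Y\rangle}+e^{\langle\alpha-\varepsilon e,Y\rangle}\bigr)$ holds for a suitable direction $e$ and small $\varepsilon$. Each term on the right is integrable for $\alpha\pm\varepsilon e$ in the domain, so dominated convergence applies.
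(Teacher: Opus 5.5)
Your proposal is correct. For the gradient and Hessian it is exactly the paper's argument: write $Z_T(\alpha)=\E_P[\exp(\sum_k\alpha_k\log\Pi_{k,T})]$, differentiate once to get the $Q_T^\alpha$-mean, and differentiate again to get the $Q_T^\alpha$-covariance.

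The one real difference is how you get convexity. The paper reads it off the positive semidefiniteness of the covariance Hessian. That argument needs twice-differentiability, so under the assumed interchange it effectively covers only the interior of the domain. Your H\"older argument proves that the domain $\{Z_T<\infty\}$ is convex and that $\Phi_T$ is convex on all of it, boundary points included, with no differentiability hypothesis. For that clause it is the stronger route, and the paper's Hessian argument becomes a corollary that you also record.

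One small imprecision in your optional aside on dominated convergence. Domination along a single direction $e=e_i$ justifies each first partial derivative. The mixed second partials, however, need exponential moments at several points around $\alpha$, for example $\alpha\pm\varepsilon e_i\pm\varepsilon e_j$, combined with $|Y_iY_j|\le\frac12(Y_i^2+Y_j^2)$. The statement assumes this step, so it does not affect the proof.
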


\subsection{Coalescent interpretation and predictive decomposition}
\label{sec:coalescent-pathspace}

The path-space partition function has a direct coincidence meaning.
Let $S$ be a countable state space and $P_t^{(1)},\dots,P_t^{(W)}$ path laws on $S^{t+1}$ with integer exponents.

\begin{proposition}[Mixed prefix-coalescence probability]
\label{prop:coalescence}
Let $\alpha_1,\dots,\alpha_W \in \N$.
Then $\displaystyle Z_t(\alpha) := \sum_{\gamma \in S^{t+1}} \prod_{i=1}^W P_t^{(i)}(\gamma)^{\alpha_i}$ is the probability that, for each $i$, $\alpha_i$ independent copies sampled from $P_t^{(i)}$ all realize the same common prefix $\gamma$ up to time $t$.
\end{proposition}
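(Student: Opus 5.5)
The plan is to realize the coalescence event on an explicit product probability space and compute its probability as a sum over the common prefix. Set $N := \bar\alpha = \sum_{i=1}^W \alpha_i \in \N$. Sample $N$ independent $S^{t+1}$-valued random paths: copies $\Gamma^{(i,k)}$ for $i\in[W]$ and $k\in[\alpha_i]$, where $\Gamma^{(i,k)} \sim P_t^{(i)}$. Their joint law is the product measure $\bigotimes_{i=1}^W (P_t^{(i)})^{\otimes \alpha_i}$ on $(S^{t+1})^N$. The event in the statement is
\[
E := \bigl\{\Gamma^{(i,k)} = \Gamma^{(1,1)} \text{ for all } (i,k)\bigr\},
\]
that is, all copies realize one common prefix up to time $t$. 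Since a path on $S^{t+1}$ is its own prefix up to time $t$, ``same prefix'' here is equality of the full $S^{t+1}$-valued draws. If the laws $P^{(i)}_t$ are marginals of longer path laws, we first push forward to $\cF_t$.

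First, decompose $E$ according to the common value: $E = \bigsqcup_{\gamma\in S^{t+1}} E_\gamma$, where $E_\gamma := \{\Gamma^{(i,k)} = \gamma \ \forall (i,k)\}$. The union is disjoint because distinct $\gamma$ give incompatible values of $\Gamma^{(1,1)}$. Because $S$ is countable, $S^{t+1}$ is countable, so each $E_\gamma$ is measurable and $E$ is a countable disjoint union. Countable additivity then gives $\PP(E) = \sum_\gamma \PP(E_\gamma)$.

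Second, by independence of the $N$ copies,
\[
\PP(E_\gamma) = \prod_{i=1}^W \prod_{k=1}^{\alpha_i} P_t^{(i)}(\gamma) = \prod_{i=1}^W P_t^{(i)}(\gamma)^{\alpha_i}.
\]
Summing over $\gamma$ yields exactly $Z_t(\alpha)$. A by-product is that $0\le Z_t(\alpha)\le 1$, so $-\log Z_t(\alpha)\ge 0$, which is the coalescent free energy nonnegativity asserted in the text. Note also that $Z_t(\alpha)$ is the mixed partition function~\eqref{eq:Zalpha} with $\nu$ the counting measure on $S^{t+1}$ and $\pi_i = P_t^{(i)}$.

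There is no real obstacle. The only points needing care are interpretive. One is the convention for copies with $\alpha_i$ counted per law, which is handled by the double index $(i,k)$. The other is the measurability and countable-additivity step, which is where countability of $S$ is used. For a general state space the event $E$ would typically have probability zero even though $\int\prod_i p_i^{\alpha_i}\,\dd\nu$ is positive, so the statement is restricted to countable $S$.
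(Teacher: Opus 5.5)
Your proposal is correct and follows the same route as the paper's proof: draw $\alpha_i$ independent copies from each $P_t^{(i)}$, use independence to get $\prod_i P_t^{(i)}(\gamma)^{\alpha_i}$ for the event that every copy equals a fixed $\gamma$, and sum over $\gamma$. Your version additionally spells out the disjointness of the events $E_\gamma$ and the countable additivity over the countable set $S^{t+1}$, which the paper's two-line proof leaves implicit.
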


The coalescent free energy of these prefix laws is $\cC_\alpha(t) := -\log Z_t(\alpha)$, and it decomposes additively by time.

\begin{theorem}[Predictive coincidence decomposition]
\label{thm:predictive}
Let $\alpha_i \geq 0$ for all $i$ with $\bar\alpha := \sum_{i=1}^W \alpha_i
\geq 1$, let $Q_t^\alpha(\gamma) := \prod_i
P_t^{(i)}(\gamma)^{\alpha_i}/Z_t(\alpha)$ be the Gibbs law on prefixes, and let
\begin{equation}\label{eq:kappa}
  \kappa_t^\alpha(\gamma)
  := \sum_{x\in S} \prod_{i=1}^W
     P\bigl(X_{t+1}=x \mid X_{0:t}=\gamma; P^{(i)}\bigr)^{\alpha_i}
\end{equation}
be the one-step predictive coincidence kernel.  Then
\begin{align}
  Z_{t+1}(\alpha) &= Z_t(\alpha)\,\E_{Q_t^\alpha}[\kappa_t^\alpha],
    \label{eq:predictive1}\\
  \cC_\alpha(t+1) - \cC_\alpha(t)
    &= -\log \E_{Q_t^\alpha}[\kappa_t^\alpha] \geq 0 .
    \label{eq:predictive2}
\end{align}
\end{theorem}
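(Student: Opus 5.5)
The plan is to prove \eqref{eq:predictive1} by conditioning on the prefix and factoring each path law into prefix law times one-step predictive kernel. Then \eqref{eq:predictive2} follows by taking logarithms and bounding $\kappa_t^\alpha\le 1$ pointwise.

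\textbf{Step 1 (factorization).} Write each extended prefix as $(\gamma,x)$ with $\gamma\in S^{t+1}$ and $x\in S$. By the definition of conditional probability, $P_{t+1}^{(i)}(\gamma,x)=P_t^{(i)}(\gamma)\,P(X_{t+1}=x\mid X_{0:t}=\gamma;P^{(i)})$. Here I use that the prefix laws are consistent, i.e.\ $P_t^{(i)}$ is the $\{0,\dots,t\}$-marginal of $P_{t+1}^{(i)}$. On $\gamma$ where some $P_t^{(i)}(\gamma)=0$ with $\alpha_i>0$, both sides vanish, so the arbitrary choice of conditional there is harmless. (For $\alpha_i=0$, use the convention $0^0=1$ and the factor drops.) Raising to the powers $\alpha_i$ and taking the product over $i$ gives
\[
\prod_i P_{t+1}^{(i)}(\gamma,x)^{\alpha_i}=\Bigl[\prod_i P_t^{(i)}(\gamma)^{\alpha_i}\Bigr]\prod_i P(X_{t+1}=x\mid\gamma;P^{(i)})^{\alpha_i}.
\]

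\textbf{Step 2 (summation).} Sum over $x$ and then over $\gamma$; Tonelli applies since all terms are nonnegative. This gives $Z_{t+1}(\alpha)=\sum_\gamma \prod_iP_t^{(i)}(\gamma)^{\alpha_i}\kappa_t^\alpha(\gamma)$. Factoring out $Z_t(\alpha)$ turns the weights into $Q_t^\alpha(\gamma)$, which yields \eqref{eq:predictive1}. This requires $0<Z_t(\alpha)<\infty$. Finiteness holds because each $P_t^{(i)}(\gamma)\le1$ and $\bar\alpha\ge1$ give $\prod_iP_t^{(i)}(\gamma)^{\alpha_i}\le \max_i P_t^{(i)}(\gamma)\le\sum_iP_t^{(i)}(\gamma)$, which is summable. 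Positivity is a standing assumption.

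\textbf{Step 3 (monotonicity).} Taking $-\log$ of \eqref{eq:predictive1} gives the equality in \eqref{eq:predictive2}. For the sign, it suffices to show $\kappa_t^\alpha(\gamma)\le1$ for every $\gamma$. Writing $p_i(x)$ for the $i$-th predictive probabilities, the case $\bar\alpha=1$ follows from weighted AM--GM: $\prod_ip_i(x)^{\alpha_i}\le\sum_i\alpha_ip_i(x)$, and summing over $x$ gives at most $1$. For $\bar\alpha>1$, set $\widetilde\alpha=\alpha/\bar\alpha$. Then $\prod_ip_i^{\alpha_i}=\bigl(\prod_ip_i^{\widetilde\alpha_i}\bigr)^{\bar\alpha}\le\prod_ip_i^{\widetilde\alpha_i}$, since the base lies in $[0,1]$ and $\bar\alpha\ge1$. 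This reduces to the previous case, so $\E_{Q_t^\alpha}[\kappa_t^\alpha]\le1$ and the increment is nonnegative.

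The main obstacle is this last step. The condition $\bar\alpha\ge1$ is exactly what makes the coincidence kernel subprobability: for $\bar\alpha<1$, $\kappa_t^\alpha$ can exceed one. The reduction to normalized weights is the point to state carefully, along with the measure-theoretic bookkeeping for zero-probability prefixes and $\alpha_i=0$ in Step 1. The rest is a direct computation.
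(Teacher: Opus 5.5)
Your proof is correct and follows the paper's argument: you factor each extended-prefix probability into the prefix law times the one-step predictive kernel, sum to get $Z_{t+1}(\alpha)=Z_t(\alpha)\,\E_{Q_t^\alpha}[\kappa_t^\alpha]$, and bound $\kappa_t^\alpha\le 1$ after passing to the normalized weights $\widetilde\alpha$. The only difference is cosmetic: you close with pointwise weighted AM--GM where the paper uses H\"older on the simplex, and both give the bound $1$ because each predictive law sums to one.
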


The increment $\cC_\alpha(t+1)-\cC_\alpha(t)$ is the one-step predictive-coincidence cost, the per-step growth of the coalescent free energy, and its nonnegativity makes $\cC_\alpha(t)$ nondecreasing in $t$.

For Markov families the free energy has a closed asymptotic form, read off a single matrix.
A nonnegative matrix that is irreducible (every state reaches every other) and aperiodic has, by the Perron--Frobenius theorem~\cite{seneta2006}, a largest eigenvalue that is real, positive, simple, and strictly larger in modulus than every other eigenvalue, with positive left and right eigenvectors.
That eigenvalue is the \emph{spectral radius} $r$, and it governs growth: iterating the matrix multiplies any positive vector by $r$ per step, up to a factor that converges.
Here the matrix is the tilted transition kernel, so its spectral radius is the per-step growth rate of the partition function.

\begin{proposition}[Transfer-operator representation for Markov families]
\label{prop:markov}
Let $S$ be finite and, for each $i = 1,\dots,W$, let $P^{(i)}$ be the law of a Markov chain on $S$ with initial distribution $\mu_i$ and transition matrix $K_i$.
Define $\mu_\alpha(x) := \prod_i \mu_i(x)^{\alpha_i}$, and $T_\alpha(x,y) := \prod_i K_i(x,y)^{\alpha_i}$.
Then
\[
  Z_t(\alpha)
  = \sum_{x_0,\dots,x_t}
     \mu_\alpha(x_0)\prod_{s=0}^{t-1} T_\alpha(x_s,x_{s+1})
  = \langle \mu_\alpha, T_\alpha^t \mathbf{1}\rangle.
\]
If $T_\alpha$ is irreducible and aperiodic (e.g., Perron--Frobenius applies) and $\mu_\alpha \not\equiv 0$ (the weighted initial supports overlap, i.e.\ $Z_0(\alpha) > 0$), then $\lim_{t \to \infty}\frac{1}{t}\log Z_t(\alpha) = \log r(T_\alpha)$,
where $r(T_\alpha)$ is the Perron--Frobenius spectral radius.
\end{proposition}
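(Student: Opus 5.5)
The plan has two parts: first the matrix representation, then the asymptotics from Perron--Frobenius.

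\emph{Representation.} For a Markov law,
\[
P^{(i)}(x_{0:t}) = \mu_i(x_0)\prod_{s<t} K_i(x_s,x_{s+1}).
\]
Raising to the power $\alpha_i$ and taking the product over $i$ factorizes the weight of each prefix as
\[
\mu_\alpha(x_0)\prod_{s<t} T_\alpha(x_s,x_{s+1}).
\]
Summing over $x_1,\dots,x_t$ by iterated matrix multiplication, innermost sum first, gives $(T_\alpha^t\mathbf 1)(x_0)$. Pairing with $\mu_\alpha$ then yields $Z_t(\alpha)=\langle\mu_\alpha,T_\alpha^t\mathbf 1\rangle$. This step is pure algebra and uses only the definition in Proposition~\ref{prop:coalescence}. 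No integrality of the exponents is needed here beyond keeping the powers well defined; we use the convention $0^{\alpha}=0$ for $\alpha>0$.

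\emph{Asymptotics.} Let $r=r(T_\alpha)$, and let $u,v>0$ be the right and left Perron eigenvectors, normalized so that $v^\top u=1$. Since $T_\alpha$ is irreducible and aperiodic, Perron--Frobenius gives
\[
T_\alpha^t/r^t\to u v^\top \quad\text{entrywise},
\]
because all other eigenvalues have modulus strictly less than $r$. Then
\[
r^{-t}Z_t(\alpha)\to(\mu_\alpha^\top u)(v^\top\mathbf 1).
\]
Both factors are strictly positive: $v^\top\mathbf 1>0$ since $v>0$, and $\mu_\alpha^\top u>0$ since $\mu_\alpha\ge0$, $\mu_\alpha\not\equiv0$ and $u>0$. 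Taking logarithms and dividing by $t$, the bounded term $\log$ of the positive limit contributes $o(1)$, so $\tfrac1t\log Z_t(\alpha)\to\log r$.

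\emph{Fallback bound.} The convergence step needs aperiodicity. Without it, a sandwich argument avoids spectral convergence. Positivity of $u$ gives
\[
c\,\mathbf 1\le u\le C\,\mathbf 1,
\]
hence $T_\alpha^t\mathbf 1\le c^{-1}T_\alpha^t u=c^{-1}r^t u\le (C/c)\,r^t\mathbf 1$, and symmetrically $T_\alpha^t\mathbf 1\ge (c/C)\,r^t\mathbf 1$. Thus $Z_t(\alpha)$ is within constant multiples of $r^t\,\langle\mu_\alpha,\mathbf 1\rangle$, with $\langle\mu_\alpha,\mathbf 1\rangle=Z_0(\alpha)>0$. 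This gives the limit directly and only requires irreducibility, which guarantees $u>0$. I would present this sandwich argument as the main proof because it is cleaner, and note that aperiodicity additionally identifies the constant $(\mu_\alpha^\top u)(v^\top\mathbf 1)$.

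The only delicate point is the positivity of the constants, which is what needs $\mu_\alpha\not\equiv0$ and strict positivity of the Perron eigenvector.
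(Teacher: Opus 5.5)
Your proposal is correct. Your \emph{Asymptotics} paragraph is the paper's own argument. The paper likewise obtains the representation by multiplying the Markov densities. It then uses primitivity to write $T_\alpha^t=r(T_\alpha)^t\bigl(uv^\top+o(1)\bigr)$ and reads off $Z_t(\alpha)=r(T_\alpha)^t\bigl((\mu_\alpha^\top u)(v^\top\mathbf 1)+o(1)\bigr)$ with both factors positive. Here the vectors are named as in your proposal; the paper leaves your normalization $v^\top u=1$ implicit.

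The sandwich you say you would actually present is a genuinely different route. You compare $\mathbf 1$ with the strictly positive right Perron vector and use that $T_\alpha^t$ preserves the order, which gives
\[
\tfrac{c}{C}\,r^tZ_0(\alpha)\le Z_t(\alpha)\le\tfrac{C}{c}\,r^tZ_0(\alpha).
\]
This needs only irreducibility, so it shows the aperiodicity hypothesis is superfluous for the growth-rate claim. It is also non-asymptotic, giving $\bigl|\log Z_t(\alpha)-t\log r(T_\alpha)-\log Z_0(\alpha)\bigr|\le\log(C/c)$ at every $t$.

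What the spectral-convergence route buys instead is the exact prefactor $(\mu_\alpha^\top u)(v^\top\mathbf 1)$, which the sandwich only brackets. That prefactor genuinely need not exist without aperiodicity: for a periodic $T_\alpha$ the ratio $Z_t(\alpha)/r(T_\alpha)^t$ can oscillate.
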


The coalescent free energy therefore grows at the logarithm of a positive-kernel spectral radius per step.
Figure~\ref{fig:coalescent-tree} draws this coincidence tree and its free-energy staircase, read from the completions of a real language model.
%
% [relocated 2026-08-09 -> P17 (merw_schrodinger), Prop. `pressure_cts':
%  the continuous-state (Krein--Rutman) transfer operator and the coalescent
%  multifractal spectrum, formerly Appendix `app:coalescent-cts' here.]

\begin{figure}[tbp]
\centering
\includegraphics[width=0.92\textwidth]{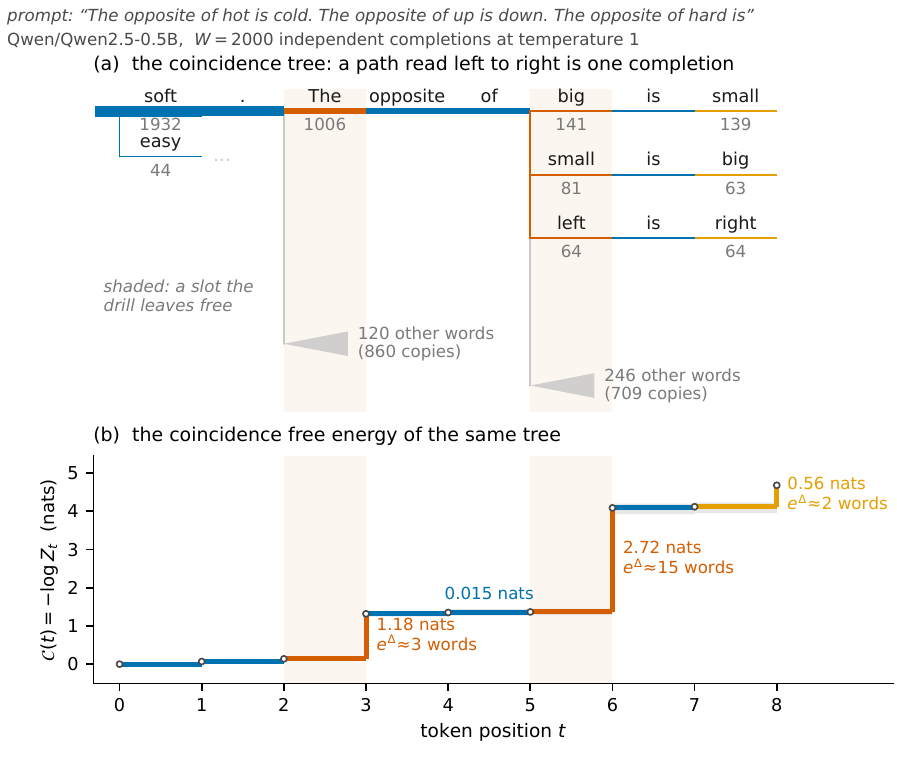}
\caption{\textbf{The free energy rises exactly where the antonym exercise leaves a choice.}
It is flat through every slot the exercise determines.  The $W=2000$ completions of a
small open-weights language model (Qwen2.5-0.5B) stay coincident through the
determined slots and branch at the free ones.  Both panels describe that one
run on a shared token axis.
\textbf{(a)} The coincidence tree: a path read left to right is one completion,
and edge width tracks the number of copies sharing that prefix.  The posed item
is answered near-unanimously---$1932$ copies take \emph{soft} and $44$ the
second admissible sense \emph{easy}---and the frame tokens ``of'' and
``is'' are taken by every surviving copy.  The two shaded slots are the ones
the exercise leaves free: the continuation after the answer, and the next word to
ask about.  A lineage that picks a word is then determined again by its own
choice, giving \emph{big}$\to$\emph{small}, \emph{small}$\to$\emph{big} and
\emph{left}$\to$\emph{right}; the remaining continuations at each contested slot
are folded into a wedge.
\textbf{(b)} The coincidence free energy $\cC(t)=-\log Z_t$ of the same tree,
the $\alpha=(1,1)$ case of Proposition~\ref{prop:coalescence}, where $Z_t$ is
the probability that two independent copies share the prefix at $t$, estimated
over all prefixes by the unbiased pair-collision estimator with a $95\%$
bootstrap interval over completions (band).  Its two risers are exactly the two
shaded slots of (a), costing $1.18$ and $2.72$ nats, while the determined slots
cost between $0.015$ and $0.069$; the exponential of a riser is the effective
number of equally likely words at that slot.  Color grades each slot by what
its token costs, from committed to contested.}
\label{fig:coalescent-tree}
\end{figure}

% ======================================================================
\section{Martingale inequalities from the coincidence calculus}
\label{sec:master}
% ======================================================================

The path-space identity yields concentration theory: a wide range of classical martingale tails, including every one treated in this section, is the master entropic identity read at a single exponential process, built from the cumulants of the increments.
Fix an adapted $Y = (Y_t)_{t \geq 0}$ with $Y_0 = 0$ and increments $d_t := Y_t - Y_{t-1}$, and fix $\lambda$ at which the conditional cumulant generating function of every increment is a.s.\ finite.
The \emph{running conditional log-CGF} is the predictable process
\begin{equation}\label{eq:exact-cgf}
  \Psi_t(\lambda) := \sum_{s=1}^t \psi_s(\lambda),
  \qquad
  \psi_s(\lambda) := \log \E\bigl[e^{\lambda d_s} \mid \cF_{s-1}\bigr],
\end{equation}
and the exponential process it normalizes,
\begin{equation}\label{eq:doleans}
  \cE_t(\lambda) := \exp\!\bigl(\lambda Y_t - \Psi_t(\lambda)\bigr),
\end{equation}
known as the \emph{Dol\'eans exponential} of $\lambda Y$, is a nonnegative martingale with $\cE_0(\lambda) = 1$ and $\E_P[\cE_t(\lambda)] = 1$ at every $t$: each one-step factor $e^{\lambda d_s - \psi_s(\lambda)}$ has conditional mean one by construction.
The log-expectation of this martingale is a Donsker--Varadhan free energy pinned at zero, and reading that normalization against every alternative path measure at once is an exact entropic equality (Figure~\ref{fig:slack-ladder}).

\begin{figure}[tbp]
\centering
\includegraphics[width=\textwidth]{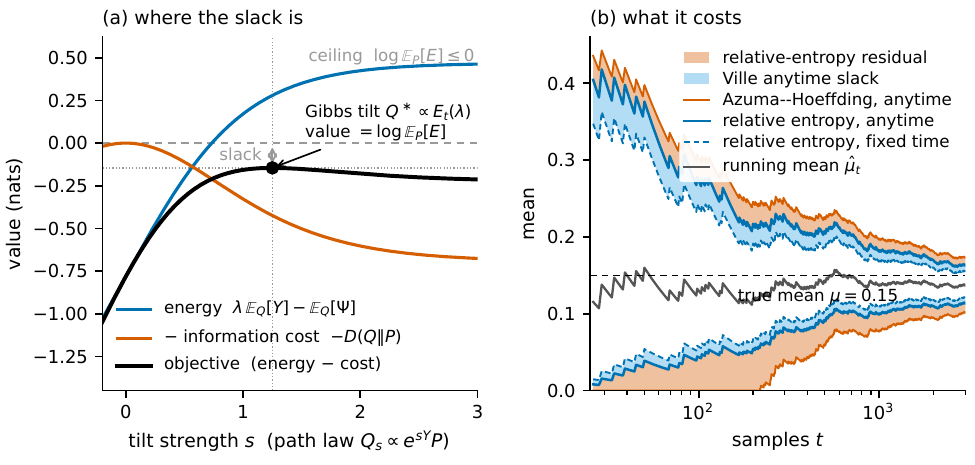}
\caption{\textbf{The master identity sits at the peak of a free-energy landscape, and every classical relaxation steps down from it by an amount that widens a confidence band.}
\textbf{(a)} The identity~\eqref{eq:master-identity} drawn as a Donsker--Varadhan free-energy landscape.
Along the exponential-tilt family $Q_s\propto e^{sY}P$ the variational objective, energy minus relative-entropy cost, is unimodal in the tilt and attains its supremum at the Gibbs tilt.
The value there equals the log-expectation of the exponential process: zero for the exact conditional log-CGF~\eqref{eq:doleans}, and below zero by exactly the cumulant gap~\eqref{eq:relax-deficit} under a majorant.
Shown for a Rademacher increment under the Hoeffding cumulant at $\lambda=5/4$, with dotted guides at the maximizing tilt and the value it attains.
\textbf{(b)} What those steps cost, on a Bernoulli stream with true mean $0.15$ read at level $\delta=0.05$.
Four running-mean envelopes at one level: the exact relative-entropy tail inverted at a single pre-specified time (dashed) and uniformly over all times (solid blue), and the range-based Azuma--Hoeffding tail inverted uniformly (orange).
The two shaded rings are the paper's two named residuals, and they are orthogonal.
The inner ring is the Ville anytime slack, the width that holding at every time costs over holding at one, $21\%$ of the band at $t=100$ and $28\%$ at $t=1000$.
The outer ring is the relative-entropy residual, what a range-based reading discards against the exact tail, $34\%$ and $31\%$ at the same two horizons.
The asymmetry of the exact envelopes is the variance-awareness the range-based reading throws away.}
\label{fig:slack-ladder}
\end{figure}

\begin{theorem}[Entropic martingale identity]
\label{thm:master}
For $\Psi_t$ the running conditional log-CGF~\eqref{eq:exact-cgf}, the exponential martingale~\eqref{eq:doleans} has $\log \E_P[\cE_t(\lambda)] = 0$ for every $t \geq 0$, and for every $Q \ll P|_{\cF_t}$,
\begin{equation}\label{eq:master-identity}
  \lambda\,\E_Q[Y_t] - \E_Q[\Psi_t(\lambda)]
    + \KL(Q\|Q^\star_t)
  = \KL(Q\|P|_{\cF_t}),
\end{equation}
where $\dd Q^\star_t/\dd P \propto \cE_t(\lambda)$ is the Gibbs tilt.
Dropping the nonnegative residual $\KL(Q\|Q^\star_t)$ gives the variational form $\sup_{Q\ll P|_{\cF_t}}\bigl\{\lambda\,\E_Q[Y_t] -
\E_Q[\Psi_t(\lambda)] - \KL(Q\|P|_{\cF_t})\bigr\} = 0$, attained at $Q^\star_t$.
\end{theorem}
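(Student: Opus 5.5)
First I will establish the normalization $\E_P[\cE_t(\lambda)]=1$ by induction on $t$ with the tower property. Since $\Psi_s(\lambda)-\Psi_{s-1}(\lambda)=\psi_s(\lambda)$ is $\cF_{s-1}$-measurable, we have $\cE_s(\lambda)=\cE_{s-1}(\lambda)\,e^{\lambda d_s-\psi_s(\lambda)}$. By the definition~\eqref{eq:exact-cgf}, $\E[e^{\lambda d_s-\psi_s(\lambda)}\mid\cF_{s-1}]=1$ almost surely, so $\E[\cE_s(\lambda)\mid\cF_{s-1}]=\cE_{s-1}(\lambda)$. Iterating from $\cE_0(\lambda)=1$ gives $\E_P[\cE_t(\lambda)]=1$, hence $\log\E_P[\cE_t(\lambda)]=0$. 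Nonnegativity is immediate, and strict positivity holds because $\psi_s$ is finite. This also shows that the Gibbs tilt $Q^\star_t$ has density exactly $\cE_t(\lambda)$, with no extra normalizing constant.

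Next I will apply Theorem~\ref{thm:pathspace}, or equivalently Corollary~\ref{cor:DV}, on $(\Omega,\cF_t)$ with reference $P|_{\cF_t}$. I take the single factor $W=1$, $\Pi_{1,t}=\cE_t(\lambda)$ and $\alpha_1=1$. This factor is strictly positive and $\cF_t$-measurable, and $Z_t(1)=\E_P[\cE_t(\lambda)]=1\in(0,\infty)$. The identity~\eqref{eq:pathspace} then reads
\[
0=\log Z_t=\E_Q[\log\cE_t(\lambda)]-\KL(Q\|P|_{\cF_t})+\KL(Q\|Q^\star_t).
\]
Substituting $\log\cE_t(\lambda)=\lambda Y_t-\Psi_t(\lambda)$ and using linearity of $\E_Q$ gives $\E_Q[\log\cE_t]=\lambda\E_Q[Y_t]-\E_Q[\Psi_t(\lambda)]$. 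Rearranging yields~\eqref{eq:master-identity}. The variational statement then follows from the last clause of Theorem~\ref{thm:pathspace}: the residual $\KL(Q\|Q^\star_t)\ge0$ vanishes exactly at $Q=Q^\star_t$, so the supremum equals $0$ and is attained uniquely there.

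The only delicate step is integrability, namely splitting $\E_Q[\lambda Y_t-\Psi_t(\lambda)]$ into two separate expectations. Theorem~\ref{thm:pathspace} implicitly needs $\E_Q[\log\Pi]$ to be well defined. I would handle this as follows.

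First, when $\KL(Q\|P|_{\cF_t})<\infty$, the positive part of $\log\cE_t$ is $Q$-integrable. This follows from the Young/Donsker--Varadhan inequality $\E_Q[f]\le\KL(Q\|P)+\log\E_P[e^f]$ applied to $f=(\log\cE_t)^+$, together with $\E_P[e^{(\log\cE_t)^+}]\le 1+\E_P[\cE_t]=2$. The identity then holds in $[-\infty,\infty)$ form, with both sides possibly $+\infty$ after rearrangement.

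Second, when $\KL(Q\|P|_{\cF_t})=+\infty$, I will read the identity in the extended sense, or restrict to $Q$ for which $Y_t$ and $\Psi_t(\lambda)$ are $Q$-integrable. I will state this as the standing convention: both sides are equal in $(-\infty,+\infty]$ whenever $\E_Q[|Y_t|]$ and $\E_Q[|\Psi_t(\lambda)|]$ are finite.

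The algebra itself is routine; this bookkeeping of infinite values is the only place I expect any care to be needed.
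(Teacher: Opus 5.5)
Your proposal is correct and is the paper's own proof: each factor $e^{\lambda d_s-\psi_s(\lambda)}$ has conditional mean one, so $\E_P[\cE_t(\lambda)]=1$, and then Theorem~\ref{thm:pathspace} (equivalently Corollary~\ref{cor:DV}) with the single factor $\cE_t(\lambda)$ and zero log-partition gives~\eqref{eq:master-identity}. Your integrability bookkeeping goes beyond the paper, which splits $\E_Q[\lambda Y_t-\Psi_t(\lambda)]$ without comment; the same care would also note that attainment of the supremum at $Q^\star_t$ requires $\E_P[\cE_t(\lambda)\log\cE_t(\lambda)]<\infty$.
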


Every term of~\eqref{eq:master-identity} is exact, and the statement is algebra at the fixed time $t$: no stopping rule, no crossing event, and no property of the path beyond its cumulants enters.
The classical bounds depart from it in one move---they replace the exact cumulant, which is rarely computable, by a computable predictable majorant---and the gap that replacement opens is itself exact.

\begin{proposition}[Supermartingale relaxation]
\label{prop:supermart-relax}
For an adapted $\overline\Psi_t(\lambda)$, set
\begin{equation}\label{eq:superexp}
  E_t(\lambda) := \exp\!\bigl(\lambda Y_t - \overline\Psi_t(\lambda)\bigr).
\end{equation}
\begin{enumerate}[label=(\roman*),leftmargin=2em]
\item If $\overline\Psi_t(\lambda) = \Psi_t(\lambda) + D_t(\lambda)$ with $D(\lambda)$ predictable and nondecreasing, $D_0(\lambda) = 0$, then $E_t(\lambda) = \cE_t(\lambda)\,e^{-D_t(\lambda)}$ is a nonnegative supermartingale with $E_0(\lambda) = 1$, and its mean measures the discarded cumulant gap exactly:
\begin{equation}\label{eq:relax-deficit}
  \E_P[E_t(\lambda)] = \E_{Q^\star_t}\bigl[e^{-D_t(\lambda)}\bigr] \leq 1 .
\end{equation}
\item In the setting of (i), for every $Q \ll P|_{\cF_t}$ the entropic equality~\eqref{eq:master-identity} relaxes to the entropic inequality
\begin{equation}\label{eq:master}
  \lambda\,\E_Q[Y_t] - \E_Q[\overline\Psi_t(\lambda)] \leq \KL(Q\|P|_{\cF_t}),
\end{equation}
with slack exactly $\KL(Q\|Q^\star_t) + \E_Q[D_t(\lambda)]$.
\item If~\eqref{eq:superexp} is a nonnegative supermartingale with $E_0(\lambda) \leq 1$ for an arbitrary adapted $\overline\Psi_t$---no exact-cumulant hypothesis---then $\log\E_P[E_t(\lambda)] \leq 0$, and~\eqref{eq:master} holds with slack $\KL(Q\|\overline Q^\star_t) - \log\E_P[E_t(\lambda)] \geq 0$, for $\dd \overline Q^\star_t/\dd P \propto E_t(\lambda)$.
\end{enumerate}
\end{proposition}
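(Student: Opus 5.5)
The plan is to reduce all three parts to Theorem~\ref{thm:master} and Corollary~\ref{cor:DV}, applied at fixed time $t$ with $\mu = P|_{\cF_t}$.

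For (i), write $E_t(\lambda) = \cE_t(\lambda)e^{-D_t(\lambda)}$. The supermartingale property follows one step at a time. Because $D_t$ is predictable, $e^{-D_t}$ is $\cF_{t-1}$-measurable, so
\[
\E[E_t\mid\cF_{t-1}] = e^{-D_t}\,\E[\cE_t\mid\cF_{t-1}] = e^{-D_t}\cE_{t-1} \le e^{-D_{t-1}}\cE_{t-1} = E_{t-1},
\]
where the inequality uses that $D$ is nondecreasing. Nonnegativity is clear, and $E_0 = 1$ holds because $Y_0 = 0$ and $D_0 = \Psi_0 = 0$. For the mean identity, $\E_P[\cE_t]=1$ by Theorem~\ref{thm:master}, so $Q^\star_t$ has density exactly $\cE_t(\lambda)$. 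Therefore $\E_P[\cE_t e^{-D_t}] = \E_{Q^\star_t}[e^{-D_t}]$, and this is at most $1$ since $D_t\ge 0$.

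For (ii), start from~\eqref{eq:master-identity} and substitute $\Psi_t = \overline\Psi_t - D_t$. This gives
\[
\lambda\E_Q[Y_t] - \E_Q[\overline\Psi_t] + \E_Q[D_t] + \KL(Q\|Q^\star_t) = \KL(Q\|P|_{\cF_t}).
\]
The inequality~\eqref{eq:master} then follows with slack exactly $\KL(Q\|Q^\star_t)+\E_Q[D_t]$. Both terms are nonnegative, the first by Gibbs' inequality and the second because $D_t\ge 0$.

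One integrability point has to be handled. When $\E_Q[D_t]=+\infty$, or when $\E_Q[Y_t]$ is not finite, the rearrangement must be interpreted in $[-\infty,\infty]$. I will state the finite case and note that the inequality is trivial otherwise. If $\KL(Q\|P)=\infty$ there is nothing to prove. If $\KL(Q\|P)<\infty$, the identity forces the left-hand terms to be finite, up to the usual convention.

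For (iii), no structure on $\overline\Psi$ is assumed. First, the supermartingale property and $E_0\le 1$ give $\E_P[E_t]\le \E_P[E_0]\le 1$ by iterating conditional expectations, so $\log\E_P[E_t]\le 0$.

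Next, apply Corollary~\ref{cor:DV} with $\mu=P|_{\cF_t}$ and $g = E_t(\lambda)$. This $g$ is strictly positive, so $\{g=0\}$ is empty and every $Q\ll P$ is admissible. The result is
\[
\log\E_P[E_t] = \lambda\E_Q[Y_t]-\E_Q[\overline\Psi_t] - \KL(Q\|P) + \KL(Q\|\overline Q^\star_t).
\]
Rearranging gives $\KL(Q\|P) - (\lambda\E_Q[Y_t]-\E_Q[\overline\Psi_t]) = \KL(Q\|\overline Q^\star_t) - \log\E_P[E_t]$. Both terms on the right are nonnegative, which proves~\eqref{eq:master} with the stated slack.

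The mathematical content is light. I expect the main obstacle to be the bookkeeping of integrability. Expectations such as $\E_Q[\log E_t]$ must be well defined, and $\E_P[E_t]$ must be positive so that $\overline Q^\star_t$ exists; positivity holds here because $E_t>0$. I would first handle the case of finite $\KL(Q\|P)$ and then extend to the general case by monotone truncation of $D_t$ if needed.
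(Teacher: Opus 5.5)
Your proof is correct and follows the paper's argument step for step. Part (i) pulls the predictable factor $e^{-D_t(\lambda)}$ out of the conditional expectation and reads $\cE_t(\lambda)$ as the density of $Q^\star_t$, part (ii) substitutes $\Psi_t=\overline\Psi_t-D_t$ into the master identity, and part (iii) applies the finite-measure Donsker--Varadhan formula at $g=E_t(\lambda)$. The only addition is your integrability bookkeeping, which the paper omits; note that a finite $\KL(Q\|P|_{\cF_t})$ does not force $\E_Q[\log\cE_t(\lambda)]$ to be finite, since it can equal $-\infty$ with $\KL(Q\|Q^\star_t)=+\infty$, but then the inequality is trivial, as you say.
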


The relaxation moves the entropic statement from an equality to an inequality by an exactly known amount.
Against any $Q$ the slack splits into the tilt sub-optimality $\KL(Q\|Q^\star_t)$, already present in the identity, plus the $Q$-expected cumulant gap $\E_Q[D_t(\lambda)]$; the mean deficit~\eqref{eq:relax-deficit} is the same gap read under the Gibbs tilt, and the two accountings agree because $\KL(Q\|\overline Q^\star_t) = \KL(Q\|Q^\star_t) + \E_Q[D_t(\lambda)] + \log\E_P[E_t(\lambda)]$.
The construction~\eqref{eq:superexp} is standard \cite{howard2021ann,ramdas2020admissible}; typical majorants are the Hoeffding ($\overline\Psi_t(\lambda) = \frac{\lambda^2}{2}\sum c_s^2$, from the range of the increments) and Bernstein ($\overline\Psi_t(\lambda) = \frac{\lambda^2}{2(1-\lambda b/3)} \langle Y\rangle_t$, for $\langle Y\rangle$ the predictable variance) processes.
Part (iii) is the working form when the increments have no finite exact cumulant, and it is the hypothesis under which every named bound below---from Ville to PAC-Bayes---is derived, each a specialization through a different $E_t$ and event $A$.

\begin{corollary}[Event identity]
\label{cor:event}
In the setting of Proposition~\ref{prop:supermart-relax}(iii), for $A \in \cF_t$ with $P(A) > 0$,
\begin{equation}\label{eq:event}
  -\log\PP(A)
  = \lambda\,\E[Y_t \mid A] - \E[\overline\Psi_t(\lambda) \mid A]
    + \KL\bigl(P(\cdot \mid A)\,\big\|\,\overline Q^\star_t\bigr)
    - \log\E_P[E_t(\lambda)],
\end{equation}
for $\dd\overline Q^\star_t/\dd P \propto E_t(\lambda)$.
Both residuals are nonnegative, and discarding them gives the event bound
\begin{equation}\label{eq:event-bound}
  \PP(A) \leq \exp\!\bigl(-\lambda\,\E[Y_t \mid A] + \E[\overline\Psi_t(\lambda) \mid A]\bigr);
\end{equation}
if in addition $\lambda \geq 0$ and on $A$ one has $Y_t \geq x$ and $\overline\Psi_t(\lambda) \leq c$,
then $\PP(A) \leq e^{-\lambda x + c}$, at the further cost of the event relaxation $\lambda(\E[Y_t \mid A] - x) + (c - \E[\overline\Psi_t(\lambda) \mid A])$.
Since $E_t(\lambda) > 0$ a.s., the first residual is strictly positive whenever $\PP(A) < 1$: equality in~\eqref{eq:event-bound} holds only if $\PP(A) = 1$ and $\lambda Y_t = \overline\Psi_t(\lambda)$ a.s.
\end{corollary}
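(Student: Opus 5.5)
The plan is to apply the finite-measure Donsker--Varadhan identity, Corollary~\ref{cor:DV}, on $(\Omega,\cF_t)$ with reference $\mu = P|_{\cF_t}$, factor $g = E_t(\lambda)$, and candidate $Q = P(\cdot\mid A)$. The Gibbs measure of Corollary~\ref{cor:DV} is then exactly $\overline Q^\star_t$, since $\dd\overline Q^\star_t/\dd P = E_t(\lambda)/\E_P[E_t(\lambda)]$. We first check the hypotheses. Because $E_t(\lambda) = \exp(\lambda Y_t - \overline\Psi_t(\lambda)) > 0$ a.s., the set $\{g=0\}$ is null, so $Q(\{g=0\})=0$ automatically. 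The normalization satisfies $0<\E_P[E_t(\lambda)] \le E_0(\lambda) \le 1 < \infty$ by the supermartingale property assumed in Proposition~\ref{prop:supermart-relax}(iii). Finally $Q \ll P$ because $P(A)>0$.

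Next we evaluate each term of \eqref{eq:DV}. We have $\E_Q[\log g] = \lambda\E[Y_t\mid A] - \E[\overline\Psi_t(\lambda)\mid A]$, and since $\dd Q/\dd P = \ind_A/P(A)$, $\KL(Q\|P) = -\log P(A)$. Identity \eqref{eq:DV} then reads
\[
\log\E_P[E_t(\lambda)] = \lambda\E[Y_t\mid A] - \E[\overline\Psi_t(\lambda)\mid A] + \log P(A) + \KL(P(\cdot\mid A)\|\overline Q^\star_t).
\]
Rearranging gives \eqref{eq:event}. This step carries an integrability caveat: the conditional expectations must be finite, or at least the combination must be well defined. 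We either assume this, as Theorem~\ref{thm:mci} does, or note that when $\E_Q[\log g] = -\infty$ both sides are infinite consistently.

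For the bounds, the two residuals are nonnegative. $\KL(P(\cdot\mid A)\|\overline Q^\star_t)\ge 0$ by Gibbs' inequality between probability measures, and $-\log\E_P[E_t(\lambda)]\ge0$ because the mean is at most one. Dropping both yields \eqref{eq:event-bound}. On $A$ we have $Y_t\ge x$ and $\overline\Psi_t\le c$, so for $\lambda\ge0$ the exponent satisfies $-\lambda\E[Y_t\mid A]+\E[\overline\Psi_t\mid A] \le -\lambda x + c$. The difference is exactly the stated event relaxation, and it is nonnegative term by term.

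The strictness claim is the only delicate point. Suppose $P(A)<1$. Then $P(\cdot\mid A)$ vanishes on $A^c$, while $\overline Q^\star_t(A^c) > 0$ because its density $E_t(\lambda)$ is strictly positive and $P(A^c)>0$. So the two measures differ and the divergence is strictly positive by the equality case of Gibbs' inequality. Equality in \eqref{eq:event-bound} therefore forces $P(A)=1$, in which case $\overline Q^\star_t$ must equal $P$. That makes $E_t(\lambda)$ a.s.\ constant, namely $E_t(\lambda) = \E_P[E_t(\lambda)]$. The second residual must also vanish, which gives $\E_P[E_t(\lambda)]=1$, hence $E_t\equiv 1$, i.e.\ $\lambda Y_t = \overline\Psi_t(\lambda)$ a.s.
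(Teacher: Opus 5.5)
Your proposal is correct and follows the paper's proof: apply Corollary~\ref{cor:DV} on $(\Omega,\cF_t,P)$ with $g=E_t(\lambda)$ and $Q=P(\cdot\mid A)$, then discard the two nonnegative residuals. Your explicit arguments for the event relaxation and for the strictness claim are also right, and they fill in steps the paper's proof leaves implicit. The strictness argument rests on $\overline Q^\star_t(A^c)>0$ when $P(A)<1$, and then on both residuals vanishing, which forces $E_t(\lambda)\equiv1$.
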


For any event, $-\log\PP(A) = \KL(P(\cdot\mid A)\|P)$ is its information, and~\eqref{eq:event} resolves that information into a tilted-energy part and two named residuals: the sub-optimality of the tilt at $Q = P(\cdot\mid A)$, and the cumulant relaxation paid for replacing the exact log-CGF by a majorant.
Every named tail bound below is this divergence relaxed; the informative content is the slack the tightness results pin down---the per-step decomposition of Theorem~\ref{thm:azuma} and the no-overshoot equality of Theorem~\ref{thm:ville-tight}.

The identity and its relaxation are read at a fixed time; the crossing statements below read them at a stopping time, and one theorem licenses that passage .

Stopping a nonnegative supermartingale cannot increase its mean.
A martingale preserves its mean under either of two further hypotheses, and integrability of the stopped value is not one of them, because mass can escape to the event that the time is infinite.

\subsection{Classical concentration inequalities as specializations}
\label{sec:mart-ineq}

A wide range of classical martingale concentration inequalities are the entropic inequality~\eqref{eq:master} of the supermartingale relaxation (Proposition~\ref{prop:supermart-relax})---equivalently its event identity Corollary~\ref{cor:event}---specialized to a particular cumulant majorant $\overline\Psi_t$ and then optimized over $\lambda$; only the majorant changes from one member to the next.
The same crossing event read through the Donsker--Varadhan identity (Corollary~\ref{cor:DV}) returns the exact tail as a relative-entropy equality, of which each bound is the exponential-tilt relaxation.

Table~\ref{tab:classical-ledger} does this for every member of the classical family at once: the identity each bound relaxes, what it discards to reach its bound, and the geometry in which that discard is the deficit of an optimizer.
Its lower segment lists the power-mean certificate family of Section~\ref{sec:doob-bellman} in the same form.

\begin{table}[!htp]
\centering
\caption{The standard bounds as identities with named residuals, across the three
geometries.  \emph{Upper segment}: the classical statement, the identity it
relaxes, what it discards to get there, and the geometry in which that discard
is the deficit of an optimizer; every entry points at the result that develops
it.  \emph{Lower segment} (the power-mean certificate family of
Section~\ref{sec:doob-bellman}): each row is an inequality
$\E[\Gamma]\le C\,\E[\Xi]$ proved by a certificate $U$ dominating $\Gamma-C\Xi$
and supermartingale along the path.  The certificate
form~\eqref{eq:power-certificate} names the discarded quantities---a
majorization deficit, an optional-stopping deficit, and an initial value---and
``Evaluated'' records whether this paper computes those quantities or only
names them.  The Burkholder--Davis--Gundy row is
evaluated on a second axis, the cost of moving from a stopping time to a
random time; in that row $\Phi$ is a moderate convex function---increasing,
vanishing at the origin, and with $\Phi(2x)$ bounded by a constant multiple of
$\Phi(x)$.  $M^\ast_T:=\sup_{t\le T}M_t$; $\langle M\rangle_T^{1/2}$ is the
square function.}
\label{tab:classical-ledger}
\small
\begin{tabular}{@{}p{0.215\textwidth}p{0.245\textwidth}p{0.315\textwidth}p{0.135\textwidth}@{}}
\toprule
\textbf{Classical statement} & \textbf{Identity it relaxes} &
\textbf{What it discards} & \textbf{Geometry} \\
\midrule
Ville
  & first-passage identity (Thm.~\ref{thm:firstpassage-exact})
  & the mean overshoot at the crossing; for a supermartingale also the
    predictable loss and the never-crossing mass
  & optional stopping \\
\addlinespace[2pt]
Azuma--Hoeffding
  & per-step relative-entropy sum \eqref{eq:azuma-exact}
  & the event relaxation, the tilt's sub-optimality, and the range-based
    cumulant majorant \eqref{eq:azuma-three}
  & Gibbs / entropic \\
\addlinespace[2pt]
Bennett--Bernstein--Freedman
  & the same identity at a variance-aware cumulant
  & the same three, the majorant gap now keeping a proxy for the higher
    cumulants
  & Gibbs / entropic \\
\addlinespace[2pt]
Parameter mixtures (method of mixtures)
  & pathwise mixture identity, tail exact
    (Prop.~\ref{prop:mixture-exact})
  & the overshoot at the crossing and the set on which the mixture crosses
    while the certificate never fires
  & Gibbs / entropic \\
\addlinespace[2pt]
Curved boundary of any form
  & discrete crossing law
    (Thm.~\ref{thm:curved-crossing-discrete})
  & a quadrature residual and an overshoot, both zero in continuous time and
    computable in advance
  & optional stopping \\
\addlinespace[2pt]
PAC-Bayes (martingale form)
  & pathwise Gibbs-tilt identity
    (Prop.~\ref{prop:mixture-exact})
  & the divergence $\KL(\rho\|\rho^\ast_t)$ to the running tilt, zero at the
    Bayes posterior, then the Ville relaxation
  & Gibbs / entropic \\
\addlinespace[2pt]
Pooled Ville over $W$ tests
  & exact crossing probability
    (Cor.~\ref{cor:pooling-crossing-exact}), on the crossing-mass identity
    (Prop.~\ref{prop:pooling-exact})
  & the compensator shed before the crossing, the below-threshold mass, and
    the overshoot
  & optional stopping \\
\bottomrule
\end{tabular}
\par\vspace{2pt}
\begin{tabular}{@{}p{0.19\textwidth}p{0.20\textwidth}p{0.225\textwidth}p{0.15\textwidth}p{0.12\textwidth}@{}}
\toprule
\textbf{Inequality} & \textbf{Target $\Gamma$ vs.\ comparand $\Xi$} &
\textbf{Certificate} & \textbf{Discarded} & \textbf{Evaluated} \\
\midrule
Doob $L^p$ maximal, $p>1$ (Theorem~\ref{thm:doob})
  & $(M^\ast_T)^p$ against $M_T^p$ at $C=q^p$, $q=p/(p-1)$; as an identity,
    $\|M^\ast\|_p=q\|M_T\|_p-\cR$
  & $U(x,y)=y^p-q\,y^{p-1}x$, \eqref{eq:doob-bellman}; affine in the running
    value, smooth fit on the diagonal; the Az\'ema--Yor process at
    $\Phi(y)=-y^p/(p-1)$
  & H\"older deficit $\delta_{\mathrm H}$, optional-stopping deficit
    $\delta_{\mathrm B}$, initial value $M_0^{p}/(p-1)$
  & yes, Corollary~\ref{cor:doob-residual}; $\delta_{\mathrm B}$ per step,
    Proposition~\ref{prop:ay-certificate} \\
Maximal identity at concave $\Phi$
(Proposition~\ref{prop:ay-certificate})
  & $\Phi(M^\ast_T)$ against $\Phi(M_0)$ and the terminal gap
    $(M^\ast_T-M_T)\Phi'(M^\ast_T)$
  & the Az\'ema--Yor process $A^\Phi$ of~\eqref{eq:ay-process}, one member per
    concave $\Phi$
  & one Bregman divergence $D_{-\Phi}$ per advance of the running maximum, and
    nothing else
  & yes, exactly; no H\"older step and no constant \\
Burkholder--Davis--Gundy, including the $p=1$ Davis
endpoint~\cite{burkholder1972}
  & $\Phi(M^\ast_T)$ against $\Phi(\langle M\rangle_T^{1/2})$, each direction separately
  & a Burkholder function for the given $\Phi$~\cite{burkholder1972,osekowski2012}
  & named by \eqref{eq:power-certificate} once a $\Phi$-certificate is fixed;
    at a random time, an inflation factor
  & deficits no; at a random time yes,
    Appendix~\ref{sec:moment-random-time} \\
Burkholder--Rosenthal, $p\ge2$~\cite{burkholder1973ann}
  & $|M_T|^p$ against the conditional-variance and per-step moment terms
  & distribution-function and good-$\lambda$
    inequalities~\cite{burkholder1973ann}
  & named by \eqref{eq:power-certificate} per certificate
  & no \\
Sharp martingale-transform and square-function bounds~\cite{osekowski2012}
  & the transform against the martingale in $L^p$, at the sharp constant
  & Burkholder's explicit special function; the obstacle boundary of the
    Bellman function supplies the equality
  & $\delta_{\mathrm M}=0$ at a sharp certificate, leaving the
    optional-stopping deficit and the initial value
  & no \\
\bottomrule
\end{tabular}
\end{table}

The object a bound is about fixes which of the table's three geometries it lives in.
An inequality about an exponential moment has a Gibbs tilt for its extremizer, so its residual is a relative entropy; the whole cumulant family falls here.
An inequality about a power mean has for its extremizer the point at which a certificate function meets the quantity it dominates, so its residual is the deficit of a function affine in the running value.
Doob's $L^p$ maximal inequality is that instance, its residual written $\cR$ against the running maximum $M^\ast:=\sup_{t\le T}M_t$ at the conjugate exponent $q=p/(p-1)$; the certificate rows occupy the lower segment, developed in Section~\ref{sec:doob-bellman}.
The crossing results occupy the third.
Their residuals are optional-stopping quantities---a mean overshoot, a shed compensator, a quadrature residual---governed by where the path first exceeds a level, with neither a tilt nor a contact entering.

The exponential process itself is the first specialization.

\begin{corollary}[Ville's inequality]
\label{cor:ville}
Let $(E_t)_{t \geq 0}$ be a nonnegative supermartingale with $E_0 \leq 1$.  Then for every $x > 0$, $\PP\!\left(\sup_{t\geq 0} E_t \geq e^x\right) \leq e^{-x}$.
Consequently, if~\eqref{eq:superexp} is a nonnegative supermartingale with $E_0(\lambda) \leq 1$ (Proposition~\ref{prop:supermart-relax}(iii)),
$\PP(\sup_t \{\lambda Y_t - \overline\Psi_t(\lambda)\} \geq x) \leq e^{-x}$.
\end{corollary}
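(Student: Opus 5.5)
The plan is to prove Ville's inequality by optional stopping at the first passage time, read through the event identity of Corollary~\ref{cor:event} so that the discarded quantities are visible. Fix $x>0$ and set $\tau:=\inf\{t\ge 0: E_t\ge e^x\}$, a stopping time. Since $\{\sup_t E_t\ge e^x\}$ may be attained only in the limit, I first work with the event $\{\sup_{t\le T}E_t\ge e^x\}=\{\tau\le T\}$ at a finite horizon $T$, and then let $T\to\infty$ by continuity from below. The supremum is over the countable set $\N_0$, so a path with $\sup_t E_t\ge e^x$ has some $t$ with $E_t\ge e^x$, provided the threshold is attained. To handle the case where only the supremum equals $e^x$, I prove the bound at each level $e^{x'}$ with $x'<x$ and let $x'\uparrow x$. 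The resulting $e^{-x'}\to e^{-x}$, and the events decrease to $\{\sup E\ge e^x\}$.

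For the finite horizon, the stopped process $E_{\tau\wedge t}$ is a nonnegative supermartingale. This is the standard discrete-time stopping argument: $E_{\tau\wedge t}-E_{\tau\wedge(t-1)}=\ind\{\tau\ge t\}(E_t-E_{t-1})$, and the indicator is predictable. Hence $\E[E_{\tau\wedge T}]\le E_0\le 1$. On $\{\tau\le T\}$ we have $E_{\tau\wedge T}=E_\tau\ge e^x$, and elsewhere $E_{\tau\wedge T}\ge 0$. This gives $e^x\,\PP(\tau\le T)\le \E[E_{\tau\wedge T}\ind\{\tau\le T\}]\le 1$. In the language of the paper, this is Corollary~\ref{cor:DV} (or Corollary~\ref{cor:event}) applied on $\cF_T$ with $\mu=P$, $g=E_{\tau\wedge T}$, and $Q=P(\cdot\mid \tau\le T)$. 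The identity
\[
-\log\PP(\tau\le T)=\E[\log E_{\tau\wedge T}\mid \tau\le T]+\KL\bigl(P(\cdot\mid\tau\le T)\,\big\|\,Q^\star\bigr)-\log\E_P[E_{\tau\wedge T}]
\]
has both residuals nonnegative and first term at least $x$. It therefore yields $\PP(\tau\le T)\le e^{-x}$ and names the discarded overshoot $\E[\log E_\tau\mid\tau\le T]-x$. Letting $T\to\infty$ gives the first claim.

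For the consequence, apply the first claim to the process $E_t(\lambda)$ of~\eqref{eq:superexp}, which by hypothesis is a nonnegative supermartingale with $E_0(\lambda)\le1$. Since $\exp$ is increasing, $\{\sup_t(\lambda Y_t-\overline\Psi_t(\lambda))\ge x\}=\{\sup_t E_t(\lambda)\ge e^x\}$. The main obstacle is only bookkeeping. The stopped-process supermartingale property must be established without integrability issues, which holds because nonnegativity with $\E[E_t]\le 1$ gives integrability. The non-attained supremum must also be handled, which the $x'\uparrow x$ limit does.
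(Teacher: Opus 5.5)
Your proof is correct and follows the paper's own argument. Both use optional stopping of the nonnegative supermartingale at $\tau\wedge n$, with $\tau$ the first passage to $e^x$, then a monotone limit in the horizon, then a level approximation $x'\uparrow x$ to cover a supremum that is not attained, with the line-crossing consequence read off from the monotonicity of $\exp$.

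Your Donsker--Varadhan rewriting with $g=E_{\tau\wedge T}$ and $Q=P(\cdot\mid\tau\le T)$ is a valid extra gloss; every term is well defined, since $g\ge e^x>0$ on the conditioning event and $\E[g]\le1$. The paper's proof neither needs nor uses it.
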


The Donsker--Varadhan reading of the same crossing event sharpens Ville's inequality to the exact tail probability, a relative-entropy equality.

\begin{theorem}[Ville's inequality as an exact-tail identity]
\label{thm:ville-DV}
Let $(M_t)_{t \geq 0}$ be a nonnegative supermartingale with $M_0 = 1$.
For every $\lambda > 0$ with $\PP(\sup_t M_t \geq \lambda) > 0$, the first-passage tail is a relative-entropy equality,
\begin{equation}\label{eq:ville-exact}
  -\log\PP\!\left(\sup_t M_t \geq \lambda\right)
  = \KL\!\left(P(\cdot \mid \sup_t M_t \geq \lambda) \,\|\, P\right),
\end{equation}
and optional stopping bounds this residual below by $\log\lambda$, which is Ville's inequality $\PP\!\left(\sup_{t \geq 0} M_t \geq \lambda\right) \leq \frac{1}{\lambda}$,
itself valid at every $\lambda > 0$.
\end{theorem}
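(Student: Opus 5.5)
The proof has two parts: an exact identity, which is pure Donsker--Varadhan, and a lower bound on its residual, which is optional stopping.

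\textbf{The identity.} Let $A:=\{\sup_t M_t\ge\lambda\}$ with $\PP(A)>0$. Apply Corollary~\ref{cor:DV} with $\mu=P$ and the indicator $g=\ind_A$. The Gibbs measure is then $Q^\star=P(\cdot\mid A)$, and taking $Q=Q^\star$ in~\eqref{eq:DV} kills the residual, giving $\log\PP(A)=-\KL(P(\cdot\mid A)\|P)$. This is~\eqref{eq:ville-exact}. One can also check it directly: $\dd P(\cdot\mid A)/\dd P=\ind_A/\PP(A)$, so the divergence equals $\E[\log(1/\PP(A))\mid A]=-\log\PP(A)$.

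The event $A$ lies in $\cF_\infty$ rather than in a fixed $\cF_t$. This causes no difficulty, because Corollary~\ref{cor:DV} is stated on an arbitrary measure space.

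\textbf{The lower bound by optional stopping.} Let $\tau:=\inf\{t: M_t\ge\lambda\}$, a stopping time. Then $A=\{\tau<\infty\}$, since the supremum over the discrete index set is reached at some finite $t$ exactly when $M_t\ge\lambda$ for some $t$. Fix $n$. By optional stopping for the bounded stopping time $\tau\wedge n$ and the nonnegative supermartingale $M$,
\[
  1=M_0\ge\E[M_{\tau\wedge n}]\ge\E[M_\tau\ind_{\{\tau\le n\}}]\ge\lambda\,\PP(\tau\le n),
\]
where the middle inequality drops the nonnegative term $M_n\ind_{\{\tau>n\}}$. Letting $n\to\infty$ gives $\PP(A)\le1/\lambda$. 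Combined with the identity, this shows $\KL(P(\cdot\mid A)\|P)\ge\log\lambda$, which is Ville's inequality.

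\textbf{Remaining cases and the main obstacle.} If $\PP(A)=0$, Ville's inequality holds trivially, so it is valid at every $\lambda>0$. The only delicate point is the passage to an infinite horizon. We do not apply optional stopping at the unbounded time $\tau$ itself, because mass can escape to $\{\tau=\infty\}$. Truncating at $\tau\wedge n$ and using monotone convergence of $\PP(\tau\le n)\uparrow\PP(\tau<\infty)$ avoids this.

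If one wants the residual in a more explicit form, write
\[
  \log\lambda+\log\bigl(\E[M_\tau\mid A]/\lambda\bigr)+\Bigl(-\log\E[M_\tau\ind_A]\Bigr).
\]
The second term is the overshoot and is nonnegative because $M_\tau\ge\lambda$ on $A$. The third term is nonnegative because $\E[M_\tau\ind_A]\le1$ by Fatou's lemma applied to the truncation argument above. This decomposition is only needed to name the slack later; it is not needed for the statement itself.
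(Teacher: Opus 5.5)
Your identity step is the paper's own (Corollary~\ref{cor:DV} at $g=\ind_A$), and your bound uses the same truncated optional stopping. One step of the bound is false, however. With $\tau:=\inf\{t:M_t\ge\lambda\}$, the event $A=\{\sup_t M_t\ge\lambda\}$ need not equal $\{\tau<\infty\}$. A supremum over infinitely many times need not be attained, so a path can have $M_t<\lambda$ at every $t$ and still have $\sup_t M_t=\lambda$; such a path lies in $A$ with $\tau=\infty$. This happens with positive probability. Take $M_0=1$ and $\lambda=2$. Let a path at the value $2-2^{-t}$ move to $2-2^{-(t+1)}$ with probability $p_t:=(2-2^{-t})/(2-2^{-(t+1)})$ and to $0$ otherwise, staying at $0$ thereafter. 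This is a nonnegative martingale with $\prod_{t\ge0}p_t=\tfrac12$, so with probability $\tfrac12$ the path climbs to $2$ without reaching it. Hence $\PP(A)=\tfrac12$ while $\PP(\tau<\infty)=0$. Your chain correctly proves $\PP(\tau<\infty)\le1/\lambda$, but only the inclusion $\{\tau<\infty\}\subseteq A$ holds, so that bound does not transfer to $\PP(A)$.

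The paper's proof is built to avoid exactly this. For $\mu<\lambda$, $\sup_t M_t\ge\lambda>\mu$ forces $M_t>\mu$ at some finite $t$, so $A\subseteq\{\sigma_\mu<\infty\}$ with $\sigma_\mu:=\inf\{t:M_t\ge\mu\}$. Your truncation argument at level $\mu$ then gives $\PP(A)\le1/\mu$, and letting $\mu\uparrow\lambda$ finishes; the same device closes the proof of Corollary~\ref{cor:ville}. An alternative repair keeps your $\tau$. A nonnegative supermartingale converges a.s., and every path in $A\cap\{\tau=\infty\}$ has $\limsup_t M_t=\lambda$. So with $M_\infty:=\lim_t M_t$ one gets $M_\tau\ge\lambda$ a.s.\ on all of $A$, and Fatou gives $\lambda\,\PP(A)\le\E[M_\tau\ind_A]\le1$. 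Your optional residual decomposition needs that same convention: as written it asserts $M_\tau\ge\lambda$ on $A$ without saying what $M_\tau$ is on $A\cap\{\tau=\infty\}$.
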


The gap between the residual $\KL(P(\cdot|A)\|P)$ and $\log\lambda$ measures how much tighter the true tail is than Ville's bound;
Theorem~\ref{thm:firstpassage-exact} closes it exactly for a vanishing martingale, in terms of the mean overshoot; for a general supermartingale the gap also includes the predictable loss and the never-crossing mass.

\subsection{Ville tightness at the ultimate maximum}

The random-time divergences later in the paper all trace to one extreme object: the value a vanishing martingale takes at its own global maximum.

For a nonnegative martingale $(M_t)$ with $M_0 = 1$ and $\lim_{t\to\infty} M_t = 0$ a.s., the \emph{ultimate-maximum time} is
\begin{equation}\label{eq:tau-star}
  \tau^\star := \min\!\bigl\{t \geq 1 : M_t = \sup_{s \geq 1} M_s\bigr\}
\end{equation}
Under $M_t \to 0$, the supremum is a.s.\ attained, so $\tau^\star$ is a.s.\
finite and positive-integer-valued, as a random time requires; it is \emph{not} a stopping time, because identifying that $t$ achieves the global maximum requires seeing all future values.
Since $\E[\sup_{t\ge0}M_t]\le 1+\E[\sup_{t\ge1}M_t]$, restricting the index to $t\ge1$ leaves the non-integrability of Lemma~\ref{lem:sup-nonintegrable} unaffected.

Ville's classical inequality~\cite{howard2021ann,ramdas2020admissible,ruf2022composite} bounds the probability that a nonnegative unit-mean martingale ever exceeds $x$ by $1/x$.
When the martingale converges to $0$ and crosses each level without overshoot---the continuous-path idealization---that bound is attained.
The equality combines the upper bound already carried by Ville's inequality with a matching lower bound from Doob's maximal identity, under which the running maximum is distributed as $1/U$ for $U$ uniform on $[0,1]$.
The resulting heavy Pareto-type $1/x$ tail of $M_{\tau^\star}$ is the source of the divergences in the random-time peeking penalties below; the estimation of such running extrema is itself possible~\cite{balsubramani2020p}.

\begin{theorem}[Doob's maximal identity / Ville tightness]
\label{thm:ville-tight}
Let $(M_t)$ be a nonnegative martingale with $M_0=1$ and $M_t\to0$ a.s., let $\tau^\star$ be its ultimate-maximum time of~\eqref{eq:tau-star}, and fix a level $x > 1$.  Then
\[
  \PP(M_{\tau^\star} \geq x)
  = \PP\!\left(\sup_{t\geq 0} M_t \geq x\right)
  \leq \frac{1}{x},
\]
with equality at that level exactly when $x$ is crossed without overshoot: writing $\sigma_x := \inf\{t : M_t \geq x\}$ for the first passage to $x$, $M_{\sigma_x} = x$ a.s.\ on $\{\sigma_x < \infty\}$; the deficit is the mean overshoot of Theorem~\ref{thm:firstpassage-exact}, divided by the level.
A continuous-time vanishing martingale with continuous paths crosses every level without overshoot, and its running maximum is then exactly Pareto$(1)$~\cite{nikeghbali2006doobs}.
No discrete-time vanishing martingale attains equality at every level at once: absence of overshoot at every $x \geq 1$ forces $\sup_{t} M_t \leq 1$ a.s., and a nonnegative martingale with $M_0 = 1$ bounded by $1$ is constant.
The Pareto$(1)$ law is the continuous-time idealization that the discrete-time tail approaches from below, level by level, as the overshoot vanishes.
\end{theorem}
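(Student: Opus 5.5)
The statement has four parts, and I will prove them in order. The first is the identity of events, $\{M_{\tau^\star}\ge x\}=\{\sup_{t\ge0}M_t\ge x\}$ for $x>1$. Since $M_t\to0$ a.s., the supremum over $t\ge1$ is attained, so $M_{\tau^\star}=\sup_{s\ge1}M_s$. Because $M_0=1<x$, the index $t=0$ cannot be where $x$ is exceeded, which gives the equality. The bound $\le 1/x$ is then Ville's inequality, Corollary~\ref{cor:ville} or Theorem~\ref{thm:ville-DV}.

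For the equality case I will use optional stopping at $\sigma_x$ and avoid any uniform-integrability argument. The stopped martingale $M_{t\wedge\sigma_x}$ is nonnegative and bounded by $\max(x, M_{\sigma_x})$ only on the crossing event, so I will instead argue through the limit. On $\{\sigma_x=\infty\}$ we have $M_{t\wedge\sigma_x}=M_t\to0$, and on $\{\sigma_x<\infty\}$ it tends to $M_{\sigma_x}$.

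Take the exact first-passage identity as given by Theorem~\ref{thm:firstpassage-exact}, that is $\E[M_{\sigma_x}\ind\{\sigma_x<\infty\}]=1$ for a vanishing martingale. If I must derive it myself, I would use that $M_{t\wedge\sigma_x}$ is dominated on $\{t<\sigma_x\}$ by $x$. Then $1=\E[M_{t\wedge\sigma_x}]=\E[M_{\sigma_x}\ind\{\sigma_x\le t\}]+\E[M_t\ind\{\sigma_x>t\}]$. The second term tends to $0$ by dominated convergence, since $M_t\ind\{\sigma_x>t\}\le x$ and $M_t\to0$. The first term increases monotonically to $\E[M_{\sigma_x};\sigma_x<\infty]$.

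Writing $M_{\sigma_x}=x+O_x$ with overshoot $O_x\ge0$, this gives
\[
1=x\,\PP(\sigma_x<\infty)+\E[O_x;\sigma_x<\infty].
\]
So $1/x-\PP(\sup_t M_t\ge x)=\E[O_x;\sigma_x<\infty]/x$. This quantity vanishes iff $O_x=0$ a.s. on the crossing event, which is exactly the stated characterization with the deficit equal to the mean overshoot divided by the level.

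The continuous-time claim follows by the same computation, since path continuity forces $M_{\sigma_x}=x$. This gives $\PP(\sup M\ge x)=1/x$ for all $x\ge1$, i.e.\ Pareto$(1)$, in agreement with \cite{nikeghbali2006doobs}.

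The last claim is that no discrete-time vanishing martingale attains equality at every level. I expect this step to be the main subtlety. Suppose there is no overshoot at every $x\ge1$, and suppose $\PP(\sup_t M_t>1)>0$. Then on some event of positive probability there is a first time $t$ with $M_t>1$, at which $M_{t-1}\le 1<M_t$. Set $x:=M_t$. Then $x$ is crossed at time $t$, and I need to show that some level is crossed with overshoot.

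Choose a level $y\in(M_{t-1}\vee 1, M_t)$. Then $\sigma_y=t$ on that event, since all earlier values are $\le1<y$, and $M_{\sigma_y}>y$. To get positive probability for a single fixed $y$, I will intersect over a rational $y$: the union over rational $y$ of the events $\{\sigma_y<\infty,\ M_{\sigma_y}>y\}$ covers the event in question, so some rational $y$ carries positive probability. This contradicts the no-overshoot assumption.

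Hence $\sup_t M_t\le1$ a.s. A martingale with $M_t\le 1=M_0$ and $\E M_t=1$ satisfies $M_t=1$ a.s., so it is constant. This contradicts $M_t\to0$. The closing remark, that the tail approaches Pareto from below as the overshoot vanishes, is a reading of the deficit formula and needs no separate argument.
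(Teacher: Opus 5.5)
Your proposal is correct and follows the paper's proof. It uses the same event identity via $\sup_{t\ge0}M_t=\max(1,\sup_{s\ge1}M_s)$ and the same optional-stopping overshoot accounting, which you re-derive inline exactly as the paper proves Theorem~\ref{thm:firstpassage-exact}; this gives you both directions of the equality characterization. For the last claim you use the same rational-level argument at the first time $M$ exceeds $1$, and your countable-subadditivity phrasing is equivalent to the paper's discarding of a countable null set.
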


Theorem~\ref{thm:ville-tight} reads the ultimate maximum from the start of the path.
The same accounting holds from every stopping time onward, with the current value in place of $M_0$.
Equality is attained in the continuous-path regime, and the conditional form of the maximal identity lives there as well,
so Proposition~\ref{prop:maximal-conditional} --- that form, read at a stopping time and against a level already attained --- is stated in continuous time; the discrete-time convention of the paper applies elsewhere.

At $\tau=0$ and $m=1$, part~(ii) reduces to part~(i), which is the Pareto$(1)$ law of Theorem~\ref{thm:ville-tight}.
On the logarithmic scale that law is a standard exponential, so $\E[\log M^\ast_\infty]=1$: the ultimate maximum stands one nat above the starting value in expectation.
Part~(ii) says the same nat is available from every stopping time onward, at the same value and whatever the path has done so far.
The anticipation an observer could still collect by holding out for the global maximum does not deplete along the trajectory; it renews at every moment the observer is entitled to recognize, and the elapsed history contains no information about how much of it remains.
The comparison with Lemma~\ref{lem:sup-nonintegrable} and Proposition~\ref{prop:infinite} is a matter of scale: the same object has $\E[M^\ast_\infty]=\infty$ and $\E[\log M^\ast_\infty]=1$, so the peeking penalty diverges while the anticipation budget it measures stays at one nat.

Each hypothesis supplies part of the statement.
Positivity and $M_\infty=0$ place the whole mass of $M^\ast_\infty$ on $[m,\infty)$ with the Pareto scaling; continuity of $M^\ast$ is the no-overshoot condition, under which the level $a$ is met at $M_{\sigma_a}=a$.
The absence of positive jumps in (ii) makes the same no-overshoot property hold from every stopping time onward, at levels below the running record as well.
The discrete-time limitation of Theorem~\ref{thm:ville-tight} binds here too, and Proposition~\ref{prop:maximal-conditional} is the continuous-path idealization of that limit, at every stopping time simultaneously.

The deficit in Ville's bound has a closed form: for a vanishing martingale the first-passage probability falls below the bound by exactly the mean overshoot divided by the level.

\begin{theorem}[Discrete-time first-passage identity]
\label{thm:firstpassage-exact}
Let $(M_t)_{t\ge0}$ be a nonnegative martingale with $M_0=1$ and $M_t\to0$ a.s.  For $x\ge1$ let $\sigma_x:=\inf\{t: M_t\ge x\}$ and, on $\{\sigma_x<\infty\}$, the overshoot $J_x:=M_{\sigma_x}-x\ge0$.
Then the optional-stopping mass is conserved,
\begin{equation}\label{eq:fp-mass}
  \E\!\left[M_{\sigma_x}\,\ind\{\sigma_x<\infty\}\right]=1,
\end{equation}
and the first-passage probability satisfies the identity
\begin{equation}\label{eq:fp-exact}
  \PP\!\Bigl(\sup_{t\ge0}M_t\ge x\Bigr)
  =\PP(\sigma_x<\infty)
  =\frac1x\Bigl(1-\E\!\left[J_x\,\ind\{\sigma_x<\infty\}\right]\Bigr).
\end{equation}
The named slack is the deficit $\E[J_x\ind\{\sigma_x<\infty\}]/x$.
Equality with the Pareto$(1)$ law of Theorem~\ref{thm:ville-tight} holds iff $J_x=0$ a.s.\ (no overshoot).
The overshoot law depends on the increment distribution, so no universal closed form for $\E[J_x]$ exists beyond the overshoot-free extreme; the identity~\eqref{eq:fp-exact} accounts for the slack in every case.
\end{theorem}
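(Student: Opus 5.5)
The plan is to apply optional stopping to the stopped martingale $M^{\sigma_x}_t := M_{t\wedge\sigma_x}$ and pass to the limit $t\to\infty$, splitting the terminal value according to whether the crossing happens. Since $M^{\sigma_x}$ is a nonnegative martingale with $M_0=1$, we have $\E[M_{t\wedge\sigma_x}]=1$ for every finite $t$. Write
\[
  1=\E\!\left[M_{\sigma_x}\ind\{\sigma_x\le t\}\right]+\E\!\left[M_t\ind\{\sigma_x>t\}\right].
\]
The first term increases monotonically to $\E[M_{\sigma_x}\ind\{\sigma_x<\infty\}]$ by monotone convergence, since $M\ge0$.

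The second term carries the real content, and it is the step where the hypothesis $M_t\to0$ is used. On $\{\sigma_x>t\}$ we have $0\le M_t<x$, so $M_t\ind\{\sigma_x>t\}$ is dominated by the constant $x$. It converges a.s.\ to $0$: on $\{\sigma_x<\infty\}$ the indicator eventually vanishes, and on $\{\sigma_x=\infty\}$ we have $M_t\to0$. Dominated convergence then gives $\E[M_t\ind\{\sigma_x>t\}]\to0$, which proves the mass conservation~\eqref{eq:fp-mass}. This dominance by the level is exactly why no uniform integrability of $M$ itself is needed. It is also the point where a general supermartingale would fail: without $M_t\to0$, the never-crossing mass $\E[M_\infty\ind\{\sigma_x=\infty\}]$ survives, matching the remark after Theorem~\ref{thm:ville-DV}.

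Next substitute $M_{\sigma_x}=x+J_x$ on $\{\sigma_x<\infty\}$ into~\eqref{eq:fp-mass}:
\[
  1=x\,\PP(\sigma_x<\infty)+\E[J_x\ind\{\sigma_x<\infty\}].
\]
Rearranging gives~\eqref{eq:fp-exact}. Because $J_x\ge0$, the expectation $\E[J_x\ind\{\sigma_x<\infty\}]$ is finite, being at most $1$. The equality $\{\sup_t M_t\ge x\}=\{\sigma_x<\infty\}$ follows directly from the definition of $\sigma_x$ as a first entrance time in discrete time, where the infimum is attained whenever the set is nonempty.

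Finally, the characterization of equality: $\PP(\sigma_x<\infty)=1/x$ holds iff $\E[J_x\ind\{\sigma_x<\infty\}]=0$. Since $J_x\ge0$, this happens iff $J_x=0$ a.s.\ on $\{\sigma_x<\infty\}$. Comparing with the Pareto$(1)$ value $1/x$ from Theorem~\ref{thm:ville-tight} then gives the stated iff.

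The remark that no universal closed form exists for the overshoot is a comment rather than a claim requiring proof. At most I would illustrate it with two increment laws that share the same mean but have different overshoot expectations.
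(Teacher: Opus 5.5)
Your proposal is correct and follows the paper's proof essentially verbatim. Both use optional stopping at the bounded time $\sigma_x\wedge t$, monotone convergence on the crossing part, dominated convergence by the level $x$ (using $M_t\to0$) on the non-crossing part, and then the substitution $M_{\sigma_x}=x+J_x$. One small precision: the identity $\{\sup_t M_t\ge x\}=\{\sigma_x<\infty\}$ also needs the supremum to be attained. That holds a.s.\ because $M_t\to0$ and $x>0$, not merely because the infimum defining $\sigma_x$ is attained.
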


The mass the identity conserves is exactly the quantity a sample cannot reach.
The same partition function that produced the crossing statements, read at the wealth itself, says how large a sample would have to be.

\begin{proposition}[R\'enyi spectrum of a test martingale]
\label{prop:renyi-wealth}
Let $(M_t)$ be a nonnegative martingale with $M_0=1$.
Taking the single factor $\Pi=M$ in Theorem~\ref{thm:pathspace} gives the wealth cumulant $\Phi_T(s)=\log\E_P[M_T^{\,s}]$, convex in $s$, with
\[
  \Phi_T(1)=0,\qquad
  \Phi_T'(0)=\E_P[\log M_T]=-\KL(P\|Q_M),\qquad
  \Phi_T'(1)=\E_P[M_T\log M_T]=\KL(Q_M\|P),
\]
for $\dd Q_M/\dd P := M_T$.
The separation between the typical value and the mass-holding one is $\Phi_T'(1)-\Phi_T'(0)=\KL(Q_M\|P)+\KL(P\|Q_M)$.
For a likelihood-ratio stream $M_T=\prod_{t\le T}R_t$, at any $s$ where the conditional one-step cumulants $\log\E_P[R_t^{\,s}\mid\cF_{t-1}]$ are all deterministic --- independent steps being the canonical case --- the spectrum is their sum, $\Phi_T(s)=\sum_{t\le T}\log\E_P[R_t^{\,s}]$, evaluated term by term without simulating a single path.
\end{proposition}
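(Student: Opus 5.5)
The plan is to treat $\Phi_T(s)=\log\E_P[M_T^{\,s}]$ as the one-factor instance of Theorem~\ref{thm:pathspace}, with $W=1$, $\Pi_{1,T}=M_T$ and exponent $\alpha_1=s$, and to read off its properties from Proposition~\ref{prop:grad}.

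\textbf{Convexity and the first derivative.}
Convexity in $s$ is Proposition~\ref{prop:grad} (equivalently Hölder's inequality), on the interval where $\E_P[M_T^s]<\infty$, which contains $[0,1]$. The first derivative is $\Phi_T'(s)=\E_{Q^s}[\log M_T]$, where $Q^s$ is the Gibbs law with $\dd Q^s/\dd P=M_T^s/\E_P[M_T^s]$. Two endpoint values settle the normalizations. At $s=1$ the martingale property gives $\E_P[M_T]=\E_P[M_0]=1$, so $\Phi_T(1)=0$ and $Q^1=Q_M$. At $s=0$ the Gibbs law is $Q^0=P$.

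\textbf{The two identities at the endpoints.}
At $s=1$,
\[
\Phi_T'(1)=\E_{Q_M}[\log M_T]=\E_P[M_T\log M_T]=\KL(Q_M\|P),
\]
directly from the definition of relative entropy with $\dd Q_M/\dd P=M_T$.

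At $s=0$, $\Phi_T'(0)=\E_P[\log M_T]$. This requires $P\ll Q_M$, i.e.\ $M_T>0$ a.s.; otherwise both sides are $-\infty$ and the identity holds in the extended sense. With that, $\dd P/\dd Q_M=1/M_T$, so
\[
\KL(P\|Q_M)=\E_P[-\log M_T],
\]
which is the claimed value of $\Phi_T'(0)$. The separation formula is the difference of these two identities.

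\textbf{Main obstacle: justifying the derivatives at the endpoints.}
I would first try a convexity argument. The map $s\mapsto M_T^s=e^{s\log M_T}$ is convex, so difference quotients are monotone, and monotone convergence then yields the one-sided derivatives. Concretely:
\begin{itemize}
\item at $s=1^-$, the quotients $(M_T-M_T^s)/(1-s)$ decrease to $M_T\log M_T$;
\item at $s=0^+$, the quotients $(M_T^s-1)/s$ decrease to $\log M_T$.
\end{itemize}
These give the one-sided derivatives, possibly infinite, which is the content of the ``assume differentiation is justified'' clause. Dividing by $\E_P[M_T^s]\to1$ at both ends converts derivatives of $\E_P[M_T^s]$ into derivatives of $\Phi_T$.

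\textbf{The likelihood-ratio stream.}
For $M_T=\prod_t R_t$, condition successively by the tower property:
\[
\E_P\Bigl[\prod_{t\le T}R_t^s\Bigr]=\E_P\Bigl[\prod_{t<T}R_t^s\,\E_P[R_T^s\mid\cF_{T-1}]\Bigr].
\]
When $\E_P[R_T^s\mid\cF_{T-1}]$ is deterministic it factors out as a constant. Inducting downward in $t$ then gives
\[
\Phi_T(s)=\sum_{t\le T}\log\E_P[R_t^s].
\]
In the independent-steps case each factor is simply its unconditional moment.
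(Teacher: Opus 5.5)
Your proposal is correct and follows the paper's proof essentially step for step. Both use the one-factor instance of Theorem~\ref{thm:pathspace} with Proposition~\ref{prop:grad} for convexity and the derivatives, the Gibbs laws $Q_M$ at $s=1$ and $P$ at $s=0$ for the two endpoint identities, and the tower rule to peel off deterministic conditional factors for the likelihood-ratio stream; your monotone-convergence argument and your $M_T>0$ caveat spell out what the paper leaves to the differentiation hypothesis of Proposition~\ref{prop:grad}.

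One slip does not affect the conclusion. For convex $s\mapsto M_T^s$ the left secant slopes $(M_T-M_T^s)/(1-s)$ \emph{increase}, not decrease, to $M_T\log M_T$ as $s\uparrow1$. Monotone convergence should therefore be run from below, using the integrable minorant $M_T-1$ (the quotient at $s=0$); this still gives $\Phi_T'(1^-)=\E_P[M_T\log M_T]\in[0,\infty]$.
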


The two halves of the spectrum answer different questions about the same process.
Its slope at $0$ is the almost-sure decay that drives $M_t$ to zero, and its slope at $1$ is the improbability of the paths that keep the mean at one; the Jeffreys separation between them measures how far a sample mean falls short of the mass.

Dependence between the steps breaks the term-by-term sum away from $s=1$: the conditional cumulants become $\cF_{t-1}$-measurable random variables, their realized sum varies from path to path, and the deterministic sum $\sum_{t\le T}\log\E_P[R_t^{\,s}]$ need no longer equal $\Phi_T(s)$.
The slope at $1$ still decomposes: the chain rule (Lemma~\ref{lem:chain-rule}) resolves $\Phi_T'(1)=\KL(Q_M\|P)$ into the sum $\sum_{t\le T}\E_{Q_M}[\KL(Q_t(\cdot\mid X_{1:t-1})\,\|\,P_t(\cdot\mid X_{1:t-1}))]$ of per-step conditional divergences averaged under $Q_M$.

For an i.i.d.\ stream --- one-step ratios $R_t=q(X_t)/p(X_t)$ with the $X_t$ drawn from $p$ under $P$ and $\KL(q\|p)$ finite --- the spectrum grows linearly, $\Phi_T(s)=T\log\E_P[R_1^{\,s}]$.
The mass then acquires an asymptotic shape: $T^{-1}\log M_T$ concentrates at $\KL(q\|p)$ under $Q_M$ by the law of large numbers.
The change of measure $\dd P=M_T^{-1}\,\dd Q_M$ on $\{M_T>0\}$ therefore places the concentration set at $P$-probability $e^{-\Phi_T'(1)+o(T)}$.
Estimating $\E_P[M_T]=1$ by averaging $n$ independent draws under $P$ therefore requires $\log n\gtrsim\Phi_T'(1)$, and below that the sample mean is exponentially small.
The mechanism is the truncated-mean identity $\E_P[M_T\ind\{\log M_T\le u\}]=Q_M(\log M_T\le u)$, whose right-hand side collapses once the ceiling $u\approx\log n$ that a size-$n$ sample resolves falls below the concentration point.
The two routes to the mass diverge exactly here: the identity $\Phi_T(1)=0$ returns $\E_P[M_T]=1$ for every test martingale, while a sample mean of $M_T$ reaches that same $1$ only once $n$ passes the size $\Phi_T'(1)$ sets.

\paragraph{Other sharpenings.}
In concurrent and independent work, de la Pe\~na and Klass~\cite{delapena2026exact} establish an exact identity for the first-passage probability of a nonnegative supermartingale $(M_n)$ with $M_0=1$.
Writing $T_b$ for the first time $M$ reaches a level $b>1$,
they show $\PP(T_b<\infty)=(1-D_b-R_b)/(b+J_b)$, where $J_b$ is the expected overshoot at crossing, $D_b$ the cumulative predictable loss accrued before crossing, and $R_b$ the residual mass on paths that never cross.
Their decomposition splits the slack in Ville's inequality into three mechanisms that collapse, for a vanishing martingale, to the single overshoot term of Theorem~\ref{thm:firstpassage-exact}.
The overshoot $J_b$ is that identity's deficit conditioned on crossing, the two differing by the crossing probability, while the predictable-loss and survival terms account for the stopping defect and the never-crossing paths that the martingale case suppresses.
Both first-passage identities are conservation statements read off optional stopping.
A separate line sharpens those same tails by a complementary route: the randomized and exchangeable refinements of Markov's, Chebyshev's, and Chernoff's inequalities~\cite{ramdas2026randomized} add external randomization to tighten the bound itself, whereas the present account leaves the bound in place and measures its information cost.
The two are orthogonal---randomization sharpens the inequality, while the relative-entropy residual of this paper explains it.

The identity names a residual; which parameter governs it is a separate question, settled by measurement.
Ville tightness is a statement about the first-passage ratio $x\,\PP(\sup_{t} M_t \geq x)$, which sits at the continuous-path value $1$ minus the mean overshoot of Theorem~\ref{thm:firstpassage-exact}; the corresponding residual of Theorem~\ref{thm:ville-DV} is its logarithm, $-\log\bigl(x\,\PP(\sup_t M_t\ge x)\bigr)$.
The deficit there is set by the increment scale and not by the horizon, so it is a property of the increment law at the crossing and does not wash out as the horizon grows (Section~\ref{subsec:eval-overshoot-convergence}).

\subsection{Cumulant majorants and intrinsic time}
\label{sec:cumulant-majorants}

How much of the tail entropy a bound keeps depends on the bound as much as on the law.
The range-matched and the variance-aware bound are both Chernoff bounds that replace the increment's cumulant generating function by a quadratic upper bound (majorant) $\lambda^2v/2$, differing only in the proxy $v$ they use, and both are surrogates for one tilt-dependent quantity.
Matching on variance is legitimate only for laws that are $\sigma^2$-sub-Gaussian; for a skewed or heavy-tailed law it is not, and there the variance-aware bound of Proposition~\ref{prop:freedman} supplies the correction.
Neither closes the gap.  The residual persists under both and remains largest where the increments are heaviest, and the identity of Section~\ref{sec:master} names it in either case.

A bounded-increment cumulant bound turns the same master inequality into Azuma--Hoeffding, and states the exact tail alongside it.
That bound is the conditional Hoeffding lemma: for a sub-$\sigma$-field $\cG$ and a random variable $X$ with $a \leq X \leq b$ a.s.\ and $\E[X\mid\cG]=0$,
$\E[e^{\lambda X}\mid\cG] \leq \exp\bigl(\lambda^2(b-a)^2/8\bigr)$.

The bound that results is the Azuma--Hoeffding tail (Theorem~\ref{thm:azuma}).

The Azuma bound sits above the exact tail~\eqref{eq:azuma-exact} by three relaxations.
Writing $Q := P(\cdot \mid \sum_s d_s \geq x)$ and $\overline Q^\star$ for the law tilted by the Hoeffding exponential~\eqref{eq:superexp} at the optimal $\lambda^\ast$,
\begin{equation}\label{eq:azuma-three}
  -\log P\Bigl(\textstyle\sum_s d_s \geq x\Bigr)
  - \frac{2x^2}{\sum_s (b_s-a_s)^2}
  = \underbrace{\lambda^\ast\bigl(\E[\textstyle\sum_s d_s \mid Q] - x\bigr)}
      _{\text{event relaxation}}
  + \underbrace{\KL(Q \| \overline Q^\star)}_{\text{tilt sub-optimality}}
  + \underbrace{\bigl(-\log \E[E_t(\lambda^\ast)]\bigr)}_{\text{cumulant relaxation}} .
\end{equation}
This display is the event identity~\eqref{eq:event} read at the optimizing tilt $\lambda^\ast$, with its event relaxation split off: the tilt sub-optimality and the cumulant relaxation are the corollary's two residuals, and the first term is the cost of passing from $\E[\sum_s d_s \mid Q]$ to the threshold $x$.
Only the third is the Hoeffding cumulant, and how the three compare depends on how tightly the range bounds the increment.
For $\pm c_s$ Rademacher increments the cumulant relaxation is tight --- at $t=8$, $x=3$ it contributes $0.013$ of a $1.372$ gap, against $0.578$ for the event relaxation and $0.781$ for the tilt --- so the combined gap is most of the exact tail.
When the range badly overstates the variance the ordering changes: for increments uniform on $[-1,1]$, at $t=12$, $x=4$, the three terms are $0.233$, $2.459$ and $0.445$, so the cumulant is no longer the smallest.

Replacing the bounded-increment cumulant with a variance-aware one gives the Bennett--Bernstein--Freedman family.
Let $\langle M\rangle_t := \sum_{s=1}^t \E[d_s^2 \mid \cF_{s-1}]$ denote the predictable variance process.

The one-step Bernstein bound (Lemma~\ref{lem:bernstein}) is the majorant behind the variance-sensitive members.

Substituting it gives the Freedman--Bernstein line-crossing bound (Proposition~\ref{prop:freedman}).
The proxy $v$ is a single function of the tilt, so these two members approximate the same object.
Write the \emph{intrinsic time} of an increment $d$ at tilt $\lambda$ as
\begin{equation}\label{eq:intrinsic-time}
  v(\lambda) \;:=\; \frac{2\Psi(\lambda)}{\lambda^{2}},
  \qquad \Psi(\lambda) = \log\E\bigl[e^{\lambda d}\bigr],
\end{equation}
the variance proxy a Chernoff bound must use to majorize the cumulant by the quadratic $\lambda^{2}v/2$.
Since $\Psi(\lambda) = \lambda^{2}\sigma^{2}/2 +
\kappa_{3}\lambda^{3}/6 + O(\lambda^{4})$ for a mean-zero law whose cumulant
generating function is finite near the origin, $v(\lambda) \to \sigma^{2}$ as $\lambda \to 0$: \emph{the variance is the small-tilt limit of intrinsic time}.
The name records that $v(\lambda)$ is the variance of the Gaussian increment whose cumulant matches $\Psi$ at that tilt, and on the Brownian clock variance and elapsed time coincide.
The bounded increments of this section meet that condition, and the two classical proxies are then surrogates for one quantity.
Azuma--Hoeffding uses $R^{2}/4$, flat in $\lambda$; Bennett--Bernstein--Freedman uses $\sigma^{2}/(1-\lambda b/3)$, which meets the intrinsic time at $\lambda=0$ and grows thereafter.
Neither proxy dominates the other: which is tighter depends on the deviation being tested at, and the crossing has a closed form in that deviation.
Over $T$ steps a Chernoff bound on the quadratic majorant captures $x^{2}/(2Tv)$ nats at deviation $x$, so the variance-aware proxy is the tighter of the two exactly while
\[
  x \;<\; x^{\dagger} \;:=\; \frac{3T\bigl(R^{2}/4-\sigma^{2}\bigr)}{b}.
\]
Popoviciu's inequality makes the numerator nonnegative, so $x^{\dagger}$ is well defined for every bounded law and vanishes exactly at $R=2\sigma$; a variance small enough against the range makes the variance-aware bound the sharper one at every reachable deviation~\cite{semenova2023simpler}.
The simplest mean-zero martingale that is not symmetric---two atoms, $+2$ with probability $1/3$---has $\sigma^{2}=2$, $R=3$ and $b=2$, so $x^{\dagger}=0.375\,T$ and the two bounds cross at a deviation a run of length $T$ can reach.

\subsection{Parameter mixtures and curved boundaries}

A single exponential supermartingale $E_t(\lambda)$ certifies a bound only at the parameter $\lambda$ fixed in advance, yet the analyst rarely knows the right $\lambda$ before seeing the data.
Averaging the whole family over a prior $\mu$ on $\Lambda$ yields one supermartingale valid simultaneously for every $\lambda$, at the cost of a relative-entropy penalty for whichever posterior the data end up favoring.
This is the method of mixtures, and it is how a fixed-parameter tail bound becomes a single time-uniform curved boundary.

That construction is the pathwise mixture bound (Proposition~\ref{prop:mixture}).

Markov's inequality reaches~\eqref{eq:mixturePAC} in one step and leaves no record of what that step cost. 
In fact, two quantities are discarded in passing from~\eqref{eq:mixturepathwise} to~\eqref{eq:mixturePAC}, and Proposition~\ref{prop:mixture-exact} restores both. 
On path space the same passage is an equality once its two residuals are named: the mass on which the mixture crosses the level while the posterior's divergence from the running tilt exceeds the margin,
and the deficit at the crossing itself.

Its exact form (Proposition~\ref{prop:mixture-exact}) evaluates both of the discarded quantities.

On $\cV_x\setminus\cA_x$ the mixture crosses the level but the certificate never fires; at the Bayes posterior, that set is empty.

The posterior $\rho^*_t$ moves as data arrive, and the relative entropy between consecutive posteriors is the information the path spends on that step.
Accumulating it puts information on an axis against which each first crossing of an evidence level can be located, so what the next factor of evidence requires is the information accumulated between consecutive crossings (Figure~\ref{fig:evidence-spend}).

\begin{figure}[t]
  \centering
  \includegraphics[width=0.98\linewidth]{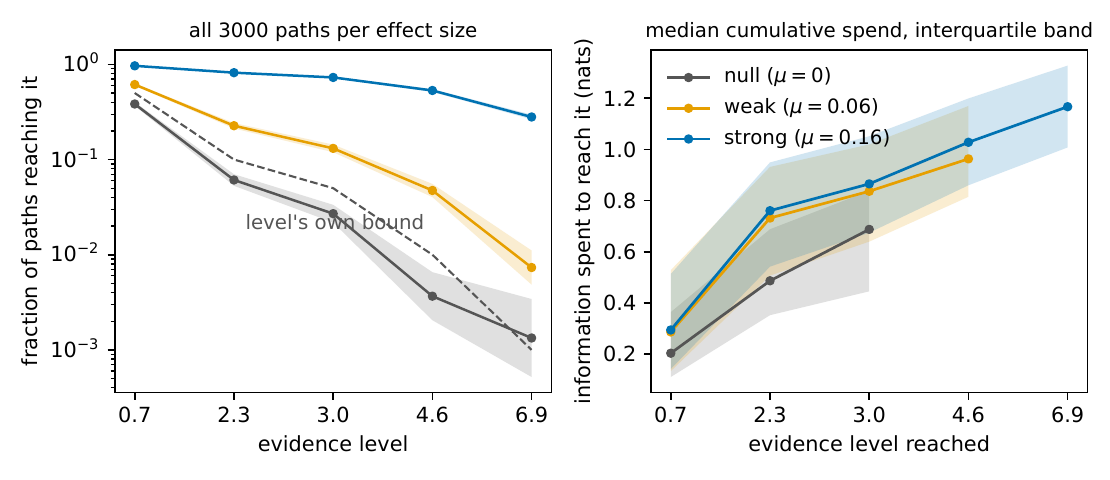}
  \caption{\textbf{What evidence costs, in nats, along realized paths.}
  A Gaussian stream read through the mixture of Proposition~\ref{prop:mixture} over a grid of tilts; the spend on a step is $\KL(\rho^*_t\|\rho^*_{t+1})$ between consecutive optimal posteriors, and a milestone is the first crossing of $\log(1/\alpha)$ by $\log\overline E_t$.
  Both panels are read against the same evidence level $\log(1/\alpha)$, so the pair gives what a level costs beside how often it is reached.
  \textbf{Left:} the fraction of all $3000$ paths per effect size reaching each level, with $95\%$ Wilson intervals, against the level's own bound.
  The null arm tracks below that bound throughout: crossing rates $0.383$, $0.061$, $0.027$, $0.0037$ and $0.0013$ against $0.5$, $0.1$, $0.05$, $0.01$ and $0.001$, the last being four crossings where three are expected.
  \textbf{Right:} the cumulative spend to first reach each level, median and interquartile band, so the rise between two ticks is the information that next factor of evidence requires.
  Each level is drawn only where at least $30$ paths reached it, and the statistics are conditional on arrival, which is why the null and weak curves stop early: reaching a deep level under a weak signal selects the paths that moved fastest, and too few remain to summarize.
  The effect size decides how often a level is reached and barely touches what reaching it costs.
  At the deepest level the three arms differ $210$-fold in the fraction arriving, while their median total spend differs by a factor of $1.66$: $0.77$ nats under the null, $0.96$ at the weak effect and $1.28$ at the strong one.}
  \label{fig:evidence-spend}
\end{figure}

\subsection{Exact crossing of a curved boundary}
\label{sec:curved-crossing}

A sequential test that spends its error level along a curved boundary is calibrated by one integral of that boundary in the continuous-path idealization, and the calibration fails once the test is run on a discrete clock.
On a symmetric $\pm1$ walk read through its running maximum, a boundary set to spend $\alpha=1-e^{-1/2}\approx0.394$ spends about $0.442$ instead---an eighth over its nominal level.
Theorem~\ref{thm:curved-crossing-discrete} locates that overrun in two computable terms.

The instance to hold in view is the drawdown of a martingale below its own running maximum: the boundary caps how far the path may fall from its record, and the record itself is the clock the boundary is read against.
Drawdown limits in sequential monitoring have exactly this form.
Posing the question needs no continuum---a boundary $\varphi$, a clock $A$, and the event $\{\exists\,t:\ X_t>\varphi(A_t)\}$ make sense on any filtration---and the closed form below comes from one function-indexed family of martingales sharing a single increasing process, whose continuous-path member is the classical law (Appendix~\ref{app:curved-cts}).
Call $X=N+A$ a \emph{discrete class-$(\Sigma)$ process} when $X\ge0$ is adapted with $X_0=0$, $N$ is a martingale with $N_0=0$, $A$ is adapted and nondecreasing with $A_0=0$, and $\Delta A_t>0$ only at times with $X_t=0$.
The drawdown just described is one such process, with $A$ the running maximum itself.

\begin{theorem}[Discrete-time crossing of a curved boundary]
\label{thm:curved-crossing-discrete}
Let $X=N+A$ be a discrete class-$(\Sigma)$ process and let $\varphi\colon\R_+\to(0,\infty]$ be Borel with $I:=\int_0^\infty\varphi(x)^{-1}\dd x<\infty$.
Write $w(x):=\exp\bigl(-\int_x^\infty\varphi(z)^{-1}\dd z\bigr)$, $\zeta:=1-w$ and $h:=w/\varphi$, so that $w'=h$ and $\zeta'=-h$, and assume $h$ is bounded.
Then $M_t:=\zeta(A_t)+h(A_t)X_t$ is nonnegative with $M_0=1-e^{-I}$, the crossing event is exactly
\begin{equation}\label{eq:discrete-crossing-event}
  \bigl\{\exists\,t:\ X_t>\varphi(A_t)\bigr\}
  =\bigl\{\sup\nolimits_t M_t>1\bigr\},
\end{equation}
and $M$ decomposes pathwise as
\begin{equation}\label{eq:discrete-crossing-decomp}
  M_t=M_0+\sum_{s\le t}h(A_{s-1})\,\Delta N_s+\sum_{s\le t}R_s,
  \qquad
  R_s:=h(A_{s-1})\,\Delta A_s-\bigl(w(A_s)-w(A_{s-1})\bigr),
\end{equation}
the first sum a martingale and $R_s=\int_{A_{s-1}}^{A_s}\bigl(h(A_{s-1})-h(z)\bigr)\dd z$ the quadrature residual of $h$ across the jump of $A$.
If in addition $h$ has finite total variation $\mathrm{var}(h)$ on $\R_+$, the jumps of $A$ are uniformly bounded, $A_t\to\infty$, and $M_t\to0$ off the crossing event, then with $T:=\inf\{t:X_t>\varphi(A_t)\}$ and overshoot $J:=M_T-1\ge0$,
\begin{equation}\label{eq:discrete-crossing-law}
  \PP\bigl(\exists\,t:\ X_t>\varphi(A_t)\bigr)
  =1-e^{-I}
   +\E\Bigl[\sum_{s\le T}R_s\Bigr]
   -\E\bigl[J\,\ind\{T<\infty\}\bigr].
\end{equation}
\end{theorem}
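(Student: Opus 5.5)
The proof has three parts: the algebra of the functions $w,\zeta,h$; a pathwise telescoping that gives the decomposition~\eqref{eq:discrete-crossing-decomp} together with the crossing-event identity; and an optional-stopping argument in the spirit of Theorem~\ref{thm:firstpassage-exact} for the law~\eqref{eq:discrete-crossing-law}.

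\emph{Algebra and nonnegativity.} Since $\varphi>0$ and $I<\infty$, the function $w$ is absolutely continuous, nondecreasing, and satisfies $w(0)=e^{-I}$, $w(\infty)=1$ and $w'=w/\varphi=h$ a.e. Then $M_0=\zeta(0)+h(0)\cdot 0=1-e^{-I}$. Nonnegativity is immediate because $\zeta\ge0$, $h\ge0$ and $X\ge0$.

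\emph{Crossing event.} The key algebraic step is
\[
M_t-1=-w(A_t)+\frac{w(A_t)}{\varphi(A_t)}X_t=w(A_t)\Bigl(\frac{X_t}{\varphi(A_t)}-1\Bigr).
\]
Since $w>0$, the sign of $M_t-1$ is the sign of $X_t-\varphi(A_t)$; the case $\varphi=\infty$ gives $h=0$ and no crossing. Hence $M_t>1$ if and only if $X_t>\varphi(A_t)$, which is~\eqref{eq:discrete-crossing-event}.

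\emph{Decomposition.} I would write the increment as
\[
\Delta M_s=\bigl(\zeta(A_s)-\zeta(A_{s-1})\bigr)+h(A_s)X_s-h(A_{s-1})X_{s-1}.
\]
The class-$(\Sigma)$ condition says that on $\{\Delta A_s>0\}$ we have $X_s=0$, so $h(A_s)X_s=h(A_{s-1})X_s$ in every case: if $\Delta A_s=0$ the arguments agree, and otherwise $X_s=0$. Therefore
\[
\Delta M_s=-(w(A_s)-w(A_{s-1}))+h(A_{s-1})(\Delta N_s+\Delta A_s)=h(A_{s-1})\Delta N_s+R_s.
\]
The integral form of $R_s$ follows from $w'=h$. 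The sum $\sum h(A_{s-1})\Delta N_s$ is a martingale transform of $N$ by a bounded predictable integrand: $A_{s-1}$ is $\cF_{s-1}$-measurable and $h$ is bounded.

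\emph{Crossing law.} I would stop at $T\wedge n$ and take expectations:
\[
\E[M_{T\wedge n}]=M_0+\E\Bigl[\sum_{s\le T\wedge n}R_s\Bigr].
\]
Letting $n\to\infty$ and splitting according to $\{T<\infty\}$ gives
\[
\lim_n \E[M_{T\wedge n}]=\E\bigl[(1+J)\ind\{T<\infty\}\bigr]+\lim_n\E\bigl[M_n\ind\{T=\infty\}\bigr],
\]
and the last term should vanish because $M_n\to0$ off the crossing event. Rearranging then yields~\eqref{eq:discrete-crossing-law}.

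\emph{Main obstacle: the limit interchange.} Both limits require domination, and this is where the extra hypotheses enter.

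For the residual sum, each $|R_s|$ is at most $\Delta A_s$ times the oscillation of $h$ over $[A_{s-1},A_s]$. Because the jumps of $A$ are bounded by some $c$, the intervals $[A_{s-1},A_s]$ are disjoint and of length at most $c$, so $\sum_s|R_s|\le c\,\mathrm{var}(h)$ uniformly. Dominated convergence therefore applies to $\E[\sum_{s\le T\wedge n}R_s]$.

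For $M$ itself, the stopped process satisfies $M_{T\wedge n}\le 1$ before $T$. At $T$ the value is $M_T=M_{T-1}+h(A_{T-1})\Delta N_T+R_T$, and the martingale increment $\Delta N_T$ is not a priori bounded. This is the step I expect to be delicate. My first approach is to use the pathwise decomposition: $M_{T\wedge n}$ is nonnegative and, by the decomposition, equals $M_0$ plus a martingale plus a term bounded by $c\,\mathrm{var}(h)$. Hence the stopped martingale part is bounded below by $-M_0-c\,\mathrm{var}(h)$, and a martingale bounded below converges a.s. Fatou's lemma then gives $\E[M_T\ind\{T<\infty\}]\le\lim_n\E[M_{T\wedge n}]$.

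For the reverse inequality I would argue uniform integrability of $\{M_{T\wedge n}\}$ from $M_{T\wedge n}\le 1+h(A_{T-1})|\Delta N_T|\ind\{T\le n\}+|R_T|$. If that fails, I would fall back on the mass-conservation argument of Theorem~\ref{thm:firstpassage-exact}, with $J$ defined so as to absorb exactly the overshoot contribution. On $\{T=\infty\}$ the process stays bounded by $1$ and tends to $0$, so bounded convergence handles that part.
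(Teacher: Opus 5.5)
Your argument follows the paper's proof in structure. It uses the same sign computation for the crossing event and the same telescoping; your observation that $h(A_s)X_s=h(A_{s-1})X_s$ in both cases compresses the paper's two-case check into one line. It also uses the same bound $\sum_s|R_s|\le\|\Delta A\|_\infty\,\mathrm{var}(h)$ and optional stopping at $T\wedge n$.

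The one step that does not close as written is the final limit. Your bound $M_{T\wedge n}\le 1+h(A_{T-1})|\Delta N_T|\ind\{T\le n\}+|R_T|$ gives uniform integrability only if $\E\bigl[h(A_{T-1})|\Delta N_T|\ind\{T<\infty\}\bigr]<\infty$. Nothing in the hypotheses makes a martingale increment read at the random time $T$ integrable, and the fallback you mention is left unspecified. Your displayed split with $\ind\{T=\infty\}$ is also not an identity at finite $n$: the complement of $\{T\le n\}$ is $\{T>n\}$.

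The obstacle comes from the split, and uniform integrability is not needed. Write instead $\E[M_{T\wedge n}]=\E[M_T\ind\{T\le n\}]+\E[M_n\ind\{T>n\}]$.
\begin{itemize}
\item The first integrand is nonnegative and nondecreasing in $n$. Monotone convergence therefore sends it to $\PP(T<\infty)+\E[J\ind\{T<\infty\}]$.
\item The second integrand is dominated by $1$, because $M_n\le1$ before the crossing. It tends to $0$ a.s.: on $\{T<\infty\}$ the indicator dies, and on $\{T=\infty\}$ the hypothesis gives $M_n\to0$.
\item The right side $M_0+\E\bigl[\sum_{s\le T\wedge n}R_s\bigr]$ is bounded by $M_0+\|\Delta A\|_\infty\,\mathrm{var}(h)$ and converges by dominated convergence.
\end{itemize}
Together these yield integrability of $J\ind\{T<\infty\}$ for free, and then the law. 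This is exactly how the paper concludes.

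Equivalently, your own Fatou step already gives $\E[M_T\ind\{T<\infty\}]\le M_0+\|\Delta A\|_\infty\,\mathrm{var}(h)$. After that, $M_{T\wedge n}\le 1+M_T\ind\{T<\infty\}$ is an integrable dominator, and the gap closes in one line. The detour through almost-sure convergence of a martingale bounded below is also unnecessary: $M_{T\wedge n}$ is eventually constant on $\{T<\infty\}$ and tends to $0$ on $\{T=\infty\}$ by hypothesis.
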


The event identity~\eqref{eq:discrete-crossing-event} is the geometric content, and it is pointwise: the transform is a change of the vertical coordinate that maps the boundary, and with it every level set of $M$, onto a horizontal line (Figure~\ref{fig:curved-crossing}).

Each correction is controlled by one feature of the path, and each vanishes in the continuum.
The quadrature residual obeys $\sum_s|R_s|\le\|\Delta A\|_\infty\,\mathrm{var}(h)$, with $\mathrm{var}(h)$ the total variation of $h$ on $\R_+$, equal to $h(0)=e^{-I}/\varphi(0)$ when $h$ is monotone; it shrinks as the jumps of $A$ refine and is identically zero when $A$ is continuous, since $R_s=0$ whenever $\Delta A_s=0$.
The overshoot vanishes when $X$ has no positive jumps, the hypothesis of Theorem~\ref{thm:curved-crossing} in continuous time and generally false in discrete time.
Setting both to zero returns~\eqref{eq:curved-crossing} exactly: a continuous clock spends its whole budget $\int_0^\infty\varphi^{-1}$ without skipping, and a continuous path meets the boundary without passing it.

The practical consequence is a level, and a schedule for committing it.
For a target $\alpha\in(0,1)$, any boundary with $\int_0^\infty\varphi(x)^{-1}\dd x=-\log(1-\alpha)$ is crossed with probability $\alpha$ in the continuous-path limit, and $u\mapsto1-\exp\bigl(-\int_0^u\varphi(x)^{-1}\dd x\bigr)$ reports how much of $\alpha$ has been committed once the clock reaches $u$; the form of $\varphi$ redistributes the level along the clock while leaving the total fixed.
On a discrete clock the realized probability is instead the nominal $\alpha$ plus the quadrature residual, minus the overshoot, and on a coarse clock both terms are non-trivial.
At $\varphi(x)=2(1+x)^2$ on the $\pm1$ walk the residual contributes $0.134$ and the overshoot $0.086$, placing the realized level at $0.442$.
A direct count of crossings climbs toward that value from below as the horizon grows---$0.355$ at $2\times10^{2}$ steps, $0.404$ at $5\times10^{3}$, $0.413$ at $1.25\times10^{5}$---since the identity asks for $A_t\to\infty$.
The shortfall at any finite horizon is exactly the mass of $M$ left on the paths that have not yet crossed.
A boundary calibrated by the continuum integral alone is therefore anticonservative in practice.
Both corrections contain the factor $h=w/\varphi$, so the discrepancy shrinks as the boundary is set further from the record in units of the clock's own step, and a boundary only a few steps out is where calibrating by the integral alone costs most.
Both corrections can be computed before any data arrive---the residual from $\varphi$ and the clock's jump sizes, the overshoot from the increment law at the boundary---so the budget can be set against the level the discrete rule actually attains.

The law is checked in Section~\ref{subsec:eval-curved-crossing} against a closed form built from gambler's ruin, which never mentions the martingale that proves it, and the horizon a discrete clock needs to approach it is measured there.

The continuous-time construction behind the limit---the class-$(\Sigma)$ family of function-indexed local martingales, the closed-form crossing law for arbitrary Borel boundaries, and the Brownian drawdown instance in which the law is Knight's---is collected in Appendix~\ref{app:curved-cts}.

\subsection{PAC-Bayes martingale inequalities}
\label{sec:pacbayes-mart}

Reading the mixture parameter as a model index turns the same inequality into a PAC-Bayes bound: it holds simultaneously for every data-dependent posterior over a space of models, with the relative entropy from prior to posterior as the complexity term.
It inherits the exact form of Proposition~\ref{prop:mixture-exact}: the slack against $e^{-x}$ resolves into the same two residuals, with the model mixture in place of the parameter mixture.

\begin{theorem}[PAC-Bayes martingale inequality]
\label{thm:pacbayes}
The exponential case $E_t^\theta = \exp(\lambda Y_t^\theta -
\overline\Psi_t^\theta(\lambda))$ of Proposition~\ref{prop:mixture}, with prior
$\pi$ on $\Theta$ and any adapted posterior $\rho_t \ll \pi$, is the PAC-Bayes martingale inequality
\begin{equation}\label{eq:pacbayes-bound}
  \PP\!\left(\exists\,t :
    \lambda\,\E_{\rho_t}[Y_t^\theta] - \E_{\rho_t}[\overline\Psi_t^\theta(\lambda)]
    - \KL(\rho_t\|\pi) \geq x\right) \leq e^{-x}.
\end{equation}
\end{theorem}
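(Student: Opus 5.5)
The plan is to build the mixture supermartingale, apply Ville's inequality to it once, and then use the Donsker--Varadhan variational formula to turn the mixture's logarithm into a simultaneous lower bound over all posteriors.

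\textbf{The mixture process.} For each $\theta\in\Theta$, the process $E^\theta_t=\exp(\lambda Y^\theta_t-\overline\Psi^\theta_t(\lambda))$ is assumed to be a nonnegative supermartingale with $E^\theta_0\le 1$. This is the hypothesis of Proposition~\ref{prop:supermart-relax}(iii), applied parameter by parameter, and it requires joint measurability in $(\theta,\omega)$. Set
\[
  \overline E_t:=\int_\Theta E^\theta_t\,\pi(\dd\theta).
\]
Since $\pi$ is fixed in advance, conditional Fubini (Tonelli, as everything is nonnegative) gives
\[
  \E[\overline E_t\mid\cF_{t-1}]=\int\E[E^\theta_t\mid\cF_{t-1}]\,\pi(\dd\theta)\le\overline E_{t-1}.
\]
Hence $\overline E$ is a nonnegative supermartingale with $\overline E_0\le 1$. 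This is the content of Proposition~\ref{prop:mixture}, which I would simply invoke.

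\textbf{Ville's inequality.} Corollary~\ref{cor:ville} then gives
\[
  \PP\bigl(\exists\,t:\log\overline E_t\ge x\bigr)\le e^{-x}.
\]
The remaining task is a deterministic, pathwise inequality: at every $t$ and every $\omega$,
\[
  \lambda\,\E_{\rho_t}[Y^\theta_t]-\E_{\rho_t}[\overline\Psi^\theta_t(\lambda)]-\KL(\rho_t\|\pi)\le\log\overline E_t .
\]
Once this holds, the event in~\eqref{eq:pacbayes-bound} is contained in the Ville event, and the bound follows.

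\textbf{The pathwise inequality.} This is Corollary~\ref{cor:DV} applied on the parameter space $\Theta$, with finite measure $\mu=\pi$ and $g=E^\theta_t(\omega)$ for fixed $t,\omega$. It gives
\[
  \log\overline E_t=\E_{\rho_t}[\log E^\theta_t]-\KL(\rho_t\|\pi)+\KL(\rho_t\|\rho^*_t),
\]
where $\rho^*_t\propto E^\theta_t\,\pi$ is the running Gibbs tilt. Since $\log E^\theta_t=\lambda Y^\theta_t-\overline\Psi^\theta_t(\lambda)$, dropping $\KL(\rho_t\|\rho^*_t)\ge0$ gives the claim. Because the inequality holds for every $\rho_t\ll\pi$ simultaneously, the posterior may depend on the data arbitrarily; adaptedness is not even needed beyond measurability of the event.

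\textbf{Main obstacle.} I expect the delicate step to be the edge cases in applying Corollary~\ref{cor:DV}.
\begin{itemize}
\item If $\overline E_t(\omega)=\infty$, the inequality is trivial; in any case $\overline E_t<\infty$ almost surely by the supermartingale property.
\item If $E^\theta_t=0$ on a set charged by $\rho_t$, then $\E_{\rho_t}[\log E^\theta_t]=-\infty$ and the left side is $-\infty$. In that case $\E_{\rho_t}[Y^\theta_t]$ or $\E_{\rho_t}[\overline\Psi^\theta_t]$ must be interpreted with the convention that an undefined or infinite left side does not enter the event.
\end{itemize}
I would handle these cases by restricting to $\rho_t$ for which the left-hand side is well defined, and by using the variational form~\eqref{eq:DV-var} as an inequality, which needs only $\int g\,\dd\pi>0$ or else trivially holds.
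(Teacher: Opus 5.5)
Your proposal is correct and follows the paper's proof step for step. The mixture $\overline E_t=\int_\Theta E_t^\theta\,\pi(\dd\theta)$ is a nonnegative supermartingale by Tonelli (Proposition~\ref{prop:mixture}). The Donsker--Varadhan identity on $(\Theta,\pi)$, with the residual $\KL(\rho_t\|\rho^*_t)$ dropped, places the PAC-Bayes event inside $\{\exists\,t:\overline E_t\ge e^x\}$, and Ville's inequality (Corollary~\ref{cor:ville}) then bounds its probability by $e^{-x}$. Your edge-case worry about $E_t^\theta=0$ is essentially vacuous in this exponential case, since the exponential of a finite quantity is strictly positive.
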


Theorem~\ref{thm:pacbayes} is the PAC-Bayes martingale mechanism of \cite{seldin2011pacbayesian}, deployed there as a proof device and read here as a literal mixed coincidence identity on parameter space.
The identity holds already off the path.
Writing $\pi_\lambda\propto\pi\,e^{\lambda\ell}$ for the Gibbs tilt a loss $\ell$ induces on the prior, $\lambda\,\E_\rho[\ell]=\KL(\rho\|\pi)-\KL(\rho\|\pi_\lambda)+\log\E_\pi[e^{\lambda\ell}]$ at every posterior $\rho\ll\pi$, so the usual bound discards exactly $\KL(\rho\|\pi_\lambda)$, the divergence from the posterior to that tilt \cite{balsubramani2026information}.
The martingale form inherits that deficit.
The prior's anchor then decides whether the certificate says anything at all, while the mechanism behind it does not change.
At deep-net scale, holding the data, the classifier and the martingale fixed and varying only the anchor, an uninformed prior returns a vacuous anytime-valid certificate where a prior at the pretrained weights returns a tight one (Section~\ref{sec:eval-pacbayes-cifar-fixed-prior}).

A whole family of priors sheds the multi-way coincidence divergence of Proposition~\ref{prop:multi-PAC}, a geometric mixture of $\pi_1,\dots,\pi_W$ costing less than the weighted average of the individual penalties by exactly $\cC_\alpha(\pi_{1:W})$.
That identity transfers to path space unchanged, with the priors read as whole models over trajectories, and the sequential setting resolves its coincidence divergence one step at a time (Proposition~\ref{prop:seq-PAC}).
On the back of the path-time identity of Section~\ref{sec:pathtime} it holds at random evaluation times as well as at a fixed horizon.

\section{Exact residuals in the power-mean geometry}
\label{sec:doob-bellman}

Doob's $L^p$ maximal inequality also has an exact residual, carried by a second geometry: its slack is a H\"older deficit, an optional-stopping deficit and an initial value, and the same accounting reads off the slack of any bound proved by exhibiting a dominating certificate.
The certificate that supplies it is one member of the Az\'ema--Yor family, which replaces the exponent by an arbitrary concave function and resolves the optional-stopping deficit step by step.
That second geometry is needed because the entropy method does not sharpen this inequality, which is stated in its classical form as Theorem~\ref{thm:doob}.  (The matching lower bound behind Theorem~\ref{thm:ville-tight} is Doob's maximal \emph{identity}, a separate classical result.)

The coincidence calculus sharpens a classical bound into a relative-entropy equality when the bound's extremizer is a Gibbs tilt, so that its slack is a relative entropy (Theorems~\ref{thm:ville-DV},
\ref{thm:master}). Doob's $L^p$ inequality resists this for a structural reason.
Its natural entropy extremal is the no-overshoot vanishing martingale of Theorem~\ref{thm:ville-tight}, whose running maximum is Pareto$(1)$.
For it the layer-cake integral is $\E[(\sup_t M_t)^p]=1+\int_1^\infty p\,x^{p-2}\,\dd x=+\infty$, while the almost-sure limit has $\E[M_\infty^p]=0$.
The ratio degenerates to $\infty/0$ in the limit; it does not approach the finite constant $\bigl(\tfrac{p}{p-1}\bigr)^p$.
At each finite $T$ both sides are finite and Doob's inequality holds as stated; $\E[M_T^p]$ does not vanish, being at least $M_0^p$ by Jensen's inequality.
The sharp constant is attained instead by exhibiting a function that dominates the target and is a supermartingale along the process, the method of Burkholder's special functions and of the Bellman functions of the associated obstacle problem; the dynamic-programming object of the same name is unrelated.  That is an $L^p$ (power-mean) convex geometry disjoint from the relative-entropy geometry of this paper.
There the constant has its own certificate---the Bellman function is an equality whose obstacle boundary yields the inequality---so Doob's inequality has an identity of its own, in a different calculus.
Corollary~\ref{cor:doob-residual} states that identity.

The power-mean geometry supplies a residual decomposition of exactly the same form as the entropic one, with two named nonnegative deficits in place of a relative entropy.
The role the Gibbs tilt plays in the entropic geometry is played here by a family of certificate processes indexed by an arbitrary concave function.
For $C^1$ concave $\Phi$, the Az\'ema--Yor process of $M$ with respect to $\Phi$~\cite{AY79,balsubramani2020p} reads the record $M^\ast_t$ through $\Phi$ and subtracts the gap $M^\ast_t-M_t$ at $\Phi$'s slope there.
Every member is affine in $M_t$ below the running maximum and meets the diagonal with matching slope, since $\Phi$ enters only through its value and first derivative at $M^\ast_t$.
As a function of the current value, each certificate is the tangent extension of $\Phi$ at the record---the tangent line of $\Phi$ at $M^\ast_t$ below the record and $\Phi$ itself above---so it is $C^1$ and concave, and nonincreasing whenever $\Phi$ is.
That smooth fit is flat to first order when the running maximum advances, and reading the certificate step by step resolves its optional-stopping deficit exactly.

\begin{proposition}[Az\'ema--Yor certificates and the per-step form of the optional-stopping deficit]
\label{prop:ay-certificate}
Let $(M_t)_{t=0}^T$ be a nonnegative martingale with deterministic $M_0$ and $M^\ast_t:=\max_{s\le t}M_s$,
let $\Phi$ be $C^1$ and concave on $(0,\infty)$, and set
\begin{equation}\label{eq:ay-process}
  A^\Phi_t \;:=\; \Phi(M^\ast_t)-(M^\ast_t-M_t)\,\Phi'(M^\ast_t),
  \qquad
  D_{-\Phi}(a,b)\;:=\;\Phi(b)-\Phi(a)+(a-b)\,\Phi'(b)\;\ge\;0,
\end{equation}
with $D_{-\Phi}$ the Bregman divergence of the convex function $-\Phi$.
Assume every expectation below is finite.
\begin{enumerate}[label=(\roman*),leftmargin=2em]
\item $A^\Phi$ is a supermartingale with an exact increment:
  \begin{equation}\label{eq:ay-increment}
    A^\Phi_t-A^\Phi_{t-1}
    \;=\;(M_t-M_{t-1})\,\Phi'(M^\ast_{t-1})\;-\;D_{-\Phi}\bigl(M^\ast_t,M^\ast_{t-1}\bigr).
  \end{equation}
\item The record obeys the exact identity
  \begin{equation}\label{eq:ay-identity}
    \E\bigl[\Phi(M^\ast_T)\bigr]
    \;=\;\Phi(M_0)
    \;+\;\E\bigl[(M^\ast_T-M_T)\,\Phi'(M^\ast_T)\bigr]
    \;-\;\E\Bigl[\sum_{t=1}^{T}D_{-\Phi}\bigl(M^\ast_t,M^\ast_{t-1}\bigr)\Bigr].
  \end{equation}
\item For $p>1$ and its conjugate exponent $q:=\tfrac{p}{p-1}$, at $\Phi(y)=-y^{p}/(p-1)$ the process $A^\Phi_t$ is $U(M_t,M^\ast_t)$ for the certificate function
  \begin{equation}\label{eq:doob-bellman}
    U(x,y)\;:=\;y^{p}-q\,y^{p-1}x,
    \qquad 0<x\le y,
  \end{equation}
  and the optional-stopping deficit $A^\Phi_0-\E\bigl[A^\Phi_T\bigr]$ is the record's Bregman sum $\E\bigl[\sum_{t=1}^{T}D_{-\Phi}(M^\ast_t,M^\ast_{t-1})\bigr]$.
\end{enumerate}
For a nonnegative submartingale and $\Phi$ in addition nonincreasing, which $-y^{p}/(p-1)$ is, (i) holds verbatim and $A^\Phi$ is again a supermartingale, while the optional-stopping deficit $A^\Phi_0-\E\bigl[A^\Phi_T\bigr]$ exceeds the Bregman sum of (iii) by $-\E\bigl[\sum_t (M_t-M_{t-1})\Phi'(M^\ast_{t-1})\bigr]\ge0$, the drift weighted by the certificate's slope.
\end{proposition}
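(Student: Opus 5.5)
The plan is to obtain everything from a single pathwise algebraic identity for the increment of $A^\Phi$, then take expectations. First fix a step $t$ and write the increment relative to the previous record $b:=M^\ast_{t-1}$. Two cases arise.

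\emph{Case 1: the record does not advance}, $M^\ast_t=b$. Then
\[
A^\Phi_t-A^\Phi_{t-1}=(M_t-M_{t-1})\,\Phi'(b),
\]
and $D_{-\Phi}(b,b)=0$, so \eqref{eq:ay-increment} holds trivially.

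\emph{Case 2: the record advances}, $M^\ast_t=M_t=:a>b$. Now $A^\Phi_t=\Phi(a)$, while $A^\Phi_{t-1}=\Phi(b)-(b-M_{t-1})\Phi'(b)$. Subtracting and inserting $\pm(a-b)\Phi'(b)$ gives
\[
\Phi(a)-\Phi(b)-(a-b)\Phi'(b)+(a-M_{t-1})\Phi'(b),
\]
that is, $(M_t-M_{t-1})\Phi'(b)-D_{-\Phi}(a,b)$ with the sign convention of \eqref{eq:ay-process}. This is the key algebraic check. The sign in the definition of $D_{-\Phi}$ is arranged so that $\Phi(b)-\Phi(a)+(a-b)\Phi'(b)\ge0$ by concavity, and the increment carries its negative. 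Nonnegativity itself is just the tangent-line inequality for concave $\Phi$.

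Conditioning \eqref{eq:ay-increment} on $\cF_{t-1}$ kills the first term, because $\Phi'(M^\ast_{t-1})$ is $\cF_{t-1}$-measurable and $M$ is a martingale; the finiteness assumption licenses pulling it out. The conditional increment is then $-\E[D_{-\Phi}\mid\cF_{t-1}]\le0$, which gives (i).

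For (ii), sum \eqref{eq:ay-increment} over $t\le T$, take expectations, and use $A^\Phi_0=\Phi(M_0)$, since $M^\ast_0=M_0$. This yields $\E[A^\Phi_T]=\Phi(M_0)-\E[\sum_t D_{-\Phi}]$. Expanding $A^\Phi_T$ by its definition and rearranging gives \eqref{eq:ay-identity}.

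For (iii), take $\Phi(y)=-y^p/(p-1)$, so that $\Phi'(y)=-q\,y^{p-1}$ with $q=p/(p-1)$. Then
\[
A^\Phi=-\tfrac{y^p}{p-1}+q(y-x)y^{p-1}=y^p\bigl(q-\tfrac1{p-1}\bigr)-q\,y^{p-1}x=y^p-q\,y^{p-1}x=U(x,y),
\]
using $q-1/(p-1)=1$. The deficit statement is just the summed identity from (ii), read as $A^\Phi_0-\E[A^\Phi_T]$.

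For the submartingale extension, note that the pathwise identity \eqref{eq:ay-increment} never used the martingale property, so it holds verbatim. Conditioning now gives
\[
\E[M_t-M_{t-1}\mid\cF_{t-1}]\,\Phi'(M^\ast_{t-1})\le0,
\]
since the drift is nonnegative and $\Phi'\le0$ when $\Phi$ is nonincreasing. So $A^\Phi$ remains a supermartingale, and summing shows that the deficit equals the Bregman sum plus $-\E[\sum_t(M_t-M_{t-1})\Phi'(M^\ast_{t-1})]\ge0$.

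The only delicate point is the Case 2 algebra, together with getting the Bregman sign convention to match. Integrability is assumed away by hypothesis, and positivity $M_t>0$ is needed for $\Phi$ on $(0,\infty)$; if $M$ can hit $0$, one requires $\Phi$ to extend continuously at $0$.
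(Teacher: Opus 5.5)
Your proposal is correct and follows the paper's proof essentially step for step: the same two-case pathwise verification of \eqref{eq:ay-increment} according to whether the record advances, conditioning on $\cF_{t-1}$ for the supermartingale property, summing for (ii), the direct evaluation via $q-1/(p-1)=1$ for (iii), and the same sign argument $\Phi'\le0$ for the submartingale extension. One small correction to your closing remark: $\Phi$ and $\Phi'$ are only ever evaluated at the records $M^\ast_t\ge M_0$, never at $M_t$ itself, so positivity of $M_t$ is not needed; only $M_0>0$ is required.
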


A step that leaves the running maximum where it was contributes nothing.
Equation~\eqref{eq:ay-increment} values a step that advances it at one Bregman divergence between consecutive records, the second-order remainder the smooth fit leaves: $D_{-\Phi}(y+h,y)=\tfrac12(-\Phi)''(y)h^{2}+o(h^{2})$, so an advance costs nothing to first order and the exact cost is quadratic in the advance.
Continuous paths advance the maximum only infinitesimally, which is why $A^\Phi$ is a local martingale there and strictly a supermartingale on a discrete clock.
The entropic residuals of Section~\ref{sec:master} resolve by the relative-entropy chain rule into per-step conditional divergences, and the optional-stopping deficit resolves by telescoping into per-step Bregman divergences of the record process.
The divergence differs; the per-step resolution does not.

The family also reaches past the $p$-powers, which fix the exponent as the only free parameter: $\Phi=\log$, $\Phi=\sqrt{\cdot}$ and any bounded concave $\Phi$ each give~\eqref{eq:ay-identity} for the corresponding functional of the record.
For nondecreasing concave $\Phi$ the sign of $\Phi'$ reverses, and the middle term of~\eqref{eq:ay-identity} adds to the initial value instead of subtracting from it; the identity then estimates the ultimate record, which is the use made of it in~\cite{balsubramani2020p}.
Doob's inequality is recovered from the decreasing branch by one further step, and that step is where the exponent re-enters: H\"older converts $\E[(M^\ast)^{p-1}M_T]$ into $\|M^\ast\|_p^{p-1}\|M_T\|_p$, and no such conversion is available at general $\Phi$.
The certificate identity and the H\"older conversion together price Doob's inequality exactly.
The certificate~\eqref{eq:doob-bellman} is affine in the running value $x$ with slope $-q\,y^{p-1}$ and, by the smooth fit $\partial_yU(y,y)=0$, is flat to first order when the running maximum advances.
Its tangent extension at the record $y$ is affine below $y$ and equals $-x^{p}/(p-1)$ above, the two branches meeting in value and slope at $x=y$, with both slopes strictly negative.

\begin{corollary}[Doob's $L^p$ slack is an exact three-term residual]
\label{cor:doob-residual}
Let $(M_t)_{t=0}^T$ be a nonnegative \emph{sub}martingale with deterministic $M_0>0$ and $M^\ast:=\sup_{t\le T}M_t$, let $p>1$ and $q=p/(p-1)$ with $\E[M_T^p]<\infty$, and let $U$ be as in~\eqref{eq:doob-bellman}.
\begin{enumerate}[label=(\roman*),leftmargin=2em]
\item $U(M_t,M^\ast_t)$ is a supermartingale and the \emph{optional-stopping deficit}
  \begin{equation}\label{eq:delta-B}
    \delta_{\mathrm B}\;:=\;U(M_0,M_0)-\E\bigl[U(M_T,M^\ast_T)\bigr]
    \;=\;q\,\E\bigl[(M^\ast)^{p-1}M_T\bigr]-\E\bigl[(M^\ast)^{p}\bigr]-\frac{M_0^{p}}{p-1}
  \end{equation}
  is nonnegative: it is the Bregman record sum of Proposition~\ref{prop:ay-certificate}(iii), plus the nonnegative drift term of the submartingale form.
\item With the \emph{H\"older deficit}
  $\delta_{\mathrm H}:=\E[(M^\ast)^p]^{(p-1)/p}\,\E[M_T^p]^{1/p}-\E[(M^\ast)^{p-1}M_T]\ \ge 0$,
  \begin{equation}\label{eq:doob-residual}
    \bigl\|M^\ast\bigr\|_p\;=\;q\,\bigl\|M_T\bigr\|_p\;-\;\cR,
    \qquad
    \cR\;=\;\frac{q\,\delta_{\mathrm H}+\delta_{\mathrm B}+M_0^{p}/(p-1)}
               {\E[(M^\ast)^p]^{(p-1)/p}}\;\ge\;0 .
  \end{equation}
\end{enumerate}
\end{corollary}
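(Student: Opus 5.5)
The plan is to specialize Proposition~\ref{prop:ay-certificate} to $\Phi(y)=-y^p/(p-1)$ and then close with one algebraic rearrangement that uses H\"older's inequality only as a bookkeeping device, with its gap recorded as $\delta_{\mathrm H}$.

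\emph{Part (i).} Compute $\Phi'(y)=-q\,y^{p-1}$. Then
\[
A^\Phi_t=-\frac{(M^\ast_t)^p}{p-1}+q\,(M^\ast_t)^{p-1}(M^\ast_t-M_t).
\]
Since $q-\tfrac{1}{p-1}=1$, this equals $(M^\ast_t)^p-q\,(M^\ast_t)^{p-1}M_t=U(M_t,M^\ast_t)$, which is the identification in Proposition~\ref{prop:ay-certificate}(iii). Because $\Phi$ is concave and nonincreasing, the submartingale clause of that proposition shows $U(M_t,M^\ast_t)$ is a supermartingale. It also shows that $\delta_{\mathrm B}=A^\Phi_0-\E[A^\Phi_T]$ equals the Bregman record sum plus the drift term $-\E[\sum_t(M_t-M_{t-1})\Phi'(M^\ast_{t-1})]$, and both pieces are nonnegative.

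The closed form in~\eqref{eq:delta-B} comes from $U(M_0,M_0)=M_0^p(1-q)=-M_0^p/(p-1)$ together with $\E[U(M_T,M^\ast_T)]=\E[(M^\ast)^p]-q\,\E[(M^\ast)^{p-1}M_T]$.

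The finiteness assumptions of the proposition must be checked here. For a nonnegative submartingale, the classical Doob inequality (Theorem~\ref{thm:doob}) together with $\E[M_T^p]<\infty$ gives $\E[(M^\ast)^p]<\infty$. H\"older then makes $\E[(M^\ast)^{p-1}M_T]$ finite, and each step of $M$ is integrable since $M_t\le M^\ast$. This integrability check is the only real obstacle. If one wants to avoid citing Theorem~\ref{thm:doob}, one can first note that on a finite horizon $\E[(M^\ast)^p]\le\sum_{t\le T}\E[M_t^p]$. Each $\E[M_t^p]\le\E[M_T^p]$, because $M^p$ is a submartingale: $x\mapsto x^p$ is convex and nondecreasing on $[0,\infty)$.

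\emph{Part (ii).} Write $a:=\E[(M^\ast)^p]$, $b:=\E[M_T^p]$ and $c:=\E[(M^\ast)^{p-1}M_T]$. The definition~\eqref{eq:delta-B} rearranges to
\[
a=q\,c-\delta_{\mathrm B}-\frac{M_0^p}{p-1}.
\]
Substituting $c=a^{(p-1)/p}b^{1/p}-\delta_{\mathrm H}$ gives
\[
a=q\,a^{(p-1)/p}b^{1/p}-q\,\delta_{\mathrm H}-\delta_{\mathrm B}-\frac{M_0^p}{p-1}.
\]
Now divide by $a^{(p-1)/p}$. This is legitimate because $a\ge M_0^p>0$, using $M_0>0$ and $M^\ast\ge M_0$. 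Since $a/a^{(p-1)/p}=a^{1/p}=\|M^\ast\|_p$ and $b^{1/p}=\|M_T\|_p$, this is exactly~\eqref{eq:doob-residual} with the stated $\cR$.

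Nonnegativity of $\cR$ follows because each numerator term is nonnegative: $\delta_{\mathrm H}\ge0$ by H\"older with exponents $p/(p-1)$ and $p$, $\delta_{\mathrm B}\ge0$ by part (i), and $M_0^p/(p-1)>0$. Dropping $\cR$ then recovers $\|M^\ast\|_p\le q\|M_T\|_p$, which confirms the corollary is an exact refinement of the classical bound.
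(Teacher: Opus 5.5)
Your proposal is correct and follows the paper's proof essentially step for step. You specialize Proposition~\ref{prop:ay-certificate} at $\Phi(y)=-y^p/(p-1)$ through its submartingale clause, evaluate $U$ at the endpoints to obtain \eqref{eq:delta-B}, then substitute the H\"older deficit and divide by $\E[(M^\ast)^p]^{(p-1)/p}>0$. Your alternative finiteness check, $\E[(M^\ast)^p]\le\sum_{t\le T}\E[M_t^p]\le(T+1)\,\E[M_T^p]$, is a small improvement: the paper instead cites Theorem~\ref{thm:doob} for integrability, and so leans on the very inequality the corollary is meant to refine.
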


\noindent Doob's inequality (Theorem~\ref{thm:doob}) is the statement $\cR\ge0$.
Substituting the two deficits into the numerator of~\eqref{eq:doob-residual} cancels the cross term $\E[(M^\ast)^{p-1}M_T]$ against itself and the initial value against itself, leaving $\cR=q\|M_T\|_p-\|M^\ast\|_p$ exactly, so the inequality and the nonnegativity of $\cR$ are one statement.
The decomposition supplies the source of that nonnegativity, in three interpretable pieces: the H\"older deficit, which vanishes exactly when $M_T$ is proportional to $M^\ast$; the optional-stopping deficit of $U$, which vanishes exactly when $U(M_t,M^\ast_t)$ is a true martingale; and the initial value.
The residual is an identity: \eqref{eq:doob-residual} holds with both deficits nonnegative, and both vanish simultaneously on the constant martingale, where $M_T=M^\ast$.
The initial-value term nonetheless keeps $\cR$ strictly positive there and everywhere:
$\cR\ge M_0^{p}\bigl/\bigl((p-1)\,\E[(M^\ast)^p]^{(p-1)/p}\bigr)>0$, so the sharp constant $(p/(p-1))^p$ is approached but never attained by a nonnegative martingale with $M_0>0$ and $\E[M_T^p]<\infty$.
The decomposition also locates the obstruction that stops the entropy method at Doob quantitatively.
The entropy-natural extremal is the vanishing martingale, and it is exactly the configuration that inflates $\delta_{\mathrm B}$: on the two-point family that sends $M_T$ to zero with probability $r$, the optional-stopping leg of $\cR$ grows $0,\ 0.098,\ 0.511,\ 2.453$ as $r$ runs over $0,\ 0.3,\ 0.6,\ 0.9$ at $p=2$.
Where the entropic geometry is tight, the power-mean geometry loses the most; the two are complementary.

Nothing in the three-term decomposition of Corollary~\ref{cor:doob-residual} is special to Doob's inequality.
Any $L^p$ bound proved by exhibiting a certificate function --- a Burkholder or Bellman function that dominates the target and is a supermartingale along the process --- discards exactly two nonnegative quantities plus an initial value, and which two they are is read off the certificate.
That is the content of the certificate method as developed in~\cite{burkholder1972,burkholder1973ann} and systematized in~\cite{osekowski2012}.
Proposition~\ref{prop:power-certificate} states the accounting once, for an arbitrary certificate.
Value is discarded in exactly three places: between the certificate and the target it dominates, between the certificate's start and its stopped end, and at the start itself.

\begin{proposition}[Certificate form of a power-mean inequality]
\label{prop:power-certificate}
Let $\Gamma,\Xi\ge0$ be integrable and $\cF_T$-measurable, let $C>0$, and let $(U_t)_{t=0}^T$ be an integrable adapted process --- the certificate read along the path --- with $U_0$ deterministic and
\begin{enumerate}[label=(\roman*),leftmargin=2em]
\item $U_T\ \ge\ \Gamma - C\,\Xi$ almost surely, and
\item $(U_t)_{t\le T}$ a supermartingale.
\end{enumerate}
Write $\delta_{\mathrm M}:=\E[U_T-\Gamma+C\,\Xi]\ge0$ for the majorization deficit and $\delta_{\mathrm S}:=U_0-\E[U_T]\ge0$ for the optional-stopping deficit.  Then
\begin{equation}\label{eq:power-certificate}
  C\,\E[\Xi] - \E[\Gamma]
  \;=\; \delta_{\mathrm M} + \delta_{\mathrm S} - U_0,
\end{equation}
and the inequality $\E[\Gamma]\le C\,\E[\Xi]$ is the statement that the right side is nonnegative, which holds whenever $U_0\le0$.
\end{proposition}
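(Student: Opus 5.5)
The identity \eqref{eq:power-certificate} is pure bookkeeping, so I will verify it by expanding the two deficits and checking that the cross terms cancel. The nonnegativity claims come from the two hypotheses.

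First I confirm that the deficits are well defined and nonnegative. All of $\Gamma$, $\Xi$ and $U_T$ are integrable, so $\delta_{\mathrm M}=\E[U_T]-\E[\Gamma]+C\,\E[\Xi]$ is a finite number. Hypothesis~(i) says $U_T-\Gamma+C\Xi\ge0$ almost surely, and taking expectations gives $\delta_{\mathrm M}\ge0$. For $\delta_{\mathrm S}$, I use hypothesis~(ii): iterating the supermartingale property $\E[U_t\mid\cF_{t-1}]\le U_{t-1}$ from $t=T$ down to $t=1$ and then taking expectations gives $\E[U_T]\le\E[U_0]$. Since $U_0$ is deterministic, $\E[U_0]=U_0$, so $\delta_{\mathrm S}=U_0-\E[U_T]\ge0$. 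The horizon $T$ is fixed and finite, so no optional-stopping or uniform-integrability issue arises.

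Next I add the two deficits:
\[
\delta_{\mathrm M}+\delta_{\mathrm S}=\E[U_T]-\E[\Gamma]+C\,\E[\Xi]+U_0-\E[U_T]=C\,\E[\Xi]-\E[\Gamma]+U_0 .
\]
The term $\E[U_T]$ cancels. Subtracting $U_0$ from both sides gives \eqref{eq:power-certificate} exactly.

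For the final claim, note that $\E[\Gamma]\le C\,\E[\Xi]$ holds precisely when the left side of \eqref{eq:power-certificate} is nonnegative, which is the same as saying the right side $\delta_{\mathrm M}+\delta_{\mathrm S}-U_0$ is nonnegative. Both deficits are nonnegative, so $U_0\le0$ is enough to make the right side nonnegative.

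The only point that needs any care is integrability. The subtraction producing the cancellation requires $\E[U_T]$ to be finite, and the hypotheses supply this by assuming $U$ is integrable. I do not expect any genuine obstacle.
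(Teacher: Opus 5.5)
Your proof is correct and is essentially the paper's argument: $\delta_{\mathrm M}\ge0$ from hypothesis~(i), $\E[U_T]\le U_0$ from the supermartingale property with deterministic $U_0$, then adding the two definitions so that $\E[U_T]$ cancels, and rearranging. Your observation that the fixed finite horizon rules out any optional-stopping or uniform-integrability issue is correct, and it is a point the paper leaves implicit.
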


Corollary~\ref{cor:doob-residual} is this accounting read at the certificate~\eqref{eq:doob-bellman}, in two steps. 
At the bilinear comparand---$\Gamma=(M^\ast)^p$ against $\Xi=(M^\ast)^{p-1}M_T$ at $C=q$---the Az\'ema--Yor certificate satisfies $U_T=\Gamma-C\,\Xi$ identically, so the majorization deficit vanishes.
The certificate is sharp against this target, \eqref{eq:power-certificate} is exactly~\eqref{eq:delta-B}, and $\delta_{\mathrm S}=\delta_{\mathrm B}$ with initial value $-U_0=M_0^p/(p-1)$.
H\"older then converts the bilinear comparand into the power mean $\|M^\ast\|_p^{p-1}\|M_T\|_p$ losing $\delta_{\mathrm H}$, the one step particular to Doob: against the power-mean target itself, pointwise majorization fails wherever $0<q\,M_T<M^\ast$, and the H\"older deficit stands in expectation where the majorization deficit $\delta_{\mathrm M}$ would stand pointwise.
Whenever a member of the Burkholder--Davis--Gundy or Burkholder--Rosenthal family is certified this way,
\eqref{eq:power-certificate} names what it discards without evaluating it:
the two deficits are computable only once a certificate is written down, and none is written down here at general $p$.
A certificate that is not sharp also inflates $\delta_{\mathrm M}$ by its own sub-optimality, so the decomposition is relative to the certificate chosen and not to the process alone.

The power-mean segment of Table~\ref{tab:classical-ledger} collects the members of the family this section reaches, beside the upper segment's entropic and crossing rows.
The two segments share a pattern and not a mechanism: a classical inequality is an identity minus named nonnegative deficits in both, but those deficits are relative entropies to a Gibbs tilt in one case and certificate deficits in the other.

The last row of that segment is where the method's sharpness is visible: a certificate that attains the constant drives its majorization deficit to zero, so the whole of the slack sits in the optional-stopping deficit and the initial value.
Doob's inequality, whose constant is approached but never attained by a nonnegative martingale with $M_0>0$, keeps a strictly positive initial-value leg for that reason.
Doob's maximal identity is not a row of the segment: it is already an equality,
and it enters this paper through Theorem~\ref{thm:ville-tight} in the optional-stopping geometry.
All three geometries state a classical inequality in the same form, an identity minus named nonnegative deficits, and differ only in which optimizer supplies them: the Gibbs tilt, the dominating certificate, and the crossing itself.

\section{Pooling benefit for multi-model safe testing}
\label{sec:pooling-benefit}

The classical bounds of Section~\ref{sec:master} each used a single test martingale.
Pooling several at once closes the crossing probability entirely.
Corollary~\ref{cor:pooling-crossing-exact} writes it with no inequality left in it, as the Ville baseline minus three named quantities: the compensator shed before the crossing, the mass that never reaches the threshold, and the overshoot at it.
The two inequalities in common use are that decomposition with terms removed, and what they discard is the disagreement among the pooled tests, which is itself evidence.
Let $M^{(1)},\dots,M^{(W)}$ be $W$ nonnegative unit-mean test martingales adapted to $(\cF_t)$ on a common probability space.

Write $M_t^{(\alpha)} := \prod_{i=1}^W \bigl(M_t^{(i)}\bigr)^{\alpha_i}$ for the geometric mixture at weights $\alpha \in \Delta([W])$.
Weighted arithmetic--geometric-mean comparison makes it a nonnegative supermartingale of unit initial value (Lemma~\ref{lem:geometric-mixture}), so with $Z_t(\alpha) := \E_P[M_t^{(\alpha)}]$ the coincidence divergence at these factors is
\begin{equation}\label{eq:pathwise-Calpha}
  \cC_\alpha(t) := -\log Z_t(\alpha) \geq 0
\end{equation}
This coincidence divergence is the coalescent free energy of Section~\ref{sec:coalescent-pathspace}, read at the test-martingale factors $M^{(i)}$; since each $M^{(i)}$ is the density of a path law, it is equally the barycentric form of Section~\ref{sec:barycentric} at those laws.

In the testing-by-betting reading each member is a \emph{bettor} wagering against the null, and the one-step gap of Lemma~\ref{lem:geometric-mixture} removes a slice of mass equal to the bettors' one-step disagreement.
When they disagree the mixture is therefore a \emph{strict} supermartingale, shedding mass before crossing any level.
So $\cC_\alpha$ is the running total of that disagreement along the path, growing the faster the more the $M^{(i)}$ diverge under $P$: without bound when the mass vanishes, $\E_P[M_t^{(\alpha)}] \to 0$, and to a finite limit when only finite mass is lost.
This is the sequential counterpart of the static multi-way coincidence divergence~\cite{balsubramani2026information}.
A test gains something else, and the level indexes it: the mass $g_x(\alpha) \le 1$ the mixture still retains when it first reaches $x$, whose deficit $1 - g_x(\alpha)$ is the disagreement accumulated by that moment.

\begin{corollary}[Coincidence-adjusted Ville bound / pooling benefit]
\label{cor:pooling-benefit}
For $\alpha \in \Delta([W])$ and $x > 1$, let $\tau^{(\alpha)}_x := \inf\{t : M_t^{(\alpha)}
\geq x\}$ and write $g_x(\alpha) := \E\bigl[M_{\tau^{(\alpha)}_x}^{(\alpha)}\,
\ind\{\tau^{(\alpha)}_x < \infty\}\bigr]$ for the mass the geometric mixture
retains across level $x$.  Then
\begin{equation}\label{eq:pooling}
  \PP(\tau^{(\alpha)}_x < \infty)
  = \PP\!\left(\exists\,t : \prod_i \bigl(M_t^{(i)}\bigr)^{\alpha_i}
            \geq x\right)
  \leq \frac{g_x(\alpha)}{x}
  \leq \frac{1}{x},
\end{equation}
and the improvement over the plain Ville bound is the mass deficit $1 - g_x(\alpha) \geq 0$, strictly positive whenever the bettors disagree along the path.
\end{corollary}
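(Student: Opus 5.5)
The proof is a stopped-mass argument for the supermartingale $M^{(\alpha)}$: Markov at the first passage gives the first inequality, and optional stopping of a nonnegative supermartingale gives $g_x(\alpha)\le1$. The first equality in~\eqref{eq:pooling} is definitional, since $\{\tau^{(\alpha)}_x<\infty\}$ is exactly the event that the geometric mixture $\prod_i(M^{(i)}_t)^{\alpha_i}$ reaches $x$ at some $t$.

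For the first inequality I work on $\{\tau^{(\alpha)}_x<\infty\}$, where $M^{(\alpha)}_{\tau^{(\alpha)}_x}\ge x$ by definition of the first passage. Hence
\[
  x\,\PP(\tau^{(\alpha)}_x<\infty)\;\le\;\E\bigl[M^{(\alpha)}_{\tau^{(\alpha)}_x}\ind\{\tau^{(\alpha)}_x<\infty\}\bigr]=g_x(\alpha).
\]
This step is pure Markov at the stopped value and uses nothing about the supermartingale property. The slack is exactly the mean overshoot, as in Theorem~\ref{thm:firstpassage-exact}.

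For $g_x(\alpha)\le1$ I invoke Lemma~\ref{lem:geometric-mixture}: $M^{(\alpha)}$ is a nonnegative supermartingale with $M^{(\alpha)}_0=\prod_i 1^{\alpha_i}=1$. For each finite $n$, optional stopping at the bounded time $\tau^{(\alpha)}_x\wedge n$ gives $\E[M^{(\alpha)}_{\tau^{(\alpha)}_x\wedge n}]\le1$. Restricting to $\{\tau^{(\alpha)}_x\le n\}$ only drops nonnegative mass, so
\[
  \E\bigl[M^{(\alpha)}_{\tau^{(\alpha)}_x}\ind\{\tau^{(\alpha)}_x\le n\}\bigr]\le1 .
\]
Letting $n\to\infty$ by monotone convergence gives $g_x(\alpha)\le1$. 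This limit passage is the step that needs care. Integrability of the stopped value is not enough, because mass can escape to $\{\tau=\infty\}$. Monotone convergence on the indicator-truncated quantity avoids that problem, since we only need an upper bound.

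It remains to show that the deficit $1-g_x(\alpha)$ is strictly positive when the bettors disagree; I expect this to be the main obstacle. I would write the Doob decomposition $M^{(\alpha)}_t=1+N_t-D_t$. Here $D_t=\sum_{s\le t}\bigl(M^{(\alpha)}_{s-1}-\E[M^{(\alpha)}_s\mid\cF_{s-1}]\bigr)$ is the compensator, and each increment of $D$ is the one-step AM--GM gap of Lemma~\ref{lem:geometric-mixture}. That gap is strictly positive on the event that the one-step ratios $M^{(i)}_s/M^{(i)}_{s-1}$ are not conditionally a.s.\ equal across the $i$ with $\alpha_i>0$.

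Stopping at $\tau^{(\alpha)}_x\wedge n$ and passing to the limit, I aim for
\[
  1-g_x(\alpha)\;\ge\;\E\bigl[D_{\tau^{(\alpha)}_x}\ind\{\tau^{(\alpha)}_x<\infty\}\bigr]+\lim_n\E\bigl[M^{(\alpha)}_n\ind\{\tau^{(\alpha)}_x>n\}\bigr].
\]
If this cannot be made an identity, I will settle for the inequality: write $1=\E[M^{(\alpha)}_{\tau\wedge n}]+\E[D_{\tau\wedge n}]-\E[N_{\tau\wedge n}]$ and use $\E[N_{\tau\wedge n}]=0$ from bounded stopping. Then $\E[D_{\tau\wedge n}]\to\E[D_\tau]$ by monotone convergence, and every term is nonnegative. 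Disagreement before crossing, on an event of positive probability, then makes $\E[D_{\tau}]>0$, and hence $1-g_x(\alpha)>0$. I read the statement's ``disagree along the path'' as exactly this hypothesis, disagreement at some step strictly before $\tau^{(\alpha)}_x$ with positive probability.
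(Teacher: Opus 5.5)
Your proposal is correct and follows the paper's route: the first-passage Markov step and optional stopping at $\tau^{(\alpha)}_x\wedge n$ with monotone convergence give both inequalities. Your Doob-compensator argument for strict positivity is exactly the content of Proposition~\ref{prop:pooling-exact}, and your displayed inequality is in fact that identity, since $M^{(\alpha)}_n<x$ on $\{\tau^{(\alpha)}_x>n\}$ lets dominated convergence identify the limit term as $\E[M^{(\alpha)}_\infty\ind\{\tau^{(\alpha)}_x=\infty\}]$. Your reading of ``disagree along the path'' as disagreement on $\{\tau^{(\alpha)}_x\ge s\}$ at a step where $M^{(\alpha)}_{s-1}>0$ is the right hypothesis, and it is more precise than the paper's one-line justification, because disagreement that occurs only after the crossing sheds no mass before it.
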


Disagreement is sufficient but not necessary: optional-stopping leakage can leave $g_x(\alpha) < 1$ without any pathwise disagreement, a distinction the evaluation below has to control for.

Neither inequality need be there.
The Doob decomposition of the mixture accounts for the second (Proposition~\ref{prop:pooling-exact}) and the overshoot at the crossing for the first, which leaves the pooled crossing probability in closed form where Markov's inequality gives only an upper bound.

\begin{corollary}[Exact pooled crossing probability]
\label{cor:pooling-crossing-exact}
In the setting of Corollary~\ref{cor:pooling-benefit}, let $A$ be the predictable nondecreasing part of the mixture's Doob decomposition, so that $A_{\tau^{(\alpha)}_x}=\sum_{t=1}^{\tau^{(\alpha)}_x}\bigl(M^{(\alpha)}_{t-1}-\E[M^{(\alpha)}_t\mid\cF_{t-1}]\bigr)$ is the disagreement shed by the crossing (Proposition~\ref{prop:pooling-exact}), and write $J^{(\alpha)}_x := M^{(\alpha)}_{\tau^{(\alpha)}_x} - x \geq 0$ for the overshoot at it.
The crossing probability bounded in~\eqref{eq:pooling} is
\begin{equation}\label{eq:pooling-crossing-exact}
  \PP(\tau^{(\alpha)}_x < \infty)
  = \frac{1}{x}\Bigl(
      1
      - \underbrace{\E[A_{\tau^{(\alpha)}_x}]}_{\text{shed before }\tau^{(\alpha)}_x}
      - \underbrace{\E\bigl[M^{(\alpha)}_\infty\ind\{\tau^{(\alpha)}_x=\infty\}\bigr]}
          _{\text{never reaches }x}
      - \underbrace{\E\bigl[J^{(\alpha)}_x\ind\{\tau^{(\alpha)}_x<\infty\}\bigr]}
          _{\text{overshoot at }\tau^{(\alpha)}_x}
    \Bigr).
\end{equation}
The first inequality of~\eqref{eq:pooling} discards the third subtracted term, and the second inequality discards the first two subtracted terms.
Each is an equality exactly when its own terms vanish, so the pooled bound is attained only by a mixture that crosses $x$ without overshoot, sheds nothing before $\tau^{(\alpha)}_x$, and leaves nothing below the threshold.
At $W=1$ the mixture is a martingale and the shed term is absent; if in addition $M^{(1)}_t\to0$ a.s.\ the below-threshold term is absent too, and~\eqref{eq:pooling-crossing-exact} is the first-passage identity~\eqref{eq:fp-exact}.
\end{corollary}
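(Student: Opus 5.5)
The plan is the first-passage argument of Theorem~\ref{thm:firstpassage-exact}, applied to the supermartingale $M^{(\alpha)}$ after its Doob decomposition. The only extra bookkeeping is for the compensator and for the mass left on paths that never cross.

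Write $M^{(\alpha)}_t = 1 + N_t - A_t$, where $N$ is a martingale with $N_0=0$ and $A$ is the predictable nondecreasing compensator with increments $M^{(\alpha)}_{t-1}-\E[M^{(\alpha)}_t\mid\cF_{t-1}]\ge0$; this is Lemma~\ref{lem:geometric-mixture} with Proposition~\ref{prop:pooling-exact}. Set $\tau:=\tau^{(\alpha)}_x$. At the bounded stopping time $\tau\wedge n$, optional stopping for $N$ gives the exact equation
\[
\E\bigl[M^{(\alpha)}_{\tau\wedge n}\bigr]=1-\E\bigl[A_{\tau\wedge n}\bigr].
\]

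Next let $n\to\infty$. By monotone convergence, $\E[A_{\tau\wedge n}]\to\E[A_\tau]$, which is finite and at most $1$. Split the left side on $\{\tau\le n\}$ and $\{\tau>n\}$. On $\{\tau\le n\}$ the value is $M^{(\alpha)}_\tau=x+J^{(\alpha)}_x$. On $\{\tau>n\}$ we have $0\le M^{(\alpha)}_n<x$, and the nonnegative supermartingale converges a.s. to $M^{(\alpha)}_\infty$. Since these values are dominated by the constant $x$, dominated convergence sends the second piece to $\E[M^{(\alpha)}_\infty\ind\{\tau=\infty\}]$.

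For the first piece, $\E[M^{(\alpha)}_\tau\ind\{\tau\le n\}]$ increases to $\E[M^{(\alpha)}_\tau\ind\{\tau<\infty\}]=g_x(\alpha)$. This limit is finite because it is at most $1$, being bounded by the left side, which is at most $1$. The limit therefore gives the crossing-mass identity
\[
g_x(\alpha)=1-\E[A_\tau]-\E\bigl[M^{(\alpha)}_\infty\ind\{\tau=\infty\}\bigr].
\]
Substituting $g_x(\alpha)=x\,\PP(\tau<\infty)+\E[J^{(\alpha)}_x\ind\{\tau<\infty\}]$ and dividing by $x$ gives~\eqref{eq:pooling-crossing-exact}.

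The remaining statements are bookkeeping from these two lines. The first inequality of~\eqref{eq:pooling} is $x\PP(\tau<\infty)\le g_x(\alpha)$, which drops only the overshoot. The second is $g_x(\alpha)\le1$, which drops the shed and never-crossing terms. Each is an equality iff its dropped nonnegative terms vanish.

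At $W=1$ the mixture is the martingale $M^{(1)}$, so $A\equiv0$. If moreover $M^{(1)}_t\to0$, then $M_\infty=0$, and the formula reduces to~\eqref{eq:fp-exact}.

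The main obstacle is the limit interchange, namely the never-crossing term. It needs both the a.s. convergence of the nonnegative supermartingale and the uniform bound $M^{(\alpha)}_n<x$ before crossing. Without these, mass escaping to $\{\tau=\infty\}$ could not be identified with $\E[M^{(\alpha)}_\infty\ind\{\tau=\infty\}]$; the bound by $x$ is what licenses dominated convergence there.
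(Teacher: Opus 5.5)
Your proposal is correct and follows essentially the same route as the paper. The paper cites Proposition~\ref{prop:pooling-exact} for the crossing-mass identity, and that proposition is proved by exactly your argument: optional stopping of the Doob martingale part at $\tau\wedge n$, with monotone convergence on the crossing and compensator terms and dominated convergence (by $x$) on the never-crossing term. It then substitutes $g_x(\alpha)=x\,\PP(\tau^{(\alpha)}_x<\infty)+\E[J^{(\alpha)}_x\ind\{\tau^{(\alpha)}_x<\infty\}]$ and divides by $x$, just as you do after proving that identity inline.
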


The mass the mixture retains at the crossing and the realized tightening of Ville's bound are two readings of the same deficit, differing by the overshoot; Appendix~\ref{app:pooling-decomposition} separates them.

The benefit is not confined to synthetic bettors, and a paired single-bettor control separates it from leakage on trained vision models: with one bettor's crossing mass at $1.005$ the leakage is negligible, so the deficit at larger ensemble sizes is the mixture's.
That deficit is positive in every multi-member cell, with bootstrap intervals excluding zero in both pools at every ensemble size.
At matched accuracy and ensemble sizes four and eight the architecturally diverse pool sheds more mass than the recipe-diverse one (Appendix~\ref{sec:eval-pooling-benefit-decorrelated}), which the matched accuracy attributes to composition and not to model quality.

% ======================================================================
The bound has a betting reading of its own.
Its partition function is the wealth of a Kelly bettor splitting stake by the weights $\alpha$ across the $W$ strategies, and $\cC_\alpha$ is the accumulated log-wealth disagreement among them.
The pooling benefit is the growth a rebalanced portfolio earns from that disagreement, the sequential and entropic form of the diversification behind universal-portfolio growth.
$\cC_\alpha$ measures disagreement and is blind to membership, so $W$ identical bettors give zero at every $W$ and it need not grow like the $\log W$ regret of the arithmetic-mixture portfolio.

\section{Path-time identities and peeking penalties}
\label{sec:pathtime}
% ======================================================================

Everything up to this point is defined on \emph{path space}, where a trajectory is drawn and every quantity is read off at a fixed horizon or at a stopping time---a moment whose arrival the process itself can recognize.
The phenomena that remain---evaluating an e-process at an \emph{arbitrary} random time, measuring the advantage of a well-chosen observation rule,
quantifying anticipation---all depend on \emph{when} one looks. 
The setting for all of them is the \emph{path-time space} $\Omega\times\N$, whose points are a trajectory paired with one instant at which it might be read.
Choosing when to look is then part of the sample point itself, and the mixed coincidence identity transfers to it essentially unchanged.
The freedom to choose the moment of observation enters there as one new factor,
the \emph{anticipation martingale} of the random time, which isolates the part of the time a process cannot foresee and measures that freedom in nats.

The object being evaluated is an e-process: a nonnegative process $(E_t)_{t \in \N_0}$ with $E_0 = 1$ and $\E_P[E_\tau] \leq 1$ at every $(\cF_t)$-stopping time $\tau$, possibly infinite; equivalently \cite{ramdas2020admissible}, one dominated by a nonnegative supermartingale.
The reciprocal $1/E_\tau$ is a valid p-value at every stopping time, which makes sequential inference on it safe; the penalties below measure the departure from one.

\subsection{The anticipation martingale}
\label{sec:survival}

We isolate the \emph{anticipation martingale} of an arbitrary discrete random time---the martingale factor of the time's occurrence density that measures anticipation in the peeking penalty below, equal to $1$ for stopping times.
Everything below is read off the time's conditional survival and hazard.

Let $\tau$ be a positive-integer-valued random variable on $(\Omega,\cF,(\cF_t), P)$.
The \emph{conditional survival process} and \emph{conditional hazard process} are
\begin{equation}\label{eq:surv-haz}
  S_t := P(\tau > t \mid \cF_t),
  \qquad
  H_t := \frac{P(\tau = t \mid \cF_t)}{P(\tau \geq t \mid \cF_t)},
\end{equation}
with the convention $0/0 := 0$.  When $\tau$ is \emph{oblivious}
(independent of $\cF_\infty$), $S_t$ and $H_t$ are deterministic;
when $\tau$ is a stopping time, $H_t = \ind\{\tau = t\}$ on $\{\tau \geq t\}$.

The hazard resolves the time into two multiplicative parts: a hazard clock that the observed history determines, and a martingale factor that absorbs whatever the history cannot.
Neither displays $\tau$ in its defining formula the way the survival and hazard of~\eqref{eq:surv-haz} do, and both are built from $\tau$ alone, so both take it as a superscript.

Define the \emph{hazard clock} $\clock{\tau}$ and the \emph{martingale factor} $\Amart{\tau}$ by
\begin{equation}\label{eq:clock-mart}
  \clock{\tau}_T := 1 - \prod_{t=1}^T (1 - H_t),
  \qquad
  \Amart{\tau}_T := \prod_{t=1}^T \frac{P(\tau \geq t \mid \cF_t)}
                            {P(\tau \geq t \mid \cF_{t-1})},
\end{equation}
with $\clock{\tau}_0 := 0$ and $\Amart{\tau}_0 := 1$, and with each factor read as $1$ on the event $\{P(\tau \geq t \mid \cF_{t-1}) = 0\}$, where the numerator vanishes almost surely as well.
The convention matters after a stopping time has exhausted its support: without it the factor would read $0$ where the convention gives $1$, and $\Amart{\tau}$ would leave the value it has already settled at.

The two facts about this decomposition that the rest of the paper uses are the product form itself and the \emph{mass identity} $P(\tau = t \mid \cF_t) = \Amart{\tau}_t\,\Delta \clock{\tau}_t$, proved as Theorem~\ref{thm:surv-factor}(v).
The hazard $H_t$ of~\eqref{eq:surv-haz} is $\cF_t$-measurable, so $\clock{\tau}_t$ is adapted but not in general \emph{predictable} ($\cF_{t-1}$-measurable); none of the arguments below requires predictability, and the name records what the object is built from: the hazard $H_t$.

The martingale factor $\Amart{\tau}_T$ is the \emph{anticipation martingale}: it encodes the anticipatory part of $\tau$, and every departure from a stopping time is confined to it, while the hazard clock $1-\clock{\tau}_T$ records what the history determines.
For stopping times, $\Amart{\tau} \equiv 1$ and the hazard clock holds all the information; the construction is unconditional, requiring no integrability or finite-$\tau$ hypothesis.  $\clock{\tau}$ and $\Amart{\tau}$ are built from $\tau$ alone: one hazard clock and one anticipation martingale per random time.
Under a null $P$ that is itself in question the construction is repeated measure by measure, giving $\Amart{\tau,P}$ (Theorem~\ref{thm:composite-peeking}).

\begin{theorem}[The anticipation martingale]
\label{thm:surv-factor}
For the survival and hazard of~\eqref{eq:surv-haz} and the hazard clock and martingale factor of~\eqref{eq:clock-mart}, the processes $\clock{\tau}$ and $\Amart{\tau}$ satisfy:
\begin{enumerate}[label=(\roman*)]
\item $\clock{\tau}$ is nondecreasing with $\clock{\tau}_0 = 0$ and $\clock{\tau}_T \in [0,1]$;
\item $\Amart{\tau}$ is a nonnegative $P$-martingale with $\Amart{\tau}_0 = 1$;
\item the conditional survival satisfies $S_T = (1 - \clock{\tau}_T)\,\Amart{\tau}_T$;
\item the incremental relations are $(\clock{\tau}_t - \clock{\tau}_{t-1})/(1 - \clock{\tau}_{t-1}) = H_t$ and $\Amart{\tau}_t/\Amart{\tau}_{t-1} = P(\tau \geq t \mid \cF_t)/P(\tau \geq t \mid \cF_{t-1})$;
\item $P(\tau = t \mid \cF_t) = \Amart{\tau}_t \cdot (\clock{\tau}_t - \clock{\tau}_{t-1})$.
\end{enumerate}
\end{theorem}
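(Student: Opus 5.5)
The plan is to work with the two conditional quantities $a_t:=P(\tau\ge t\mid\cF_t)$ and $b_t:=P(\tau\ge t\mid\cF_{t-1})$ and derive every item from a one-step telescoping identity between them. Two elementary facts do the work. First, $S_{t-1}=P(\tau>t-1\mid\cF_{t-1})=P(\tau\ge t\mid\cF_{t-1})=b_t$, so the survival at the previous step is the denominator of the $t$-th factor of $\Amart{\tau}$. Second, directly from~\eqref{eq:surv-haz}, $S_t=a_t-P(\tau=t\mid\cF_t)=a_t(1-H_t)$; this also holds on $\{a_t=0\}$ under the convention $0/0:=0$, since there both sides vanish.

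Items (i) and (iv) are read off the definitions. Each $H_t\in[0,1]$ because $\{\tau=t\}\subseteq\{\tau\ge t\}$. Hence $\prod_{s\le T}(1-H_s)$ is nonincreasing in $T$ and lies in $[0,1]$, which gives (i). The relations $1-\clock{\tau}_t=(1-\clock{\tau}_{t-1})(1-H_t)$ and $\Amart{\tau}_t/\Amart{\tau}_{t-1}=a_t/b_t$ give (iv) by algebra, taken on the event where the denominators are nonzero.

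For (ii), note that $\Amart{\tau}_t\ge0$ and that $\Amart{\tau}_{t-1}$ is $\cF_{t-1}$-measurable. The martingale property then reduces to $\E[a_t/b_t\mid\cF_{t-1}]=1$ on $\{b_t>0\}$, which holds because $\E[a_t\mid\cF_{t-1}]=b_t$ by the tower property. On $\{b_t=0\}$ the factor is $1$ by convention. To justify that convention, observe that $\E[a_t\ind\{b_t=0\}]=\E[b_t\ind\{b_t=0\}]=0$, so $a_t=0$ a.s.\ there. Integrability follows inductively: $\E[\Amart{\tau}_t]=\E[\Amart{\tau}_{t-1}]=1$.

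For (iii), I would prove $S_T=(1-\clock{\tau}_T)\Amart{\tau}_T$ by induction on $T$, with base case $S_0=1$ since $\tau\ge1$. Combining the two facts gives the one-step recursion
\[
S_t=a_t(1-H_t)=\frac{a_t}{b_t}\,S_{t-1}(1-H_t),
\]
and this matches the product recursions of (iv). Item (v) then follows from
\[
P(\tau=t\mid\cF_t)=a_tH_t=\frac{a_t}{b_t}\,S_{t-1}H_t=\Amart{\tau}_t\,(1-\clock{\tau}_{t-1})H_t,
\]
where the middle equality uses $S_{t-1}=b_t$ and the last uses (iii) at $t-1$ together with (iv). By (iv), $(1-\clock{\tau}_{t-1})H_t=\clock{\tau}_t-\clock{\tau}_{t-1}$, which is exactly (v).

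The main obstacle is the null-set bookkeeping. On $\{b_t=0\}$ the recursion in (iii) must still hold even though $a_t/b_t$ has been replaced by $1$. There $S_{t-1}=b_t=0$, and $S_t\le a_t=0$ a.s., so both sides vanish. The convention $0/0=0$ in $H_t$ must be checked in the same way. Once these a.s.\ identities are in place, the rest is routine.
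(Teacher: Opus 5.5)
Your proposal is correct and follows essentially the paper's own route. The paper likewise identifies $P(\tau\ge t\mid\cF_{t-1})=S_{t-1}$, obtains (ii) from the tower property applied to the ratio $G_t/S_{t-1}$, gets (iii) by telescoping $\prod_t (S_t/G_t)(G_t/S_{t-1})$, which is your induction written as a single product, and derives (v) from (iii) at $t-1$. Your use of $S_t=a_t(1-H_t)$ in place of the paper's $1-H_t=S_t/G_t$ also handles the set $\{P(\tau\ge t\mid\cF_t)=0\}$ slightly more cleanly.
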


The factorization delivers the identity the peeking calculus runs on: an expectation taken at the random time $\tau$, rewritten on the fixed time grid with each instant weighted by $\Amart{\tau}_t\,\Delta \clock{\tau}_t$.

\begin{theorem}[Representation at random times]
\label{thm:rep}
For any adapted process $(V_t)_{t \geq 1}$ with $\E\bigl[\sum_{t \geq 1} |V_t|\,P(\tau = t \mid \cF_t)\bigr] < \infty$,
\begin{equation}\label{eq:rep}
  \E[V_\tau]
  = \E\!\left[\sum_{t \geq 1} V_t\,\Amart{\tau}_t\,(\clock{\tau}_t - \clock{\tau}_{t-1})\right].
\end{equation}
\end{theorem}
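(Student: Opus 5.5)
The plan is to decompose $V_\tau$ over the values of $\tau$, condition each piece on $\cF_t$, and then apply the mass identity of Theorem~\ref{thm:surv-factor}(v). Since $\tau$ is positive-integer valued, it is a.s.\ finite. Then $V_\tau = \sum_{t\ge1} V_t\,\ind\{\tau=t\}$ pointwise, since exactly one indicator is nonzero.

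First, justify exchanging expectation and sum. For each $t$, $V_t$ is $\cF_t$-measurable, so the tower property gives
\[
\E\bigl[|V_t|\,\ind\{\tau=t\}\bigr]=\E\bigl[|V_t|\,P(\tau=t\mid\cF_t)\bigr].
\]
Summing over $t$ and using Tonelli, $\E\bigl[\sum_t |V_t|\ind\{\tau=t\}\bigr]$ equals the quantity assumed finite in the hypothesis. Hence $V_\tau$ is integrable, and Fubini lets us write
\[
\E[V_\tau]=\sum_{t\ge1}\E\bigl[V_t\,\ind\{\tau=t\}\bigr]=\sum_{t\ge1}\E\bigl[V_t\,P(\tau=t\mid\cF_t)\bigr],
\]
where each term is again computed by conditioning on $\cF_t$ and pulling out $V_t$.

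Second, substitute $P(\tau=t\mid\cF_t)=\Amart{\tau}_t\,(\clock{\tau}_t-\clock{\tau}_{t-1})$ from Theorem~\ref{thm:surv-factor}(v). This equality holds a.s.\ for each $t$, and there are countably many $t$, so it holds simultaneously a.s.

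Third, move the sum back inside the expectation. This uses Fubini once more, justified by the same absolute bound: the weights $\Amart{\tau}_t\,\Delta\clock{\tau}_t$ are nonnegative and agree a.s.\ with $P(\tau=t\mid\cF_t)$, so the integrability hypothesis controls the rewritten sum too. The result is~\eqref{eq:rep}.

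The main subtlety lies in the conventions rather than the computation. On events where $P(\tau\ge t\mid\cF_{t-1})=0$ or where the hazard is read as $0/0:=0$, we must check that the mass identity still gives zero mass, consistent with $P(\tau=t\mid\cF_t)=0$ there. Since Theorem~\ref{thm:surv-factor}(v) is stated unconditionally, we take it as given. The only care needed is the a.s.\ version of conditional probabilities, which affects nothing.
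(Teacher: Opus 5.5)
Your proposal is correct and follows the paper's proof: decompose $V_\tau=\sum_t V_t\,\ind\{\tau=t\}$, replace each indicator by $P(\tau=t\mid\cF_t)$ via the tower property, then substitute the mass identity of Theorem~\ref{thm:surv-factor}(v). Your Tonelli/Fubini justification, which uses the integrability hypothesis to license the exchanges of sum and expectation, is sound and only makes explicit what the paper's one-line proof leaves implicit.
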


Read pathwise on the index $\{1,2,\ldots\}$ with counting measure, the two summand factors $V_t$ and $\Amart{\tau}_t\,\Delta \clock{\tau}_t$ are themselves nonnegative factors.
For strictly positive $V$ the realized sum in~\eqref{eq:rep} is therefore an instance of Theorem~\ref{thm:mci}, whose optimizer $w^\ast(t)\propto V_t\,\Amart{\tau}_t\,\Delta \clock{\tau}_t$ is the typical-time distribution along that path.

Two classes of random time have zero anticipation ($\Amart{\tau}_t \equiv 1$), so the peeking penalty below degenerates for both: \emph{stopping times}, whose hazard clock $\clock{\tau}$ stays at $0$ until $\tau$ and then jumps to $1$; and \emph{pseudo-stopping times}~\cite{NY05,coculescu2012hazard},
whose martingale factor is $\Amart{\tau}_t \equiv 1$ pointwise (under $P(\tau<\infty)=1$).
Equivalently, the random-time evaluation acts as if it were a stopping time on every bounded test martingale, though the time itself need not be one.
Only $\Amart{\tau}_t \equiv 1$ is used below.

Beyond this dichotomy the anticipation martingale has a full R\'enyi spectrum,
of which the peeking calculus uses only the extremes.

\begin{proposition}[R\'enyi spectrum of anticipation]
\label{prop:renyi-anticipation}
Taking the single factor $\Pi=\Amart{\tau}$ in the path-space partition function of Theorem~\ref{thm:pathspace} gives the anticipation cumulant $\Phi_T(\alpha)=\log Z_T(\alpha)=\log\E[(\Amart{\tau}_T)^\alpha]$, convex in $\alpha$
(Proposition~\ref{prop:grad}), with
\[
  \Phi_T(1)=0,\qquad \Phi_T(0)=\log P(\Amart{\tau}_T>0)\le0,\qquad
  \Phi_T'(1)=\E[\Amart{\tau}_T\log \Amart{\tau}_T]=\KL(Q_A\|P|_{\cF_T}),\quad \tfrac{\dd Q_A}{\dd P}:=\Amart{\tau}_T,
\]
the last the total marginal anticipation of $\tau$ in nats.
Here $\Phi_T\equiv0$ iff $\Amart{\tau}\equiv1$, i.e.\ $\tau$ is a stopping or pseudo-stopping time (Section~\ref{sec:survival}).
More generally $\Phi_T$ is affine exactly when $\Amart{\tau}_T=\ind_B/P(B)$ for some event $B$ with $P(B)>0$ --- the all-or-nothing peek,
of which $P(B)=1$ is the case $\Amart{\tau}\equiv1$ above --- and is strictly convex otherwise.
A fuller classification of the intermediate anticipation classes by the form of $\Phi_T$ lies beyond the present scope.
\end{proposition}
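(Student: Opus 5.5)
The plan is to treat $\Amart{\tau}_T$ as a nonnegative random variable $X$ with $\E[X]=1$, which is Theorem~\ref{thm:surv-factor}(ii). Every claim then reduces to elementary properties of the moment function $\alpha\mapsto\log\E[X^\alpha]$. We set $\Phi_T(\alpha)=\log\E[X^\alpha]$, adopting the convention $0^0:=0$ so that $\Phi_T(0)=\log P(X>0)$. This is the single-factor case $W=1$, $\Pi=X$ of Theorem~\ref{thm:pathspace}, restricted to $\{X>0\}$ where the factor is positive.

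\textbf{Convexity and the endpoint values.} Convexity follows from H\"older's inequality, or equivalently from the Hessian formula of Proposition~\ref{prop:grad}, which gives $\Phi_T''(\alpha)=\Var_{Q^\alpha}(\log X)\ge0$ for the tilt $\dd Q^\alpha/\dd P\propto X^\alpha$ on $\{X>0\}$. The values at the two reference points are direct. At $\alpha=1$ we get $\Phi_T(1)=\log\E[X]=0$. At $\alpha=0$ we get $\Phi_T(0)=\log P(X>0)\le 0$. For the slope at $1$, Proposition~\ref{prop:grad} gives $\Phi_T'(1)=\E_{Q^1}[\log X]$ with $Q^1=Q_A$, so $\Phi_T'(1)=\E[X\log X]=\KL(Q_A\|P|_{\cF_T})$. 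Differentiation under the integral needs justification. I would justify it by monotone convergence of the difference quotients, using the convexity of $\alpha\mapsto X^\alpha$ and allowing the value $+\infty$.

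\textbf{Characterising $\Phi_T\equiv 0$.} If $X\equiv1$, then trivially $\Phi_T\equiv0$. Conversely, $\Phi_T(2)=0$ means $\E[X^2]=(\E X)^2$, so $\Var(X)=0$ and $X=1$ a.s. It remains to upgrade $\Amart{\tau}_T=1$ to $\Amart{\tau}_t=1$ for all $t\le T$, which follows because $\Amart{\tau}$ is a martingale. If the statement is read over all horizons $T$, this is the defining condition $\Amart{\tau}\equiv1$ of the stopping and pseudo-stopping classes in Section~\ref{sec:survival}.

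\textbf{Characterising affinity.} This is the main step. Since $\Phi_T''(\alpha)=\Var_{Q^\alpha}(\log X)$, $\Phi_T$ is affine on an interval exactly when $\log X$ is $Q^\alpha$-a.s.\ constant. Because $Q^\alpha$ is equivalent to $P$ on $\{X>0\}$, this means $X=c$ on $\{X>0\}$, i.e.\ $X=c\,\ind_B$ with $B=\{X>0\}$. The constraint $\E[X]=1$ then forces $c=1/P(B)$. Conversely, if $X=\ind_B/P(B)$, then $\E[X^\alpha]=P(B)^{1-\alpha}$, so $\Phi_T(\alpha)=(1-\alpha)\log(1/P(B))$, which is affine. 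To avoid any regularity assumptions, I would bypass derivatives and argue via the equality case of H\"older's inequality. Affinity on $[0,2]$, say, gives equality in $\E[X^{1}]\le\E[X^0]^{1/2}\E[X^2]^{1/2}$. That equality case forces $X^2\propto \ind_{\{X>0\}}$, which again gives $X$ constant on its support. If $\Phi_T$ is not of this form, the variance of $\log X$ under every tilt is positive, so $\Phi_T$ is strictly convex. The one delicate point I expect is the domain: $\Phi_T$ may be $+\infty$ for large $\alpha$. I would therefore state strict convexity on the interior of the finiteness domain, which contains $[0,1]$ since $\E[X^\alpha]\le1$ there by Jensen's inequality.
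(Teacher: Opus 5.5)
Your proposal is correct and matches the paper's proof in substance. Both read $\Phi_T(1)$, $\Phi_T(0)$ and $\Phi_T'(1)$ off the tilt at $\alpha=1$, and both characterise affinity by $\Phi_T''=\Var_{Q^\alpha}(\log \Amart{\tau}_T)$ vanishing exactly when $\Amart{\tau}_T$ is constant on $\{\Amart{\tau}_T>0\}$. Your variations add care the paper leaves implicit: $\Phi_T(2)=0$ in place of $\Phi_T'(1)=0$, the H\"older equality case in place of differentiating under the integral, and the martingale upgrade from $\Amart{\tau}_T=1$ to every $t\le T$. One slip: for $\Amart{\tau}_T=\ind_B/P(B)$ you get $\Phi_T(\alpha)=(1-\alpha)\log P(B)$, not $(1-\alpha)\log(1/P(B))$, as $\Phi_T(0)=\log P(B)\le 0$ confirms.
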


\subsection{The path-time identity}

To price the choice of time itself, move from path space to the \emph{path-time space} that pairs each trajectory with the instant at which it is examined.
Define $\widehat\Omega := \Omega \times \N$ with $\sigma$-field generated by $A \times \{t\}$, $A \in \cF_t$, and the finite positive measure
\begin{equation}\label{eq:phat}
  \widehat P(A\times\{t\}) := \E[\ind_A \Delta \clock{\tau}_t],
\end{equation}
together with the probability measure
\begin{equation}\label{eq:Rlaw}
  R(A\times\{t\}) := \PP(A \cap \{\tau = t\}).
\end{equation}

\begin{lemma}[Random-time path-time law]
\label{lem:Rt}
$\dd R / \dd \widehat P (\omega,t) = \Amart{\tau}_t(\omega)$ on each time-slice.
Equivalently, for every nonnegative adapted process $V$,
$\E[V_\tau] = \int_{\widehat\Omega} V_t(\omega)
\Amart{\tau}_t(\omega)\,\widehat P(\dd\omega,\dd t)$.
\end{lemma}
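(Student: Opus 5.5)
The plan is to verify the density claim on the generating rectangles $A\times\{t\}$, $A\in\cF_t$, and then extend to all measurable sets and to integrals of nonnegative adapted processes by the standard monotone-class argument.

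Fix $t$ and $A\in\cF_t$. By definition~\eqref{eq:Rlaw} and the tower property, since $\ind_A$ is $\cF_t$-measurable,
\[
  R(A\times\{t\}) = \E\bigl[\ind_A\,\ind\{\tau=t\}\bigr]
  = \E\bigl[\ind_A\,P(\tau=t\mid\cF_t)\bigr].
\]
The mass identity of Theorem~\ref{thm:surv-factor}(v) rewrites the conditional probability as $\Amart{\tau}_t\,\Delta\clock{\tau}_t$, so
\[
  R(A\times\{t\}) = \E\bigl[\ind_A\,\Amart{\tau}_t\,\Delta\clock{\tau}_t\bigr].
\]
The right side is exactly $\int_{A\times\{t\}}\Amart{\tau}_t\,\dd\widehat P$. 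To see this, note that on the slice $\{t\}$ the measure $\widehat P$ is the measure $\Delta\clock{\tau}_t\,\dd P$ restricted to $\cF_t$, by~\eqref{eq:phat}. Integrating the $\cF_t$-measurable function $\Amart{\tau}_t\ind_A$ against it, first for simple functions and then by monotone convergence, gives $\E[\ind_A\Amart{\tau}_t\Delta\clock{\tau}_t]$.

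The sets $A\times\{t\}$ together with $\emptyset$ form a $\pi$-system generating the path-time $\sigma$-field, since intersections of slices at different times are empty. Both $R$ and the measure $\Amart{\tau}_t\,\dd\widehat P$ are finite and agree on it; $\widehat P$ has total mass at most one because $\clock{\tau}\le1$. Countable additivity in $t$ then gives agreement everywhere, which is the statement $\dd R/\dd\widehat P=\Amart{\tau}_t$ on each slice.

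For the equivalent form, take a nonnegative adapted $V$. On the one hand, $\int V\,\dd R=\sum_t\E[V_t\ind\{\tau=t\}]=\E[V_\tau]$ by monotone convergence over $t$, using that $\tau$ is a positive integer a.s. On the other hand, $\int V\,\dd R=\int V_t\Amart{\tau}_t\,\dd\widehat P$ by the density just established, extended from indicators to simple functions and then to nonnegative measurable $V$ by monotone convergence. (This also reproduces Theorem~\ref{thm:rep} for nonnegative $V$, with no integrability hypothesis needed.)

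The only delicate point is the convention on $\{P(\tau\ge t\mid\cF_{t-1})=0\}$, where the factor of $\Amart{\tau}$ is set to $1$. There $P(\tau=t\mid\cF_t)=0$ a.s., and $\Delta\clock{\tau}_t=0$ as well: either $H_t=0$ by the convention $0/0=0$, or $1-\clock{\tau}_{t-1}$ already vanishes. Hence both sides of the mass identity vanish and the convention is harmless. This is already covered by Theorem~\ref{thm:surv-factor}(v), so I would simply invoke it.
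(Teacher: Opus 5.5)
Your proposal is correct and follows the paper's own proof: the tower property on each rectangle $A\times\{t\}$ with $A\in\cF_t$, the mass identity of Theorem~\ref{thm:surv-factor}(v), and integration of $V$ against $R$. The extra bookkeeping you add fills in what the paper leaves implicit. This covers extending from rectangles to the whole path-time $\sigma$-field by countable additivity over slices, and checking that the convention on $\{P(\tau\ge t\mid\cF_{t-1})=0\}$ is harmless.
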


On this substrate the mixed coincidence identity acquires one extra factor of $\log \Amart{\tau}_t$ beyond the path-space form, and that factor is the information, in nats, that moving the factors $\Pi_{i,t}$ from a deterministic horizon to the random time $\tau$ requires.

\begin{theorem}[Path-time mixed coincidence identity]
\label{thm:pathtime}
Let $\Pi_{1,t},\ldots,\Pi_{W,t}$ be strictly positive adapted factors with $0 < Z_\tau(\alpha) := \E\bigl[\prod_i \Pi_{i,\tau}^{\alpha_i}\bigr]
< \infty$.  Then for every $Q\ll\widehat P$ with
$Q(\{\Amart{\tau}_t\prod_i\Pi_{i,t}^{\alpha_i}=0\})=0$,
\begin{equation}\label{eq:pathtime}
  \log Z_\tau(\alpha)
  = \sum_{i=1}^W \alpha_i\,\E_Q[\log \Pi_{i,t}]
    + \E_Q[\log \Amart{\tau}_t]
    - \KL(Q\|\widehat P)
    + \KL(Q\|Q_\tau^\alpha),
\end{equation}
where the optimizer is
\begin{equation}\label{eq:pathtimeopt}
  \frac{\dd Q_\tau^\alpha}{\dd \widehat P}(\omega,t)
  = \frac{\Amart{\tau}_t(\omega)\prod_i \Pi_{i,t}(\omega)^{\alpha_i}}{Z_\tau(\alpha)}.
\end{equation}
Dropping the nonnegative residual $\KL(Q\|Q_\tau^\alpha)$ recovers the variational form (a supremum over $Q \ll \widehat P$), attained at $Q_\tau^\alpha$.
\end{theorem}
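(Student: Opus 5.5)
The plan is to read Theorem~\ref{thm:pathtime} as Corollary~\ref{cor:DV} applied on the path-time space. Take $\mu=\widehat P$, the finite positive measure of~\eqref{eq:phat}, and the nonnegative function $g(\omega,t):=\Amart{\tau}_t(\omega)\prod_i\Pi_{i,t}(\omega)^{\alpha_i}$. Measurability with respect to the path-time $\sigma$-field is immediate: on each slice $\Omega\times\{t\}$ both $\Amart{\tau}_t$ and the $\Pi_{i,t}$ are $\cF_t$-measurable. The first step is to identify $\int g\,\dd\widehat P$ with $Z_\tau(\alpha)$. This is Lemma~\ref{lem:Rt} applied to the nonnegative adapted process $V_t:=\prod_i\Pi_{i,t}^{\alpha_i}$, which gives $\int V_t\Amart{\tau}_t\,\dd\widehat P=\E[V_\tau]=Z_\tau(\alpha)$. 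Positivity and finiteness of this integral are exactly the hypothesis $0<Z_\tau(\alpha)<\infty$, so Corollary~\ref{cor:DV} applies.

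With that in hand, the Gibbs measure $\dd Q^\star/\dd\widehat P=g/\int g\,\dd\widehat P$ is precisely the $Q_\tau^\alpha$ of~\eqref{eq:pathtimeopt}. Now take any $Q\ll\widehat P$ with $Q(\{g=0\})=0$, which is the stated hypothesis. Identity~\eqref{eq:DV} gives $\log Z_\tau(\alpha)=\E_Q[\log g]-\KL(Q\|\widehat P)+\KL(Q\|Q_\tau^\alpha)$. Expanding $\log g=\sum_i\alpha_i\log\Pi_{i,t}+\log\Amart{\tau}_t$ and using linearity of $\E_Q$ yields~\eqref{eq:pathtime}. Note that $\KL(Q\|\widehat P)$ is the finite-measure divergence of~\eqref{eq:finite-entropy}, which is the form Corollary~\ref{cor:DV} already uses, since $\widehat P$ has total mass $\E[\clock{\tau}_\infty]\le1$. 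The variational form and the attainment at $Q_\tau^\alpha$ are then the second half of Corollary~\ref{cor:DV}, since $\KL(Q\|Q_\tau^\alpha)\ge0$ with equality iff $Q=Q_\tau^\alpha$.

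The main obstacle is the expansion of $\E_Q[\log g]$ into separate terms, because each $\E_Q[\log\Pi_{i,t}]$ and $\E_Q[\log\Amart{\tau}_t]$ must be well defined and the sum must not be of the form $\infty-\infty$. I would handle this as follows. Since $Q(\{g=0\})=0$, every logarithm is $Q$-a.s.\ finite. First prove the identity with $\E_Q[\log g]$ kept as a single term, using the conventions of Corollary~\ref{cor:DV} in $[-\infty,\infty)$. Then split it whenever the individual expectations are finite, which is the reading implicit in the statement; if needed I would add that integrability as a standing convention, matching the finiteness assumption of Theorem~\ref{thm:mci}. A secondary check is that Lemma~\ref{lem:Rt} is valid for nonnegative $V$ without further integrability. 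It is, because it follows from Theorem~\ref{thm:rep} by monotone convergence and the mass identity of Theorem~\ref{thm:surv-factor}(v), giving $\dd R/\dd\widehat P=\Amart{\tau}_t$ slice by slice.
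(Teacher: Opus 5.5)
Your proposal is correct and is essentially the paper's proof. The paper likewise uses Lemma~\ref{lem:Rt} to write $Z_\tau(\alpha)=\int \Amart{\tau}_t\prod_i\Pi_{i,t}^{\alpha_i}\,\dd\widehat P$, applies Corollary~\ref{cor:DV} on $(\widehat\Omega,\widehat P)$ with $g=\Amart{\tau}_t\prod_i\Pi_{i,t}^{\alpha_i}$, and splits $\log g$; your added care about avoiding $\infty-\infty$ when separating $\E_Q[\log g]$, and about Lemma~\ref{lem:Rt} holding for nonnegative $V$ by Tonelli, are sound refinements of points the paper leaves implicit.
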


For stopping times \eqref{eq:pathtime} reduces to the path-space identity on $\cF_\tau$, and for anticipatory times the correction $\E_Q[\log \Amart{\tau}_t]$ becomes the peeking penalty of Theorem~\ref{thm:peeking}.

\subsection{Peeking penalties}

Evaluating an e-process (or any nonnegative supermartingale) at an arbitrary random time $\tau$ inflates its expectation by $\log \E[E_\tau]$ relative to the deterministic-time bound $\E[E_t] \leq 1$.
This is the \emph{peeking penalty}: what the analyst incurs, in nats, for looking at the process at a moment that depends on the trajectory in ways the process itself cannot foresee.
It measures, in nats, the effect behind optional-stopping bias---the downward bias in a $p$-value reported at a data-dependent moment, and the estimation of the running extrema that such peeking tracks~\cite{balsubramani2020p}.
As a logarithm of an expectation it is an information-theoretic quantity, on the same nat scale as the relative entropies of the identity that produces it.
It admits an identity---a Donsker--Varadhan free energy that holds at every path-time measure $Q$, the non-optimal ones included, and gains one extra additive term $\E_Q[\log \Amart{\tau}_t]$ that vanishes precisely when $\Amart{\tau} \equiv 1$---so the penalty is the path-time relative-entropy discrepancy the anticipation martingale makes explicit, with no additional slack.

Read directly against the reference measure, before the path-time refinement that exposes the anticipation factor, the penalty is already a Donsker--Varadhan free energy.

For an e-process $(E_t)$ and a random time $\tau$ with $0<\E[E_\tau]<\infty$, that reading is Corollary~\ref{cor:DV} at $g=E_\tau$: for every $Q\ll P$ charging no $\{E_\tau=0\}$,
$\log\E[E_\tau]=\E_Q[\log E_\tau]-\KL(Q\|P)+\KL(Q\|P_{E_\tau})$ with $\dd P_{E_\tau}/\dd P\propto E_\tau$, so dropping the residual gives $\log\E[E_\tau]=\sup_{Q\ll P}\{\E_Q[\log E_\tau]-\KL(Q\|P)\}$.
The path-time refinement below keeps the equality and splits the first term, exposing the anticipation factor the reference measure alone cannot see.

Refining the substrate from $\Omega$ to path-time resolves that free energy one step further, separating the anticipation the time contributes from the tilt the process supplies.

\begin{theorem}[Random-time peeking penalty]
\label{thm:peeking}
Let $E_t(\lambda) = \exp(\lambda Y_t - \overline\Psi_t(\lambda))$ be a nonnegative supermartingale with $E_0(\lambda) \leq 1$, as in Proposition~\ref{prop:supermart-relax}(iii).
Define the peeking penalty $\cP_\tau(\lambda) := \log \E[E_\tau(\lambda)]$, and assume $0 < \E[E_\tau(\lambda)] < \infty$.
Then for every $Q \ll \widehat P$ with $Q(\{\Amart{\tau}_t=0\})=0$,
\begin{equation}\label{eq:peeking}
  \cP_\tau(\lambda)
  = \lambda \E_Q[Y_t] - \E_Q[\overline\Psi_t(\lambda)]
    + \E_Q[\log \Amart{\tau}_t] - \KL(Q\|\widehat P) + \KL(Q\|Q^\star_\tau),
\end{equation}
where $\dd Q^\star_\tau/\dd\widehat P \propto \Amart{\tau}_t\,E_t(\lambda)$; dropping the residual $\KL(Q\|Q^\star_\tau) \geq 0$ gives the variational (supremum) form.
For $\lambda > 0$ and any $A \in \sigma(\tau,\cF_\tau)$ such that on $A$ one has $Y_\tau \geq x$ and $\overline\Psi_\tau(\lambda) \leq c$, $\PP(A) \leq \exp\!\bigl(-\lambda x + c + \cP_\tau(\lambda)\bigr)$.
\end{theorem}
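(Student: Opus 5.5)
The identity~\eqref{eq:peeking} is Theorem~\ref{thm:pathtime} read at one factor, and the tail bound is the event-identity argument of Corollary~\ref{cor:event} transplanted to path-time space. I would take $W=1$, $\Pi_{1,t}:=E_t(\lambda)=\exp(\lambda Y_t-\overline\Psi_t(\lambda))$ and $\alpha_1=1$. This factor is strictly positive and adapted, so $Z_\tau(1)=\E[E_\tau(\lambda)]$. The hypothesis $0<\E[E_\tau(\lambda)]<\infty$ is exactly the normalization hypothesis of Theorem~\ref{thm:pathtime}, and $\log Z_\tau(1)=\cP_\tau(\lambda)$. Positivity of $E_t$ reduces the side condition $Q(\{\Amart{\tau}_t E_t=0\})=0$ to $Q(\{\Amart{\tau}_t=0\})=0$, which is what the statement assumes. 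Substituting $\log\Pi_{1,t}=\lambda Y_t-\overline\Psi_t(\lambda)$ into~\eqref{eq:pathtime} gives~\eqref{eq:peeking}, and~\eqref{eq:pathtimeopt} gives the optimizer $\dd Q^\star_\tau/\dd\widehat P\propto\Amart{\tau}_t E_t(\lambda)$. Dropping $\KL(Q\|Q^\star_\tau)\ge0$ yields the supremum form.

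One technical point needs care: splitting $\E_Q[\log(\Amart{\tau}_tE_t)]$ into separate terms requires each of $\E_Q[Y_t]$, $\E_Q[\overline\Psi_t]$ and $\E_Q[\log\Amart{\tau}_t]$ to be well defined. I would state the identity for $Q$ under which these are finite, or read it in the extended sense with $\pm\infty$ conventions, exactly as in the path-space case.

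For the tail bound I would avoid the path-time $Q$ and use the representation directly. By Lemma~\ref{lem:Rt}, the law $R$ of $(\omega,\tau(\omega))$ has density $\Amart{\tau}_t$ with respect to $\widehat P$. Conditioning $R$ on $A$ (viewed as a subset of $\widehat\Omega$ via its time slices) gives the candidate $Q_A:=R(\cdot\mid A)$, which satisfies $Q_A\ll\widehat P$ and charges no $\{\Amart{\tau}_t=0\}$.

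Simpler still is Markov's inequality. Since $\lambda>0$, $Y_\tau\ge x$ and $\overline\Psi_\tau\le c$ on $A$, we have $E_\tau(\lambda)\ge e^{\lambda x-c}$ on $A$. Hence $\PP(A)\,e^{\lambda x-c}\le\E[E_\tau\ind_A]\le\E[E_\tau]=e^{\cP_\tau(\lambda)}$, which is the bound.

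To present it as a residual-dropping step, in the paper's style, I would plug $Q_A$ into~\eqref{eq:peeking}. Using $\dd Q_A/\dd\widehat P=\Amart{\tau}_t\ind_A/\PP(A)$, one gets
\[
\E_{Q_A}[\log\Amart{\tau}_t]-\KL(Q_A\|\widehat P)=\log\PP(A).
\]
This leaves
\[
-\log\PP(A)=\lambda\E[Y_\tau\mid A]-\E[\overline\Psi_\tau\mid A]-\cP_\tau(\lambda)+\KL(Q_A\|Q^\star_\tau),
\]
and dropping the residual and relaxing on $A$ gives the claim.

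The main obstacle is the measurability of $A\in\sigma(\tau,\cF_\tau)$ as a path-time set. I would show that $A\cap\{\tau=t\}$ corresponds to an $\cF_t$-measurable slice $A_t$ up to $P$-null sets, via the projection $P(A\cap\{\tau=t\}\mid\cF_t)$, and handle this through Theorem~\ref{thm:rep}. If the slice argument fails, the direct Markov inequality route above proves the tail bound independently of it.
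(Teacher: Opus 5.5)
Your proof is correct and matches the paper's. The identity~\eqref{eq:peeking} is Theorem~\ref{thm:pathtime} read at the single strictly positive factor $\Pi_t=E_t(\lambda)$, and the tail bound is Markov's inequality using $E_\tau(\lambda)\ge e^{\lambda x-c}$ on $A$. Your residual-dropping variant with $Q_A=R(\cdot\mid A)$ is a valid optional addition when $A=\bigcup_t(A_t\cap\{\tau=t\})$ with $A_t\in\cF_t$. The projection $P(A\cap\{\tau=t\}\mid\cF_t)$ you mention would not by itself produce such slices, but the Markov route needs no such structure and is the one the paper uses.
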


\begin{proposition}[When the penalty vanishes]
\label{prop:vanish}
Let $E_t$ be a nonnegative supermartingale with $E_0 \leq 1$.
\begin{enumerate}[label=(\alph*)]
\item If $\tau$ is a stopping time and $E_\tau$ is integrable,
$\cP_\tau = \log\E[E_\tau] \leq 0$.
\item If $\tau$ is a pseudo-stopping time and $(E_t)$ is a uniformly integrable martingale with $\E[E_0] = 1$, $\cP_\tau = 0$.
\end{enumerate}
\end{proposition}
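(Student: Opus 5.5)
Part (a) is optional stopping for nonnegative supermartingales, and part (b) reduces to the defining property of pseudo-stopping times once we pass from bounded to uniformly integrable martingales. I treat the two parts separately.

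\emph{Part (a).} Let $\tau$ be a stopping time. First I use Fatou's lemma to reduce to bounded times. For each $n$, $\tau\wedge n$ is a bounded stopping time. The supermartingale property then gives $\E[E_{\tau\wedge n}]\le \E[E_0]\le 1$; this is just the discrete Doob decomposition summed up to time $n$. Since $\tau$ is positive-integer valued, $E_{\tau\wedge n}\to E_\tau$ pointwise on $\{\tau<\infty\}$. By Fatou, $\E[E_\tau\ind\{\tau<\infty\}]\le\liminf_n\E[E_{\tau\wedge n}]\le1$.

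If $\tau$ may be infinite, I interpret $E_\tau$ on $\{\tau=\infty\}$ as $E_\infty:=\lim_t E_t$. This limit exists a.s.\ by the supermartingale convergence theorem for nonnegative supermartingales. Fatou then applies on the whole space, giving $\E[E_\tau]\le 1$. Integrability is assumed, so taking logarithms yields $\cP_\tau\le0$. No further steps are needed.

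\emph{Part (b).} Here $\tau$ is a pseudo-stopping time and $(E_t)$ is a uniformly integrable martingale with $\E[E_0]=1$. The definition only gives $\E[B_\tau]=\E[B_0]$ for \emph{bounded} martingales $B$, so the work is an approximation argument. I use the terminal value $E_\infty$, which exists since $E$ is UI, and truncate it. Set $E^{(n)}_t:=\E[E_\infty\wedge n\mid\cF_t]$. This is a bounded martingale, so the pseudo-stopping property gives
\[
\E[E^{(n)}_\tau]=\E[E^{(n)}_0].
\]

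It remains to pass to the limit $n\to\infty$ on both sides. The right side converges to $\E[E_0]=1$ by monotone convergence. On the left, $E^{(n)}_t\uparrow E_t$ for every $t$ by conditional monotone convergence, and $E_t=\E[E_\infty\mid\cF_t]$ holds because of uniform integrability. Since $\tau<\infty$ a.s. (assumed for pseudo-stopping times, as in Section~\ref{sec:survival}), we get $E^{(n)}_\tau\uparrow E_\tau$ pointwise. Monotone convergence then gives $\E[E_\tau]=1$, and hence $\cP_\tau=0$.

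Alternatively, the same result follows from Theorem~\ref{thm:rep} with $\Amart{\tau}\equiv1$. That route reduces $\E[E_\tau]$ to $\E[\sum_t E_t\Delta\clock{\tau}_t]$. Combining $E_t=\E[E_\infty\mid\cF_t]$ with the hazard-clock total mass $\E[\clock{\tau}_\infty]=1$ then gives $\E[E_\infty]=1$. I would keep this only as a cross-check.

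The main obstacle is the limit interchange in (b). Monotonicity in $n$ and the finiteness of $\tau$ handle it, and uniform integrability is essential there: without it, $E_t$ would not equal $\E[E_\infty\mid\cF_t]$, and mass could escape, as the remark before Section~\ref{sec:mart-ineq} warns.
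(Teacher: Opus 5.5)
Your proof is correct and follows the paper's route. Part (a) is optional stopping at $\tau\wedge n$ followed by Fatou, exactly as in Theorem~\ref{thm:OST}, and part (b) extends the pseudo-stopping property from bounded to uniformly integrable martingales; your truncation $E^{(n)}_t=\E[E_\infty\wedge n\mid\cF_t]$ with monotone convergence spells out the step the paper compresses into ``by uniform integrability the same holds for $E_t$''.
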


The pseudo-stopping class admits a further description in these terms: it is exactly the class of times whose anticipation index is identically one (Corollary~\ref{cor:peeking-ball}).

At the opposite extreme the penalty is unbounded, and the mechanism is the non-integrability of a vanishing martingale's running supremum.

\begin{lemma}[Vanishing martingales have non-integrable suprema]
\label{lem:sup-nonintegrable}
Let $(E_t)_{t\in\N_0}$ be a nonnegative martingale with $E_0 = 1$ and $E_t \to 0$ a.s.  Then $\E[\sup_t E_t] = \infty$.
\end{lemma}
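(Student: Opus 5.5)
The plan is to show that $\sup_t E_t$ has a tail no thinner than a constant times $1/x$, which is not integrable. I would use the first-passage mass conservation~\eqref{eq:fp-mass} of Theorem~\ref{thm:firstpassage-exact}, or re-derive it directly. Fix $x>1$ and let $\sigma_x:=\inf\{t:E_t\ge x\}$.

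The stopped process $E_{t\wedge\sigma_x}$ is a nonnegative martingale with mean one. On $\{\sigma_x=\infty\}$ it converges to $E_\infty=0$ a.s. On $\{\sigma_x<\infty\}$ it equals $E_{\sigma_x}$ eventually.

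Mass conservation is the delicate point: optional stopping for unbounded $\sigma_x$ needs uniform integrability. Before crossing, the stopped process is bounded by $x$, so the only possible escape of mass is through the overshoot at $\sigma_x$. I would handle this directly. Write
\[
1=\E[E_{t\wedge\sigma_x}]=\E[E_{\sigma_x}\ind\{\sigma_x\le t\}]+\E[E_t\ind\{\sigma_x>t\}].
\]
The second term is dominated by $x$ and tends to $0$ by bounded convergence, since $E_t\to0$. The first term increases monotonically, so $\E[E_{\sigma_x}\ind\{\sigma_x<\infty\}]=1$. This step needs no uniform integrability.

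Next comes the lower bound on the supremum. On $\{\sigma_x<\infty\}$ we have $\sup_t E_t\ge E_{\sigma_x}$, and $E_{\sigma_x}\ge x$. Hence
\[
\E\bigl[\sup_t E_t\,\ind\{\sup_t E_t\ge x\}\bigr]\ \ge\ \E[E_{\sigma_x}\ind\{\sigma_x<\infty\}]=1
\]
for every $x>1$. If $\E[\sup_t E_t]$ were finite, dominated convergence would drive the left side to $0$ as $x\to\infty$, because $\sup_t E_t<\infty$ a.s. by Ville's inequality (Corollary~\ref{cor:ville}). That contradicts the lower bound of $1$, so $\E[\sup_t E_t]=\infty$.

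The main obstacle is only the mass-conservation step, which the bounded-convergence argument above handles. An alternative route would use the overshoot identity~\eqref{eq:fp-exact} together with the layer-cake formula, but that requires controlling $\E[J_x]$, so I would avoid it.
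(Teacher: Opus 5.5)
Your proof is correct, but it takes a different route from the paper's. The paper argues abstractly. Since $E_t\to0$ a.s.\ while $\E[E_t]=1$, the family $(E_t)$ is not uniformly integrable, whereas an integrable $\sup_t E_t$ would dominate it and make it uniformly integrable. Equivalently, dominated convergence applied to $E_t$ itself, with $\sup_t E_t$ as the dominating variable, would force $\E[E_t]\to0$.

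You instead re-derive the conserved first-passage mass~\eqref{eq:fp-mass}; your bounded-convergence handling of the uncrossed term is exactly the paper's proof of Theorem~\ref{thm:firstpassage-exact}. You then apply dominated convergence to the truncated tail $\sup_t E_t\,\ind\{\sup_t E_t\ge x\}$ rather than to $E_t$. The paper's version is one line and needs no stopping times. Yours costs more but returns a quantitative, level-by-level statement: $\E[\sup_t E_t\,\ind\{\sup_t E_t\ge x\}]\ge1$ for every $x>1$. This shows concretely where the non-integrability sits, namely that the paths reaching any level carry the whole unit mean at the crossing, and it ties the lemma to the first-passage identity.

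Two small remarks. First, the appeal to Ville's inequality for $\sup_t E_t<\infty$ a.s.\ is unnecessary, since a convergent sequence is bounded. Second, your opening sentence promises a tail bound $\PP(\sup_t E_t\ge x)\ge c/x$ for large $x$. Your argument does not prove this, and it is false in general because overshoots can drive $x\,\PP(\sup_t E_t\ge x)$ to $0$ along a sequence of levels. For example, take a martingale that at step $k$ multiplies by $N_k\to\infty$ with probability $1/N_k$ and otherwise drops to $0$. Nothing in the body of your proof relies on that claim, so it should simply be removed from the stated plan.
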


\begin{proposition}[Fully anticipatory times yield infinite penalty]
\label{prop:infinite}
Let $(E_t)$ be a nonnegative martingale with $E_0 = 1$ and $E_t \to 0$ a.s., and let $\tau^\star$ be its ultimate-maximum time of~\eqref{eq:tau-star}.
Then $\E[E_{\tau^\star}] = \infty$, hence $\cP_{\tau^\star} = \infty$.
\end{proposition}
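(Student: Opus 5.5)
The plan is to reduce the statement to Lemma~\ref{lem:sup-nonintegrable} by identifying the value at the ultimate-maximum time with the running supremum. Since $E_t\to0$ a.s., the supremum over $t\ge1$ is attained a.s., so $\tau^\star$ is a.s.\ finite. By the definition~\eqref{eq:tau-star},
\[
  E_{\tau^\star}=\sup_{s\ge1}E_s \quad\text{a.s.}
\]
The integrability of $\sup_{t\ge0}E_t$ and of $\sup_{t\ge1}E_t$ are equivalent. This follows from the sandwich
\[
  \sup_{t\ge1}E_t\;\le\;\sup_{t\ge0}E_t\;\le\;1+\sup_{t\ge1}E_t,
\]
which uses $E_0=1$ and is the remark made right after~\eqref{eq:tau-star}. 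Hence $\E[E_{\tau^\star}]\ge \E[\sup_{t\ge0}E_t]-1=\infty$ by Lemma~\ref{lem:sup-nonintegrable}.

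With $\E[E_{\tau^\star}]=\infty$, the penalty $\cP_{\tau^\star}=\log\E[E_{\tau^\star}]$ is $+\infty$ under the convention $\log\infty=\infty$, which closes the argument. The definition of $\cP_\tau$ in Theorem~\ref{thm:peeking} assumed finiteness, so I will note explicitly that the penalty is read here as the extended-valued logarithm of the expectation; the identity~\eqref{eq:peeking} is not invoked.

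The only substantive input is Lemma~\ref{lem:sup-nonintegrable}, taken as given. Should a self-contained check be wanted, I would argue through Theorem~\ref{thm:firstpassage-exact} as follows.

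By~\eqref{eq:fp-mass}, for every $x\ge1$ the crossing mass is $\E[E_{\sigma_x}\ind\{\sigma_x<\infty\}]=1$. Since $E_{\sigma_x}\le\sup_tE_t$ on $\{\sigma_x<\infty\}$, and that event equals $\{\sup_tE_t\ge x\}$, this gives
\[
  \E\bigl[\sup\nolimits_tE_t\,\ind\{\sup\nolimits_tE_t\ge x\}\bigr]\ge1 \quad\text{for every }x\ge1.
\]
If $\sup_tE_t$ were integrable, dominated convergence would force the left side to $0$ as $x\to\infty$, a contradiction. That step is the main (mild) obstacle: it needs the mass-conservation identity~\eqref{eq:fp-mass}, which rests on $E_t\to0$ so that no mass escapes to $\{\sigma_x=\infty\}$.
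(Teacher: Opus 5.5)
Your proof is correct and is the same as the paper's. It identifies $E_{\tau^\star}=\sup_{s\ge1}E_s\ge\sup_{t\ge0}E_t-1$ and invokes Lemma~\ref{lem:sup-nonintegrable}. Your optional self-contained check through the conserved crossing mass~\eqref{eq:fp-mass} is also valid; it proves the lemma by a different route from the paper, which argues that an integrable supremum would make the martingale uniformly integrable, contradicting $\E[E_\infty]=0\neq1$.
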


Proposition~\ref{prop:infinite} shows that there is no universal finite correction for evaluating arbitrary e-processes at arbitrary non-stopping times: the peeking penalty at the ultimate-maximum time can be infinite.
This is the random-time analogue of the classical warning that peeking can destroy sequential validity; it is quantified here by the path-time partition function.

The penalty extends from a simple null to a composite one as an upper envelope over the null: each measure has its own anticipation factor.

\begin{theorem}[Composite random-time peeking, upper-expectation form]
\label{thm:composite-peeking}
Let $\cP_0$ be a set of measures and $E=(E_t)$ a nonnegative process that is a $P$-supermartingale for every $P\in\cP_0$ with $E_0\le1$.
For a random time $\tau$, let $\Amart{\tau,P}$ and $\widehat P_P$ be the anticipation martingale and clock measure of $\tau$ under $P$
(Theorem~\ref{thm:surv-factor}). Writing $\overline{\E}_{\cP_0}[\cdot]:=\sup_{P\in\cP_0}\E_P[\cdot]$,
the composite peeking penalty $\overline{\cP}_\tau:=\log\overline{\E}_{\cP_0}[E_\tau]$ is the upper envelope $\overline{\cP}_\tau=\sup_{P\in\cP_0}\cP^P_\tau$ of the simple penalties $\cP^P_\tau:=\log\E_P[E_\tau]$, each of which is itself an identity: for every $P\in\cP_0$ with $0<\E_P[E_\tau]<\infty$ and every $Q\ll\widehat P_P$ with $Q(\{\Amart{\tau,P}_tE_t=0\})=0$,
\begin{equation}\label{eq:composite-peeking-simple}
  \cP^P_\tau=\E_Q[\log E_t]+\E_Q[\log \Amart{\tau,P}_t]-\KL(Q\|\widehat P_P)+\KL(Q\|Q^\star_P),
\end{equation}
with $\dd Q^\star_P/\dd\widehat P_P\propto \Amart{\tau,P}_t\,E_t$.
Hence $\overline{\E}_{\cP_0}[E_\tau]\le\exp\!\big(\sup_{P\in\cP_0}\cP^P_\tau\big)$, each simple penalty with its own correction $\E_Q[\log \Amart{\tau,P}_t]$ that vanishes iff $\tau$ is a (pseudo-)stopping time under $P$ (Proposition~\ref{prop:vanish}).
\end{theorem}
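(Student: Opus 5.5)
The envelope identity and the per-measure identities are two separate facts, so I will prove them separately.

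\emph{The envelope.} Since $\log$ is increasing and continuous on $(0,\infty]$, it commutes with suprema:
\[
\overline{\cP}_\tau=\log\sup_{P\in\cP_0}\E_P[E_\tau]=\sup_{P\in\cP_0}\log\E_P[E_\tau]=\sup_{P\in\cP_0}\cP^P_\tau .
\]
This holds under the conventions $\log 0=-\infty$ and $\log\infty=\infty$. The stated consequence $\overline{\E}_{\cP_0}[E_\tau]\le\exp(\sup_P\cP^P_\tau)$ then holds with equality and needs no further argument.

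\emph{The per-measure identity.} Fix $P\in\cP_0$ with $0<\E_P[E_\tau]<\infty$. I will build the whole random-time apparatus under $P$ alone. Theorem~\ref{thm:surv-factor}, applied on $(\Omega,\cF,(\cF_t),P)$, gives the hazard clock $\clock{\tau,P}$ and the anticipation martingale $\Amart{\tau,P}$. Equation~\eqref{eq:phat} then gives the clock measure $\widehat P_P$, a finite positive measure on path-time of mass at most one.

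Lemma~\ref{lem:Rt} under $P$, applied to the nonnegative adapted process $V=E$, yields
\[
\E_P[E_\tau]=\int E_t\,\Amart{\tau,P}_t\,\dd\widehat P_P .
\]
Now apply Corollary~\ref{cor:DV} with $\mu=\widehat P_P$ and $g(\omega,t)=\Amart{\tau,P}_t(\omega)E_t(\omega)$. This $g$ is nonnegative and measurable for the path-time $\sigma$-field because both factors are adapted, and $0<\int g\,\dd\mu<\infty$ by hypothesis. The Gibbs measure is exactly $Q^\star_P$, and the corollary returns
\[
\cP^P_\tau=\E_Q[\log g]-\KL(Q\|\widehat P_P)+\KL(Q\|Q^\star_P)
\]
for every $Q\ll\widehat P_P$ charging no $\{g=0\}$. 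Splitting $\log g=\log E_t+\log\Amart{\tau,P}_t$ gives~\eqref{eq:composite-peeking-simple}. Equivalently, this is Theorem~\ref{thm:pathtime} under $P$ with $W=1$, $\Pi_{1,t}=E_t$, $\alpha_1=1$. Its proof only needs strict positivity of the product on the support of $Q$, and that is precisely the hypothesis assumed here.

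\emph{The vanishing claim.} If $\tau$ is a stopping or pseudo-stopping time under $P$, then $\Amart{\tau,P}\equiv1$ by the discussion in Section~\ref{sec:survival}, so the correction is zero. Conversely, suppose $\E_Q[\log\Amart{\tau,P}_t]=0$ for every admissible $Q$. Taking $Q$ to be point-mass-like restrictions of $\widehat P_P$ to small sets forces $\log\Amart{\tau,P}_t=0$ $\widehat P_P$-a.e., which is the pseudo-stopping characterization. Proposition~\ref{prop:vanish} then supplies the value of the penalty in each case.

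\emph{Main obstacle.} The delicate step is splitting $\E_Q[\log g]$ into two expectations without producing $\infty-\infty$. I would first handle $Q$ for which both $\log E_t$ and $\log\Amart{\tau,P}_t$ are $Q$-quasi-integrable. I would then note that otherwise the right-hand side is interpreted with the same extended-real convention as Corollary~\ref{cor:DV}, so the identity persists whenever $\E_Q[\log g]$ is defined.
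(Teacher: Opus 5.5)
Your proof of the envelope and of the per-measure identity follows the paper's own proof. The paper likewise rewrites $\E_P[E_\tau]$ as $\E_{\widehat P_P}[E_t\Amart{\tau,P}_t]$ by the representation at random times under $P$. It then reads~\eqref{eq:composite-peeking-simple} off Theorem~\ref{thm:peeking}, that is, Corollary~\ref{cor:DV} on $(\widehat\Omega,\widehat P_P)$ with $g=\Amart{\tau,P}_tE_t$, and takes the supremum over $P$. Your remark that the final bound actually holds with equality is correct.

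The step that fails is your converse for the vanishing clause. Admissible $Q$ may not charge $\{\Amart{\tau,P}_tE_t=0\}$. Probing with normalized restrictions of $\widehat P_P$ therefore only yields $\Amart{\tau,P}_t=1$ $\widehat P_P$-a.e.\ on $\{E_t>0\}$, and an e-process can vanish on most of path-time.

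Here is a concrete counterexample. On i.i.d.\ fair bits let $E_t=2^t\ind\{X_1=\cdots=X_t=1\}$. Let $\tau=1$ on $\{X_1=1\}$; on $\{X_1=0\}$ set $\tau=2$ if $X_3=1$ and $\tau=3$ if $X_3=0$. Then $\E[E_\tau]=1$, and $\widehat P_P$ lives on three slices: $\{t=1,X_1=1\}$, $\{t=2,X_1=0\}$ and $\{t=3,X_1=X_3=0\}$. On these slices $\Amart{\tau,P}$ equals $1$, $1$ and $2$ respectively, and $E_t=0$ on the last two. Every admissible $Q$ therefore sits on the first slice, so the correction is $0$. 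Yet $\tau$ is not pseudo-stopping: the bounded martingale $B_t:=P(X_3=1\mid\cF_t)-\tfrac12$ has $\E[B_\tau]=-\tfrac18$.

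So, read over admissible $Q$, the converse is false. The paper's proof does not establish it either, since Proposition~\ref{prop:vanish} addresses only the forward direction.

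The converse does hold if ``vanishes'' means $\log\Amart{\tau,P}_t=0$ $\widehat P_P$-a.e. Even then your last step needs an argument, because that condition is weaker than $\Amart{\tau,P}\equiv1$. It gives $\mathfrak{A}^\tau=\sum_t\Delta\clock{\tau,P}_t\le1$ a.s.\ under $P$. Together with $\E[\mathfrak{A}^\tau]=1$ this forces $\mathfrak{A}^\tau\equiv1$, and Corollary~\ref{cor:peeking-ball} then yields pseudo-stopping.
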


When $\cP_0$ is composite in the general sense---$E$ merely \emph{dominated} by a $P$-specific supermartingale $\widetilde E^{P}$ ($E\le\widetilde E^{P}$)---the same argument gives the upper bound $\overline{\cP}_\tau\le\sup_{P\in\cP_0}\cP^{P,\widetilde E^{P}}_\tau$, with slack the domination gap $\E_P[\widetilde E^{P}_\tau-E_\tau]\ge0$ evaluated at $\tau$; equality holds iff the dominating supermartingale is tight at the random time.

% ======================================================================

When $P$ is invariant under a group action---permutations of an exchangeable stream, rotations of a symmetric design---the nonnegative supermartingales that respect the symmetry are exactly the invariant test martingales, and they generate group-symmetric e-processes \cite{ortiz2024ann,ramdas2022testing,lardy2025anytime}.
The peeking penalty (Theorem~\ref{thm:peeking}) then inherits an equivariant form, because the anticipation an invariant random time can exploit is confined to the information the group action leaves free.
The residual $\log \Amart{\tau}_t$ is measured against the invariant hazard clock alone, connecting the random-time calculus to the conformal-martingale tests of exchangeability.

\subsection{The worst-case peeking complexity}
\label{sec:peeking-complexity}

The entropic and power-mean geometries both attach a quantity to a random time, and the two accounts agree at the ends and differ between them.
In the entropic geometry that quantity is the peeking penalty, and its extremal form over a class of times is a constrained Donsker--Varadhan program indexed by a relative entropy on path-time.
In the power-mean geometry it is an inflation factor on the Burkholder--Davis--Gundy comparison, and there it has a closed-form law.
This subsection identifies the index that supports a finite interpolation between the ends---a supremum norm of the anticipation the class allows---and shows that the two weaker readings of the same anticipation, a logarithmic average and a finite moment, both fail.

The worst-case peeking complexity
\begin{equation}\label{eq:peeking-complexity}
  \mathfrak{C}_\cT(E) := \sup_{\tau \in \cT}\log\E[E_\tau]
\end{equation}
of an e-process $E$ against a class $\cT$ of random times has two settled extremes.
It is at most $0$ when $\cT$ holds only stopping times---the defining property of an e-process---and, for uniformly integrable martingales, when it holds pseudo-stopping times as well (Proposition~\ref{prop:vanish}).
It is $+\infty$ the moment $\cT$ admits the ultimate-maximum time $\tau^\star$ of a vanishing martingale (Proposition~\ref{prop:infinite}), whose Pareto-tailed $E_{\tau^\star}$ makes $\E[E_{\tau^\star}]$ diverge.
Between the two extremes sits a family of classes, indexed by how much anticipation each allows.
The first candidate index is the expected log-anticipation,
which cuts out the bounded-anticipation classes
\begin{equation}\label{eq:TB}
  \cT_B := \{\tau : \E[\log \Amart{\tau}_\tau] \leq B\},
\end{equation}
with $\Amart{\tau}$ the martingale factor of $\tau$.  The quantity $B$ caps,
$\E[\log \Amart{\tau}_\tau]$, is the \emph{anticipation budget} of $\tau$, and it is a relative entropy.

\begin{proposition}[The anticipation budget is a relative entropy]
\label{prop:budget-relent}
For any random time $\tau$ with path-time law $R$~\eqref{eq:Rlaw} and clock measure $\widehat P$~\eqref{eq:phat}, the anticipation budget is a relative entropy,
\[
  \E[\log \Amart{\tau}_\tau]
  = \E_R\!\left[\log\frac{\dd R}{\dd\widehat P}\right]
  = \KL(R\,\|\,\widehat P),
\]
so $\cT_B = \{\tau : \KL(R\|\widehat P)\le B\}$ is a divergence ball in the path-time law, and $\mathfrak{C}_{\cT_B}(E) = \log V(B)$ with the linear-objective value $V(B) := \sup\{\E_R[E_t] : R \text{ a path-time law with }
\Omega\text{-marginal } P,\ \KL(R\|\widehat P(R))\le B\}$.
\end{proposition}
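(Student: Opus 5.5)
The plan is to read the budget through Lemma~\ref{lem:Rt}, which already identifies the Radon--Nikodym derivative $\dd R/\dd\widehat P(\omega,t)=\Amart{\tau}_t(\omega)$ on each time slice. The relative entropy against the finite measure $\widehat P$, in the sense of~\eqref{eq:finite-entropy}, is
\[
\KL(R\|\widehat P)=\int_{\widehat\Omega}\log\frac{\dd R}{\dd\widehat P}\,\dd R
=\sum_{t\ge1}\E\bigl[\log\Amart{\tau}_t\,\ind\{\tau=t\}\bigr],
\]
using the definition~\eqref{eq:Rlaw} of $R$ to evaluate $\int\cdot\,\dd R$ as an expectation at the random time. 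The right-hand side is $\E[\log\Amart{\tau}_\tau]$. Two points need checking.

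The first point is absolute continuity. We need $R\ll\widehat P$, and $R$ must not charge the set $\{\Amart{\tau}_t=0\}$. Both follow from the mass identity, Theorem~\ref{thm:surv-factor}(v): $P(\tau=t\mid\cF_t)=\Amart{\tau}_t\,\Delta\clock{\tau}_t$. This gives $R(\{\Amart{\tau}_t=0\}\times\{t\})=\E[\ind\{\Amart{\tau}_t=0\}P(\tau=t\mid\cF_t)]=0$. So $\log\Amart{\tau}_\tau$ is finite $R$-a.s.

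The second point is integrability. I would handle it by splitting the logarithm into positive and negative parts. The negative part is controlled because $x\log x\ge -1/e$. Summed against $\widehat P$, which has total mass at most one (since $\clock{\tau}\le1$), its $R$-integral is at least $-1/e$. Hence the quantity is well defined in $(-\infty,+\infty]$, and both sides equal $+\infty$ simultaneously. This also gives the lower bound $\KL(R\|\widehat P)\ge-\log\widehat P(\widehat\Omega)\ge0$ from~\eqref{eq:finite-entropy}.

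With the identity in hand, the description $\cT_B=\{\tau:\KL(R\|\widehat P)\le B\}$ is a restatement of~\eqref{eq:TB}. For the value formula, I would use Lemma~\ref{lem:Rt} again: for the nonnegative adapted process $E$,
\[
\E[E_\tau]=\int E_t\,\Amart{\tau}_t\,\dd\widehat P=\E_R[E_t].
\]
Taking logs and the supremum over $\tau\in\cT_B$ gives $\mathfrak C_{\cT_B}(E)=\log\sup_{\tau\in\cT_B}\E_R[E_t]$.

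It remains to identify this supremum with $V(B)$, which is a supremum over path-time laws $R$. This requires a correspondence in both directions. Every random time produces a path-time law $R_\tau$ whose $\Omega$-marginal is $P$, with $\widehat P=\widehat P(R_\tau)$ being its clock measure. Conversely, every probability law $R$ on $\widehat\Omega$ with $\Omega$-marginal $P$ is the law of $(\omega,\tau(\omega))$ for some random time $\tau$. Here I would take $\tau$ to be a conditional draw from $R(\cdot\mid\omega)$, enlarging $\Omega$ by an independent uniform variable if necessary, so that $\tau$ is a genuine random variable. The clock $\widehat P(R)$ depends only on the conditional laws $P(\tau=t\mid\cF_t)$, and those are determined by $R$ through its disintegration over $\Omega$. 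So the constraint $\KL(R\|\widehat P(R))\le B$ matches $\tau\in\cT_B$ exactly, and the two suprema coincide.

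I expect the main obstacle to be this realization step, together with the bookkeeping showing that $\widehat P(R)$ is a functional of $R$ alone. I would settle it by writing $\Delta\clock{\tau}_t$ entirely in terms of the $\cF_t$-conditional probabilities $R(\Omega\times\{s\}\mid\cF_t)$ for $s\le t$ and $R(\Omega\times\{\ge t\}\mid\cF_t)$, all of which are determined by $R$.
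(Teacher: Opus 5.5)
Your proposal is correct and takes essentially the paper's route: both read $\dd R/\dd\widehat P=\Amart{\tau}_t$ off Lemma~\ref{lem:Rt}, evaluate $\E[\log\Amart{\tau}_\tau]=\E_R[\log\Amart{\tau}_t]=\KL(R\|\widehat P)$ through~\eqref{eq:finite-entropy}, and take logarithms of $\E[E_\tau]=\E_R[E_t]$ under the supremum over $\cT_B$. The paper treats the identification with $V(B)$ as definitional (a path-time law is by definition the law $R$ of some random time). Your extra steps are rigor it omits rather than a different route: the null set $\{\Amart{\tau}_t=0\}$, the $x\log x\ge-1/e$ bound that makes the budget well defined in $(-\infty,+\infty]$, and the realization of $R$ by enlarging $\Omega$. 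The enlargement produces a time on a larger space, so it is needed only if ``path-time law'' is read more broadly than the law of a random time on the given space.
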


The log budget fails, because it reads the wrong functional of the anticipation martingale: $\E[\log \Amart{\tau}_\tau]$ is a logarithmic average, where the complexity is a supremum norm of a linear one.
One example shows that the failure is complete---the complexity of $\cT_B$ is infinite at every positive budget.
Take $E_t=2^t\ind\{X_1=\cdots=X_t=1\}$ on i.i.d.\ fair bits.
At the ultimate-maximum time $\Amart{\tau^\star}=\tfrac12E$ on $\{X_1=1\}$ and $\Amart{\tau^\star}\equiv1$ on $\{X_1=0\}$, so the budget $\E[\log \Amart{\tau^\star}_{\tau^\star}]$ is $\tfrac12\log 2$ while $\E[E_{\tau^\star}]=\infty$.
Truncating to $\tau_K:=\tau^\star$ on $\{\tau^\star\ge K\}$ and $1$ elsewhere drives the budget to $0$ and leaves $\E[E_{\tau_K}]$ infinite.
So $\mathfrak{C}_{\cT_B}(E)=+\infty$ at every $B>0$, and the class $\cT_B$ supports no finite interpolation.

A budget that controls the complexity must read a moment of $\Amart{\tau}_\tau$ itself.
In the same example $\Amart{\tau^\star}_{\tau^\star}$ is half the divergent $E_{\tau^\star}$ on $\{X_1=1\}$, so even a first-moment budget excludes $\tau^\star$.
Such a budget cuts out classes on which the peeking complexity~\eqref{eq:peeking-complexity} is finite, interpolating the endpoint values $0$ and $+\infty$, and on such a class Theorem~\ref{thm:peeking} reads as a constrained Donsker--Varadhan program with the budget an additive Lagrangian term.

A Lagrangian treatment of $V(B)$ would need the budget functional $R\mapsto\KL(R\|\widehat P(R))$ to be convex, and it is not: the multiplicative hazard clock $\clock{\tau}=1-\prod(1-H)$ makes $R\mapsto\widehat P(R)$ nonlinear, so convexity is not inherited from the additive compensator.
On a \emph{fixed} path-time reference $\widehat P$ the constraint $Q\mapsto\KL(Q\|\widehat P)$ is convex and the anticipation is capped against a hazard clock the time cannot move, so the Lagrangian collapses and $B\mapsto\mathfrak{C}_{\cT_B}$ is concave and non-decreasing.
That case speaks to neither the self-referential map $R\mapsto\widehat P(R)$ nor the interpolation below, which is exact and needs no Lagrangian.

The budget that succeeds reads the same anticipation against its own hazard clock, and in $L^\infty$.  Write
\begin{equation}\label{eq:anticipation-index}
  \mathfrak{A}^\tau
  := \sum_{t\ge1} P(\tau = t \mid \cF_t)
   = \sum_{t\ge1} \Amart{\tau}_t\,\Delta \clock{\tau}_t
\end{equation}
for the \emph{anticipation index} of $\tau$, the second equality being the mass identity of Theorem~\ref{thm:surv-factor}(v).
It is $\cF_\infty$-measurable with $\E[\mathfrak{A}^\tau]=\sum_{t\ge1}\PP(\tau=t)=1$, so $\mathfrak{A}^\tau$ is a probability density against $P$, and it is the quantity the worst case over e-processes reads.

\begin{theorem}[The worst case over e-processes]
\label{thm:peeking-radius}
Let $\tau$ be an almost surely finite random time.  Then
\begin{equation}\label{eq:peeking-radius}
  \sup_E\,\E[E_\tau] \;=\; \esssup \mathfrak{A}^\tau ,
\end{equation}
the supremum running over nonnegative supermartingales $E$ with $E_0\le1$.
\end{theorem}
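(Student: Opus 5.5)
The plan is to prove the two inequalities in~\eqref{eq:peeking-radius} separately. In both directions the starting point is the representation at random times (Theorem~\ref{thm:rep}) in its most elementary form. For nonnegative adapted $E$, conditioning on $\cF_t$ inside each time slice gives
\[
  \E[E_\tau] \;=\; \sum_{t\ge1}\E\bigl[E_t\,P(\tau=t\mid\cF_t)\bigr] \;=\; \E\Bigl[\sum_{t\ge1}E_t\,p_t\Bigr],
  \qquad p_t:=P(\tau=t\mid\cF_t)\ge0,
\]
with no integrability needed, by monotone convergence. Here $\sum_t p_t=\mathfrak{A}^\tau$ by~\eqref{eq:anticipation-index}. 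Write $c:=\esssup\mathfrak{A}^\tau\in[1,\infty]$, which is at least $1$ because $\E[\mathfrak{A}^\tau]=1$.

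\emph{Upper bound.} Let $K_t:=\sum_{s\le t}p_s$. This is an adapted, nondecreasing process with $K_\infty=\mathfrak{A}^\tau\le c$ a.s. For $u>0$ set $\sigma_u:=\inf\{t: K_t\ge u\}$, which is an $(\cF_t)$-stopping time because $K$ is adapted. The pathwise layer-cake (time-change) identity
\[
  \sum_{t\ge1}E_t\,\Delta K_t=\int_0^{K_\infty}E_{\sigma_u}\,\dd u
\]
holds because $\sigma_u=t$ exactly for $u\in(K_{t-1},K_t]$. Taking expectations and using Tonelli, we get
\[
  \E[E_\tau]\le\int_0^{c}\E\bigl[E_{\sigma_u}\ind\{\sigma_u<\infty\}\bigr]\dd u.
\]
For a nonnegative supermartingale with $E_0\le1$, optional stopping at bounded times combined with Fatou's lemma gives $\E[E_{\sigma}\ind\{\sigma<\infty\}]\le1$ at every stopping time. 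Hence $\E[E_\tau]\le c$. This is the step where supermartingality, and not merely adaptedness, enters. It is also the step I expect to need the most care: checking that the time change is exact at ties and jumps, and that the case $c=\infty$ is trivially fine.

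\emph{Lower bound.} If $c<\infty$, fix $\varepsilon>0$; if $c=\infty$, fix a large $n$ instead. Let $B:=\{\mathfrak{A}^\tau>c-\varepsilon\}$ (respectively $\{\mathfrak{A}^\tau>n\}$). This is $\cF_\infty$-measurable with $P(B)>0$ by definition of the essential supremum. Take the Doob martingale $E_t:=P(B\mid\cF_t)/P(B)$, which is nonnegative with mean one. Since each $p_t$ is $\cF_t$-measurable, the tower property gives
\[
  \E[E_\tau]=\sum_t\E[E_t p_t]=\sum_t\E[\ind_B p_t]/P(B)=\E[\ind_B\mathfrak{A}^\tau]/P(B)>c-\varepsilon ,
\]
and letting $\varepsilon\to0$ (or $n\to\infty$) gives the matching bound. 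The remaining technical point is the constraint $E_0\le1$. If $\cF_0$ is trivial it is automatic. Otherwise, I would restrict $B$ to a subset on which the ratio of $P(B\mid\cF_0)$ to its mean is controlled. A cleaner alternative is to replace the Doob martingale by the $\cF_0$-conditional version $E_t:=P(B\mid\cF_t)/P(B\mid\cF_0)$ on $\{P(B\mid\cF_0)>0\}$, which has $E_0=1$. The tower computation then gives $\E\bigl[\E[\ind_B\mathfrak{A}^\tau\mid\cF_0]/P(B\mid\cF_0)\bigr]$, and choosing $B$ inside an $\cF_0$-atom-like set where $\mathfrak{A}^\tau$ is near its supremum recovers $c$. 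I would check this reduction, together with the a.s.\ finiteness of $\tau$ (used so that $\sum_t\PP(\tau=t)=1$), as the final step.
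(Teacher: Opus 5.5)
Your proof is correct in the setting the paper's proof uses (trivial $\cF_0$). The lower bound is the paper's own construction, the Doob martingale $P(S\mid\cF_t)/P(S)$ of a near-maximal level set of $\mathfrak{A}^\tau$. You also treat the case $\esssup\mathfrak{A}^\tau=\infty$ explicitly, which the paper leaves implicit.

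The upper bound takes a different route. The paper's argument runs as follows:
\begin{enumerate}
\item It writes the Doob decomposition $E=M-C$ and dominates $E$ by the martingale part $M$.
\item It uses the $\cF_t$-measurability of $\pi_t=P(\tau=t\mid\cF_t)$ to replace each $\E[M_t\pi_t]$ by $\E[M_T\pi_t]$.
\item It bounds $\sum_{t\le T}\pi_t$ by $\esssup\mathfrak{A}^\tau$ inside a single expectation against $M_T$.
\end{enumerate}
You instead time-change along the adapted clock $K_t=\sum_{s\le t}p_s$. This turns $\E[E_\tau]$ into the exact superposition $\int_0^\infty\E[E_{\sigma_u}\ind\{\sigma_u<\infty\}]\,\dd u$ over the stopping times $\sigma_u$. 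The integrand vanishes for $u>\esssup\mathfrak{A}^\tau$ and is at most $1$ by optional stopping. Tonelli applies because $\{\sigma_u\le t\}=\{K_t\ge u\}$ is jointly measurable in $(u,\omega)$.

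The paper's route is shorter. Yours uses nothing beyond $\E[E_\sigma\ind\{\sigma<\infty\}]\le1$ at stopping times. So it covers e-processes verbatim, as the theorem's title suggests, without first dominating them by a supermartingale. It also displays the random time as a mixture of stopping times of total mass $\mathfrak{A}^\tau$, the discrete counterpart of the randomized-stopping-time reduction the paper invokes in continuous time.

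Drop the final reduction for nontrivial $\cF_0$. The normalization $P(B\mid\cF_t)/P(B\mid\cF_0)$ earns the large value only on $\{P(B\mid\cF_0)>0\}$. No choice of $B$ repairs this, because under the pointwise constraint $E_0\le1$ the identity is false once $\cF_0$ is nontrivial. Here is a counterexample:
\begin{itemize}
\item Take $Z,X_1,X_2,\dots$ i.i.d.\ fair bits, $\cF_t=\sigma(Z,X_1,\dots,X_t)$, and $\tau=1+\ind\{Z=1,X_2=1\}$.
\item Then $\mathfrak{A}^\tau$ equals $1$ on $\{Z=0\}$ and $\tfrac12+\ind\{X_2=1\}$ on $\{Z=1\}$, so $\esssup\mathfrak{A}^\tau=\tfrac32$.
\item Apply your own time-change bound to the supermartingale $E\ind_F$ for $F=\{Z=0\}$ and $F=\{Z=1\}$. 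On each $F$ the clock stays below the essential supremum of $\mathfrak{A}^\tau$ there.
\item This gives $\E[E_\tau]\le\tfrac12\cdot1+\tfrac12\cdot\tfrac32=\tfrac54$ for every admissible $E$.
\end{itemize}
The theorem therefore needs one of two fixes. Either $\cF_0$ is trivial, which the paper assumes tacitly when it asserts $E_0=1$ for its Doob martingale. Or the constraint is relaxed to $\E[E_0]\le1$; then the unnormalized $P(B\mid\cF_t)/P(B)$ is admissible, and both upper-bound arguments go through unchanged.
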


\begin{corollary}[The interpolating classes]
\label{cor:peeking-ball}
For $\beta\ge1$ let $\cT^{\mathfrak{A}}_\beta:=\{\tau:\esssup\mathfrak{A}^\tau\le\beta\}$.
Then $\mathfrak{C}_{\cT^{\mathfrak{A}}_\beta}(E)\le\log\beta$ for every nonnegative supermartingale $E$ with $E_0\le1$, and $\sup_E\mathfrak{C}_{\cT^{\mathfrak{A}}_\beta}(E)=\log\beta$ whenever the class contains a time attaining $\beta$.
Moreover $\mathfrak{A}^\tau\equiv1$ almost surely if and only if $\tau$ is a pseudo-stopping time, and since $\E[\mathfrak{A}^\tau]=1$ the class $\cT^{\mathfrak{A}}_1$ is exactly that one, where the penalty is $0$.
\end{corollary}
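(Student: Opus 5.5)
The plan is to read every part of the corollary off Theorem~\ref{thm:peeking-radius}, together with the mass identity of Theorem~\ref{thm:surv-factor}(v). The only part that needs a separate argument is the characterization of pseudo-stopping times, which I expect to go by a tower-property computation against the terminal value of a bounded martingale.

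\emph{Upper bound and attainment.} Fix $\beta\ge1$, a nonnegative supermartingale $E$ with $E_0\le1$, and a time $\tau\in\cT^{\mathfrak{A}}_\beta$. Theorem~\ref{thm:peeking-radius} gives $\E[E_\tau]\le\sup_{E'}\E[E'_\tau]=\esssup\mathfrak{A}^\tau\le\beta$. Taking logarithms and then the supremum over $\tau$ gives $\mathfrak{C}_{\cT^{\mathfrak{A}}_\beta}(E)\le\log\beta$, and this holds for every such $E$. For the reverse inequality, suppose $\tau_0\in\cT^{\mathfrak{A}}_\beta$ has $\esssup\mathfrak{A}^{\tau_0}=\beta$. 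Then
\[
\sup_E\mathfrak{C}_{\cT^{\mathfrak{A}}_\beta}(E)\ \ge\ \sup_E\log\E[E_{\tau_0}]\ =\ \log\beta,
\]
again by Theorem~\ref{thm:peeking-radius}. Combined with the upper bound, this gives equality.

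\emph{Characterization of pseudo-stopping times.} Let $B$ be a bounded martingale. It converges a.s. to $B_\infty$ and satisfies $B_t=\E[B_\infty\mid\cF_t]$ and $\E[B_0]=\E[B_\infty]$. Since $\tau$ is a.s. finite, conditioning on $\cF_t$ in each term and then conditioning again inside gives
\[
\E[B_\tau]=\sum_{t\ge1}\E\bigl[B_t\,P(\tau=t\mid\cF_t)\bigr]=\sum_{t\ge1}\E\bigl[B_\infty\,P(\tau=t\mid\cF_t)\bigr]=\E\bigl[B_\infty\,\mathfrak{A}^\tau\bigr].
\]
The interchanges of sum and expectation are justified by dominated convergence, because $|B|$ is bounded and $\sum_t P(\tau=t\mid\cF_t)$ has expectation $1$. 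Hence $\E[B_\tau]-\E[B_0]=\E[B_\infty(\mathfrak{A}^\tau-1)]$.
\begin{itemize}
\item If $\mathfrak{A}^\tau\equiv1$, the right side vanishes for every bounded martingale $B$, so $\tau$ is pseudo-stopping.
\item Conversely, if $\tau$ is pseudo-stopping, choose any bounded $\cF_\infty$-measurable $\xi$ and set $B_t:=\E[\xi\mid\cF_t]$, so that $B_\infty=\xi$. Then $\E[\xi(\mathfrak{A}^\tau-1)]=0$. Taking $\xi=\ind\{\mathfrak{A}^\tau>1\}$ and $\xi=\ind\{\mathfrak{A}^\tau<1\}$ forces $\mathfrak{A}^\tau=1$ a.s.
\end{itemize}
The step that needs the most care is this measurability bookkeeping: $\mathfrak{A}^\tau$ must be $\cF_\infty$-measurable and integrable, which was recorded right after~\eqref{eq:anticipation-index}.

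\emph{The class at $\beta=1$.} If $\esssup\mathfrak{A}^\tau\le1$, then $1-\mathfrak{A}^\tau\ge0$ a.s. while $\E[1-\mathfrak{A}^\tau]=0$. So $\mathfrak{A}^\tau=1$ a.s., and $\cT^{\mathfrak{A}}_1$ is exactly the pseudo-stopping class. On this class the first part gives a complexity of at most $\log1=0$. The value $0$ is attained, for example by the constant martingale $E\equiv1$, which has $\E[E_\tau]=1$; Proposition~\ref{prop:vanish}(b) also gives a penalty of exactly $0$ for uniformly integrable martingales.
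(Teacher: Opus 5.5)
Your proof is correct. For the upper bound and attainment it matches the paper: Theorem~\ref{thm:peeking-radius} read through the logarithm, once over the class and once at a time attaining $\beta$.

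The characterization of pseudo-stopping times takes a different route. The paper splits it across the two descriptions of the class given in the body. Assuming $\Amart{\tau}\equiv1$, it gets $\mathfrak{A}^\tau=\sum_t\Delta\clock{\tau}_t=1$ from $1-\clock{\tau}_T=S_T\to0$. Conversely, it feeds the nonnegative martingales $1\pm cN$ ($N$ bounded, $N_0=0$) into the upper bound of Theorem~\ref{thm:peeking-radius} to force $\E[N_\tau]=0$.

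You instead keep the tower-property step of that theorem's proof as an equality, which boundedness of $B$ permits. This gives $\E[B_\tau]-\E[B_0]=\E[B_\infty(\mathfrak{A}^\tau-1)]=\Cov(B_\infty,\mathfrak{A}^\tau)$. You then test it against $B_t=\E[\xi\mid\cF_t]$ with $\xi$ the indicators of $\{\mathfrak{A}^\tau>1\}$ and $\{\mathfrak{A}^\tau<1\}$, the same duality the paper uses for the lower bound of that theorem.

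Your route buys self-containment. You work only from the mean-preservation definition and prove pseudo-stopping $\Rightarrow\mathfrak{A}^\tau\equiv1$ outright. The paper reaches that direction only through the equivalence of pseudo-stopping with $\Amart{\tau}\equiv1$, which it asserts in Section~\ref{sec:survival} on a continuous-time citation and never proves in discrete time. You also get an exact formula for the bias of any bounded martingale read at $\tau$.

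The paper's route buys the link to the anticipation martingale, which your argument never touches. If you want that link too, note that $\E[\mathfrak{A}^\tau\mid\cF_t]=S_t+\sum_{s\le t}P(\tau=s\mid\cF_s)$. So $\mathfrak{A}^\tau\equiv1$ forces $P(\tau\ge t\mid\cF_t)=S_{t-1}$ for every $t$, i.e.\ $\Amart{\tau}\equiv1$.
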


A finite moment of the index is not enough either.
At the ultimate-maximum time of the same example $\mathfrak{A}^{\tau^\star}$ is half the number of leading ones, and $1$ when there are none, so every moment $\E[(\mathfrak{A}^{\tau^\star})^s]$ with $s<\infty$ is finite while $\esssup\mathfrak{A}^{\tau^\star}=\infty$ and the complexity is already $+\infty$.
Mixing $\tau^\star$ with a stopping time on an independent coin drives any one of those moments to its floor of $1$ and changes neither the essential supremum nor the complexity.

The peeking identity carries over verbatim to continuous time, where the anticipation martingale plays the part of the exponential density in a Girsanov change of measure and no quadratic-variation term enters (Appendix~\ref{app:random-time-ct}).
In the power-mean geometry the Burkholder--Davis--Gundy comparison meets the same question, agreeing with the peeking penalty at both extremes and supplying a closed-form inflation law between them.

% ======================================================================
\section{Related work}
\label{sec:related}
% ======================================================================

\paragraph{Variational principles.}
The Donsker--Varadhan variational principle \cite{donsker1975asymptotic} and the Gibbs variational formula are classical tools in large deviations \cite{dembo1998b} and statistical mechanics; the large-deviations route to the free energies and Legendre--Fenchel duals that recur throughout is surveyed in \cite{touchette2009large}.
The mixed coincidence identity generalizes these by allowing multiple nonnegative factors with arbitrary real exponents, and by upgrading the usual \emph{inequality} into an \emph{equality} with an explicit relative-entropy residual \cite{balsubramani2026information}.
The same functional fixes the value of a robust optimization.
For a diffusion whose region and instantaneous covariation are known but whose drift is not, the admissible models are those whose occupancy-time measures converge to a prescribed density.
The best asymptotic growth rate attainable against that whole class equals the Donsker--Varadhan occupancy-time rate function at the density, and one explicit strategy attains it under every model in the class \cite{kardaras2021ergodic}.

\paragraph{Classical martingale concentration.}
The line-crossing and time-uniform Bernstein bounds recovered in Section~\ref{sec:master} specialize the classical Freedman tail bound~\cite{freedman1975tail}; the method of mixtures (Proposition~\ref{prop:mixture}) supplies the anytime / curved-boundary forms of the confidence-sequence literature~\cite{howard2021ann}.
The sharper one-sided Hoeffding-type inequalities with explicit conditional-variance dependence~\cite{fan2015exponential} follow by the same route, on substituting the corresponding conditional-variance cumulant majorant in the entropic inequality of Proposition~\ref{prop:supermart-relax}.
The same exponential-supermartingale construction---a running sum of conditional log-moment-generating functions controlled by Ville's inequality---underlies the data-dependent concentration bounds for sequential prediction~\cite{zhang2005data}, which convert online mistake bounds into batch generalization guarantees; Theorem~\ref{thm:master} records the relative-entropy equality behind that exponential step.

\paragraph{Certificate functions and sharp constants.}
The sharp constant in Doob's $L^p$ maximal inequality, and in the wider family of martingale transform and square-function bounds, is reached by exhibiting a function that dominates the target and is a supermartingale along the process---Burkholder's special functions~\cite{burkholder1972,burkholder1973ann}, systematized through the associated obstacle problems in~\cite{osekowski2012}.
Section~\ref{sec:doob-bellman} reads that construction as a residual identity: a bound proved this way discards a majorization deficit, an optional-stopping deficit and an initial value, and which two deficits they are is read off the certificate.

\paragraph{R\'enyi divergences.}
R\'enyi divergences and their variational characterizations have been studied extensively \cite{renyi1961measures,erven2014b}.
The sequential decomposition of R\'enyi divergences into per-step conditional terms via the chain rule appears implicitly in the information-theoretic literature but has not previously been connected to a master identity acting on multiple factors. 
Our usage of unnormalized factors, outside standard probability distributions, does follow an initial theme of R\'enyi divergences \citep{renyi1961measures, renyi1965foundationsinfotheory, balsubramani2026information}.

\paragraph{Random times and survival analysis.}
The survival-analytic framework for random times draws on the classical theory of Az\'ema supermartingales~\cite{quelques1972quelques}, their multiplicative decomposition~\cite{nikeghbali2006doobs}, the pseudo-stopping time characterization~\cite{NY05}, the hazard-process approach~\cite{coculescu2012hazard}, and the behavior of optional processes up to random times~\cite{kardaras2015stochastic}.
The enlargement-of-filtrations and hazard-process theory underlying these constructions is collected in~\cite{jeanblanc2009mathematical}.
Two strands of that theory run in the opposite direction to Theorem~\ref{thm:surv-factor}, and together they delimit what the factorization records.
The first is the inverse problem.
Every $[0,1]$-valued c\`adl\`ag submartingale with terminal value $1$ is the conditional-distribution process of some random time, built from a multiplicative system in Meyer's sense, and the correspondence is onto without being injective~\cite{li2012multiplicative}.
The predictable and the optional multiplicative system attached to one Az\'ema supermartingale have different conditional laws, with uniqueness restored under a cocycle hypothesis on the conditional-survival field together with predictability~\cite{li2012multiplicative}.
The hazard clock and anticipation martingale of~\eqref{eq:clock-mart} therefore determine the conditional survival and the diagonal masses, and nothing past them: two times sharing both may still differ in law.
The second strand concerns the dependence on $P$ that Theorem~\ref{thm:composite-peeking} asserts, and it is realized exactly in continuous time~\cite{gapeev2010constructing}.
For a strictly positive local martingale $N$ with $N_0=1$ and a continuous hazard $\Lambda$ with $Ne^{-\Lambda}\le1$, a random time and an equivalent measure $Q$ agreeing with $P$ on $(\cF_t)$ exist whose Az\'ema supermartingale under $Q$ is $Ne^{-\Lambda}$.
One time then has an anticipation factor equal to $1$ under $P$ and to $N$ under $Q$, so the per-measure factors of Theorem~\ref{thm:composite-peeking} genuinely differ even across measures that agree on the base filtration.
The hazard fixes the behavior before the time; the conditional law after it is recovered only from the full density $P(\tau\in\dd\theta\mid\cF_t)$, of which the intensity is the diagonal~\cite{elkaroui2010what}.
Our discrete-time construction specializes these results to a setting where every step can be made algebraically explicit.

\paragraph{E-processes and safe anytime-valid inference.}
E-processes and safe anytime-valid inference have been developed by \cite{howard2021ann,heide2024safe,ramdas2020admissible,ruf2022composite},
against a backdrop of testing-by-betting and the e-value calibration framework \cite{shafer2021testingb,vovk2021b}, including the data-driven significance levels that e-values license beyond the Neyman--Pearson paradigm \cite{Grunwald22} and the online testing of randomness and exchangeability via conformal martingales \cite{vovk2005algorithmic,vovk2021testingb}.
Recent betting-based confidence-sequence work \cite{waudbysmith2024estimatingf,shekhar2024nearoptimality}, the game-theoretic synthesis of \cite{ramdas2023statistical}, and the universal-inference literature \cite{wasserman2020universal,larsson2025adaptive} are particularly close in spirit; these constructions implicitly use the variational equality exhibited here.
PAC-Bayes martingale inequalities originate with \cite{seldin2011pacbayesian}; recent unified treatments \cite{chugg2023unified,jang2024tighter} likewise reduce to specific instantiations of the path-space mixed coincidence identity.
In particular, the unified recipe for deriving time-uniform PAC-Bayes bounds from supermartingale constructions \cite{chugg2023unified} formally specializes to Theorem~\ref{thm:pacbayes} when the supermartingale is exponential.
Our pathwise mixture bound (Proposition~\ref{prop:mixture}) and PAC-Bayes martingale inequality (Theorem~\ref{thm:pacbayes}) recover these developments as path-space mixed coincidence identities.

\paragraph{Random-time taxonomy and multifractal analysis.}
The anticipation martingale $\Amart{\tau}$ of Section~\ref{sec:survival} has a R\'enyi spectrum of anticipation, a temporal analogue of the classical spatial multifractal formalism \cite{halsey1986,hentschel1983infinite,falconer2014fractal,bacry2001multifractal},
recorded in Proposition~\ref{prop:renyi-anticipation}.

\paragraph{Scope of the entropy side.}
The coincidence calculus covers the entropy side of martingale concentration, the bounds that depend on Gibbs factors, partition functions and relative entropy: exponential concentration, large deviations,
line-crossing bounds, change of measure, PAC-Bayes, and the thermodynamic formalism of path overlaps.
The complementary family---the sharp $L^p$ inequalities of Burkholder--Davis--Gundy, Davis, and Burkholder--Rosenthal type---lives in a different convex geometry, carried by Burkholder functions and deterministic pathwise inequalities, and Section~\ref{sec:doob-bellman} makes its accounting exact as well (Proposition~\ref{prop:power-certificate}).
A bound in any of the three geometries supplies the quantity that measures its slack---a relative entropy in the Gibbs geometry, a pair of deficits in the power-mean one, and an optional-stopping quantity at the crossing.

% ======================================================================
\appendix
\appendixtitle

\section{Notation}
\label{app:glossary}

$D$ denotes the relative and R\'enyi divergences, written with their two arguments, while $D_t(\lambda)$ is the cumulant gap of the supermartingale relaxation.  $R$ is the path-time law of $\tau$, a measure on $\widehat\Omega$ used only from Section~\ref{sec:pathtime} onward.
$R_t$ is the one-step likelihood ratio and $R^*_\alpha$ its tilted optimizer, and $R_s$ is the quadrature residual of the discrete crossing law.
The letter also writes the range of a bounded law throughout Section~\ref{sec:cumulant-majorants}.  $S$ is the state space of a Markov family, $S_t$ the conditional survival of $\tau$,
and $S_\alpha(p)$ an effective support size.  $Z(\alpha)$, $Z_T(\alpha)$ and $Z_t(\alpha)$ are partition functions, $Z_t$ the Az\'ema supermartingale of Section~\ref{sec:continuous-time}, and the integrand $Z$ of the transportation lemma at the close of Section~\ref{sec:info-background} is local to that paragraph.  $\tau$,
$\tau^\star$, $\tau^{\mathrm{cert}}$ and $\tau^{(\alpha)}_x$ are random times.

\subsection*{Spaces and laws}

\begin{center}
\begin{tabular}{@{}p{0.28\textwidth}>{\raggedright\arraybackslash}p{0.66\textwidth}@{}}
\toprule
Symbol & Meaning (first occurrence) \\
\midrule
$(\cX,\cB,\nu)$; $(\Omega,\cF,(\cF_t),P)$ & base measurable space and reference
  measure (Sec.~\ref{sec:info-background}); filtered probability space
  (Sec.~\ref{sec:pathspace}) \\
$P|_{\cF_T}$; $Q,\widetilde Q$ & path law at horizon $T$, and an alternative or
  variational path law (Thm.~\ref{thm:pathspace}) \\
$\widehat\Omega=\Omega\times\N$; $\widehat P$ & path-time space and its clock
  measure, \eqref{eq:phat}; $\widehat P_P$ under a null $P$
  (Thm.~\ref{thm:composite-peeking}) \\
$R$ & path-time law of $\tau$, \eqref{eq:Rlaw} \\
$\mu,\pi$ & prior on the parameter space (Prop.~\ref{prop:mixture}) and on the
  model space (Thm.~\ref{thm:pacbayes}) \\
$\rho_t,\rho^*_t$ & adapted posterior and running Gibbs tilt,
  \eqref{eq:mixturepathwise} \\
\bottomrule
\end{tabular}
\end{center}

\subsection*{Information quantities}

\begin{center}
\begin{tabular}{@{}p{0.28\textwidth}>{\raggedright\arraybackslash}p{0.66\textwidth}@{}}
\toprule
Symbol & Meaning (first occurrence) \\
\midrule
$H(p), H(p,\pi), H_\alpha, S_\alpha$ & entropy and cross-entropy
  (Sec.~\ref{sec:info-background}); R\'enyi entropy of order $\alpha$,
  \eqref{eq:renyi-ent}; the effective support size
  $S_\alpha(p):=\exp(H_\alpha(p))=\bigl(\int p^\alpha\,\mathrm{d}\nu\bigr)^{1/(1-\alpha)}$ \\
$\KL(\cdot\|\cdot), D_\alpha$ & two divergences, both set as
  $D$ and separated by their arguments: relative entropy, including the
  finite-measure version \eqref{eq:finite-entropy}; and R\'enyi divergence of
  order $\alpha$, \eqref{eq:renyi-div} \\
$\cC_\alpha$ & coincidence divergence $-\log Z(\alpha)$, and $-\log Z_t(\alpha)$
  on path space, \eqref{eq:pathwise-Calpha}; its mass and crossing forms at a
  crossing are written $\cC^{\mathrm{mass}}_\alpha$, $\cC^{\mathrm{cross}}_\alpha$ \\
$\cP_\tau(\lambda)$ & peeking penalty $\log\E[E_\tau(\lambda)]$
  (Thm.~\ref{thm:peeking}); $\cP^P_\tau$, $\overline{\cP}_\tau$ composite
  (Thm.~\ref{thm:composite-peeking}) \\
\bottomrule
\end{tabular}
\end{center}

\subsection*{Partition functions and tilts}

\begin{center}
\begin{tabular}{@{}p{0.28\textwidth}>{\raggedright\arraybackslash}p{0.66\textwidth}@{}}
\toprule
Symbol & Meaning (first occurrence) \\
\midrule
$\pi_1,\dots,\pi_W$; $\alpha$, $\bar\alpha$ & factors, exponent vector and its
  sum (Sec.~\ref{sec:mci}) \\
$Z(\alpha)$; $\Pi_{i,T}$, $Z_T(\alpha)$; $Z_\tau(\alpha)$ & mixed partition
  function \eqref{eq:Zalpha}; the path-space factors and partition function
  \eqref{eq:pathZ}; its path-time form (Thm.~\ref{thm:pathtime}) \\
$p^*_\alpha$, $Q_T^\alpha$, $R^*_\alpha$; $\pi_\lambda$ & the Gibbs optimizer of
  each, \eqref{eq:pstar}, \eqref{eq:Qtstar}; and the tilt of a PAC-Bayes prior by
  the loss (Sec.~\ref{sec:pacbayes-mart}) \\
$\Phi_T(\alpha)$ & log-partition function, convex in $\alpha$
  (Prop.~\ref{prop:grad}) \\
$\Psi(\lambda)$; $v(\lambda)$ & cumulant generating function of an increment and
  the intrinsic time $2\Psi(\lambda)/\lambda^2$ it sets,
  \eqref{eq:intrinsic-time} \\
$T_\alpha$, $r(T_\alpha)$ & transfer operator and its spectral radius
  (Prop.~\ref{prop:markov}) \\
\bottomrule
\end{tabular}
\end{center}

\subsection*{Processes, times and crossings}

\begin{center}
\begin{tabular}{@{}p{0.28\textwidth}>{\raggedright\arraybackslash}p{0.66\textwidth}@{}}
\toprule
Symbol & Meaning (first occurrence) \\
\midrule
$M_t$; $M^{(i)}$, $M^{(\alpha)}$; $M^\ast$ & a nonnegative unit-mean martingale
  (Sec.~\ref{sec:pathspace}); pooled test martingales and their geometric
  mixture (Lem.~\ref{lem:geometric-mixture}); the running maximum
  $\sup_{t\le T}M_t$ (Cor.~\ref{cor:doob-residual}) \\
$d_t$, $\langle M\rangle_t$ & martingale increment $M_t-M_{t-1}$ and predictable
  variance (Sec.~\ref{sec:mart-ineq}) \\
$R_t$, $L_T$ & one-step and cumulative likelihood ratio,
  \eqref{eq:one-step-LR}; $L_T^{(\alpha)}$ its tilted product, \eqref{eq:seqZ} \\
$\Psi_t(\lambda)$, $\cE_t(\lambda)$ & running conditional log-CGF
  \eqref{eq:exact-cgf} and its exponential martingale
  (Thm.~\ref{thm:master}) \\
$\overline\Psi_t(\lambda)$, $D_t(\lambda)$, $E_t(\lambda)$; $\overline E_t$ &
  cumulant majorant, its gap $\overline\Psi_t - \Psi_t$, and the exponential
  supermartingale (Prop.~\ref{prop:supermart-relax}); the mixture
  $\int E_t\,\dd\mu$ (Prop.~\ref{prop:mixture}) \\
$N$, $A_t$ & martingale part and predictable compensator of a Doob
  decomposition (Prop.~\ref{prop:pooling-exact}) \\
$U(x,y)$; $U_t$ & Doob certificate \eqref{eq:doob-bellman}; a certificate read
  along the path (Prop.~\ref{prop:power-certificate}) \\
$\delta_{\mathrm H}$, $\delta_{\mathrm B}$, $\cR$; $p$, $q=p/(p-1)$ & H\"older
  deficit, optional-stopping deficit and total Doob residual
  (Cor.~\ref{cor:doob-residual}); the conjugate exponents
  (\eqref{eq:doob-bellman}) \\
$\tau$; $\tau^\star$ & random time, a stopping time when adapted
  (Sec.~\ref{sec:pathspace}); the ultimate-maximum time,
  \eqref{eq:tau-star} \\
$\sigma_x$; $J_x$ & first passage of a single martingale to level $x$
  (Thm.~\ref{thm:firstpassage-exact}); the overshoot $M_{\sigma_x}-x$ there \\
$\tau^{(\alpha)}_x$; $J^{(\alpha)}_x$; $g_x(\alpha)$ & first passage of the
  geometric mixture $M^{(\alpha)}$ to level $x$, its overshoot
  $M^{(\alpha)}_{\tau^{(\alpha)}_x}-x$ there, and the mass the mixture retains
  across the level (Cor.~\ref{cor:pooling-benefit}); each depends on both the
  level and the weights \\
$S_t$, $H_t$; $Z_t$ & conditional survival and hazard of $\tau$,
  \eqref{eq:surv-haz}; the Az\'ema supermartingale, their continuous-time form
  (Thm.~\ref{thm:pathtime-ct}) \\
$\clock{\tau}_t$, $\Amart{\tau}_t$ & hazard clock and anticipation martingale of
  $\tau$, \eqref{eq:clock-mart}; $\Amart{\tau,P}$ under a null $P$
  (Thm.~\ref{thm:composite-peeking}) \\
$\varphi$, $A^{-1}_u$ & boundary read against the increasing process, and the
  right-continuous inverse of $A$ (Thm.~\ref{thm:curved-crossing}) \\
$\delta_{\mathrm M}$, $\delta_{\mathrm S}$ & majorization deficit and
  optional-stopping deficit of a certificate function
  (Prop.~\ref{prop:power-certificate}); the power-mean counterparts of
  $\delta_{\mathrm H}$, $\delta_{\mathrm B}$ \\
$\mathfrak{C}_\cT(E)$; $\cT_B$ & worst-case peeking complexity of an e-process
  over a class of times, and the class whose anticipation budget is at most $B$
  (Sec.~\ref{sec:peeking-complexity}) \\
$\mathfrak{A}^\tau$ & anticipation index of $\tau$, the hazard-clock average
  $\sum_t \Amart{\tau}_t\,\Delta \clock{\tau}_t$ of its anticipation martingale,
  \eqref{eq:anticipation-index} \\
$I_\tau$, $\Upsilon_\tau$ & running infimum of the Az\'ema supermartingale
  before an honest time, and the logarithmic factor it enters through
  (Prop.~\ref{prop:bdg-honest}) \\
\bottomrule
\end{tabular}
\end{center}

% ======================================================================
% Bibliography
% ======================================================================
\section{Classical specializations}\label{app:specializations}
% ======================================================================

Each statement below is the formal form of a specialization the body describes and puts to work; the entropic accounting that produces it is Theorem~\ref{thm:master} with Proposition~\ref{prop:supermart-relax}.

The first is a standard result, included because the crossing statements depend on its exact hypotheses.
Which of them a martingale satisfies decides whether its mass is conserved at the time or leaks away, so the precise statement is worth having in one place.

\begin{theorem}[Optional stopping for nonnegative supermartingales]
\label{thm:OST}
Let $(M_t)$ be a nonnegative supermartingale and $\tau$ a stopping time (possibly infinite, with $M_\infty := \liminf_{t\to\infty} M_t$).
Then
\[
  \E[M_\tau] \leq M_0 .
\]
If $(M_t)$ is a nonnegative martingale, the matching equality $\E[M_\tau] = M_0$ holds in each of the following cases.
\emph{(a)} $\tau$ is bounded.
\emph{(b)} The stopped family $\{M_{\tau\wedge n}\}_{n\geq 0}$ is uniformly integrable (in particular when $(M_t)$ is closed by an integrable terminal variable, or $\tau$ is a.s.\ finite and $(M_t)$ is uniformly integrable).
Integrability of $M_\tau$ alone does \emph{not} suffice: a nonnegative martingale can leak mass to $\{\tau = \infty\}$, so that $\E[M_\tau] < M_0$ even though $\E[M_\tau] < \infty$.
\end{theorem}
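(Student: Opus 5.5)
The plan is to reduce everything to the bounded-time case and pass to the limit, handling the inequality and the two equality cases separately.

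\emph{The inequality.} I will start from the discrete optional sampling bound at bounded times. For each $n$, the stopped process $(M_{\tau\wedge t})_t$ is a supermartingale. Its increments are
\[
M_{\tau\wedge t}-M_{\tau\wedge(t-1)}=\ind\{\tau\ge t\}(M_t-M_{t-1}),
\]
and $\{\tau\ge t\}\in\cF_{t-1}$, so conditioning on $\cF_{t-1}$ gives $\E[M_{\tau\wedge n}]\le M_0$. I then send $n\to\infty$. On $\{\tau<\infty\}$, $M_{\tau\wedge n}\to M_\tau$. On $\{\tau=\infty\}$, $\liminf_n M_{\tau\wedge n}=\liminf_n M_n=M_\infty$, by the convention in the statement. (The supermartingale convergence theorem even gives a genuine a.s.\ limit, but only the $\liminf$ is needed.) Since $M\ge0$, Fatou's lemma yields
\[
\E[M_\tau]\le\liminf_n\E[M_{\tau\wedge n}]\le M_0.
\]
Nonnegativity is used exactly here, and it is what lets an infinite $\tau$ be handled without further hypotheses.

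\emph{Case (a).} If $\tau\le N$, the stopped-martingale computation above is an equality at $n=N$, and $M_{\tau\wedge N}=M_\tau$. So $\E[M_\tau]=M_0$.

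\emph{Case (b).} Each $\E[M_{\tau\wedge n}]=M_0$ by the bounded case, and $M_{\tau\wedge n}\to M_\tau$ a.s. For this I use martingale convergence on $\{\tau=\infty\}$, so that $M_\infty$ is an actual limit there. Uniform integrability upgrades a.s.\ convergence to $L^1$ convergence (Vitali), so $\E[M_\tau]=\lim_n\E[M_{\tau\wedge n}]=M_0$. The two parenthetical sufficient conditions each imply uniform integrability of the stopped family:
\begin{itemize}
\item If $M_t=\E[\xi\mid\cF_t]$ for an integrable $\xi$, then $M_{\tau\wedge n}=\E[\xi\mid\cF_{\tau\wedge n}]$ by bounded optional sampling. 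A family of conditional expectations of one integrable variable is uniformly integrable.
\item If $M$ is uniformly integrable, it is closed by $M_\infty$, which reduces to the previous item.
\end{itemize}

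\emph{The counterexample.} Take the double-or-nothing martingale $M_t=2^t\ind\{X_1=\cdots=X_t=1\}$ on fair bits, with $\tau\equiv\infty$ (or $\tau$ the first time $M$ hits $0$, which is infinite with probability zero, so use $\tau=\infty$ directly). Then $M_\tau=M_\infty=0$, which is integrable, yet $\E[M_\tau]=0<1=M_0$. A stopping time such as $\inf\{t:M_t\ge x\}$ gives a less trivial instance.

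The main obstacle is the limit passage for infinite $\tau$: identifying the limit of $M_{\tau\wedge n}$ on $\{\tau=\infty\}$ with the $\liminf$ convention. Fatou handles this for the inequality; the equality case additionally needs the a.s.\ convergence theorem, and I will cite it rather than reprove it.
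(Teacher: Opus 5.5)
Your argument follows the paper's proof. You use optional stopping at the bounded time $\tau\wedge n$, then Fatou's lemma with $M\ge0$ for the inequality. For (a) you note $\tau\wedge n=\tau$ eventually, and for (b) uniform integrability upgrades a.s.\ to $L^1$ convergence. For the counterexample you use the same double-or-nothing martingale at $\tau\equiv\infty$. You also cite martingale convergence for the a.s.\ limit on $\{\tau=\infty\}$ and verify the two parenthetical uniform-integrability criteria; the paper leaves both implicit.

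One side remark in your counterexample paragraph is wrong. The first passage $\sigma_x=\inf\{t:M_t\ge x\}$ is \emph{not} an instance of leakage. Write $\E[M_{\sigma_x\wedge n}]=M_0$ as a sum of two pieces. On $\{\sigma_x>n\}$ one has $M_n<x$, so dominated convergence applies there; on $\{\sigma_x\le n\}$ monotone convergence applies. Together they give $\E[M_{\sigma_x}]=M_0$ for every nonnegative martingale. This is exactly the conserved mass \eqref{eq:fp-mass} of Theorem~\ref{thm:firstpassage-exact}, and for double-or-nothing $M_{\sigma_x\wedge n}$ is even bounded by $2^{\lceil\log_2 x\rceil}$.

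The nontrivial instance is the one you set aside, the hitting time $\tau$ of $0$. It is a.s.\ finite and $M_\tau=0$ is integrable, yet $\E[M_\tau]=0<1$. The mass escapes along $\{\tau>n\}$, where $M_n=2^n$ with probability $2^{-n}$.
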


\begin{theorem}[Doob's $L^p$ maximal inequality]
\label{thm:doob}
Let $(M_t)_{t=0}^T$ be a nonnegative submartingale and $p > 1$.  Then $\E\bigl[\sup_{t \leq T} M_t^p\bigr] \leq \bigl(\tfrac{p}{p-1}\bigr)^p\,\E[M_T^p]$.
\end{theorem}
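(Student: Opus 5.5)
The plan is to derive Theorem~\ref{thm:doob} from the certificate accounting of Section~\ref{sec:doob-bellman}, instantiated at finite horizon $T$. The structure is: the Az\'ema--Yor certificate handles the bilinear comparand, and H\"older's inequality then converts that bilinear term into power means.

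First, reduce to the case where every quantity is finite. If $\E[M_T^p]=\infty$ there is nothing to prove. Otherwise, since $M_t\le\E[M_T\mid\cF_t]$ for a nonnegative submartingale, Jensen gives $\E[M_t^p]\le\E[M_T^p]$. Hence $\E[(M^\ast)^p]\le\sum_{t\le T}\E[M_t^p]<\infty$, and every expectation in Proposition~\ref{prop:ay-certificate} and Corollary~\ref{cor:doob-residual} is finite.

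For a nondeterministic $M_0$ (or $M_0=0$), condition on $\cF_0$ and run the argument under the regular conditional law. Alternatively, note that the supermartingale step below only uses $\E[A^\Phi_T]\le\E[A^\Phi_0]$ together with $A^\Phi_0=-M_0^p/(p-1)\le0$.

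Second, take $\Phi(y)=-y^p/(p-1)$, which is concave and nonincreasing, and set $U(x,y)=y^p-q\,y^{p-1}x$ as in \eqref{eq:doob-bellman}. By Proposition~\ref{prop:ay-certificate}, in its submartingale clause, $U(M_t,M^\ast_t)$ is a supermartingale. Therefore
\[
\E[(M^\ast)^p]-q\,\E[(M^\ast)^{p-1}M_T]=\E[U(M_T,M^\ast_T)]\le U(M_0,M_0)=-\tfrac{M_0^p}{p-1}\le0 .
\]
This is the certificate identity \eqref{eq:power-certificate} with $\delta_{\mathrm M}=0$, and it gives
\[
\E[(M^\ast)^p]\le q\,\E[(M^\ast)^{p-1}M_T].
\]

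Third, apply H\"older's inequality with exponents $p/(p-1)$ and $p$:
\[
\E[(M^\ast)^{p-1}M_T]\le \E[(M^\ast)^p]^{(p-1)/p}\,\E[M_T^p]^{1/p}.
\]
If $\E[(M^\ast)^p]=0$ the claim is trivial. Otherwise divide by $\E[(M^\ast)^p]^{(p-1)/p}$, which is finite by the first step, to obtain $\|M^\ast\|_p\le q\|M_T\|_p$. Raising both sides to the $p$th power gives the theorem. Equivalently, the result is the statement $\cR\ge0$ in Corollary~\ref{cor:doob-residual}.

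The main obstacle is the supermartingale property of $U(M_t,M^\ast_t)$ in the submartingale case. This rests on the increment formula \eqref{eq:ay-increment}:
\[
A^\Phi_t-A^\Phi_{t-1}=(M_t-M_{t-1})\Phi'(M^\ast_{t-1})-D_{-\Phi}(M^\ast_t,M^\ast_{t-1}).
\]
I would verify it directly in two cases. If $M^\ast_t=M^\ast_{t-1}$, the Bregman term vanishes and the increment is linear in $M_t$. If the record advances, expand $\Phi(M^\ast_t)$ around $M^\ast_{t-1}$, and the Bregman term appears.

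Taking conditional expectations, the first term has $\E[M_t-M_{t-1}\mid\cF_{t-1}]\ge0$ multiplied by the predictable factor $\Phi'(M^\ast_{t-1})\le0$, so it is nonpositive. The second term is minus a nonnegative Bregman divergence, by concavity of $\Phi$. Integrability of each increment follows from the first step.
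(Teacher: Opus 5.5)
Your proof is correct, and its bilinear step takes a different route from the paper's. The paper's proof of Theorem~\ref{thm:doob} is the textbook one. The weak-type maximal lemma $\PP(M^\ast\ge\lambda)\le\lambda^{-1}\E[M_T\ind\{M^\ast\ge\lambda\}]$, obtained by optional stopping at the first passage to $\lambda$, is integrated against $p\lambda^{p-1}\,\dd\lambda$ by the layer-cake formula and Fubini. That gives $\E[(M^\ast)^p]\le q\,\E[M_T(M^\ast)^{p-1}]$, and H\"older finishes exactly as in your third step.

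You obtain the same bilinear inequality differently: one optional-stopping statement for the explicit supermartingale $U(M_t,M^\ast_t)$ of Proposition~\ref{prop:ay-certificate}, at a deterministic time, with no integration over levels. The gain is that nothing is discarded silently. You get the slightly sharper $\E[(M^\ast)^p]\le q\,\E[(M^\ast)^{p-1}M_T]-\E[M_0^p]/(p-1)$, and the remaining loss is identified as the Bregman record sum plus the drift term. So your argument is precisely the residual statement $\cR\ge0$ of Corollary~\ref{cor:doob-residual}.

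Your a priori bound $\E[(M^\ast)^p]\le\sum_{t\le T}\E[M_t^p]\le(T+1)\E[M_T^p]$ does real work. The paper's proof of Corollary~\ref{cor:doob-residual} gets finiteness of $\E[(M^\ast)^p]$ by invoking Theorem~\ref{thm:doob} itself. Deriving Doob from that corollary as written would therefore be circular, and your first step removes the dependence.

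The layer-cake route is more elementary. It needs only the weak-type inequality and Fubini. It does not require verifying a supermartingale property or the integrability of the certificate's increments, which your route must check, and does, via H\"older and your first step.
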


\begin{proposition}[Conditional maximal identity~\cite{Nikeghbali2006essay}]
\label{prop:maximal-conditional}
Let $(M_t)_{t\ge0}$ be a nonnegative continuous-time \emph{local} martingale --- a martingale up to each of a sequence of stopping times increasing to infinity --- with $M_0=m>0$ and $\lim_{t\to\infty}M_t=0$ a.s., whose running supremum $M^\ast_t:=\sup_{u\le t}M_u$ has continuous paths, and write $M^\ast_\infty:=\sup_{u\ge0}M_u$.
\begin{enumerate}[label=(\roman*),leftmargin=2em]
\item For every $a>0$, $\PP\bigl(M^\ast_\infty>a\bigr)=\frac{m}{a}\wedge1$, and $m/M^\ast_\infty$ is uniform on $(0,1)$.
\item Suppose in addition that $M$ is strictly positive with no positive jumps, and let $\tau$ be an a.s.\ finite stopping time.
Write $M^\ast_{\ge\tau}:=\sup_{u\ge\tau}M_u$ for the \emph{future supremum} from $\tau$.  Then for every $a>0$,
\begin{equation}\label{eq:maximal-cond}
  \PP\bigl(M^\ast_{\ge\tau}>a \,\big|\, \cF_\tau\bigr)=\frac{M_\tau}{a}\wedge1
  \qquad\text{a.s.}
\end{equation}
The ratio $M_\tau/M^\ast_{\ge\tau}$ is uniform on $(0,1)$ and independent of $\cF_\tau$, and $\log\bigl(M^\ast_{\ge\tau}/M_\tau\bigr)$ is a standard exponential variable independent of $\cF_\tau$.
\end{enumerate}
\end{proposition}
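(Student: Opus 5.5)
Part~(i) is a first-passage argument at a single level. I will get the law of $M^\ast_\infty$ from the mass-conservation step of Theorem~\ref{thm:firstpassage-exact}, now run in continuous time, where continuity of $M^\ast$ removes the overshoot. Part~(ii) is the same argument restarted at the stopping time $\tau$ and run conditionally on $\cF_\tau$. The hypotheses of~(ii) (strict positivity and no positive jumps) are exactly what keep the restarted argument overshoot-free at every level, including levels below the record already attained.

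\textbf{Part (i).} For $a\le m$ the claim is trivial, since $M^\ast_\infty\ge M_0=m$. For $a>m$, set $\sigma_a:=\inf\{t: M_t\ge a\}$. Because $M^\ast$ has continuous paths, the running supremum cannot jump past $a$, so $M_{\sigma_a}=a$ on $\{\sigma_a<\infty\}$. Moreover $0\le M_{t\wedge\sigma_a}\le a$ for all $t$. A bounded nonnegative local martingale is a true uniformly integrable martingale: stop along the localizing sequence and pass to the limit by dominated convergence. Optional stopping (Theorem~\ref{thm:OST}(b)) then gives
\[
m=\E[M_{\sigma_a}]=a\,\PP(\sigma_a<\infty)+\E\bigl[M_\infty\ind\{\sigma_a=\infty\}\bigr]=a\,\PP(\sigma_a<\infty),
\]
using $M_\infty=0$. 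This shows $\PP(M^\ast_\infty\ge a)=m/a$. For the strict inequality, write $\{M^\ast_\infty>a\}=\bigcup_{a'\downarrow a}\{M^\ast_\infty\ge a'\}$ and take the limit, which again gives $m/a$. Consequently $\PP(m/M^\ast_\infty<u)=\PP(M^\ast_\infty>m/u)=u$ for $u\in(0,1)$, which is the uniform law.

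\textbf{Part (ii).} I repeat the argument on the shifted process $N_u:=M_{\tau+u}$ with filtration $\cG_u:=\cF_{\tau+u}$. The steps are as follows.
\begin{enumerate}[label=(\alph*),leftmargin=2em]
\item $N$ is a $(\cG_u)$-local martingale. This follows by optional sampling along the shifted localizing sequence.
\item $N_0=M_\tau$ is $\cG_0$-measurable and strictly positive, and $N_u\to0$ a.s.
\item $N$ has no positive jumps. Hence the future supremum $\sup_{v\le u}N_v$ is continuous. This is the only place the extra hypothesis enters: the starting value $M_\tau$ may sit below the old record, so continuity of $M^\ast$ alone would not prevent an overshoot.
\end{enumerate}
Fix $a>0$ and $B\in\cF_\tau$ with $B\subseteq\{M_\tau<a\}$, and set $\sigma:=\inf\{u\ge\tau: M_u\ge a\}$. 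The process $\ind_B M_{(\tau+u)\wedge\sigma}$ is bounded by $a$ and is a martingale. Since $M_\sigma=a$ on $\{\sigma<\infty\}$ and $M_\infty=0$, optional stopping gives
\[
\E[M_\tau\ind_B]=a\,\PP\bigl(B\cap\{M^\ast_{\ge\tau}\ge a\}\bigr).
\]
As $B$ ranges over such sets, this says $\PP(M^\ast_{\ge\tau}\ge a\mid\cF_\tau)=M_\tau/a$ on $\{M_\tau<a\}$. On $\{M_\tau\ge a\}$ the conditional probability is $1$; the boundary case $M_\tau=a$ is handled by approximating $a$ from above, as in~(i). The same monotone limit converts ``$\ge a$'' to ``$>a$'' and yields~\eqref{eq:maximal-cond}.

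\textbf{Independence.} For $u\in(0,1)$,
\[
\PP\bigl(M_\tau/M^\ast_{\ge\tau}<u\mid\cF_\tau\bigr)=\PP\bigl(M^\ast_{\ge\tau}>M_\tau/u\mid\cF_\tau\bigr)=u .
\]
This is deterministic, so the ratio is uniform and independent of $\cF_\tau$. Taking $-\log$ of a uniform variable gives the standard exponential law for $\log(M^\ast_{\ge\tau}/M_\tau)$.

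\textbf{Main obstacle.} The delicate step is the conditional optional stopping in~(ii). I must justify that the stopped, shifted process is a genuine bounded martingale relative to $(\cF_{\tau+u})$, and that restricting to $B\in\cF_\tau$ is legitimate. I would handle this by working with $\ind_B M_{(\tau+u)\wedge\sigma\wedge\tau_n}$ along the localizing times $\tau_n$ and letting $n\to\infty$ by dominated convergence, with dominating constant $a$. The remaining technical points are the passage between strict and non-strict levels and the treatment of the tie $M_\tau=a$, both done by monotone limits in $a$.
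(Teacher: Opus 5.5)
Your proposal is correct and follows essentially the same route as the paper: in (i), the stopped process is a bounded local martingale, so optional stopping applies, and with no overshoot at the first passage and $M_\infty=0$ it gives the law of $M^\ast_\infty$; in (ii), you rerun that argument on the shifted process $M_{\tau+\cdot}$ conditionally on $\cF_\tau$, with the absence of positive jumps making the future supremum continuous. One small slip: at $a=m$ the claim $\PP(M^\ast_\infty>m)=1$ does not follow trivially from $M^\ast_\infty\ge m$, but your own monotone limit along $a'\downarrow m$ gives it, exactly as the paper does.
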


\begin{theorem}[Azuma--Hoeffding, exact tail and its bound]
\label{thm:azuma}
Let $(M_t)_{t \geq 0}$ be a martingale with increments $d_t = M_t - M_{t-1}$ satisfying $a_t \leq d_t \leq b_t$ a.s.\ and $\E[d_t \mid \cF_{t-1}] = 0$, where $a_t,b_t$ are deterministic.
Then for every $t \geq 1$ and every $x > 0$ with $\PP(M_t - M_0 \geq x) > 0$ the tail is a per-step relative-entropy sum,
\begin{equation}\label{eq:azuma-exact}
  -\log \PP(M_t - M_0 \geq x)
  = \sum_{s=1}^t \E_{P^*}[\KL(P^*_s\|P_s)],
\end{equation}
where $P^* = P(\cdot \mid M_t - M_0 \geq x)$, and relaxing each conditional relative entropy to the Hoeffding cumulant gives the Azuma--Hoeffding tail and line-crossing bounds
\begin{align}
  \PP(M_t - M_0 \geq x)
  &\leq \exp\!\left(-\frac{2 x^2}{\sum_{s=1}^t (b_s - a_s)^2}\right),
  \label{eq:azuma1}\\
  \PP\!\left(\sup_{1 \leq s \leq t}(M_s - M_0) \geq x\right)
  &\leq \exp\!\left(-\frac{2 x^2}{\sum_{s=1}^t (b_s - a_s)^2}\right).
  \label{eq:azuma2}
\end{align}
\end{theorem}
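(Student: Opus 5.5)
The plan has two parts: first the exact tail identity \eqref{eq:azuma-exact}, then its relaxation to \eqref{eq:azuma1} and \eqref{eq:azuma2}.

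\textbf{The exact identity.} Let $A:=\{M_t-M_0\ge x\}\in\cF_t$ and $P^*:=P(\cdot\mid A)$. Then
\[
\dd P^*/\dd P|_{\cF_t}=\ind_A/\PP(A),
\qquad\text{so}\qquad
\KL(P^*\|P|_{\cF_t})=-\log\PP(A).
\]
This is Corollary~\ref{cor:DV} at $g=\ind_A$ and $Q=P^*$, where the residual vanishes. Since $P^*\ll P$, the chain rule (Lemma~\ref{lem:chain-rule}) applies on $\cF_t$ and splits this divergence into $\sum_{s\le t}\E_{P^*}[\KL(P^*_s\|P_s)]$, with $P^*_s,P_s$ the one-step conditionals given $\cF_{s-1}$. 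That gives \eqref{eq:azuma-exact}. The only point to check is that the chain rule is used in its filtration form, with regular conditional laws on the increments, which is harmless in discrete time.

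\textbf{The fixed-time bound.} Take $Y_s:=M_s-M_0$ and the Hoeffding majorant $\overline\Psi_s(\lambda)=\tfrac{\lambda^2}{8}\sum_{r\le s}(b_r-a_r)^2$, which is deterministic. The conditional Hoeffding lemma makes each one-step factor of $E_s(\lambda)$ have conditional mean at most one. Hence $E(\lambda)$ is a nonnegative supermartingale with $E_0=1$, and Proposition~\ref{prop:supermart-relax}(iii) applies. Corollary~\ref{cor:event} on $A$, using $Y_t\ge x$ on $A$ and $c=\overline\Psi_t(\lambda)$, gives
\[
\PP(A)\le\exp\bigl(-\lambda x+\tfrac{\lambda^2}{8}\textstyle\sum_s(b_s-a_s)^2\bigr).
\]
Optimizing at $\lambda^\ast=4x/\sum_s(b_s-a_s)^2$ yields \eqref{eq:azuma1}.

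\textbf{The link to the per-step sum.} The statement's phrase ``relaxing each conditional relative entropy'' is realized by reading \eqref{eq:azuma-exact} through the event identity \eqref{eq:event}. There $-\log\PP(A)$ equals $\lambda\E[Y_t\mid A]-\overline\Psi_t(\lambda)$ plus nonnegative residuals. Equivalently, the master inequality \eqref{eq:master} at $Q=P^*$ bounds $\KL(P^*\|P)$ from below by $\lambda\E_{P^*}[Y_t]-\overline\Psi_t\ge\lambda x-\overline\Psi_t$. Splitting it per step would use the transportation lemma stepwise, $\E_{P^*}[d_s\mid\cF_{s-1}]\le$ a function of $\KL(P^*_s\|P_s)$, but the aggregate form suffices and is the one I would write.

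\textbf{The line-crossing bound.} For \eqref{eq:azuma2}, since $\overline\Psi$ is deterministic and nondecreasing, the event $\{\sup_{s\le t}Y_s\ge x\}$ is contained in
\[
\bigl\{\sup_{s\le t}E_s(\lambda)\ge e^{\lambda x-\overline\Psi_t(\lambda)}\bigr\}.
\]
Ville's inequality (Corollary~\ref{cor:ville}) then gives the same exponent, and the same $\lambda^\ast$ finishes the bound.

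The main obstacle is presentational rather than technical: making the per-step relaxation honest. Because the Hoeffding tilt and $P^*$ differ, the bound comes from the aggregate divergence and not from each summand separately. I would state it that way, naming the three residuals of \eqref{eq:azuma-three} as the slack.
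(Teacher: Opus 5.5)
Your proposal is correct and follows the paper's proof: Corollary~\ref{cor:DV} at $g=\ind_A$ plus the chain rule for the exact identity, then the event bound of Corollary~\ref{cor:event} at the Hoeffding majorant optimized at $\lambda^\ast=4x/\sum_s(b_s-a_s)^2$ (the paper's $x/(2U_t)$), and Ville's inequality for the line-crossing form. Your explicit containment, which uses that $\overline\Psi$ is deterministic and nondecreasing, fills in the step the paper compresses into ``the same tilt in Corollary~\ref{cor:ville}''. Your remark that the relaxation acts on the aggregate divergence rather than summand by summand also matches what the paper's proof actually does.
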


\begin{lemma}[One-step Bernstein bound]
\label{lem:bernstein}
Let $X$ satisfy $\E[X\mid\cG] = 0$ and $X \leq b$ a.s.  Then for every $0 \leq \lambda < 3/b$,
\begin{equation}\label{eq:bernstein-lemma}
  \E[e^{\lambda X}\mid\cG]
  \leq \exp\!\left(\frac{\lambda^2}{2(1 - \lambda b/3)}\,
                   \E[X^2\mid\cG]\right).
\end{equation}
\end{lemma}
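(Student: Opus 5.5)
We prove the one-step Bernstein bound of Lemma~\ref{lem:bernstein} by a pointwise exponential majorant followed by a single conditional expectation. Set $\phi(u):=(e^u-1-u)/u^2$ for $u\ne0$ and $\phi(0):=\tfrac12$. The classical fact we use is that $\phi$ is nondecreasing on $\R$. This follows from the series representation
\[
  \phi(u)=\int_0^1(1-s)\,e^{su}\dd s,
\]
whose integrand is nondecreasing in $u$ for every $s\in[0,1]$.

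\emph{Pointwise bound.} Fix $\lambda\in[0,3/b)$; the case $\lambda=0$ is trivial. Since $X\le b$ and $\lambda\ge0$, we have $\lambda X\le\lambda b$, so monotonicity of $\phi$ gives
\[
  e^{\lambda X}\le 1+\lambda X+\lambda^2X^2\,\phi(\lambda b)
\]
almost surely. This holds for every sign of $X$, because the identity $e^u=1+u+u^2\phi(u)$ is exact.

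\emph{Bounding $\phi(\lambda b)$.} We bound the constant by comparing power series. Writing $u=\lambda b$,
\[
  \phi(u)=\sum_{k\ge2}\frac{u^{k-2}}{k!}.
\]
Since $k!\ge 2\cdot 3^{k-2}$ for $k\ge2$, each term satisfies $u^{k-2}/k!\le \tfrac12 (u/3)^{k-2}$ when $u\ge0$. Summing the geometric series gives $\phi(u)\le \tfrac{1}{2(1-u/3)}$ for $0\le u<3$. If $b<0$, the hypotheses $X\le b<0$ and $\E[X\mid\cG]=0$ are incompatible unless the conditioning event is null. More simply, for $u\le0$ we have $\phi(u)\le\phi(0)=\tfrac12\le\tfrac{1}{2(1-u/3)}$. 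Either way,
\[
  \lambda^2\phi(\lambda b)\le \frac{\lambda^2}{2(1-\lambda b/3)} .
\]

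\emph{Conditioning.} Taking conditional expectations and using $\E[X\mid\cG]=0$ gives
\[
  \E[e^{\lambda X}\mid\cG]\le 1+\frac{\lambda^2}{2(1-\lambda b/3)}\E[X^2\mid\cG],
\]
and $1+y\le e^y$ concludes. Integrability is not an issue: the left side is bounded by the right side, which may be $+\infty$. The conditional expectation of the nonnegative variable $X^2$ is well defined, and integrability of $X$ is implicit in the hypothesis $\E[X\mid\cG]=0$.

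The only nonroutine step is the monotonicity of $\phi$, which the integral representation settles immediately. We expect no real obstacle beyond that.
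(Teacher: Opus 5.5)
Your proof is correct, but it takes a different route from the paper's. The paper first proves the scalar inequality $e^u\le 1+u+\frac{u^2}{2(1-u/3)}$ on the whole range $u<3$. It clears the denominator and checks that $T(u)=1+\tfrac{2u}{3}+\tfrac{u^2}{6}-(1-u/3)e^u$ satisfies $T(0)=T'(0)=0$ and $T''(u)=\tfrac13\bigl(1-(1-u)e^u\bigr)\ge0$. It then substitutes $u=\lambda X$, and only afterwards uses $\lambda X\le\lambda b$, to replace $1-\lambda X/3$ by the smaller $1-\lambda b/3$ in the denominator.

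You perform the two relaxations in the opposite order. Monotonicity of $\phi(u)=(e^u-1-u)/u^2$ moves $\lambda X$ to $\lambda b$ first, and the factorial comparison $k!\ge2\cdot3^{k-2}$ then bounds the constant $\phi(\lambda b)$. Stopping before that second step gives, for $b>0$, the bound $\E[e^{\lambda X}\mid\cG]\le\exp\bigl(\E[X^2\mid\cG]\,(e^{\lambda b}-1-\lambda b)/b^2\bigr)$. This is Bennett's cumulant bound. Your route therefore yields the sharper Bennett member of the family for free, and it exposes the passage from Bennett to Bernstein as a separate, visible relaxation. The paper's route needs neither a monotonicity lemma nor a series expansion, and its scalar inequality holds for every $u<3$ regardless of sign. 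The cost is that the Bennett intermediate never appears.

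One sentence should be deleted. For $u<0$ the claim $\tfrac12\le\tfrac{1}{2(1-u/3)}$ is false, because then $1-u/3>1$. The slip is harmless, since $u=\lambda b\ge0$ under the hypotheses. As you note first, $b<0$ is incompatible with $X\le b$ and $\E[X\mid\cG]=0$. In any case, no $\lambda\ge0$ satisfies $\lambda<3/b$ when $b<0$. Keep that first justification and drop the ``more simply'' alternative.
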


\begin{proposition}[Freedman--Bernstein line-crossing]
\label{prop:freedman}
If $(M_t)$ is a martingale with increments $d_t \leq b$ a.s., then for each $0 \leq \lambda < 3/b$,
\[
  E_t(\lambda) := \exp\!\left(\lambda (M_t - M_0)
    - \frac{\lambda^2}{2(1 - \lambda b/3)} \langle M\rangle_t\right)
\]
is a nonnegative supermartingale.  Consequently, for all $x,v > 0$,
\begin{equation}\label{eq:freedman}
  \PP\!\left(\exists\, t : M_t - M_0 \geq x,\; \langle M\rangle_t \leq v\right)
  \leq \exp\!\left(-\frac{x^2}{2(v + bx/3)}\right).
\end{equation}
\end{proposition}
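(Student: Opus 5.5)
The plan is the classical Chernoff route: build an exponential supermartingale from the one-step Bernstein majorant, then read it through Ville's inequality on the stopped event. I first apply Lemma~\ref{lem:bernstein} to each increment with $\cG=\cF_{t-1}$. This gives $\E[e^{\lambda d_t}\mid\cF_{t-1}]\le\exp\bigl(\tfrac{\lambda^2}{2(1-\lambda b/3)}\E[d_t^2\mid\cF_{t-1}]\bigr)$ for $0\le\lambda<3/b$. The conditional variance term is exactly the increment $\langle M\rangle_t-\langle M\rangle_{t-1}$, which is $\cF_{t-1}$-measurable and so can be pulled out of the conditional expectation. Then $\E[E_t(\lambda)\mid\cF_{t-1}]=E_{t-1}(\lambda)\,\E\bigl[e^{\lambda d_t}\mid\cF_{t-1}\bigr]\exp\bigl(-\tfrac{\lambda^2}{2(1-\lambda b/3)}\Delta\langle M\rangle_t\bigr)\le E_{t-1}(\lambda)$. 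Nonnegativity is automatic, and $E_0(\lambda)=1$. This is Proposition~\ref{prop:supermart-relax}(iii) with the Bernstein majorant $\overline\Psi_t$. The only technical care is integrability, which I get by induction from the one-step bound itself, since each step's conditional mean is finite.

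For the tail, write $c:=\tfrac{\lambda^2}{2(1-\lambda b/3)}\ge0$. On the event $\{M_t-M_0\ge x,\ \langle M\rangle_t\le v\}$ we have $\lambda(M_t-M_0)-c\langle M\rangle_t\ge\lambda x-cv$, so $\sup_t E_t(\lambda)\ge e^{\lambda x-cv}$ there. Corollary~\ref{cor:ville}, applied at level $\lambda x-cv$, then gives probability at most $\exp(-\lambda x+\tfrac{\lambda^2 v}{2(1-\lambda b/3)})$. When that level is nonpositive the bound exceeds $1$ and holds trivially, so Ville's inequality only needs to be invoked when the level is positive.

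Finally I optimize over $\lambda$. Rather than solve the optimization exactly, I plug in the standard choice $\lambda=x/(v+bx/3)$. This choice satisfies $\lambda b/3=\tfrac{bx/3}{v+bx/3}<1$, so it lies in the admissible range $[0,3/b)$. It also gives $1-\lambda b/3=v/(v+bx/3)$, hence $\tfrac{\lambda^2 v}{2(1-\lambda b/3)}=\tfrac{x^2}{2(v+bx/3)}$. The exponent therefore becomes $-\tfrac{x^2}{v+bx/3}+\tfrac{x^2}{2(v+bx/3)}=-\tfrac{x^2}{2(v+bx/3)}$, which is \eqref{eq:freedman}.

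The main obstacle is the supermartingale step, specifically making sure the predictable variance is the right compensator. The lemma requires $\E[d_t\mid\cF_{t-1}]=0$ and $d_t\le b$, both of which are given. It bounds by $\E[d_t^2\mid\cF_{t-1}]$, which is predictable, so no further hypothesis is needed. The remaining steps are the algebra above.
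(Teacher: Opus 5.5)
Your proposal is correct and follows the paper's proof. Both apply Lemma~\ref{lem:bernstein} at $\cG=\cF_{t-1}$ to get the predictable Bernstein majorant that makes $E_t(\lambda)$ a supermartingale, use Corollary~\ref{cor:ville} to bound the event by $\exp\bigl(-\lambda x+\tfrac{\lambda^2 v}{2(1-\lambda b/3)}\bigr)$, and plug in the classical tilt $\lambda^\star=x/(v+bx/3)$ to obtain \eqref{eq:freedman}; your explicit treatment of integrability and of a nonpositive Ville level $\lambda x-cv$ is slightly more careful than the paper's one-line argument, but it is the same route.
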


\begin{proposition}[Mixture supermartingales]
\label{prop:mixture}
Suppose that for every $\lambda \in \Lambda$ (a measurable parameter space), $E_t(\lambda)$ is a nonnegative supermartingale with $E_0(\lambda) \leq 1$.
Let $\mu$ be a prior probability measure on $\Lambda$ and define $\overline E_t := \int E_t(\lambda)\,\mu(\dd\lambda)$.
Then $(\overline E_t)$ is a nonnegative supermartingale.
Moreover, for every adapted posterior $\rho_t \ll \mu$ (possibly data-dependent) with $\rho_t(\{\lambda : E_t(\lambda) = 0\}) = 0$, the mixture log-wealth is a Donsker--Varadhan identity,
\begin{equation}\label{eq:mixturepathwise}
  \log \overline E_t
  = \int_\Lambda \log E_t(\lambda)\,\rho_t(\dd\lambda)
       - \KL(\rho_t \| \mu) + \KL(\rho_t \| \rho^*_t)
\end{equation}
with $\dd\rho^*_t/\dd\mu \propto E_t(\cdot)$; dropping the nonnegative residual $\KL(\rho_t\|\rho^*_t)$ gives the PAC-Bayes lower bound $\log \overline E_t \ge \int_\Lambda \log E_t(\lambda)\,\rho_t(\dd\lambda) - \KL(\rho_t\|\mu)$.
In particular, if $E_t(\lambda) = \exp(\lambda Y_t - \overline\Psi_t(\lambda))$,
\begin{equation}\label{eq:mixturePAC}
  \PP\!\left(
    \exists\,t :
    \int_\Lambda [\lambda Y_t - \overline\Psi_t(\lambda)]\rho_t(\dd\lambda)
    - \KL(\rho_t\|\mu) \geq x\right)
  \leq e^{-x}.
\end{equation}
\end{proposition}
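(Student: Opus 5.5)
We prove the three claims of Proposition~\ref{prop:mixture} in order: the supermartingale property, the pathwise identity~\eqref{eq:mixturepathwise}, and the crossing bound~\eqref{eq:mixturePAC}.

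\emph{Supermartingale property.} Nonnegativity and measurability of $\overline E_t$ come from Tonelli, assuming $(\lambda,\omega)\mapsto E_t(\lambda)(\omega)$ is jointly measurable. Conditional Tonelli, whose interchange needs no integrability beyond nonnegativity, then gives
\[
\E\bigl[\overline E_t\mid\cF_{t-1}\bigr]=\int\E\bigl[E_t(\lambda)\mid\cF_{t-1}\bigr]\,\mu(\dd\lambda)\le\int E_{t-1}(\lambda)\,\mu(\dd\lambda)=\overline E_{t-1}.
\]
Likewise $\overline E_0\le1$. Integrability of $\overline E_t$ follows by iterating this bound down to $\overline E_0\le 1$.

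\emph{Pathwise identity.} Fix $t$ and a sample path $\omega$ with $0<\overline E_t(\omega)<\infty$. Apply Corollary~\ref{cor:DV} on the parameter space $(\Lambda,\mu)$ with $g=E_t(\cdot)(\omega)$ and $Q=\rho_t(\omega)$. The hypothesis $\rho_t(\{E_t=0\})=0$ is exactly the corollary's requirement that $Q$ not charge $\{g=0\}$. The corollary returns~\eqref{eq:mixturepathwise} verbatim, with Gibbs measure $\dd\rho_t^\ast/\dd\mu\propto E_t$. Since $\rho^\ast_t$ is a probability measure, $\KL(\rho_t\|\rho_t^\ast)\ge0$, and dropping it gives the PAC-Bayes lower bound.

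Two degenerate paths need separate handling. On $\{\overline E_t=0\}$, $E_t=0$ holds $\mu$-a.e., so the condition $\rho_t\ll\mu$ together with $\rho_t(\{E_t=0\})=0$ cannot be met unless the event is null. The event $\{\overline E_t=\infty\}$ is null by integrability. Measurability of the $\omega$-indexed family of identities is routine, since each term is a measurable functional of the kernel $\rho_t$.

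\emph{Crossing bound.} In the exponential case $\log E_t(\lambda)=\lambda Y_t-\overline\Psi_t(\lambda)$, so the PAC-Bayes lower bound says the event in~\eqref{eq:mixturePAC} is contained in $\{\exists t:\log\overline E_t\ge x\}$. Corollary~\ref{cor:ville}, applied to the nonnegative supermartingale $\overline E_t$ with $\overline E_0\le1$, bounds that event's probability by $e^{-x}$.

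The main obstacle is the bookkeeping in the integral $\int\log E_t\,\dd\rho_t$, which may be $-\infty$ or undefined. I would first restrict to posteriors with $\int|\log E_t|\,\dd\rho_t<\infty$. Otherwise the left side of the event in~\eqref{eq:mixturePAC} is $-\infty$ and the containment is trivial. This case split is where care is needed; the rest is direct application of earlier results.
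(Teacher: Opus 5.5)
Your proof is correct and follows the paper's own route. It uses Tonelli for the supermartingale property, Corollary~\ref{cor:DV} on $(\Lambda,\mu)$ applied pointwise in $\omega$ for~\eqref{eq:mixturepathwise}, and Ville's inequality (Corollary~\ref{cor:ville}) on $\overline E_t$ for~\eqref{eq:mixturePAC}; your degenerate-path bookkeeping goes beyond what the paper writes. One small correction there: $\int|\log E_t|\,\dd\rho_t=\infty$ makes the event's left side $-\infty$ only when $\int(\log E_t)^+\dd\rho_t<\infty$, which is automatic if $\KL(\rho_t\|\mu)<\infty$ because $\int(\log E_t)^+\dd\rho_t\le\KL(\rho_t\|\mu)+\log(1+\overline E_t)$; if both are infinite the expression is an $\infty-\infty$ form and needs a convention (e.g.\ reading it as $-\infty$) before the containment can be asserted.
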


\begin{proposition}[Exact form of the mixture tail]
\label{prop:mixture-exact}
In the setting of Proposition~\ref{prop:mixture}, fix $x > 0$ and an adapted posterior $(\rho_t)$ with $\rho_t \ll \mu$, and set
\[
  \cA_x := \Bigl\{\exists\,t : \int_\Lambda \log E_t(\lambda)\,\rho_t(\dd\lambda)
          - \KL(\rho_t\|\mu) \geq x\Bigr\},
  \qquad
  \cV_x := \{\exists\,t : \overline E_t \geq e^x\},
\]
so that $\cA_x$ is the event of~\eqref{eq:mixturePAC} in the exponential case $E_t(\lambda) = \exp(\lambda Y_t - \overline\Psi_t(\lambda))$.
Then $\cA_x \subseteq \cV_x$ and
\[
  \PP(\cA_x) = \PP(\cV_x) - \PP\bigl(\cV_x \setminus \cA_x\bigr),
  \qquad
  \cV_x\setminus \cA_x
  = \cV_x \cap \bigl\{\forall\,t:\ \KL(\rho_t\|\rho^*_t) > \log\overline E_t - x\bigr\},
\]
the second event being read at those $t$ with $\overline E_t>0$, the only ones at which either event can occur.
It vanishes when $\rho_t \equiv \rho^*_t$, at which $\cA_x = \cV_x$.
If $\overline E$ is a martingale with $\overline E_0 = 1$ and $\overline E_t \to 0$ a.s., then Theorem~\ref{thm:firstpassage-exact} at level $e^x$ evaluates $\PP(\cV_x)$ and
\begin{equation}\label{eq:mixture-exact-mart}
  \PP(\cA_x)
  = e^{-x}\Bigl(1 - \E\bigl[J\,\ind\{\sigma < \infty\}\bigr]\Bigr)
    - \PP\bigl(\cV_x\setminus \cA_x\bigr),
\end{equation}
with $\sigma := \inf\{t : \overline E_t \geq e^x\}$ and $J := \overline E_\sigma - e^x \geq 0$ the overshoot.
Under those hypotheses equality holds in~\eqref{eq:mixturePAC} exactly when $\PP(\cV_x\setminus \cA_x) = 0$ and $J = 0$ a.s.\ on $\{\sigma<\infty\}$.
For a mixture that is a strict supermartingale the first-passage term includes, beyond the overshoot, the predictable loss shed before the crossing and the mass that never crosses.
\end{proposition}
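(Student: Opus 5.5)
The plan is to reduce the statement to set algebra on the two crossing events, using the pathwise identity~\eqref{eq:mixturepathwise} of Proposition~\ref{prop:mixture}, and then to evaluate $\PP(\cV_x)$ by Theorem~\ref{thm:firstpassage-exact}.

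\emph{Step 1 (inclusion and complement).} Fix $t$ with $\overline E_t>0$. Since $\rho_t\ll\mu$ charges no zero of $E_t(\cdot)$, identity~\eqref{eq:mixturepathwise} gives
\[
  \int_\Lambda \log E_t\,\dd\rho_t-\KL(\rho_t\|\mu)=\log\overline E_t-\KL(\rho_t\|\rho^*_t).
\]
Because $\KL(\rho_t\|\rho^*_t)\ge0$, the left side is at least $x$ only if $\overline E_t\ge e^x$. This gives $\cA_x\subseteq\cV_x$. At times with $\overline E_t=0$ the $\rho_t$-integral is $-\infty$, so neither event can occur there; this is the reason for the stated restriction. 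The same display shows that the condition defining $\cA_x$ fails at time $t$ exactly when $\KL(\rho_t\|\rho^*_t)>\log\overline E_t-x$. Quantifying over all $t$ then gives the description of $\cV_x\setminus\cA_x$. The identity $\PP(\cA_x)=\PP(\cV_x)-\PP(\cV_x\setminus\cA_x)$ is just additivity. When $\rho_t\equiv\rho^*_t$ the residual is zero, the two conditions coincide at every $t$, and so $\cA_x=\cV_x$.

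\emph{Step 2 (the martingale case).} Now assume $\overline E$ is a martingale with $\overline E_0=1$ and $\overline E_t\to0$ a.s. Apply Theorem~\ref{thm:firstpassage-exact} with $M=\overline E$ at level $e^x\ge1$, which requires $x>0$. This yields $\PP(\cV_x)=\PP(\sigma<\infty)=e^{-x}\bigl(1-\E[J\ind\{\sigma<\infty\}]\bigr)$. Substituting into Step 1 gives~\eqref{eq:mixture-exact-mart}.

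\emph{Step 3 (equality case).} The right side of~\eqref{eq:mixture-exact-mart} equals $e^{-x}$ exactly when the two nonnegative subtracted terms both vanish. That is, $\PP(\cV_x\setminus\cA_x)=0$ and $\E[J\ind\{\sigma<\infty\}]=0$; the latter is equivalent to $J=0$ a.s.\ on $\{\sigma<\infty\}$, since $J\ge0$.

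\emph{Strict supermartingale remark.} For a strict supermartingale I would replace Theorem~\ref{thm:firstpassage-exact} with the Doob decomposition $\overline E=N-A$. Optional stopping applied to $N$ at $\sigma\wedge n$, followed by $n\to\infty$, gives
\[
  1=\E[A_\sigma]+\E\bigl[\overline E_\infty\ind\{\sigma=\infty\}\bigr]+\E\bigl[\overline E_\sigma\ind\{\sigma<\infty\}\bigr],
\]
which exhibits the predictable-loss and never-crossing terms, as in Corollary~\ref{cor:pooling-crossing-exact}.

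The main obstacle is this limiting step, which is delicate because uniform integrability may fail. Everything else is bookkeeping. I would handle the limit with Fatou's lemma and monotone convergence of $A_{\sigma\wedge n}$, together with the identity $\E[N_{\sigma\wedge n}]=1$. Mass leaking to $\{\sigma=\infty\}$ must be tracked carefully, which is exactly what the caveat in Theorem~\ref{thm:OST} warns about. In the vanishing-martingale case, Theorem~\ref{thm:firstpassage-exact} has already done this work.
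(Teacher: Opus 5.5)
Your argument is correct and is essentially the paper's proof. The same rearrangement of \eqref{eq:mixturepathwise} gives $\cA_x\subseteq\cV_x$, and negating it at every $t$ gives the description of $\cV_x\setminus\cA_x$. Additivity gives the probability identity, and Theorem~\ref{thm:firstpassage-exact} at level $e^x$, with nonnegativity of both subtracted terms, yields \eqref{eq:mixture-exact-mart} and the equality case.

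One correction concerns your strict-supermartingale remark. Fatou would only give $1\ge\E[A_\sigma]+\E[\overline E_\infty\ind\{\sigma=\infty\}]+\E[\overline E_\sigma\ind\{\sigma<\infty\}]$. The equality comes instead from dominated convergence on $\{\sigma>n\}$, where $\overline E_n<e^x$, as in the paper's proof of Proposition~\ref{prop:pooling-exact}. So the limiting step you flag as the main obstacle is routine rather than delicate.
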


\begin{proposition}[Per-step form of the multi-prior coincidence divergence]
\label{prop:seq-PAC}
Let $\pi_1,\ldots,\pi_W$ be path priors on $\Omega_T$ (different models, time scales, or modalities), let $\rho$ be a posterior path measure, and let $\alpha \in \Delta([W])$.
Proposition~\ref{prop:multi-PAC} holds verbatim for these path measures, and its coincidence divergence resolves by time:
\begin{equation}\label{eq:seq-coin-decomp}
  \cC_\alpha(\pi_{1:W})
  = -\log \E_{Q^*}\!\Bigl[\,\prod_{t=1}^T z_t\,\Bigr]
  \;\le\; \sum_{t=1}^T \E_{Q^*}\!\left[
      \min_{\widetilde q_t}\sum_w \alpha_w\,\KL(\widetilde q_t\|\pi_{w,t})
    \right],
\end{equation}
where $\pi_{w,t}$ is the conditional of $\pi_w$ at time $t$,
$z_t(x_{1:t-1})=\int\prod_w\pi_{w,t}^{\alpha_w}$ is the local normalizer (so that the per-step minimum equals $-\log z_t$), and $Q^*=\prod_t\bigl(\prod_w\pi_{w,t}^{\alpha_w}/z_t\bigr)$ is the sequential geometric-mixture path measure.
The inequality is Jensen's, so it is tight exactly when $\prod_{t=1}^T z_t$ is $Q^*$-almost surely constant; it suffices that each $z_t$ be free of the history, which holds whenever every prior is a product measure over time, $\pi_w=\bigotimes_{t=1}^T \pi_{w,t}$ with $\pi_{w,t}$ not depending on $x_{1:t-1}$.
In that case $\cC_\alpha(\pi_{1:W})=\sum_{t=1}^T\cC_\alpha(\pi_{1:W,t})$.
\end{proposition}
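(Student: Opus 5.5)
The plan is to prove the per-step form by factorizing the geometric mixture along the chain rule of densities, which turns the partition function into a $Q^*$-expectation of the product of local normalizers; Jensen's inequality then gives the per-step bound. Work on $\Omega_T$ with the product reference measure $\nu^{\otimes T}$ of Section~\ref{sec:pathspace}, and write each prior density as $\pi_w(\omega)=\prod_{t=1}^T\pi_{w,t}(x_t\mid x_{1:t-1})$. The first claim, that Proposition~\ref{prop:multi-PAC} holds verbatim for path priors, needs no new argument. It is Theorem~\ref{thm:mci} on $(\Omega_T,\nu^{\otimes T})$ with factors $\pi_w$ and $\alpha\in\Delta([W])$, so $\bar\alpha=1$ and the entropy term drops. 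This yields $\KL(\rho\|p^*_\alpha)=\sum_w\alpha_w\KL(\rho\|\pi_w)-\cC_\alpha(\pi_{1:W})$ with $\cC_\alpha=-\log Z(\alpha)$.

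Next I would factorize the integrand. Interchanging products,
\[
  \prod_w\pi_w^{\alpha_w}=\prod_{t=1}^T\prod_w\pi_{w,t}^{\alpha_w}
  =\prod_{t=1}^T z_t(x_{1:t-1})\,q^*_t(x_t\mid x_{1:t-1}),
  \qquad q^*_t:=\frac{\prod_w\pi_{w,t}^{\alpha_w}}{z_t}.
\]
By H\"older's inequality with weights $\alpha\in\Delta([W])$, $0\le z_t\le 1$. Assuming $z_t>0$, which I impose as the standing nondegeneracy making $Z(\alpha)>0$, each $q^*_t$ is a probability kernel. Hence $Q^*=\prod_t q^*_t$ is a genuine path probability measure. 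Integrating against $\nu^{\otimes T}$ then gives $Z(\alpha)=\E_{Q^*}[\prod_t z_t]$, which is the equality in~\eqref{eq:seq-coin-decomp}.

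For the inequality, write $\prod_t z_t=\exp(\sum_t\log z_t)$ and apply Jensen to the convex map $u\mapsto-\log u$, or equivalently to $\exp$:
\[
  -\log\E_{Q^*}\Bigl[e^{\sum_t\log z_t}\Bigr]\le\sum_t\E_{Q^*}[-\log z_t].
\]
Then identify $-\log z_t$ with the per-step minimum. This is the static Proposition~\ref{prop:multi-PAC} applied conditionally at each history $x_{1:t-1}$ to the conditionals $\pi_{w,t}$. There the minimum $\min_{\widetilde q_t}\sum_w\alpha_w\KL(\widetilde q_t\|\pi_{w,t})$ equals $-\log z_t$ and is attained at $q^*_t$.

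For the tightness claim, $\exp$ is strictly convex, so Jensen is an equality exactly when $\sum_t\log z_t$ is $Q^*$-a.s.\ constant. When every prior is a product measure, each $z_t$ is a deterministic number, so equality holds. In that case $\cC_\alpha(\pi_{1:W})=\sum_t(-\log z_t)=\sum_t\cC_\alpha(\pi_{1:W,t})$.

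The main obstacle I expect is the measure-theoretic bookkeeping rather than the algebra. One must check that $\prod_w\pi_w^{\alpha_w}$ factors over conditionals when the priors have versions with possibly zero conditionals, and that $Q^*$ is well defined when some $z_t$ may vanish on a null set of histories. I would handle this by restricting to histories where all $z_t>0$, which carries full $Q^*$-mass, and by noting that $z_t\le1$ keeps every expectation finite, so no integrability issue arises in Jensen.
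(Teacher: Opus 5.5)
Your proposal is correct and is essentially the paper's own proof. The paper likewise factors $\prod_w\pi_{w,t}^{\alpha_w}=z_t\,q^*_t$ to obtain $Z(\alpha)=\E_{Q^*}[\prod_t z_t]$ and then applies Jensen, there to the convex $-\log$ acting on $\prod_t z_t$, which is the same inequality as your use of $\exp$ on $\sum_t\log z_t$; it reads off tightness and the product-measure case exactly as you do, and your standing assumption $z_t>0$ is the one the paper makes implicitly when it calls $q^*_t$ a probability density.
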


% ======================================================================

\section{Pooling: the mixture and its decomposition}
\label{app:pooling-decomposition}
% ======================================================================

Section~\ref{sec:pooling-benefit} reads the pooling benefit off a geometric mixture of test martingales.
Collected here are the supermartingale property the mixture rests on, the Doob decomposition that resolves the mass it sheds, and the two readings of that mass which the section distinguishes but does not need side by side.

\begin{lemma}[Geometric mixture is a supermartingale]
\label{lem:geometric-mixture}
For $\alpha \in \Delta([W])$ (probability weights) the geometric mixture $M_t^{(\alpha)} := \prod_{i=1}^W \bigl(M_t^{(i)}\bigr)^{\alpha_i}$ is a nonnegative supermartingale with $M_0^{(\alpha)} = 1$ and $\E[M_t^{(\alpha)}] \leq 1$.
\end{lemma}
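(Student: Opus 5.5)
The plan is to verify the supermartingale property one step at a time, using the conditional weighted AM--GM inequality, and then read off the initial value and the mean bound. Nonnegativity and adaptedness are immediate: each $M^{(i)}_t\ge0$ is $\cF_t$-measurable, and $x\mapsto x^{\alpha_i}$ is Borel on $[0,\infty)$ (with $0^{0}:=1$ if some $\alpha_i=0$). Since each $M^{(i)}_0=1$, we get $M^{(\alpha)}_0=\prod_i 1^{\alpha_i}=1$.

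For the one-step inequality, I would write $M^{(i)}_t=M^{(i)}_{t-1}R^{(i)}_t$, with the convention $R^{(i)}_t:=1$ on $\{M^{(i)}_{t-1}=0\}$. On that event $M^{(i)}_t=0$ a.s., because a nonnegative martingale that hits zero stays there. This gives $M^{(\alpha)}_t=M^{(\alpha)}_{t-1}\prod_i (R^{(i)}_t)^{\alpha_i}$, where the prefactor is $\cF_{t-1}$-measurable and nonnegative. Pulling it out of the conditional expectation, which is legitimate for nonnegative variables, it remains to show
\[
  \E\Bigl[\textstyle\prod_i (R^{(i)}_t)^{\alpha_i}\,\Big|\,\cF_{t-1}\Bigr]\le 1 .
\]
Weighted AM--GM gives the pointwise bound $\prod_i r_i^{\alpha_i}\le\sum_i\alpha_i r_i$ for $r_i\ge0$ and $\alpha\in\Delta([W])$. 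Taking conditional expectations then yields $\sum_i\alpha_i\E[R^{(i)}_t\mid\cF_{t-1}]$. Each term equals $1$ on $\{M^{(i)}_{t-1}>0\}$ by the martingale property, and equals $1$ trivially on the complement by the convention, so the sum is at most $1$.

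An alternative route that avoids ratios is to apply conditional Hölder directly: $\E[\prod_i (M^{(i)}_t)^{\alpha_i}\mid\cF_{t-1}]\le\prod_i \E[M^{(i)}_t\mid\cF_{t-1}]^{\alpha_i}=M^{(\alpha)}_{t-1}$. This is the cleaner choice, and I would use it to sidestep the division-by-zero bookkeeping, which is the only delicate point in the argument.

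Integrability comes for free. Since $M^{(\alpha)}_t\le\sum_i\alpha_i M^{(i)}_t$ pointwise, $\E[M^{(\alpha)}_t]\le1$, so every $M^{(\alpha)}_t$ is integrable. Iterating the supermartingale inequality, or using this pointwise bound directly, gives $\E[M^{(\alpha)}_t]\le M^{(\alpha)}_0=1$. The AM--GM gap $\sum_i\alpha_iR^{(i)}_t-\prod_i(R^{(i)}_t)^{\alpha_i}$ is exactly the one-step disagreement that the pooling section interprets as shed mass.
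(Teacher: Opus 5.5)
Your argument is correct, and the route you settle on, conditional H\"older with exponents $1/\alpha_i$, giving $\E[\prod_i (M^{(i)}_t)^{\alpha_i}\mid\cF_{t-1}]\le\prod_i \E[M^{(i)}_t\mid\cF_{t-1}]^{\alpha_i}=M^{(\alpha)}_{t-1}$, is exactly the paper's one-line proof. Your pointwise bound $M^{(\alpha)}_t\le\sum_i\alpha_i M^{(i)}_t$ is a useful addition, since it supplies the integrability of $M^{(\alpha)}_t$ that the paper leaves implicit.
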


The pooling benefit accumulates that supermartingale's compensator.

\begin{proposition}[Pooling-benefit decomposition]
\label{prop:pooling-exact}
Let $M^{(\alpha)}=N-A$ be the Doob decomposition of the geometric-mixture supermartingale (Lemma~\ref{lem:geometric-mixture}) into a nonnegative martingale $N$ ($N_0=1$) and a predictable nondecreasing $A$ ($A_0=0$), so that $A_t-A_{t-1}=M^{(\alpha)}_{t-1}-\E[M^{(\alpha)}_t\mid\cF_{t-1}]\ge0$ is the one-step weighted arithmetic--geometric-mean gap and $\E[A_t]=1-Z_t(\alpha)=1-e^{-\cC_\alpha(t)}$ for $\cC_\alpha$ of~\eqref{eq:pathwise-Calpha}.
With $\tau^{(\alpha)}_x$ and $g_x(\alpha)=\E[M^{(\alpha)}_{\tau^{(\alpha)}_x}\ind\{\tau^{(\alpha)}_x<\infty\}]$ as in Corollary~\ref{cor:pooling-benefit}, the pooling boost obeys the identity
\begin{equation}\label{eq:pooling-exact}
  1-g_x(\alpha)
  =\underbrace{\E\!\left[A_{\tau^{(\alpha)}_x}\right]}_{\text{coincidence shed before }\tau^{(\alpha)}_x}
  +\underbrace{\E\!\left[M^{(\alpha)}_\infty\,\ind\{\tau^{(\alpha)}_x=\infty\}\right]}_{\text{mass never reaching }x},
  \qquad
  A_{\tau^{(\alpha)}_x}=\sum_{t=1}^{\tau^{(\alpha)}_x}\bigl(M^{(\alpha)}_{t-1}-\E[M^{(\alpha)}_t\mid\cF_{t-1}]\bigr).
\end{equation}
The boost is thus the multi-way coincidence divergence accumulated up to the first-passage time, plus the below-threshold leakage; its dominant term $\E[A_{\tau^{(\alpha)}_x}]$ is the running coincidence divergence $\cC_\alpha$ sampled at $\tau^{(\alpha)}_x$.
No deterministic-time closed form $1-g_x(\alpha)=f(\cC_\alpha(t))$ exists, because $\tau^{(\alpha)}_x$ is path-dependent; \eqref{eq:pooling-exact} is the exact object.
\end{proposition}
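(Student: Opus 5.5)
The plan is to obtain the identity \eqref{eq:pooling-exact} from the Doob decomposition and optional stopping of the martingale part, with the crossing-mass bookkeeping done on the events $\{\tau^{(\alpha)}_x<\infty\}$ and $\{\tau^{(\alpha)}_x=\infty\}$. Write $\tau:=\tau^{(\alpha)}_x$ and $M:=M^{(\alpha)}$.

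\emph{Step 1: the decomposition and its mean.} By Lemma~\ref{lem:geometric-mixture}, $M$ is a nonnegative supermartingale with $M_0=1$. Define $A_t:=\sum_{s\le t}(M_{s-1}-\E[M_s\mid\cF_{s-1}])$, which is predictable, nondecreasing and null at zero, and set $N:=M+A$, a martingale with $N_0=1$. Nonnegativity of $N$ follows from $N\ge M\ge0$. Taking expectations gives $\E[A_t]=1-\E[M_t]=1-Z_t(\alpha)=1-e^{-\cC_\alpha(t)}$. The one-step gap is the weighted AM--GM gap, since $\E[M_t\mid\cF_{t-1}]=M_{t-1}\E[\prod_i(R^{(i)}_t)^{\alpha_i}\mid\cF_{t-1}]$ with $\E[R^{(i)}_t\mid\cF_{t-1}]=1$.

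\emph{Step 2: optional stopping at $\tau\wedge n$.} Bounded stopping applied to $N$ gives
\[
\E[M_{\tau\wedge n}]+\E[A_{\tau\wedge n}]=1 .
\]
As $n\to\infty$, $A_{\tau\wedge n}\uparrow A_\tau$, so monotone convergence gives $\E[A_{\tau\wedge n}]\to\E[A_\tau]$; here $A_\infty$ is the increasing limit, with $\E[A_\infty]\le1$.

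For the first term, split on the two events. On $\{\tau<\infty\}$, $M_{\tau\wedge n}\to M_\tau$ eventually. On $\{\tau=\infty\}$, $M_n\to M_\infty$ a.s. by supermartingale convergence, and $M_n<x$ there.
\begin{itemize}
\item The part on $\{\tau=\infty\}$ converges by dominated convergence, using the bound $x$.
\item The part on $\{\tau<\infty\}$ is the real obstacle: $M_\tau$ may overshoot, so there is no uniform bound.
\end{itemize}
I would handle the second part by writing $\E[M_{\tau\wedge n}\ind\{\tau\le n\}]=\E[M_\tau\ind\{\tau\le n\}]$, which increases to $g_x(\alpha)$ by monotone convergence. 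The remaining piece is $\E[M_n\ind\{n<\tau<\infty\}]$, which is bounded by $x\,\PP(n<\tau<\infty)\to0$. Hence $\E[M_{\tau\wedge n}]\to g_x(\alpha)+\E[M_\infty\ind\{\tau=\infty\}]$. Substituting into the stopped identity yields $1-g_x(\alpha)=\E[A_\tau]+\E[M_\infty\ind\{\tau=\infty\}]$, which is \eqref{eq:pooling-exact}.

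\emph{Step 3: interpretation.} The expression for $A_\tau$ is the definition summed to $\tau$. The reading of $\E[A_\tau]$ as $1-e^{-\cC_\alpha}$ sampled at a random index is interpretive, resting on $\E[A_t]=1-e^{-\cC_\alpha(t)}$ at fixed $t$. The non-existence remark follows because $\tau$ is path-dependent, so $\E[A_\tau]$ is not a function of the deterministic curve $t\mapsto\cC_\alpha(t)$ alone. Two different joint laws with the same $Z_t$ would exhibit this, though I would only sketch that.

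The main care point is Step 2's limit interchange. The key observation there is that before $\tau$ the process stays below $x$, which supplies the domination that the overshoot at $\tau$ itself lacks.
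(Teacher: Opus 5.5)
Your proposal is correct and follows essentially the paper's own route: optional stopping of the martingale part $N$ at the bounded time $\tau\wedge n$, monotone convergence for the compensator term $\E[A_{\tau\wedge n}]$ and for the crossed mass $\E[M_\tau\ind\{\tau\le n\}]$, and the bound $M_n<x$ before the crossing to pass the uncrossed mass to its limit. The only cosmetic difference is that you split $\{\tau>n\}$ into $\{n<\tau<\infty\}$ and $\{\tau=\infty\}$, where the paper applies a single dominated-convergence step to $M_n\ind\{\tau>n\}\to M_\infty\ind\{\tau=\infty\}$.
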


Two readings of the same disagreement at the crossing must be distinguished.
The \emph{mass form}
$\cC^{\mathrm{mass}}_\alpha(\tau^{(\alpha)}_x):=-\log g_x(\alpha)$ is the partition function read at the first-passage time; the \emph{crossing form}
$\cC^{\mathrm{cross}}_\alpha(x):=-\log\bigl(x\,\PP(\tau^{(\alpha)}_x<\infty)\bigr)$ is the realized tightening of the plain Ville bound.
Since $x\,\PP(\tau^{(\alpha)}_x<\infty)\le g_x(\alpha)$, the crossing form dominates the mass form, the difference being the overshoot at the crossing (Corollary~\ref{cor:pooling-crossing-exact}); the two coincide exactly when the mixture crosses $x$ without overshoot.
Both are read at the random crossing time $\tau^{(\alpha)}_x$, so neither equals the running coincidence divergence $\cC_\alpha(t)$ of~\eqref{eq:pathwise-Calpha}, which is indexed by a fixed horizon.
That divergence reaches them only through the compensator $\E[A_{\tau^{(\alpha)}_x}]$ it has accumulated by the crossing (Proposition~\ref{prop:pooling-exact}).

% ======================================================================

\section{The continuous-time crossing construction}
\label{app:curved-cts}
% ======================================================================

\begin{figure}[!ht]
\centering
\includegraphics[width=\textwidth]{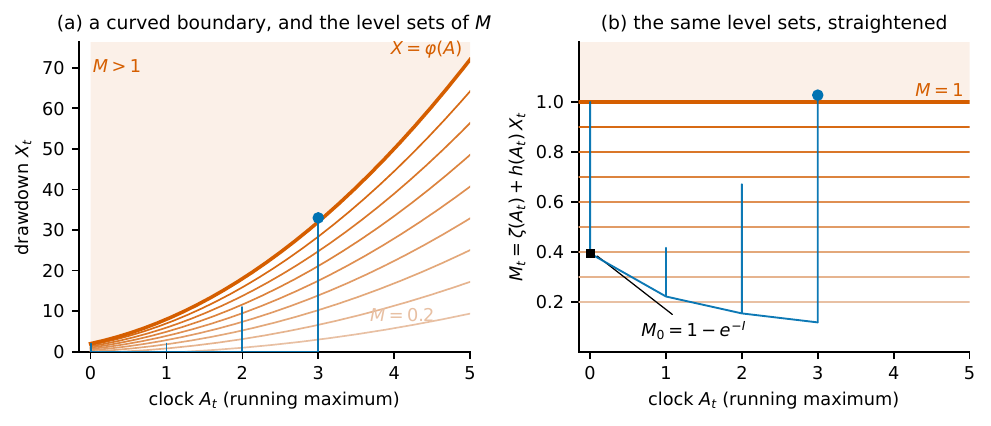}
\caption{\textbf{A boundary that moves in the natural coordinates is a flat level after the transform.}
\textbf{(a)} The drawdown $X_t$ of a symmetric $\pm1$ walk below its running maximum $A_t$, against the boundary $\varphi(A)=2(1+A)^2$ (heavy) and the level sets $M=0.2,\dots,0.9$ foliating the region beneath it; crossing is entry into the shaded region.
\textbf{(b)} The same level sets after the transform $M=\zeta(A)+h(A)X$, drawn at the same abscissae: each one is horizontal, and the boundary is the level $M=1$.
The clock advances only while the drawdown is at zero, so a path meets the boundary in one vertical probe per record.
The marked point is the path's first crossing, which lands on the boundary in (a) and on $M=1$ in (b), and the path starts at the level $M_0=1-e^{-I}$ the continuous-path idealization spends.
The path drawn is one that crosses late; at this boundary most crossings happen at the first record, where the boundary sits only two steps above it.}
\label{fig:curved-crossing}
\end{figure}

Work on $(\Omega,\cF,(\cF_t)_{t\ge0},\PP)$ under the usual conditions (i.e., completeness and right-continuity).
A nonnegative local submartingale $X$ with decomposition $X=N+A$ is of \emph{class $(\Sigma)$} when
\begin{itemize}[leftmargin=2em]
\item $N$ is a c\`adl\`ag local martingale with $N_0=0$;
\item $A$ is continuous, adapted and nondecreasing with $A_0=0$;
\item the measure $\dd A$ is carried by the zero set $\{t:X_t=0\}$.
\end{itemize}
When $N$ is continuous as well the class is written $(\Sigma_c)$.
In particular $X_0=0$, and $A$ is the \emph{increasing process} of $X$.
The two letters follow the Doob decomposition of Proposition~\ref{prop:pooling-exact}, with the sign of $A$ reversed because $X$ is a submartingale.

\begin{lemma}[Function-indexed martingales of a class-$(\Sigma)$
process~\cite{Nikeghbali2006essay}]
\label{lem:sigma-family}
Let $X=N+A$ be of class $(\Sigma)$ and let $f\colon\R_+\to\R$ be Borel and locally bounded.  Then
\begin{equation}\label{eq:sigma-family}
  \int_0^{A_t} f(z)\,\dd z-f(A_t)\,X_t
  \;=\;-\int_0^t f(A_u)\,\dd N_u
\end{equation}
so the left-hand side is a local martingale null at $0$.
Conversely, let $X$ be a nonnegative local submartingale with $X_0=0$ and let $C$ be a continuous adapted nondecreasing process with $C_0=0$ such that $\int_0^{C_t}f(z)\,\dd z-f(C_t)X_t$ is a local martingale for every locally bounded Borel $f$.
Then $X$ is of class $(\Sigma)$ and $C=A$.
\end{lemma}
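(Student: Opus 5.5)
The forward direction is a pathwise computation, so I would prove it by integration by parts rather than by stochastic-calculus machinery. Set $F(a):=\int_0^a f(z)\,\dd z$, so that $F$ is absolutely continuous with $F'=f$ almost everywhere.

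\emph{Step 1: $C^1$ case.} Assume first that $f$ is $C^1$. Since $A$ is continuous and of finite variation, $f(A_t)$ is a continuous finite-variation process with $\dd f(A_t)=f'(A_t)\,\dd A_t$. The product rule for a c\`adl\`ag semimartingale $X$ against a continuous finite-variation process has no bracket term, and gives
\[
f(A_t)X_t=\int_0^t f(A_u)\,\dd X_u+\int_0^t X_u f'(A_u)\,\dd A_u .
\]
The last integral vanishes because $\dd A$ is carried by $\{X=0\}$. Substituting $\dd X=\dd N+\dd A$ and using $\int_0^t f(A_u)\,\dd A_u=F(A_t)$ (a change of variables valid for continuous nondecreasing $A$) gives
\[
f(A_t)X_t=\int_0^t f(A_u)\,\dd N_u+F(A_t),
\]
which is \eqref{eq:sigma-family}.

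\emph{Step 2: general locally bounded Borel $f$.} I would avoid the product rule and argue by a monotone class. Both sides of \eqref{eq:sigma-family} are linear in $f$, and the identity holds for $C^1$ functions. It therefore suffices to show that both sides are stable under bounded pointwise convergence $f_n\to f$ on compacts, after localizing so that $A$ stays in a compact set.

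\begin{itemize}
\item $F_n(A_t)\to F(A_t)$ by dominated convergence.
\item $f_n(A_t)X_t\to f(A_t)X_t$ pointwise.
\item $\int_0^t f_n(A_u)\,\dd N_u\to\int_0^t f(A_u)\,\dd N_u$ uniformly on compacts in probability, by dominated convergence for stochastic integrals, since $f_n(A_u)$ is predictable (continuous $A$) and locally bounded.
\end{itemize}

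The limit identity holds almost surely. The left side is then a stochastic integral of a locally bounded predictable integrand against a local martingale, so it is a local martingale null at $0$.

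\emph{Step 3: converse.} Take $f\equiv1$: then $C_t-X_t$ is a local martingale, so $X=N'+C$ with $N':=X-C$ a local martingale, $N'_0=0$, and $C$ continuous, adapted and nondecreasing. This is the first two bullets of class $(\Sigma)$, with $C$ in the role of $A$. Uniqueness of the Doob--Meyer decomposition of the local submartingale $X$ into a local martingale and a predictable finite-variation process forces $C=A$, since a continuous $C$ is predictable.

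It remains to show $\int_0^\infty X_u\,\dd C_u=0$. Take $f(z)=z$. By Step 1, applied with $C$ in place of $A$ but without the support condition,
\[
\tfrac12 C_t^2-C_tX_t=-\int_0^t C_u\,\dd N'_u-\int_0^t X_u\,\dd C_u .
\]
The left side is a local martingale by hypothesis, so $\int_0^t X_u\,\dd C_u$ is a continuous, nondecreasing local martingale null at $0$. It therefore vanishes identically, since a local martingale of finite variation that is continuous is constant. This gives the support condition, so $X$ is of class $(\Sigma)$ and $C=A$.

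\emph{Main obstacle.} I expect the main obstacle to be Step 2, the extension to merely Borel $f$. The product rule needs $f(A)$ to be a semimartingale, which fails for discontinuous $f$, so the limit argument has to be set up with a careful localization. One consistent localization is to stop at the first time $X$ or $A$ exceeds a level $k$; this keeps $N=X-A$ bounded below on the stopped interval, so $N$ is a semimartingale there and dominated convergence applies.
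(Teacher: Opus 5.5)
Your proposal is correct and follows the paper's proof essentially step for step. Both use integration by parts in the $C^1$ case, with the $X\,\dd A$ term killed by the support condition, and a monotone class extension to locally bounded Borel $f$. For the converse, both take $f\equiv1$ with Doob--Meyer uniqueness to get $C=A$, then a quadratic $f$ to show that $\int_0^t X_u\,\dd C_u$ is a continuous nondecreasing local martingale and hence zero.

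One small correction to your closing remark. $N$ is a local martingale by hypothesis, so it is already a semimartingale, and stopping does nothing there. What stopping $A$ at level $k$ actually buys is a uniform bound on $f_n(A_u)$, which is what dominated convergence for stochastic integrals requires.
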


One class-$(\Sigma)$ process therefore yields one local martingale per locally bounded Borel $f$, every member driven by the same $\dd N$ through the integrand $f(A_u)$ and every member reading the path only through the pair $(A_t,X_t)$.
The converse says the family characterizes the class, and identifies the process against which the functions are composed.
Choosing $f$ so that the member is nonnegative and normalizing by its initial value produces a test martingale; the crossing law below is the member picked out by one particular choice.

For a boundary read against the increasing process, that member gives the crossing probability in closed form.

\begin{theorem}[Crossing law for a curved boundary~\cite{Nikeghbali2006essay}]
\label{thm:curved-crossing}
Let $X=N+A$ be of class $(\Sigma)$ with only negative jumps and $\lim_{t\to\infty}A_t=\infty$ a.s., and let $\varphi\colon\R_+\to(0,\infty]$ be Borel, with the convention $1/\infty:=0$.  Then
\begin{equation}\label{eq:curved-crossing}
  \PP\bigl(\exists\,t\ge0:\ X_t>\varphi(A_t)\bigr)
  =1-\exp\Bigl(-\int_0^\infty \varphi(x)^{-1}\dd x\Bigr)
\end{equation}
and for every $u>0$,
\begin{equation}\label{eq:curved-crossing-u}
  \PP\bigl(\exists\,t\ge0:\ A_t<u,\ X_t>\varphi(A_t)\bigr)
  =1-\exp\Bigl(-\int_0^u \varphi(x)^{-1}\dd x\Bigr)
\end{equation}
each right-hand side being read as $1$ when its integral diverges.
\end{theorem}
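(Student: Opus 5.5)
The plan is to pick out one member of the function-indexed family of Lemma~\ref{lem:sigma-family} that turns the curved boundary into a flat level, and then read the crossing probability off optional stopping. With $w(x):=\exp(-\int_x^\infty\varphi^{-1})$, $\zeta:=1-w$ and $h:=w/\varphi$ (so $w'=h$, $\zeta'=-h$), first treat $I:=\int_0^\infty\varphi^{-1}<\infty$. Applying~\eqref{eq:sigma-family} at $f=h$ gives the local martingale $\int_0^{A_t}h(z)\,\dd z-h(A_t)X_t=w(A_t)-w(0)-h(A_t)X_t$. Negating and adding $1$, the process
\[
M_t:=\zeta(A_t)+h(A_t)X_t
\]
is a local martingale with $M_0=1-e^{-I}$. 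It is nonnegative because $X\ge0$ and $\zeta\ge0$.

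The geometric step is that $\{X_t>\varphi(A_t)\}=\{M_t>1\}$ pointwise. Indeed $M_t>1$ is equivalent to $h(A_t)X_t>w(A_t)$, which is $X_t>\varphi(A_t)$ because $h/w=1/\varphi$, with $w>0$ whenever $I<\infty$ and the convention $1/\infty=0$ handled directly. Hence the crossing event equals $\{\sup_tM_t>1\}$. I then argue that the crossing probability is $M_0$, in two halves.

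\emph{Upper bound.} Let $T:=\inf\{t:M_t>1\}$. Because $X$ has only negative jumps, $M$ has no positive jumps (given $A$ continuous and $h$ fixed), so $M_T=1$ on $\{T<\infty\}$ in the limiting sense. Then $M^{T}$ is bounded by $1$, hence a uniformly integrable martingale.

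\emph{Terminal value.} Off the crossing event I need $M_\infty=\ind\{T<\infty\}$, i.e.\ $M_t\to0$ on $\{T=\infty\}$. As $A_t\to\infty$ we have $\zeta(A_t)\to0$. Moreover $h(A_t)X_t\le w(A_t)\to1$ only bounds the term, so I must use $X_t\le\varphi(A_t)$ on $\{T=\infty\}$ more carefully. Since $h(A_t)X_t\le w(A_t)$ is not small, I instead take the bounded martingale $M^T$ and its a.s.\ limit $L$, and show that $L\in\{0,1\}$ off crossing using the zero set: at times in the support of $\dd A$ we have $X=0$, so $M=\zeta(A)\to0$ along those times. $L$ exists, and $A\to\infty$ forces infinitely many such times, so $L=0$ on $\{T=\infty\}$.

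Optional stopping for the bounded $M^T$ then gives $1-e^{-I}=\E[M_\infty^T]=\PP(T<\infty)$, which is~\eqref{eq:curved-crossing}. For~\eqref{eq:curved-crossing-u}, replace $\varphi$ by $\varphi\ind_{[0,u)}+\infty\ind_{[u,\infty)}$. This boundary is never crossed once $A\ge u$, and its integral is $\int_0^u\varphi^{-1}$. When the integral diverges, truncate at levels $u'$ where it is finite and let $u'\to\infty$ (or $u'\uparrow$ the blow-up point), using monotonicity of the events to obtain probability $1$.

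The main obstacle I expect is the terminal-value step together with localization. $M$ is only a local martingale, $h$ may be unbounded near points where $\varphi$ is small, and the no-overshoot claim $M_T\le1$ requires care at $T$ because the crossing uses strict inequality and $M$ is càdlàg. I would handle this by stopping at $T\wedge\inf\{t:A_t\ge u'\}$, where everything is bounded, and passing to limits by dominated convergence.
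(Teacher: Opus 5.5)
Your skeleton matches the paper's proof. You use the same process $M=\zeta(A)+h(A)X$ (the paper reaches it through $f=-h$ in Lemma~\ref{lem:sigma-family}) and the same pointwise identity of the crossing event with $\{\sup_t M_t>1\}$. You also use the absence of positive jumps to pin $M_T=1$, read the terminal value off the zeros of $X$ at record times, and use the same $\varphi\ind_{[0,u)}+\infty\ind_{[u,\infty)}$ device for~\eqref{eq:curved-crossing-u}. Your optional stopping of the bounded $M^T$ is exactly the argument behind Proposition~\ref{prop:maximal-conditional}(i), which the paper cites instead. (Negating and adding $1$ produces $M_t+w(0)$; you want to add $1-w(0)$, which is harmless.)

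The gap is the reduction to a regular boundary, which your plan handles only in part.
\begin{enumerate}
\item Lemma~\ref{lem:sigma-family} is stated for locally bounded $f$, and $h=w/\varphi$ need not be. Take $\varphi(x)=|x-1|^{1/2}$ on $[0,2]\setminus\{1\}$, $\varphi(1)=1$, and $\varphi=+\infty$ on $(2,\infty)$. Then $I<\infty$, but $h\ge e^{-I}|x-1|^{-1/2}$ is unbounded at the finite level $1$. Stopping when $A$ reaches $u'$ does not bound $h(A_t)$ once $u'>1$, so the local-martingale property of $M$ is not established by anything you cite.
\item Clock truncation recovers the case $I=\infty$ only when the finite integrals $\int_0^{u'}\varphi^{-1}$ themselves tend to $\infty$. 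It fails when $1/\varphi$ is non-integrable just to the right of a level.
\begin{itemize}
\item If $\varphi(0)=1$ and $\varphi(x)=x$ for $x>0$, then $\int_0^{u'}\varphi^{-1}=\infty$ for every $u'>0$, so there is no finite truncation to start from.
\item If $\varphi=1$ on $[0,1]$ and $\varphi(x)=x-1$ on $(1,\infty)$, your truncated probabilities are $1-e^{-u'}$ for $u'\le1$. They increase only to $1-e^{-1}$ as $u'\uparrow1$, because they never see crossings at record levels above $1$, whereas the theorem asserts probability $1$.
\end{itemize}
\end{enumerate}

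The missing step is to approximate the boundary from above in its value as well as in its clock. The paper does this with $\varphi_{\epsilon,n}:=\varphi\vee\epsilon$ on $[0,n)$ and $\varphi_{\epsilon,n}:=+\infty$ on $[n,\infty)$.
\begin{itemize}
\item Raising the boundary shrinks the crossing event.
\item The crossing events of $\varphi_{\epsilon,n}$ increase to that of $\varphi$ as $\epsilon\downarrow0$ and $n\to\infty$. A crossing $X_t>\varphi(A_t)>0$ is already a crossing of $\varphi_{\epsilon,n}$ once $\epsilon<X_t$ and $n>A_t$.
\item On the right-hand side, $\int\varphi_{\epsilon,n}^{-1}\uparrow\int\varphi^{-1}$ by monotone convergence, with the divergent case sent to the value $1$.
\item Each $\varphi_{\epsilon,n}$ has $\int\varphi_{\epsilon,n}^{-1}\le n/\epsilon$ and $h\le 1/\epsilon$, so the lemma applies and your argument runs verbatim.
\end{itemize}
This one reduction removes both problems at once. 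With it in place, and~\eqref{eq:curved-crossing-u} obtained from the general form at $\varphi\ind_{[0,u)}+\infty\ind_{[u,\infty)}$ as you propose, the proof is complete.
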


Equation~\eqref{eq:curved-crossing-u} is indexed by the level the increasing process has reached; no time enters it.
Writing $A^{-1}_u:=\inf\{t\ge0:A_t>u\}$ for the right-continuous inverse of $A$, the event there is $\{\exists\,t\le A^{-1}_u:X_t>\varphi(A_t)\}$ whenever $A_t<u$ for every $t<A^{-1}_u$, that is, whenever $A$ does not pause at the level $u$ before passing it.
The levels at which a continuous nondecreasing process pauses form a countable set, so this holds for Lebesgue-almost every $u$, and at every fixed $u$ in the Brownian instance below.

Table~\ref{tab:classical-ledger} reads each classical statement as an identity with a named residual removed.
Theorem~\ref{thm:curved-crossing} is the degenerate case of that reading: the residual is zero for every Borel boundary.
The criterion is the one Proposition~\ref{prop:maximal-conditional}(i) supplies.
The crossing event is $\{\sup_t M_t>1\}$ for a nonnegative local martingale $M$ assembled from $X$ and $\varphi$ (Appendix~\ref{app:proofs}); $X$ has no positive jumps and $A$ is continuous, so $M$ has no positive jumps either, its running supremum is continuous, and the level $1$ is met without overshoot.
Since the level is met without overshoot, that maximal identity evaluates the crossing probability as $M_0$, the whole of the bound $M_0/x$ at $x=1$.
The boundary is arbitrary and the increasing process is the one $X$ supplies, so the closed form is a property of the class and holds for every $\varphi$.

Brownian motion supplies the worked instance.
For a standard Brownian motion $B$ with running supremum $B^\ast_t:=\sup_{u\le t}B_u$, put $X_t:=B^\ast_t-B_t$ and $A_t:=B^\ast_t$.
Then $X=(-B)+B^\ast$ is of class $(\Sigma_c)$: the process $-B$ is a continuous local martingale null at $0$,
$B^\ast$ is continuous, adapted and nondecreasing with $B^\ast_0=0$, and $\dd B^\ast$ is carried by $\{B^\ast=B\}=\{X=0\}$.
The paths are continuous and $B^\ast_\infty=\infty$ a.s., and $A^{-1}_u$ is the first time $B$ reaches $u$.
Theorem~\ref{thm:curved-crossing} therefore gives, in complementary form,
\begin{equation}\label{eq:knight}
  \PP\bigl(\forall\,t\ge0:\ B^\ast_t-B_t\le\varphi(B^\ast_t)\bigr)
  =\exp\Bigl(-\int_0^\infty\varphi(x)^{-1}\dd x\Bigr)
\end{equation}
and, with the horizon at the first passage of $B$ to level $x$,
\[
  \PP\bigl(\forall\,t\le A^{-1}_x:\ B^\ast_t-B_t\le\varphi(B^\ast_t)\bigr)
  =\exp\Bigl(-\int_0^x\varphi(y)^{-1}\dd y\Bigr)
\]
The drawdown of Brownian motion below its own running maximum stays under an arbitrary curved envelope with a probability determined by $\int\varphi^{-1}$ alone.
Knight obtained~\eqref{eq:knight} by excursion theory; the route through Lemma~\ref{lem:sigma-family} recovers it as one member of the function-indexed family~\cite{Nikeghbali2006essay}.

% ======================================================================
\section{Random times in continuous time}\label{app:random-time-ct}
% ======================================================================

\subsection{The continuous-time case}
\label{sec:continuous-time}

The peeking identity carries over verbatim to continuous time.
The random-time calculus is ultimately a change of filtration.
Reading $\widehat\Omega = \Omega \times \N$ as an enlargement of the base filtration and $\Amart{\tau}_t = \dd R/\dd\widehat P$ (Lemma~\ref{lem:Rt}) as the density linking the two, the anticipation martingale plays the part of the exponential density in a Girsanov change of measure.
The hazard clock $1 - \clock{\tau}$ is the finite-variation compensator the base filtration already predicts, and $\Amart{\tau}$ is the martingale residual that only the enlarged filtration sees.
The mass identity $P(\tau = t \mid \cF_t) = \Amart{\tau}_t\,\Delta \clock{\tau}_t$ that measures peeking in discrete time is the discrete image of the multiplicative decomposition of the \emph{Az\'ema supermartingale}~\cite{NY05,nikeghbali2006doobs,coculescu2012hazard},
which is the conditional survival $\PP(\tau > t \mid \cF_t)$ of~\eqref{eq:surv-haz} read in continuous time.
The same $\log \Amart{\tau}_t$ term measures anticipation there once that process, written $Z_t^\tau$, is read against the finite-variation hazard clock of the decomposition, with $\widehat P$ its clock measure and $\log
\Amart{\tau}_t$ its local-martingale density.  The path-time identity then carries over
with no quadratic-variation input.
A random time is \emph{quasi-left-continuous} when it lands with positive probability on no \emph{predictable} time---one the filtration can announce before it arrives.
That property makes its hazard clock continuous and puts the survival factor in the exponential-hazard form used below.
The filtration is assumed throughout to satisfy the usual conditions: it is right-continuous and contains the $P$-null sets.

\begin{theorem}[Continuous-time path-time identity]
\label{thm:pathtime-ct}
Let $\tau$ be a finite, quasi-left-continuous random time on $(\Omega,\cF,(\cF_t)_{t\ge0},P)$ under the usual conditions, with Az\'ema supermartingale $Z_t:=P(\tau>t\mid\cF_t)$ and multiplicative decomposition $Z_t=\Amart{\tau}_t(1-\clock{\tau}_t)$ on $\{Z_{-}>0\}$, where $1-\clock{\tau}$ is the predictable decreasing part of that decomposition (the hazard clock, $\clock{\tau}_0=0$) and $\Amart{\tau}$ the anticipation local martingale ($\Amart{\tau}_0=1$).
Quasi-left-continuity makes the hazard clock continuous, $1-\clock{\tau}_t=e^{-\Lambda_t}$ with $\Lambda$ the continuous predictable hazard, and licenses the disintegration below: a time charging a predictable instant separates the predictable hazard clock from the adapted one, and the density statement fails there.
Let $\widehat P(A\times \dd t):=\E[\ind_A\,\dd \clock{\tau}_t]$ be the clock measure on $\Omega\times\R_+$, so $\dd R/\dd\widehat P=\Amart{\tau}_t$ for $R(A\times \dd t):=P(A\cap\{\tau\in \dd t\})$ and $\E[V_\tau]=\E_{\widehat P}[V_t \Amart{\tau}_t]$ for adapted $V$.
Then for strictly positive adapted factors $\Pi_i$ with $0<Z_\tau(\alpha):=\E[\prod_i\Pi_{i,\tau}^{\alpha_i}]<\infty$ and every $Q\ll\widehat P$ with $Q(\{\Amart{\tau}_t\prod_i\Pi_{i,t}^{\alpha_i}=0\})=0$,
\begin{equation}\label{eq:pathtime-ct}
  \log Z_\tau(\alpha)
  =\textstyle\sum_i\alpha_i\E_Q[\log\Pi_{i,t}]+\E_Q[\log \Amart{\tau}_t]-\KL(Q\|\widehat P)+\KL(Q\|Q^\alpha_\tau),
\end{equation}
with optimizer $\dd Q^\alpha_\tau/\dd\widehat P=\Amart{\tau}_t\prod_i\Pi_{i,t}^{\alpha_i}/Z_\tau(\alpha)$;
dropping $\KL(Q\|Q^\alpha_\tau)\ge0$ recovers the supremum form.
\end{theorem}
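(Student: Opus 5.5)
The plan is to reduce Theorem~\ref{thm:pathtime-ct} to the finite-measure Donsker--Varadhan formula (Corollary~\ref{cor:DV}) on the path-time space $\Omega\times\R_+$. The new work lies entirely in establishing the continuous-time analogue of Lemma~\ref{lem:Rt}, namely $\dd R/\dd\widehat P=\Amart{\tau}_t$. Once that density is in hand, the identity is the same algebra as Theorem~\ref{thm:pathtime}.

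The first step sets up the decomposition. Under the usual conditions, the Az\'ema supermartingale $Z_t=P(\tau>t\mid\cF_t)$ admits the Doob--Meyer decomposition $Z=\mu-\Lambda^{p}$, where $\Lambda^{p}$ is the dual predictable projection of $\ind\{\tau\le\cdot\}$. Quasi-left-continuity (equivalently, $\tau$ avoids predictable times) makes $\Lambda^p$ continuous. On $\{Z_->0\}$ I would then invoke the multiplicative decomposition (It\^o--Watanabe, as in~\cite{nikeghbali2006doobs,coculescu2012hazard}). This gives $Z_t=\Amart{\tau}_t e^{-\Lambda_t}$ with $\Lambda_t=\int_0^t \dd\Lambda^p_s/Z_{s-}$ continuous and $\Amart{\tau}$ a local martingale with $\Amart{\tau}_0=1$. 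Setting $1-\clock{\tau}=e^{-\Lambda}$, this is the continuous image of Theorem~\ref{thm:surv-factor}(iii).

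The second step, which I expect to be the main obstacle, is the disintegration $\E[V_\tau]=\E\!\int_0^\infty V_t\,\Amart{\tau}_t\,\dd\clock{\tau}_t$ for nonnegative adapted $V$, the analogue of the mass identity in Theorem~\ref{thm:surv-factor}(v). The route I would try first runs in three moves.
\begin{enumerate}
\item For predictable (or optional) $V$, use $\E[V_\tau]=\E\int V_t\,\dd\Lambda^p_t$, which holds by the defining property of the dual predictable projection. Continuity of $\Lambda^p$ lets optional and predictable projections be interchanged at no cost.
\item Compute $\dd\clock{\tau}_t=e^{-\Lambda_t}\dd\Lambda_t=e^{-\Lambda_t}\dd\Lambda^p_t/Z_{t-}$, so that $\Amart{\tau}_t\,\dd\clock{\tau}_t=(Z_t/Z_{t-})\,\dd\Lambda^p_t$.
\item Conclude that $Z_t=Z_{t-}$ holds $\dd\Lambda^p$-a.e. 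This uses that the jumps of $Z$ occur only at the jumps of the martingale part, which the continuous measure $\dd\Lambda^p$ does not charge, so the weight reduces to $\dd\Lambda^p_t$.
\end{enumerate}
Localization handles the fact that $\Amart{\tau}$ is only local, and monotone convergence removes the integrability assumptions for $V\ge0$. I would also check separately that $\dd\Lambda^p$ is carried by $\{Z_->0\}$, so that the decomposition's domain suffices.

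With the density established, the identity follows in the third step. Apply Corollary~\ref{cor:DV} with $\mu=\widehat P$, a finite measure of mass at most $1$, and $g=\Amart{\tau}_t\prod_i\Pi_{i,t}^{\alpha_i}$. The disintegration gives $\int g\,\dd\widehat P=Z_\tau(\alpha)$, which is finite and positive by hypothesis. Expanding $\E_Q[\log g]$ as the sum of the log-factors plus $\E_Q[\log\Amart{\tau}_t]$ yields~\eqref{eq:pathtime-ct}, with Gibbs optimizer $Q^\alpha_\tau$ as stated and the supremum form obtained by dropping $\KL(Q\|Q^\alpha_\tau)\ge0$. The hypothesis $Q(\{g=0\})=0$ is exactly what makes every term finite.
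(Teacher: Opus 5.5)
\textbf{There is a genuine gap, in your first move of the disintegration.}

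Your overall reduction, Corollary~\ref{cor:DV} on $(\Omega\times\R_+,\widehat P)$ with $g=\Amart{\tau}_t\prod_i\Pi_{i,t}^{\alpha_i}$, is exactly the paper's proof. The paper simply cites $\dd R/\dd\widehat P=\Amart{\tau}_t$ from the statement. You are right that this disintegration is where the real work lies.

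The dual predictable projection gives $\E[V_\tau]=\E\int V_t\,\dd A^p_t$ only for \emph{predictable} $V$. For optional (adapted c\`adl\`ag) $V$ the correct compensator is the dual optional projection $A^o$. Continuity of $A^p$, which is all that avoiding predictable times buys, does not give $A^o=A^p$. Because $\dd A^p$ is diffuse, it cannot distinguish $V_t$ from $V_{t-}$. So what your three moves actually prove is
$\E\int V_t\,\Amart{\tau}_t\,\dd\clock{\tau}_t=\E\int V_{t-}\,\dd A^p_t=\E[V_{\tau-}]$.
That equals $\E[V_\tau]$ only if $\tau$ never sits on a jump of $V$. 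A time that charges no predictable time can still sit on a totally inaccessible stopping time.

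\textbf{Counterexample.} Let $\tau$ be the first jump of a rate-$\lambda$ Poisson process $N$ in its own augmented filtration. It is totally inaccessible, so it charges no predictable time. Here $Z_t=\ind\{\tau>t\}$, $A^p_t=\lambda(t\wedge\tau)$, $\Amart{\tau}_t=\ind\{\tau>t\}e^{\lambda t}$, and $\Amart{\tau}_t\,\dd\clock{\tau}_t=\lambda\ind\{t<\tau\}\,\dd t$.
\begin{itemize}
\item For $V=N$ you get $\E[N_\tau]=1$, but $\E\int_0^\tau\lambda N_t\,\dd t=0$.
\item Indeed $R$ is carried by the graph of $\tau$, an optional set that $\widehat P$ does not charge, so $R\not\ll\widehat P$.
\item With the single strictly positive adapted factor $\Pi_t=1+N_t$, you get $\int g\,\dd\widehat P=1$ while $Z_\tau(\alpha)=2$. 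The supremum of $\E_Q[\log\Pi_t]+\E_Q[\log\Amart{\tau}_t]-\KL(Q\|\widehat P)$ is then $\log 1=0$, not the asserted $\log 2$.
\end{itemize}
So the gap cannot be closed under the stated hypothesis. The paper's one-line proof inherits the same problem through the density it takes from the statement.

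\textbf{Repair.} The missing hypothesis is that $\tau$ avoid \emph{every} stopping time, condition~(A) of Appendix~\ref{sec:moment-random-time}. Then $\Delta A^o_S=P(\tau=S\mid\cF_S)=0$ at every stopping time $S$. Hence $A^o$ is continuous, therefore predictable, therefore equal to $A^p$. Your moves 2 and 3, which are correct as written, then deliver the disintegration for every optional $V$. Alternatively, keep quasi-left-continuity but restrict the factors $\Pi_i$ to be predictable; there your argument already goes through.
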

Equation~\eqref{eq:pathtime-ct} is the discrete identity (Theorem~\ref{thm:pathtime}) read on the continuum clock measure, the variational step being measure-space-agnostic.
The per-step sums integrate against the \emph{predictable finite-variation} hazard $\Lambda$, and the predictable quadratic variation does not enter; the peeking term stays $\E_Q[\log \Amart{\tau}_t]$, and a quadratic-variation term appears only at second order in $\log \Amart{\tau}_t=\int_0^t (\Amart{\tau}_{s-})^{-1}\dd \Amart{\tau}_s-\tfrac12\int_0^t (\Amart{\tau}_{s-})^{-2}\dd[\Amart{\tau}]^c_s
+\sum_{s\le t}\big(\log\tfrac{\Amart{\tau}_s}{\Amart{\tau}_{s-}}-\tfrac{\Delta \Amart{\tau}_s}{\Amart{\tau}_{s-}}\big)$.

The anticipation martingale is the change-of-measure counterpart of the classical reduction of a random time to a randomized stopping time, which incurs no distributional loss for optional processes observed up to the time~\cite{kardaras2015stochastic}; that reduction extends the peeking calculus to continuous time.

\subsection{Moment inequalities at a random time}
\label{sec:moment-random-time}

The peeking penalty measures what a random time does to an e-process evaluated there.
The Burkholder--Davis--Gundy comparison of a martingale's maximal function against its square function meets the same question, agrees with the peeking penalty at the two extremes, and supplies a closed-form inflation factor between them.
The imported statements below are continuous-time where their hypotheses say so,
and they settle the random-time behavior of the Burkholder--Davis--Gundy row of Table~\ref{tab:classical-ledger}.

A random time $\tau$ is \emph{honest} for $(\cF_t)$ when for every $t$ it agrees on $\{\tau<t\}$ with an $\cF_t$-measurable random variable.
Equivalently, within the multiplicative-system description, a random time is honest precisely when it is $\cF_\infty$-measurable and its conditional-survival field is a multiplicative cocycle~\cite{li2012multiplicative}.
It \emph{avoids} $(\cF_t)$ stopping times when $\PP(\tau=\sigma<\infty)=0$ for every $(\cF_t)$ stopping time $\sigma$.
Conditions~(A) and~(C) of~\cite{Nikeghbali2006essay} are avoidance and continuity of every $(\cF_t)$ martingale, and~(CA) is both; a Brownian filtration satisfies~(C).  Avoidance is stronger than quasi-left-continuity: every predictable time is a stopping time, so a time satisfying~(A) is in particular quasi-left-continuous, and a finite such time falls within Theorem~\ref{thm:pathtime-ct}'s hypotheses.  Throughout,
$Z_t:=\PP(\tau>t\mid\cF_t)$ is the Az\'ema supermartingale of Theorem~\ref{thm:pathtime-ct}, whose multiplicative decomposition $Z_t=\Amart{\tau}_t(1-\clock{\tau}_t)$ supplies the anticipation martingale this paper measures.

An unrestricted random time admits no comparison at all, by a two-point argument that needs nothing beyond the definitions.

\begin{proposition}[No maximal-to-square-function constant at a general random
time]
\label{prop:no-bdg-general}
Let $(M_t)_{t\ge0}$ be a martingale with $M_0=0$, increments $d_t=M_t-M_{t-1}$ and predictable variance process $\langle M\rangle_t=\sum_{s\le t}\E[d_s^2\mid\cF_{s-1}]$.
Suppose $C<\infty$ satisfies $\E[|M_\tau|]\le C\,\E\bigl[\langle M\rangle_\tau^{1/2}\bigr]$ for every $\{0,1\}$-valued $\cF$-measurable random time $\tau$.
Then $|M_1|\le C\,\langle M\rangle_1^{1/2}$ almost surely.
\end{proposition}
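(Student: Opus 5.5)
The plan is to use the freedom that a $\{0,1\}$-valued random time need not be adapted. Such a time can pick out exactly the sample points where $|M_1|$ is large relative to $\langle M\rangle_1^{1/2}$. Since $M_0=0$ and $\langle M\rangle_0=0$, evaluation at time $0$ contributes nothing to either side. The inequality therefore reduces to a statement about the two random variables $|M_1|$ and $\langle M\rangle_1^{1/2}$ restricted to an arbitrary measurable event.

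Concretely, for an arbitrary event $B\in\cF$, set $\tau:=\ind_B$, so that $\tau=1$ on $B$ and $\tau=0$ off $B$. Then $M_\tau=M_1\ind_B$ and $\langle M\rangle_\tau^{1/2}=\langle M\rangle_1^{1/2}\ind_B$. The hypothesis gives
\[
  \E\bigl[\bigl(|M_1|-C\,\langle M\rangle_1^{1/2}\bigr)\ind_B\bigr]\le0
  \qquad\text{for every }B\in\cF.
\]
Next, take $B:=\{|M_1|>C\,\langle M\rangle_1^{1/2}\}$, which is $\cF$-measurable. The integrand $\bigl(|M_1|-C\langle M\rangle_1^{1/2}\bigr)\ind_B$ is nonnegative and has nonpositive expectation, so it vanishes almost surely. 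Since it is strictly positive on $B$, it follows that $\PP(B)=0$, which is the claim.

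There is little real obstacle; the only care needed is integrability. For the expectations to be meaningful I would note that the hypothesis presupposes they are finite, or else truncate: replace $B$ by $B\cap\{|M_1|\le n\}$, apply the argument to get $\PP(B\cap\{|M_1|\le n\})=0$, and let $n\to\infty$. One should also check that the random time is allowed to take the value $0$. If the paper's convention forces positive-integer-valued times, I would shift to $\tau=1+\ind_B$ and compare with a deterministic time. However, the statement explicitly allows $\{0,1\}$-valued times, so the direct argument applies.

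Finally, I would remark on the reading of the result. The constant $C$ is arbitrary, and $M_1$ can exceed any fixed multiple of $\langle M\rangle_1^{1/2}$ with positive probability (for example, a Rademacher step has $\langle M\rangle_1=1$ while $|M_1|$ is unbounded for suitable non-Gaussian increments). Hence no finite constant survives at general random times, matching the infinite peeking penalty at the anticipatory extreme.
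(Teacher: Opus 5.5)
Your argument is exactly the paper's proof: take $\tau=\ind_B$ with $B=\{|M_1|>C\,\langle M\rangle_1^{1/2}\}$, reduce the hypothesis to $\E[(|M_1|-C\langle M\rangle_1^{1/2})\ind_B]\le0$, and conclude $\PP(B)=0$; no truncation is needed, since on $B$ one has $C\langle M\rangle_1^{1/2}<|M_1|\in L^1$. One correction to your closing remark: a Rademacher step is not an example, because there $|M_1|=\langle M\rangle_1^{1/2}=1$; the paper instead uses a standard Gaussian first increment with trivial $\cF_0$, for which $\langle M\rangle_1=1$ while $|M_1|$ is unbounded.
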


The conclusion is unattainable as soon as the first increment is unbounded at fixed conditional variance: for $\cF_0$ trivial and $d_1$ standard Gaussian,
$\langle M\rangle_1=1$ and no finite $C$ bounds $|M_1|$.
The maximal form falls with the terminal one, since $|M_\tau|\le\sup_{s\le\tau}|M_s|$.
Padding $M$ with a null first step makes the offending $\tau$ positive-integer-valued,
so the failure does not depend on admitting the value $0$.
The continuous-time original takes $\tau=\ind_A$ against Brownian motion, where $\E[|B_\tau|]\le C\,\E[\sqrt\tau\,]$ collapses to $\E[|B_1|\ind_A]\le C\,\PP(A)$ for every $A$ and forces $|B_1|$ to be bounded~\cite{Nikeghbali2006essay}.

A pseudo-stopping time changes nothing.

\begin{theorem}[Burkholder--Davis--Gundy at a pseudo-stopping
time~\cite{Nikeghbali2006essay}]
\label{thm:bdg-pseudo}
Let $(\cF_t)_{t\ge0}$ satisfy the usual conditions, let $p>0$, let $(M_t)_{t\ge0}$ be a continuous $(\cF_t)$ local martingale with $M_0=0$ (so that $\langle M\rangle=[M]$), and let $\tau$ be an $(\cF_t)$ pseudo-stopping time.  Then
\begin{equation}\label{eq:bdg-pseudo}
  c_p\,\E\bigl[\langle M\rangle_\tau^{p/2}\bigr]
  \;\le\; \E\bigl[\bigl({\textstyle\sup_{s\le\tau}}|M_s|\bigr)^{p}\bigr]
  \;\le\; C_p\,\E\bigl[\langle M\rangle_\tau^{p/2}\bigr]
\end{equation}
with $c_p$ and $C_p$ the constants of the Burkholder--Davis--Gundy inequalities at stopping times, depending on $p$ alone.
\end{theorem}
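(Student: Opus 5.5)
The plan is to reduce the statement to the classical Burkholder--Davis--Gundy inequality at stopping times, using the defining property of a pseudo-stopping time: $\E[B_\tau]=\E[B_0]$ for every bounded $(\cF_t)$ martingale $B$. Equivalently, for every bounded martingale $B$ and every $t$, $\E[B_{\tau\wedge t}]=\E[B_0]$. In continuous time this is the Nikeghbali--Yor characterization: the Az\'ema supermartingale $Z$ is a decreasing predictable process, the hazard clock alone, so $\Amart{\tau}\equiv1$. I will use the consequence that for every nonnegative adapted, right-continuous increasing process $K$,
\[
\E[K_\tau]=\E\Bigl[\int_0^\infty K_t\,\dd \clock{\tau}_t\Bigr],
\]
with $\clock{\tau}$ predictable. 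This is the continuous-time reading of the representation at random times (Theorem~\ref{thm:rep}, Theorem~\ref{thm:pathtime-ct}) at $\Amart{\tau}\equiv1$.

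First, I randomize the time. Because $1-\clock{\tau}$ is $(\cF_t)$-predictable, decreasing from $1$, take $U$ uniform on $(0,1)$ independent of $\cF_\infty$ and set $\rho:=\inf\{t:\clock{\tau}_t\ge U\}$. Then $\rho$ is a stopping time in the filtration $\cG_t:=\cF_t\vee\sigma(U)$. This initial enlargement by an independent variable keeps $M$ a continuous local martingale with the same bracket, and the same holds for $\sup|M|$. Moreover, for every increasing adapted $K$, $\E[K_\rho]=\E[\int K_t\,\dd\clock{\tau}_t]=\E[K_\tau]$, by the display above. This is the Kardaras randomized-stopping-time reduction cited after Theorem~\ref{thm:pathtime-ct}.

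Second, I apply this with $K=(\sup_{s\le\cdot}|M_s|)^p$ and with $K=\langle M\rangle^{p/2}$. Both are increasing, continuous and adapted, so the expectations at $\tau$ equal the expectations at the $(\cG_t)$-stopping time $\rho$. The classical BDG inequality at $\rho$ in the enlarged filtration, with constants depending on $p$ only, then transfers verbatim to give~\eqref{eq:bdg-pseudo}. If $\tau=\infty$ with positive probability, I first apply the argument to $\tau\wedge n$ if needed, or interpret the clock mass at infinity, and pass to the limit by monotone convergence.

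The main obstacle is the first identity: showing that the law of $K_\tau$ integrated against increasing $K$ is governed by $\dd\clock{\tau}$ alone. The plan is to write $\E[K_\tau]=\E[\int_0^\infty(1-Z_{t-})\ldots]$ via the dual optional projection. For increasing $K$, $\E[K_\tau]=\E[\int_0^\infty \PP(\tau>t\mid\cF_t)\,\dd K_t]+\E[K_0]=\E[\int Z_t\,\dd K_t]+\E[K_0]$. Using $Z=1-\clock{\tau}$ (pseudo-stopping), integration by parts against $\clock{\tau}$ turns this into $\E[\int K_t\,\dd\clock{\tau}_t]$. The delicate point is justifying $Z=1-\clock{\tau}$, i.e.\ that the martingale part of the Az\'ema supermartingale is trivial. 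I would derive it from optional stopping of bounded martingales at $\tau\wedge t$, which forces the martingale part of $Z$ to be orthogonal to all bounded martingales and hence constant.
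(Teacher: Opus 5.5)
Your argument works in outline and takes a different route from the paper, but one claim in it is false as stated and needs repair. A pseudo-stopping time need not have a predictable Az\'ema supermartingale, and the Doob--Meyer martingale part of $Z$ need not be trivial. Take the filtration generated by a Poisson process and an independent Brownian motion, and let $\tau$ be the first Poisson jump. It is a stopping time, hence pseudo-stopping. Yet $Z=\ind_{[0,\tau)}$ is not predictable, and its Doob--Meyer martingale part $1-(\ind_{\{\tau\le t\}}-t\wedge\tau)$ is not constant. Predictability of $Z$ needs an extra hypothesis, such as continuity of every $(\cF_t)$-martingale, which the statement does not make. For the same reason, Theorem~\ref{thm:pathtime-ct}, with its predictable clock and quasi-left-continuity hypothesis, is not the result to cite.

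The clock your construction needs is the dual optional projection $A^o$ of $\ind_{\{\tau\le t\}}$. It is characterized by $\E[K_\tau\ind_{\{\tau<\infty\}}]=\E[\int_{[0,\infty)}K_t\,\dd A^o_t]$ for every nonnegative optional $K$. For a bounded martingale $B$ this gives $\E[B_\tau]=\E[B_\infty(A^o_\infty+\PP(\tau=\infty\mid\cF_\infty))]$. So pseudo-stopping is equivalent to $A^o_\infty+\PP(\tau=\infty\mid\cF_\infty)=1$, that is, $Z=1-A^o$ with $A^o$ c\`adl\`ag, increasing, optional and $A^o_\infty\le1$. This is your orthogonality argument, run on the optional decomposition $Z=\mu-A^o$ instead of the Doob--Meyer one.

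Nothing downstream used predictability:
\begin{itemize}
\item $\rho:=\inf\{t:A^o_t\ge U\}$ has $\{\rho\le t\}=\{U\le A^o_t\}\in\cF_t\vee\sigma(U)$, so it is a stopping time in the enlarged filtration.
\item Its conditional law given $\cF_\infty$ is $\dd A^o$, plus mass $1-A^o_\infty$ at infinity.
\item Your main obstacle disappears: $\E[K_\tau]=\E[K_\rho]$ is then the defining property of $A^o$, for every nonnegative optional $K$. No integration by parts is needed, and no monotonicity or continuity of $K$.
\end{itemize}

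With that repair your proof is complete, and it differs from the paper's. The paper passes to the progressive enlargement $\cG_t=\cF_t\vee\sigma(\tau\wedge t)$. It uses the Nikeghbali--Yor characterization that every stopped $(\cF_t)$-local martingale $M^\tau$ remains a $\cG$-local martingale exactly when $\tau$ is pseudo-stopping. It then applies the classical inequalities at the $\cG$-stopping time $\tau$ itself, with the bracket unchanged.

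You never put $\tau$ into the filtration. You enlarge initially by an independent uniform, where $M$ trivially stays a local martingale, and substitute a randomized stopping time $\rho$ with the same dual optional projection.

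What each route buys:
\begin{itemize}
\item The paper's route transfers the martingale property of $M$ up to $\tau$ on the original clock. It rests on a genuine enlargement-of-filtrations theorem, proved through the Jeulin--Yor decomposition of stopped martingales.
\item Your route needs only the one-line identity $A^o_\infty+\PP(\tau=\infty\mid\cF_\infty)=1$. It is the continuous-time counterpart of Theorem~\ref{thm:rep} with the anticipation factor identically one.
\item Your route transfers to $\tau$ any stopping-time inequality whose two sides are expectations of $(\cF_t)$-optional processes, but nothing that cannot be written that way.
\end{itemize}
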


Transport preserves the constants themselves: their dependence on $p$, and their asymptotics as $p$ grows, are those of the classical inequalities~\cite{Nikeghbali2006essay}.
A pseudo-stopping time is therefore free in the power-mean geometry exactly as it is free in the entropic one,
where the peeking penalty vanishes (Proposition~\ref{prop:vanish}(b)).

An honest time is not free.  Under conditions~(CA) the unmodified inequalities fail at honest times~\cite{Nikeghbali2006essay}, and a corrected comparison with one extra factor takes their place.

\begin{proposition}[Corrected maximal inequality at an honest
time~\cite{Nikeghbali2006essay}]
\label{prop:bdg-honest}
Let $(\cF_t)_{t\ge0}$ be the filtration of a standard Brownian motion $(B_t)_{t\ge0}$ and let $\tau$ be an honest time for it, with Az\'ema supermartingale $Z_t$, running infimum $I_\tau:=\inf_{u<\tau}Z_u$, and $\Upsilon_\tau:=\Bigl(1+\log\tfrac{1}{I_\tau}\Bigr)^{1/2}$.
Then $\E[|B_\tau|]\le C\,\E\bigl[\Upsilon_\tau\sqrt\tau\,\bigr]$ for a universal constant $C$.
\end{proposition}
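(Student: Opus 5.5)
The plan is to pass to the progressively enlarged filtration $(\cG_t)$, the smallest filtration containing $(\cF_t)$ that makes $\tau$ a stopping time. There I would use the Jeulin--Yor decomposition of $B$ up to $\tau$, so that $B_\tau$ splits into a $\cG$-Brownian part plus an explicit drift. Write the Doob--Meyer decomposition of the Az\'ema supermartingale as $Z_t=\mu_t-A_t$ with $\mu$ an $(\cF_t)$ martingale. Because $(\cF_t)$ is Brownian, the predictable representation gives $\mu_t=\mu_0+\int_0^t k_s\,\dd B_s$ for a predictable $k$, hence $\dd\langle B,Z\rangle_s=k_s\,\dd s$. The enlargement formula then reads
\[
  B_{t\wedge\tau}=\beta_{t\wedge\tau}+\int_0^{t\wedge\tau}\frac{k_s}{Z_{s-}}\,\dd s ,
\]
with $\beta$ a $\cG$-Brownian motion and $\tau$ a $\cG$-stopping time. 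Honesty of $\tau$ is used exactly here: it makes the formula valid on all of $[0,\tau]$ and not only on the $\cG$-stopping intervals inside it.

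The martingale part is routine. The classical Burkholder--Davis--Gundy inequality in $(\cG_t)$ at the $\cG$-stopping time $\tau$ gives $\E[|\beta_\tau|]\le C\,\E[\sqrt\tau]$, which is at most $C\,\E[\Upsilon_\tau\sqrt\tau]$ since $\Upsilon_\tau\ge1$ (because $I_\tau\le1$). For the drift, pathwise Cauchy--Schwarz gives
\[
  \int_0^\tau\frac{|k_s|}{Z_s}\,\dd s\le\sqrt\tau\,\langle L\rangle_\tau^{1/2},
  \qquad
  L_t:=\int_0^{t}\frac{\dd\mu_s}{Z_s}\quad(t<\tau).
\]
So it suffices to show that $\langle L\rangle_\tau$ is controlled by $1+\log(1/I_\tau)$ in a form strong enough to survive multiplication by $\sqrt\tau$ inside the expectation.

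Controlling $\langle L\rangle_\tau$ is the step I expect to be the main obstacle. The source of the logarithm is It\^o's formula for $\log Z$ on $[0,\tau)$:
\[
  \log Z_t=\log Z_0+L_t-\tfrac12\langle L\rangle_t-\int_0^t\frac{\dd A_s}{Z_s}.
\]
Since $\tau$ is honest, the compensator $A$ grows only where $Z=1$, so the last integral is harmless. Rearranging gives $\tfrac12\langle L\rangle_t\le L_t+\log(1/I_t)$ plus a bounded term. Bounding $L$ by its bracket and the logarithm, for instance through the exponential supermartingale $e^{L-\frac12\langle L\rangle}$ together with Lenglart's domination inequality or a good-$\lambda$ comparison, I would aim for a pathwise-in-law estimate of the form $\langle L\rangle_\tau\lesssim 1+\log(1/I_\tau)+(\text{a term with exponential tails})$.

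The delicate point is that the product $\sqrt\tau\,\langle L\rangle_\tau^{1/2}$ must still be bounded in expectation by $\E[\Upsilon_\tau\sqrt\tau]$, not just by $\E[\Upsilon_\tau]\,\E[\sqrt\tau]$. To achieve this I would first try a Lenglart-type domination with $\sqrt\tau\,\Upsilon_\tau$ as the dominating weight, running the stopping argument level by level in $\tau$. If that fails, the fallback is to use condition (CA), under which $Z_t=N_t/\sup_{s\le t}N_s$ for a continuous positive martingale $N$ with $N_t\to0$. Then the law of $\log(N^\ast/N)$ is the standard exponential of Proposition~\ref{prop:maximal-conditional}, and it can absorb the fluctuation term with universal constants. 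Collecting the martingale and drift bounds gives the corrected inequality with a universal $C$.
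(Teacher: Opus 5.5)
The paper gives no proof of this proposition; it is quoted from Nikeghbali's essay, so your argument has to stand on its own. The martingale part and the Cauchy--Schwarz step are fine. The argument fails exactly at the step you flag.

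The inequality $\tfrac12\langle L\rangle_t\le L_t+\log(1/I_t)$ cannot be closed by bounding $L$ through $e^{L-\frac12\langle L\rangle}$, Lenglart, or good-$\lambda$ arguments. Those are statements at $\cF$-stopping times, and $\tau$ is not one. In $\cG$ the process $L$ carries on $[0,\tau]$ the drift $\int_0^{\cdot}\dd\langle L,\mu\rangle_s/Z_s=\langle L\rangle$. Your inequality therefore reads $-\tfrac12\langle L\rangle_\tau\le\widetilde L_\tau+\log(1/I_\tau)$, with $\widetilde L$ a $\cG$-local martingale, and it gives no upper bound on $\langle L\rangle_\tau$. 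Under (CA), where $Z_\tau=1$ and $\dd A$ is carried by $\{Z=1\}$, It\^o's formula at $\tau$ becomes the identity $L_\tau-\tfrac12\langle L\rangle_\tau=A_\tau-\log Z_0$. It constrains that difference, not the bracket. Run legitimately on $[0,T_x]$ with the $\cF$-stopping time $T_x:=\inf\{t:Z_t<x\}$, the inequality controls the law of $\langle L\rangle_\tau$ on $\{I_\tau\ge x\}$. It does not control its correlation with $\sqrt\tau$.

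More decisively, the sufficient condition you reduce to is false, so no domination argument can supply it. There is no universal $C$ with $\E[\sqrt\tau\,\langle L\rangle_\tau^{1/2}]\le C\,\E[\sqrt\tau\,\Upsilon_\tau]$, even under (CA). Take a deterministic $\sigma>0$ with $F(t):=\int_0^t\sigma_s^2\,\dd s\uparrow\infty$, put $N:=\exp\bigl(\int_0^{\cdot}\sigma\,\dd B-\tfrac12F\bigr)$, and let $\tau$ be the time of the ultimate maximum of $N$. Then:
\begin{itemize}
\item $Z=N/N^\ast$, and your $L$ is $\int\sigma\,\dd B$.
\item $\langle L\rangle_\tau=F(\tau)=:\tau_1$ is the argmax time of the Brownian motion with drift $-\tfrac12$ given by $u\mapsto\log N_{F^{-1}(u)}$.
\item $\log(1/I_\tau)$ is that path's maximal drawdown before $\tau_1$.
\item $\sqrt\tau=\sqrt{F^{-1}(\tau_1)}$ is an increasing function of $\tau_1$ at your disposal.
\end{itemize}
Taking $F^{-1}$ nearly flat below a level $s$ and steep above it loads the weight onto $\{\tau_1>s\}$. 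On that event $\tau_1-s=O(1)$ and the maximum is $O(1)$. The pre-maximum path is then a first-passage bridge of length about $s$, so its drawdown is of order $\sqrt s$. Hence $\langle L\rangle_\tau^{1/2}\approx\sqrt s$ while $\Upsilon_\tau\approx s^{1/4}$, and the ratio grows like $s^{1/4}$.

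In this example the true drift $\int_0^\tau\sigma_s\,\dd s$ sits far below the Cauchy--Schwarz bound $\sqrt{\tau F(\tau)}$. What fails is factoring the drift into two global quantities, which forgets when the drift accrues relative to the clock; a completion has to avoid that product. The (CA) fallback does not rescue it: the example satisfies (CA), and the proposition does not assume avoidance. Knowing the law of $\log(1/I_\tau)$ also says nothing about an expectation weighted by the correlated factor $\sqrt\tau$.

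A smaller point: the Jeulin--Yor decomposition on $[0,\tau]$ holds for every random time. Honesty is what extends it past $\tau$, not what makes it valid up to $\tau$.
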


The inflation factor has an exact law once the time also avoids stopping times.

\begin{corollary}[Law of the honest-time inflation factor]
\label{cor:honest-inflation-law}
In the setting of Proposition~\ref{prop:bdg-honest}, suppose in addition that $\tau$ avoids every $(\cF_t)$ stopping time.
Then $\log(1/I_\tau)$ is standard exponential, $\Upsilon_\tau$ has the law of $(1+\varepsilon)^{1/2}$ with $\varepsilon$ standard exponential, and $\E[\Upsilon_\tau^2]=2$.
\end{corollary}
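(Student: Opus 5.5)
The plan is to show that $I_\tau$ is uniform on $(0,1)$. The other two claims then follow at once: if $I_\tau$ is uniform, $\varepsilon:=\log(1/I_\tau)$ is standard exponential, $\Upsilon_\tau^2=1+\varepsilon$ by definition, and $\E[\Upsilon_\tau^2]=1+\E[\varepsilon]=2$. Uniformity should come from optional stopping of the Az\'ema supermartingale at the first time it falls to a given level. Honesty is only inherited from Proposition~\ref{prop:bdg-honest}; the argument below uses avoidance together with the Brownian filtration.

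First I would fix the regularity of $Z$. In the Brownian filtration every martingale is continuous, which is condition~(C), and with the added avoidance hypothesis~(A) we are under~(CA). I would cite the standard fact from~\cite{Nikeghbali2006essay,NY05} that under~(CA) the Az\'ema supermartingale $Z_t=\PP(\tau>t\mid\cF_t)$ has continuous paths. I would also check its initial value: since $\tau$ avoids the stopping time $0$, $\PP(\tau=0)=0$, and $\cF_0$ is trivial, so $Z_0=1$. Because $\tau$ is finite and $Z_t=\PP(\tau>t\mid\cF_t)\to0$ a.s., $Z$ decreases from $1$ toward $0$.

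Next, fix $a\in(0,1)$ and set $\sigma_a:=\inf\{t:Z_t\le a\}$, an $(\cF_t)$ stopping time. By continuity, $Z_{\sigma_a}=a$ on $\{\sigma_a<\infty\}$, and the events $\{I_\tau<a\}$, $\{I_\tau\le a\}$ and $\{\sigma_a<\tau\}$ differ only by null sets. Avoidance removes the tie $\{\sigma_a=\tau\}$, and continuity removes the boundary case where $Z$ touches $a$ exactly at $\tau$. The defining property of $Z$ at a stopping time then gives
\[
\PP(\tau>\sigma_a)=\E\bigl[Z_{\sigma_a}\ind\{\sigma_a<\infty\}\bigr]+\PP(\tau=\infty,\sigma_a=\infty).
\]
The last term vanishes because $\tau$ is finite. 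If $\sigma_a=\infty$ on a non-null set, then $Z$ stays above $a$ there, which contradicts $Z_t\to0$. Hence $\PP(\sigma_a<\tau)=a$, so $\PP(I_\tau\le a)=a$ for every $a\in(0,1)$, and $I_\tau$ is uniform.

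The step I expect to be the main obstacle is the regularity input. The argument needs continuity of $Z$ under~(CA), so that there is no overshoot at $\sigma_a$ (the same no-overshoot mechanism as in Theorem~\ref{thm:ville-tight}). It also needs the identification $\PP(\tau>\sigma\mid\cF_\sigma)=Z_\sigma$ at stopping times, which is the optional-section property of the optional projection. I would take both from the cited literature and not reprove them. The rest is bookkeeping of null sets.
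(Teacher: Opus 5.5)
Your proof is correct, and it takes a different route from the paper's at the one step that carries content. The paper reduces to condition~(CA) as you do. It then quotes from~\cite{Nikeghbali2006essay} that for an honest time under~(CA) the infimum $I_\tau$ is uniform on $[0,1]$, and reads off the exponential law and $\E[\Upsilon_\tau^2]=2$.

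You derive the uniformity instead. Three ingredients give $\PP(\sigma_a<\tau)=a$:
\begin{itemize}
\item the optional-section identity $\PP(\tau>\sigma_a\mid\cF_{\sigma_a})=Z_{\sigma_a}$ at the first passage $\sigma_a$ of $Z$ to level $a$;
\item the absence of overshoot, $Z_{\sigma_a}=a$;
\item $\sigma_a<\infty$ a.s.
\end{itemize}
Avoidance then identifies $\{\sigma_a<\tau\}$ with $\{I_\tau\le a\}$ up to the null set $\{\sigma_a=\tau\}$.

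This is the no-overshoot first-passage mechanism of Theorem~\ref{thm:firstpassage-exact} run on the Az\'ema supermartingale. It makes visible two things the citation hides. First, honesty plays no role in the law of $I_\tau$: any a.s.\ finite time that avoids stopping times, in a filtration whose martingales are continuous, will do. Honesty is needed only for the inequality of Proposition~\ref{prop:bdg-honest}. Second, when $Z$ may jump down, the same computation with $Z_{\sigma_a}\le a$ gives $\PP(I_\tau<a)\le a$. That is exactly the stochastic-ordering remark the paper makes right after the corollary. The citation, for its part, buys brevity and keeps the statement tied to the source of the inequality.

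One point you should state explicitly rather than assume: your step $\sigma_a<\infty$ rests on $\tau<\infty$ a.s., which Proposition~\ref{prop:bdg-honest} leaves implicit. Without it the claim fails. The constant time $\tau\equiv\infty$ is honest and avoids every stopping time, yet it has $Z\equiv1$ and $I_\tau=1$.
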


At a stopping time $Z_u=\ind\{\tau>u\}$, so $I_\tau=1$ and $\Upsilon_\tau=1$,
and the corrected comparison reduces to the classical one.
Without avoidance the exact law weakens to a stochastic ordering,
$\E[f(\Upsilon_\tau)]\le\E[f((1+\varepsilon)^{1/2})]$ for every continuous increasing $f:\R_+\to\R_+$~\cite{Nikeghbali2006essay}, so $(1+\varepsilon)^{1/2}$ dominates the worst honest time.

The correction is denominated in the units the rest of the paper counts in.
The quantity $\log(1/I_\tau)$ is the \emph{surprisal} --- the negative logarithm of a probability, in nats --- of the lowest level the time's Az\'ema supermartingale reaches before $\tau$, and Corollary~\ref{cor:honest-inflation-law} fixes its mean at one nat; the moment inequality is inflated by the square root of one plus that surprisal.
The classes of Section~\ref{sec:survival} are ordered by what they cost, and that ordering has a counterpart in the power-mean geometry.
Stopping and pseudo-stopping times are free in both:
the peeking penalty is at most zero at stopping times and exactly zero at pseudo-stopping times for uniformly integrable martingales (Proposition~\ref{prop:vanish}), and the Burkholder--Davis--Gundy constants are untouched (Theorem~\ref{thm:bdg-pseudo}).
An unrestricted time admits no finite quantity in either: the peeking penalty at the ultimate-maximum time is infinite (Proposition~\ref{prop:infinite}), and no maximal-to-square-function constant exists (Proposition~\ref{prop:no-bdg-general}).
At the classes between them both supply one: the power-mean geometry an inflation factor with a closed-form law,
the entropic one the exact complexity $\log\beta$ of Corollary~\ref{cor:peeking-ball}.

The two ladders run over different functionals that have the same law.
The anticipation budget $\log \Amart{\tau}_\tau$ measures the time against its own hazard clock, while $\log(1/I_\tau)$ is read off the running infimum of its Az\'ema supermartingale, and neither determines the other pathwise.
In the setting of Corollary~\ref{cor:honest-inflation-law} the time is the ultimate-maximum time of a continuous vanishing local martingale $N$ with $N_0=1$, whose Az\'ema supermartingale is $Z_t=N_t/N^\ast_t$~\cite{nikeghbali2006doobs}.
Since $1/N^\ast$ is continuous and decreasing, that product is the multiplicative decomposition of $Z$, so $\Amart{\tau}=N$ and $1-\clock{\tau}=1/N^\ast$; at the ultimate maximum $\Amart{\tau}_\tau=N^\ast_\infty$, and Proposition~\ref{prop:maximal-conditional}(i) makes $\log \Amart{\tau}_\tau$ standard exponential.
The surprisal that inflates the moment inequality has the same law (Corollary~\ref{cor:honest-inflation-law}), so at the honest-time class the two ladders are calibrated to one another: each stands one nat above its reference in expectation.

The interpolating budget of \S\ref{sec:peeking-complexity} is a separate object: this measures one inequality on one class of times.
The three statements above settle the second axis of that row, the cost of moving from a stopping time to a random time.

% ======================================================================
\section{Empirical evaluation supplement}\label{app:eval-supplement}
% ======================================================================

The evaluations below measure what the theory does not state: how large a named residual is, what governs it, and where a natural reading of it fails.
Each has a negative or a scope demarcation of its own, and each is introduced here, so that a reader arriving at a section already knows its subject.
The identities themselves are established by the theorems and illustrated in Figure~\ref{fig:decompositions}, which reads off four decompositions, two measured on moderate-scale data and two computed exactly; three resolve into relative entropies and Doob's into a H\"older deficit, an optional-stopping deficit and an initial value.
Nothing below re-derives them.

Section~\ref{subsec:eval-maximal-qq} puts the first-passage identity on a deployed e-process, where the wealth jumps and Ville's inequality is loose in consequence.
Dividing each prediction by the crossing rate observed turns that looseness into a multiplicative factor: the bound overstates by as much as $2.4\times$, and the overshoot term recovers almost all of it.
The levels where it does not are the sparsest crossings, where the observed rate is itself the noisy quantity, and the conserved mass is not estimable at any feasible replicate count, so it is not used as a check.

Section~\ref{subsec:eval-curved-crossing} checks the discrete curved-boundary law against a closed form built from gambler's ruin, which never mentions the martingale that proves it.
The two agree exactly, and the event identity holds on every one of $1.25\times10^{6}$ simulated paths.
The asymptotic level is out of reach of any attainable horizon; the shortfall is not an error term but the mass of the martingale left on paths that have not yet crossed, and it is measured directly.

Section~\ref{subsec:eval-overshoot-convergence} asks which parameter closes the first-passage deficit.
The horizon does not: the ratio is unmoved across a factor-of-$40$ sweep.
The increment scale does, the deficit falling as a power law in $\sqrt{dt}$ on both a Gaussian and a lattice increment law, which is the discrete-time tail approaching Pareto$(1)$ from below in the parameter that governs it.
The two families are reported separately, the lattice one reading a shallower slope that a single power law describes less well.

Section~\ref{sec:eval-pacbayes-cifar-fixed-prior} takes the anytime-valid certificate to deep-net scale.
It is vacuous under a prior anchored at the origin, at $41.7$ against a held-out risk of $0.078$, and tight under one anchored at the trained weights, at $0.153$; the $273\times$ tightening sits entirely in the anchor and not in the optimizer.

Section~\ref{sec:eval-pooling-benefit-decorrelated} separates the pooling benefit from optional-stopping leakage with a paired single-bettor arm on trained vision models.
The excess deficit is positive in every multi-member cell, and at matched accuracy the architecturally diverse pool sheds more mass than the recipe-diverse one.
The corruption is a single family at one severity, and the ensembles are public checkpoints not trained for this purpose, so public training recipes bound the disagreement on offer.

% First-passage identity on a real discrete-time e-process

\subsection{The first-passage identity on a real e-process}
\label{subsec:eval-maximal-qq}

Theorem~\ref{thm:ville-tight}'s Pareto$(1)$ law is exact for continuous paths, and a deployed e-process jumps.
Theorem~\ref{thm:firstpassage-exact} says what the jump costs: in discrete time the first-passage probability is $\PP(\sup_t M_t \geq x) = \tfrac1x(1 - \E[J_x \ind\{\sigma_x<\infty\}])$ with $J_x$ the overshoot at the crossing, an equality where Ville's inequality gives only $1/x$.

The substrate is the sequential likelihood ratio of a held-out classifier against the label-marginal null, on WDBC breast cancer ($T=285$) and Pima diabetes ($T=384$), with $60\,000$ null replicates per stream.
Labels are redrawn independently from the measured marginal, which makes each factor exactly mean-one and the wealth an exact martingale; drawing them by permutation instead would fix the single-draw marginal but leave the draws dependent, and a product of dependent mean-one factors does not have mean one.
The bet is left uncapped, since Theorem~\ref{thm:firstpassage-exact} is stated for a martingale.
Anytime validity holds on both streams: $1/M^\ast$ is a conservative $p$-value, and its largest excursion below the uniform is $2\times10^{-5}$ on WDBC and $3\times10^{-5}$ on Pima.

A large multiplicative factor separates the bound from the identity.
Figure~\ref{fig:maximal-qq} divides each prediction by the crossing rate actually observed, so $1$ is exact and the height above it is the shortfall.
Ville's $1/x$ overstates the crossing rate by as much as $2.4\times$, and the overshoot term recovers almost all of it: read forward from the measured overshoot, the identity lands inside the observed binomial interval at $41$ of the $44$ levels, where the bound lands inside it at $1$.
Taken against the rate observed, the median relative gap the identity leaves is $0.009$ on WDBC and $0.028$ on Pima, against $0.402$ and $0.243$ for the bound at the same levels.

One check is out of scope.
The optional-stopping mass $\E[M_{\sigma_x}\ind\{\sigma_x<\infty\}]$ equals $1$ exactly when the identity holds, but it is a sample mean of a wealth that tends to zero almost surely while keeping mean one, so it is not estimable at any feasible replicate count and is not used as a check here.

The resolution of the comparison itself is set the same way.
In the ratio the figure plots, the interval on the observed rate has width $2z\sqrt{(1-p)/n_x}$ at a level crossed $n_x$ times, so it depends on the level only through that count;
the deepest level plotted is the one $0.1\%$ of replicates reach, whatever process is simulated.
Only more replicates narrow it; a different process does not.
Past a point they stop helping too.
The identity's departure from the observed rate is exactly the conserved mass's departure from one, divided by $x$ times the observed crossing rate, so a tighter interval resolves that estimate's error before it resolves the identity.

\begin{figure}[tbp]
\centering
\includegraphics[width=\textwidth]{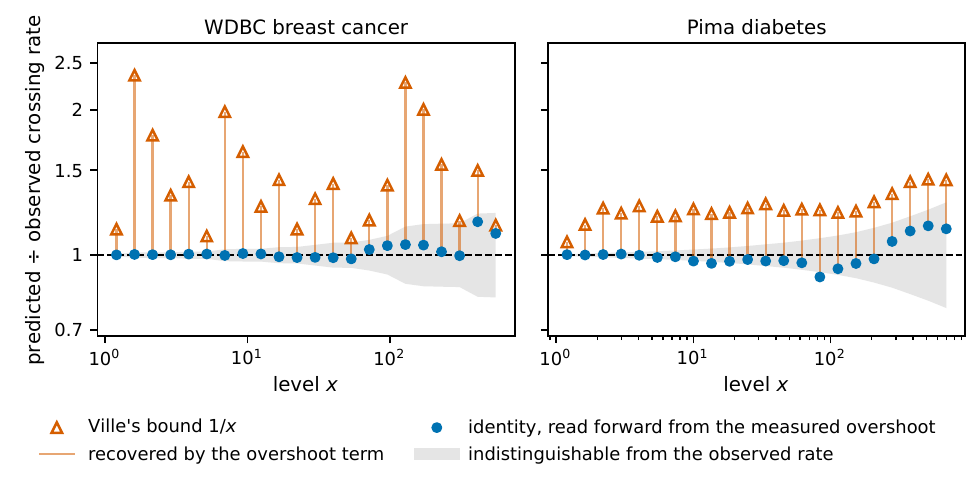}
\caption{\textbf{The overshoot term recovers almost all of what Ville's bound gives away.}  Each prediction of the crossing rate, divided by the rate observed on $60\,000$ null replicates of a sequential likelihood ratio against the label-marginal null; $1$ is exact, and height above it is the multiplicative shortfall.  At every level the two predictions are joined: the triangle is Ville's $1/x$, which overstates by up to $2.4\times$, the circle is the identity read forward from the measured overshoot, and the connector between them is the part the overshoot term recovers.  The identity sits on $1$ at $41$ of the $44$ levels, inside the shaded interval on the observed rate; the bound sits outside that interval at $43$ of them.  One panel per stream, since the two level grids share no positions.  The interval widens with the level because its width in this ratio is set by the number of crossings alone, and the deepest level plotted is the one $0.1\%$ of replicates reach.}
\label{fig:maximal-qq}
\end{figure}

% The discrete curved-boundary crossing law against an independent closed form,
% with the shortfall a finite horizon leaves.
%

\subsection{The discrete crossing law at a finite horizon}
\label{subsec:eval-curved-crossing}

Theorem~\ref{thm:curved-crossing-discrete} is checked on the instance Section~\ref{sec:curved-crossing} quotes, by an independent route that does not use the martingale the theorem is proved with.
The object throughout is the drawdown of a symmetric $\pm1$ walk against the boundary $\varphi(x)=2(1+x)^2$; there is no dataset, and every rate below is either a closed form or a frequency over simulated walks.
On this walk the record advances by exactly one, so the residual sum is a deterministic function of the record reached.
Within an excursion the record is fixed, so the boundary is constant there and the excursion maximum obeys the gambler's-ruin law.
Both sides of~\eqref{eq:discrete-crossing-law} are then convergent series over record levels, the left-hand side assembled from gambler's ruin alone.

The two routes agree exactly.
At $\varphi(x)=2(1+x)^2$ the nominal level is $0.3935$, the quadrature residual contributes $0.1343$ and the overshoot $0.08571$, placing the right-hand side at $0.4421$, which is the crossing probability the excursion decomposition returns.
Truncating the series stops moving the level at six figures.
The realized level exceeds the nominal by $12.35\%$.

Equation~\eqref{eq:discrete-crossing-law} equates two events, so it can be tested on each path.
Across $1.25\times10^{6}$ simulated paths at five horizons, $\{\exists\,t: X_t>\varphi(A_t)\}$ and $\{\sup_t M_t>1\}$ agreed on every one, and the pathwise decomposition~\eqref{eq:discrete-crossing-decomp} reproduced $M_t$ at every time step.

A finite horizon leaves a substantial shortfall, and the identity says exactly how much.
The law asks for $A_t\to\infty$, and the running maximum of a $\pm1$ walk grows like $\sqrt{T}$, so no reachable horizon attains it: at $1.25\times10^{5}$ steps the realized rate is still $0.029$ short.
The shortfall is not an error term but a quantity that can be measured directly, since optional stopping at $T\wedge T_{\max}$ closes exactly with one further term $\E[M_{T_{\max}}\ind\{\text{no cross}\}]$, the mass of $M$ left on paths that have not yet crossed.
Figure~\ref{fig:curved-crossing-horizon} adds that mass back.
Across a $625$-fold range of horizons the identity's finite-horizon prediction tracks the simulated rate, and the residual separating them --- a mean-zero quantity, being a stopped martingale sum --- covers zero at every horizon.

\begin{figure}[tbp]
\centering
\includegraphics[width=\textwidth]{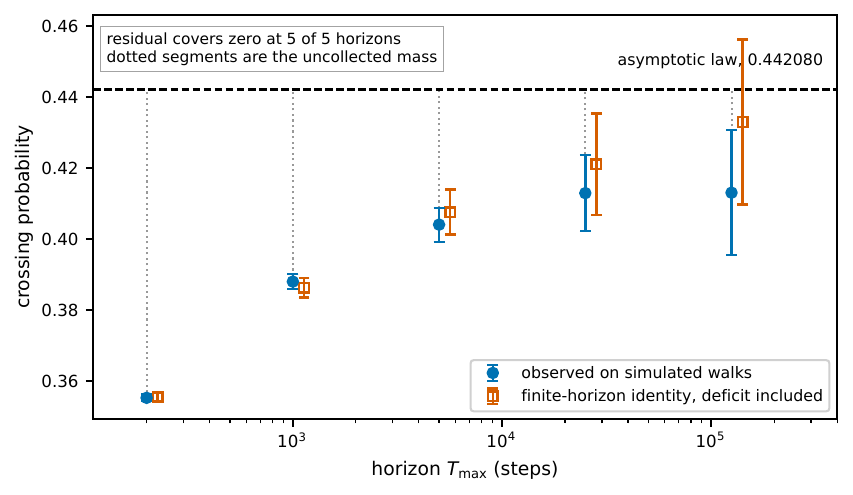}
\caption{\textbf{A finite horizon falls short of the asymptotic crossing rate by exactly the mass still sitting on paths that have not crossed.}  Simulated drawdowns of a symmetric $\pm1$ walk against $\varphi(x)=2(1+x)^2$, at five horizons from $200$ to $1.25\times10^{5}$ steps.  Circles are the crossing rate observed on those walks; squares give the finite-horizon identity's prediction once the uncrossed mass is included, offset slightly in the horizon for legibility.  The dotted segments are that uncollected mass, closing as the horizon grows.  Intervals are $95\%$: on the observed rate its own binomial interval, and on the prediction the interval of the residual, which is the uncertainty the comparison actually incurs.  The residual is mean-zero and covers zero at every horizon.}
\label{fig:curved-crossing-horizon}
\end{figure}

The size of the correction depends on the boundary.
Take the family $\varphi(x)=\varphi_0\bigl(1+x/(\varphi_0 I)\bigr)^2$, which has $\int_0^\infty\varphi^{-1}=I$ for every $\varphi_0$ and contains the instance above at $\varphi_0=2$.
At $I=\tfrac12$ the same closed form puts the excess over nominal at $12.35\%,\ 7.66\%,\ 4.90\%,\ 2.29\%$ and $1.31\%$ as $\varphi(0)$ runs over $2,\ 4,\ 8,\ 20$ and $32$, each satisfying the law.
Both corrections contain the factor $h=w/\varphi$, so a boundary set only a few steps above the record is misread most.
The instance above is the tightest member of the family, and it places $0.566$ of its crossings at record $0$, before the clock has moved at all --- where the boundary's shape does least work and the discrete correction is largest.

% Overshoot deficit in the discrete-time first-passage identity

\subsection{The first-passage deficit is the overshoot}
\label{subsec:eval-overshoot-convergence}

Theorem~\ref{thm:firstpassage-exact} makes the first-passage ratio $R(x) := x\,\PP(\sup_t M_t \geq x)$ equal to $1 - \E[J_x \ind\{\sigma_x<\infty\}]$,
so it reaches the continuous-path Pareto$(1)$ value of Theorem~\ref{thm:ville-tight} exactly when the crossing has no overshoot.
Two parameters could close the gap: the horizon, if the running maximum were merely truncated, or the step, which sets the increment scale and with it the overshoot.
This evaluation separates them on two exponential martingales with $M_0=1$ and $M_t\to0$ a.s.---a Dol\'eans exponential with Gaussian increments, and a two-point likelihood ratio whose increments are lattice-valued in $\log M$---at $\theta\in\{0.3,0.5,0.8\}$ and $x\in\{2,5,10,20\}$.

The horizon leaves the deficit untouched.
The horizon sweep runs $T$ over $\{500,1000,2000,4000,8000,20000\}$ at unit step on the Gaussian family, read off nested prefixes of the same $300\,000$ paths so the comparison is within-path.
Across that factor of $40$, $R(5)$ moves by $10^{-4}$ at $\theta=0.3$ and not at all in the fourth decimal at $\theta=0.5$ and $\theta=0.8$; the ratio sits at $0.8416$, $0.7504$ and $0.6326$ throughout.

Refining the step closes it, at the rate the increment scale sets.
On the Gaussian family the deficit at $x=5$ falls from $0.16$--$0.37$ at unit step to $0.0055$--$0.0148$ at $dt=1/1024$, and the fitted log-log slope against $\sqrt{dt}$ is $0.976$, $0.963$ and $0.936$ at the three tilts, with $R^2 \geq 0.999$ on six steps.
The ratio clears $0.95$ at every one of the $24$ (family, tilt, level) cells at the finest step, spanning $0.9525$ to $1.031$; Theorem~\ref{thm:ville-tight} caps the ratio at $1$, so the cells above unity are sampling error.
This is the approach from below that Theorem~\ref{thm:ville-tight} describes, in the parameter that governs it.

The two-point family behaves differently, and the difference is the one Theorem~\ref{thm:firstpassage-exact} anticipates when it records that the overshoot law depends on the increment distribution.
Its deficit also vanishes monotonically in the step, but a single power law describes it less well and reads a shallower slope: $0.921$, $0.901$ and $0.862$ at the three tilts, with $R^2$ falling to $0.966$ in the $\theta=0.8$ cell against $0.999$ or better on the Gaussian side.
The two families are therefore reported separately and not pooled.

The conserved mass of~\eqref{eq:fp-mass} is the substantive check on the identity itself, $\sigma_x$ being unbounded and $M$ not uniformly integrable.
Across the $24$ cells at unit step, at $400\,000$ paths each, the estimate $\widehat\E[M_{\sigma_x}\ind]$ has median absolute deviation $0.0024$ from $1$ and worst $0.0096$; its bias-corrected and accelerated bootstrap $95\%$ interval covers $1$ in $23$.
The single exclusion (two-point, $\theta=0.5$, $x=2$) reads $1.004$ with interval $[1.001,1.008]$, a departure of four parts in a thousand on a heavy-tailed mean whose interval under-covers at these path counts.
The quantity itself does not sit away from $1$.
Given a sample the two sides of~\eqref{eq:fp-exact} differ by exactly $\widehat\E[M_{\sigma_x}\ind] - 1$, so the split of the ratio into $x\widehat\PP(\sigma_x<\infty)$ and $1 - \widehat\E[J_x\ind]$ is bookkeeping and introduces no second estimate.

Figure~\ref{fig:overshoot-convergence} separates the two candidate parameters.
The ratio is unmoved by the horizon, and step refinement drives it to the continuous-path limit.

\begin{figure}[h]
\centering
\includegraphics[width=\textwidth]{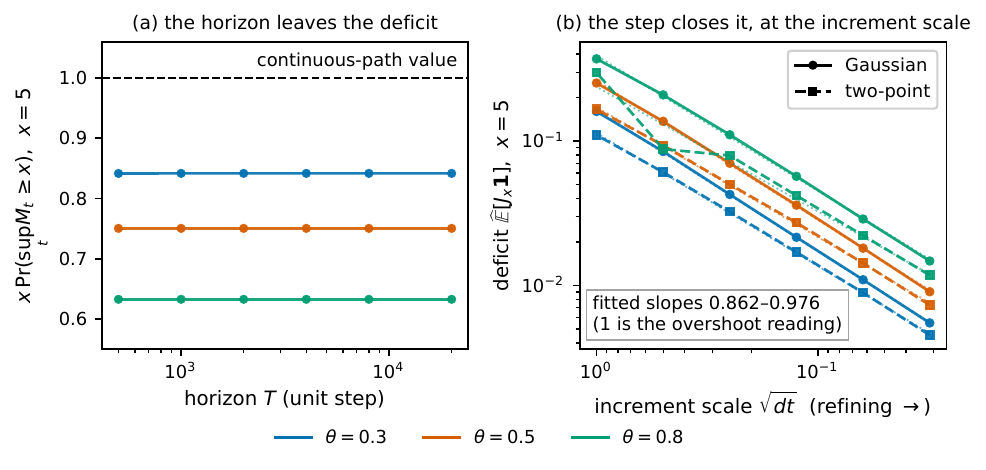}
\caption{\textbf{The horizon leaves the first-passage deficit where it is; refining the step
closes it, at the rate the increment scale sets.}
\textbf{(a)} sweeping the horizon at unit step leaves $x\,\PP(\sup_t M_t\ge x)$ unmoved
across a factor of $40$, at every tilt; this sweep is run on the Gaussian family.
\textbf{(b)} the deficit $\widehat\E[J_x\ind]$ against the increment scale $\sqrt{dt}$, on
log-log, where it is a power law: dotted lines are the fitted decays, and a slope of $1$
is the overshoot reading.  The Gaussian family sits on its fit at every tilt; the
two-point family reads a shallower slope and, at $\theta=0.8$, departs visibly from a
single power law --- the increment-distribution dependence Theorem~\ref{thm:firstpassage-exact}
records, which is why the two families are reported separately and not pooled.
Both panels are read at $x=5$.}
\label{fig:overshoot-convergence}
\end{figure}

% PAC-Bayes martingale anytime-valid generalization (CIFAR-10):
% the prior anchor separates a vacuous certificate from a tight one.
%

\subsection{At deep-net scale the prior anchor separates a vacuous certificate from a tight one}
\label{sec:eval-pacbayes-cifar-fixed-prior}

At deep-net scale the anytime-valid certificate of Theorem~\ref{thm:pacbayes} is a generalization bound that holds at every $t$ at once, and so at any random time.
A Gaussian prior $\pi$ and posterior $\rho$ over the last-linear weights, run through the parameter-mixture Freedman supermartingale of Proposition~\ref{prop:mixture}, invert Ville's inequality into
\[
  R(\rho) \;\le\; \widehat R_t(\rho)
  + \frac{\KL(\rho\|\pi) + \log(1/\delta)}{\lambda t}
  + \tfrac12\,\frac{\lambda\,\widehat V_t}{1 - \lambda/3},
\]
minimized over a $\lambda$-grid.
A frozen CIFAR ResNet-20 (held-out $92.2\%$) supplies the feature map, the posterior is the pretrained last-linear head ($650$ weights) lightly smoothed, and the stream is the held-out CIFAR-10 test set.
Write $\tau^{\mathrm{cert}}$ for the time at which the certificate is largest over the run. That choice looks ahead over the whole run and is not a stopping time; the uniform-in-$t$ guarantee covers it all the same.
Only the prior anchor is varied.

Anchored at the origin, $\pi = N(0,\sigma_\pi^2 I)$ at $\sigma_\pi=0.01$, the certificate is vacuous: the mean bound at $\tau^{\mathrm{cert}}$ is $41.7$ against a held-out risk of $0.078$.
The vacuity is entirely the complexity term, since anchoring at the origin makes $\KL(\rho\|\pi) = \|\theta_{\mathrm{pre}}\|^2/(2\sigma_\pi^2) + 206.8 \approx 3.3\times10^5$ swamp the empirical risk.
Anchored at the trained weights instead, $\pi = N(\theta_{\mathrm{pre}},\sigma_\pi^2 I)$ under identical data and martingale, the certificate at $\tau^{\mathrm{cert}}$ equals $0.153$ (five seeds) against the same held-out risk $0.078$.
There $\KL(\rho\|\pi)=206.8$ is the variance-mismatch term alone, so the tightening against the origin-anchored bound is $273\times$ and sits entirely in the anchor.
The martingale mechanism is correct in both legs; an uninformed prior simply incurs a divergence no amount of data can offset, even for a classifier that generalizes well.
The construction governs the bound, and the optimizer that reaches the weights leaves it unchanged.

The slack that separates the two is the computable divergence the identity names: the usual bound gives up exactly $\KL(\rho\|\pi_\lambda)$ against the Gibbs tilt, and the gulf between a vacuous and a tight certificate is the anchor.

% pooling benefit on a decorrelated ensemble (Corollary cor:pooling-benefit)

%
%
% Shipped legs, since 2026-08-17 (author direction): the paired single-bettor
%
% Held back for further work: the pairwise-decorrelation surrogate arm, the
% W=2 difference, the sensitivity-corner exceptions, and the ghat>1 spread in
%
\subsection{The pooling benefit survives a paired single-bettor control}
\label{sec:eval-pooling-benefit-decorrelated}

Two controls separate the pooling benefit of Corollary~\ref{cor:pooling-benefit} from optional-stopping leakage and from differences in model quality, on trained vision models.

The first is a single-bettor arm.
Every ensemble is run alongside $W=1$ on the same resampled paths, bet sizes and levels, so the reported quantity is the paired difference $\Delta = \widehat g_{1} - \widehat g_{W}$, zero by construction when pooling does nothing and positive only when the mixture sheds mass a single bettor does not.
The second is a panel of two disjoint pools of twelve checkpoints: twelve ResNet-50 checkpoints differing only in training recipe, and twelve distinct architecture families.
Twelve members allow $W\in\{1,2,4,8\}$ to be drawn without replacement inside each pool, so the two ensembles are genuinely different draws.
The pools are matched on clean accuracy to $0.013$ (means $0.686$ and $0.699$), so a difference between them is not a difference in model quality.
A third arm draws its members from the union of the two pools.
It enters the cell counts and the pooled masses reported below, and is excluded from Table~\ref{tab:eval-pooling-benefit-decorrelated}, which contrasts the two disjoint pools.

The stream is the $5{,}000$-image ImageNet-100 validation split; the alternative is the same images under a fixed corruption.
Each checkpoint bets its per-image predictive log-likelihood ratio against its own clean-stream calibration.
Across the panel, clean top-1 is $0.692$ and corrupted top-1 is $0.252$, and the least confident model still places $0.425$ of its softmax mass on the true class, so every bettor is reading a real signal.
Twenty stream orderings per cell, across $240$ cells.

A single bettor's crossing mass is $\widehat g_1 = 1.005$: at this bet size optional-stopping leakage is negligible, so the deficit at larger $W$ is attributable to the mixture; it is not inherited from the stopping rule.
The mixture's mass falls monotonically with ensemble size, $0.739$ at $W=2$, $0.590$ at $W=4$, $0.514$ at $W=8$, and $\Delta$ is positive in every one of the $180$ multi-member cells, with bootstrap intervals excluding zero in both pools at every $W$ (Table~\ref{tab:eval-pooling-benefit-decorrelated}).

At matched $W$ and matched accuracy the architecturally diverse pool sheds more mass than the recipe-diverse one: $+0.086$ at $W=4$ ($95\%$ CI $[0.047,
0.124]$) and $+0.123$ at $W=8$ (CI $[0.095, 0.151]$), which are the two ensemble sizes at which the contrast was specified in advance.

\begin{table}[h]
\centering
\caption{Both pools show a pooling benefit; the
architecturally diverse pool shows more of it at $W\ge 4$.  Paired excess deficit
$\Delta = \widehat g_1 - \widehat g_W$ by
pool and ensemble size, $20$ stream orderings per cell, $95\%$
bootstrap intervals.  The $W=1$ row is the control and is zero by
construction.}
\label{tab:eval-pooling-benefit-decorrelated}
\small
\begin{tabular}{l|ccc}
\hline
$W$ & within-family & cross-family & difference \\
\hline
$1$ & $0$ (control) & $0$ (control) & --- \\
$2$ & $0.241\ [0.217, 0.265]$ & $0.279\ [0.247, 0.315]$ & $+0.039\ [-0.002, 0.081]$ \\
$4$ & $0.363\ [0.340, 0.385]$ & $0.448\ [0.416, 0.480]$ & $+0.086\ [0.047, 0.124]$ \\
$8$ & $0.424\ [0.400, 0.449]$ & $0.547\ [0.532, 0.562]$ & $+0.123\ [0.095, 0.151]$ \\
\hline
\end{tabular}
\end{table}

At $W=1$ the geometric mixture of a single test martingale is that martingale, so the shed term $\E[A_{\tau^{(\alpha)}_x}]$ of~\eqref{eq:pooling-exact} is identically zero.
A member's increments are positive and have exact unit mean under the resampling null --- the clean stream itself, resampled --- so the member is a nonnegative martingale started at $1$.
The increments also vary, so the member has strictly negative expected log-increment and decays to $0$ along almost every path.
Theorem~\ref{thm:firstpassage-exact} asks for a nonnegative martingale started at $1$ that decays to $0$, and~\eqref{eq:fp-mass} then fixes the single-bettor crossing mass at $g_1 = 1$.
That mass is read at the value the path attains at the crossing, overshoot included, so the overshoot sits inside the conserved quantity.
The single-bettor arm estimates that mass at $1.005$ on average, so it is read here as establishing that leakage is negligible and not as a check on the bound itself.
The corruption is a single fixed family at one severity,
and the ensembles are public checkpoints that were not trained for this purpose, so public training recipes bound the disagreement available here.

\section{Deferred proofs}\label{app:proofs}
% ======================================================================

This appendix proves every result stated in the body and in Appendices~\ref{app:pooling-decomposition} and~\ref{app:curved-cts}, with one exception:
Proposition~\ref{prop:bdg-honest}, which is quoted from the literature at the citation given with it.
Proofs are grouped by the machinery they share, which in several places departs from the order in which the body states them; each proof still follows every result it draws on.

\begin{proof}[Proof of Theorem~\ref{thm:mci}]
Starting from the definition of relative entropy,
\begin{align*}
  \KL(p\|p^*_\alpha)
  &= \E_{X\sim p}\!\left[\log p(X) - \log p^*_\alpha(X)\right] \\
  &= \E_{X\sim p}\!\left[\log p(X)
     - \sum_i \alpha_i \log\pi_i(X) + \log Z(\alpha)\right]\\
  &= -H(p) + \sum_i \alpha_i\,H(p,\pi_i) + \log Z(\alpha).
\end{align*}
Rearranging and using $H(p,\pi_i) = \KL(p\|\pi_i) + H(p)$ yields \eqref{eq:mci}.
The variational form~\eqref{eq:mci-var} follows because $\KL(p\|p^*_\alpha) \geq 0$ with equality iff $p = p^*_\alpha$.
\end{proof}

\begin{proof}[Proof of Corollary~\ref{cor:DV}]
The Gibbs measure $Q^\star$ is a probability measure because $0 < \int g\,\dd\mu < \infty$, and is supported on $\{g>0\}$.
For any probability $Q \ll \mu$ with $Q(\{g=0\})=0$ (so $Q \ll Q^\star$),
\[
  \KL(Q\|Q^\star)
  = \E_Q\!\left[\log\frac{\dd Q/\dd\mu}{g/\int g\,\dd\mu}\right]
  = \KL(Q\|\mu) - \E_Q[\log g] + \log\!\int g\,\dd\mu.
\]
Rearranging yields~\eqref{eq:DV}.
Nonnegativity of $\KL(Q\|Q^\star)$ with equality iff $Q = Q^\star$ gives~\eqref{eq:DV-var};
a $Q$ with $Q(\{g=0\})>0$ has $\E_Q[\log g]=-\infty$ and so does not attain the supremum.
\end{proof}

\begin{proof}[Proof of Proposition~\ref{prop:multi-PAC}]
Set $\bar\alpha = 1$ in~\eqref{eq:mci}; the entropy term vanishes.
\end{proof}

\begin{proof}[Proof of Theorem~\ref{thm:OST}]
For each $n \geq 0$, the stopped process $(M_{\tau \wedge t})_t$ is a nonnegative supermartingale with $\E[M_{\tau \wedge n}] \leq M_0$
(discrete-time optional stopping at the bounded stopping time $\tau \wedge n$).
Since $M \geq 0$, Fatou's lemma gives $\E[M_\tau] = \E[\liminf_{n\to\infty} M_{\tau \wedge n}]
\leq \liminf_n \E[M_{\tau \wedge n}] \leq M_0$, the stated inequality.

For the equality cases assume $(M_t)$ is a martingale, so $\E[M_{\tau\wedge n}] = M_0$ for every $n$.  In case~(a), $\tau \wedge n
= \tau$ for $n$ large, so $\E[M_\tau] = M_0$ directly.  In case~(b),
uniform integrability of $\{M_{\tau\wedge n}\}_n$ upgrades the a.s.\
convergence $M_{\tau\wedge n} \to M_\tau$ to convergence in $L^1$, from which $\E[M_\tau] = \lim_n \E[M_{\tau\wedge n}] = M_0$.
Finiteness of $\E[M_\tau]$ does not by itself furnish this uniform integrability:
for the martingale $M_t = 2^t\,\ind\{X_1 = \dots = X_t = 1\}$ on i.i.d.\
fair bits, $\E[M_t] = 1$ for all $t$ but $M_t \to 0$ a.s., so the deterministic time $\tau = \infty$ has $\E[M_\tau] = 0 < 1 = M_0$ with $M_\tau$ integrable; here $\lim_n \E[M_{\tau\wedge n}] = 1 \neq 0 =
\E[M_\tau]$, exhibiting the failure of uniform integrability.  Downstream
arguments that need equality (not merely $\le$) invoke one of the sufficient conditions above; the equality is never used for a martingale that can leak mass to $\{\tau = \infty\}$.
\end{proof}

\begin{proof}[Proof of Theorem~\ref{thm:surv-factor}]
Write $G_t := P(\tau\geq t\mid\cF_t)$, so $H_t = P(\tau=t\mid\cF_t)/G_t$ and $P(\tau\geq t\mid\cF_{t-1}) = S_{t-1}$ (since $\{\tau\geq t\}=\{\tau>t-1\}$).
Parts~(i) and~(iv) are immediate from~\eqref{eq:clock-mart}.
Every ratio below is taken on $\{S_{t-1}>0\}$; on the complement $\{S_{t-1}=0\}$ the numerator $G_t$ vanishes almost surely as well, and \eqref{eq:clock-mart} reads the factor as $1$, so both sides of each displayed identity are unchanged there and the divisions are legitimate.
\emph{(ii)}~The one-step ratio $R_t^\tau := G_t/S_{t-1}$ satisfies $\E[R_t^\tau\mid\cF_{t-1}] = \E[G_t\mid\cF_{t-1}]/S_{t-1} = 1$ by the tower property, so $\Amart{\tau}_t = \Amart{\tau}_{t-1}R_t^\tau$ is a nonnegative $P$-martingale with $\Amart{\tau}_0 = 1$.
\emph{(iii)}~Since $1-H_t = P(\tau>t\mid\cF_t)/G_t = S_t/G_t$,
we have $1-\clock{\tau}_T = \prod_{t\leq T} S_t/G_t$, from which
\[
  (1-\clock{\tau}_T)\,\Amart{\tau}_T
  = \prod_{t=1}^T \frac{S_t}{G_t}\cdot\frac{G_t}{S_{t-1}}
  = \prod_{t=1}^T \frac{S_t}{S_{t-1}}
  = \frac{S_T}{S_0} = S_T,
\]
using $S_0 = P(\tau\geq 1\mid\cF_0) = 1$.
\emph{(v)}~As $\clock{\tau}_t - \clock{\tau}_{t-1} = (1-\clock{\tau}_{t-1})H_t$ and, by~(iii) at $t-1$,
$(1-\clock{\tau}_{t-1})\Amart{\tau}_t = S_{t-1}R_t^\tau = G_t$, the mass identity is $\Amart{\tau}_t(\clock{\tau}_t - \clock{\tau}_{t-1}) = G_t H_t = P(\tau = t\mid\cF_t)$.
\end{proof}

\begin{proof}[Proof of Theorem~\ref{thm:rep}]
By the tower property and the mass identity of Theorem~\ref{thm:surv-factor}(v),
\[
  \E[V_\tau]
  = \E\!\left[\sum_{t} V_t \ind\{\tau=t\}\right]
  = \E\!\left[\sum_t V_t P(\tau = t \mid \cF_t)\right]
  = \E\!\left[\sum_t V_t \Amart{\tau}_t (\clock{\tau}_t - \clock{\tau}_{t-1})\right].\qedhere
\]
\end{proof}

\begin{proof}[Proof of Theorem~\ref{thm:ville-tight}]
The first equality holds because $M_{\tau^\star} = \sup_{s\ge1} M_s$ by definition of $\tau^\star$, and $\sup_{t\ge0}M_t=\max(1,\sup_{s\ge1}M_s)$ exceeds a level $x>1$ exactly when $\sup_{s\ge1}M_s$ does.
For the second, optional stopping of the nonnegative martingale $M$ at $\sigma_x \wedge n$ with $\sigma_x := \inf\{t : M_t \geq x\}$ gives the upper bound $\PP(\sigma_x < \infty) \leq 1/x$ in general, the deficit being the mean overshoot $M_{\sigma_x} - x \geq 0$.
The no-overshoot hypothesis $M_{\sigma_x} = x$ upgrades this to the equality $\PP(\sigma_x < \infty) = 1/x$ (the classical Doob maximal identity, in which the running maximum is $M_0/U$ with $U \sim
\mathrm{Uniform}(0,1)$, Pareto$(1)$
\cite{nikeghbali2006doobs,quelques1972quelques}).
For the last statement, apply the no-overshoot hypothesis at rational levels only, and discard the countable union of the corresponding null sets.
Off that null set, suppose $\sup_t M_t>1$ and let $t^\ast:=\min\{t\ge1:M_t>1\}$; then $\max_{s<t^\ast}M_s=1$, so every rational $x\in(1,M_{t^\ast})$ has $\sigma_x=t^\ast$ and hence $M_{t^\ast}=x$, impossible for two distinct rationals.
Therefore $\sup_t M_t\le1$ a.s.; with $\E[M_t]=1$ and $M_t\ge0$ this forces $M\equiv1$, contradicting $M_t\to0$.
\end{proof}

\begin{proof}[Proof of Proposition~\ref{prop:maximal-conditional}]
\emph{(i)}  Fix $a>m$ and let $\sigma_a:=\inf\{u\ge0:M_u\ge a\}$.
Continuity of $M^\ast$ gives $M_{\sigma_a}=a$ on $\{\sigma_a<\infty\}$ and $M_u\le a$ for $u\le\sigma_a$, so the stopped process $M^{\sigma_a}$ is a bounded local martingale, hence a uniformly integrable martingale, and $\E[M_{\sigma_a\wedge n}]=m$ for every $n$.
Bounded convergence and $M_\infty=0$ send the left side to $a\,\PP(\sigma_a<\infty)+\E[M_\infty\ind\{\sigma_a=\infty\}]=a\,\PP(\sigma_a<\infty)$,
so $\PP(M^\ast_\infty\ge a)=\PP(\sigma_a<\infty)=m/a$.
The right side is continuous in $a$, so $\PP(M^\ast_\infty>a)=\lim_{b\downarrow
a}\PP(M^\ast_\infty\ge b)=m/a$.  For $a\le m$ the same limit along $b\downarrow
m$ gives $\PP(M^\ast_\infty>m)=1$, and $M^\ast_\infty\ge m\ge a$ then gives
$\PP(M^\ast_\infty>a)=1$, which is the first claim.  For $v\in(0,1)$,
$\PP(m/M^\ast_\infty<v)=\PP(M^\ast_\infty>m/v)=v$, so $m/M^\ast_\infty$ is uniform on $(0,1)$.

\emph{(ii)}  Set $\widetilde M_u:=M_{\tau+u}$ and $\widetilde\cF_u:=\cF_{\tau+u}$ for $u\ge0$.  Since $\tau<\infty$ a.s.,
$\widetilde M$ is a strictly positive local martingale for $(\widetilde\cF_u)_{u\ge0}$ with $\widetilde M_0=M_\tau>0$ and $\widetilde M_u\to0$ a.s.; since $M$ has no positive jumps, neither does $\widetilde M$, so its running supremum $\widetilde M^\ast_u=\sup_{v\le u}M_{\tau+v}$ is continuous, with ultimate value $M^\ast_{\ge\tau}$.
Continuity of the global $M^\ast$ alone would not suffice: a positive jump of $M$ below the standing record leaves $M^\ast$ continuous but makes $\widetilde M^\ast$ jump.
Running the argument of part~(i) on $\widetilde M$ with every expectation taken conditionally on $\widetilde\cF_0=\cF_\tau$ replaces $m$ by $M_\tau$ and gives~\eqref{eq:maximal-cond} for every $a>0$.
Hence the conditional law of $M^\ast_{\ge\tau}$ given $\cF_\tau$ is that of $M_\tau/U$ with $U$ uniform on $(0,1)$, so $M_\tau/M^\ast_{\ge\tau}=U$ has conditional law uniform on $(0,1)$; a conditional law free of the conditioning field is independence of it.
Taking logarithms, $\log(M^\ast_{\ge\tau}/M_\tau)=-\log U$ is standard exponential and independent of $\cF_\tau$, with mean $1$.
\end{proof}

\begin{proof}[Proof of Theorem~\ref{thm:firstpassage-exact}]
Optional stopping at the bounded time $\sigma_x\wedge n$ gives $\E[M_{\sigma_x\wedge n}]=1$, i.e.
$\E[M_{\sigma_x}\ind\{\sigma_x\le n\}]+\E[M_n\ind\{\sigma_x>n\}]=1$.
The first term increases to $\E[M_{\sigma_x}\ind\{\sigma_x<\infty\}]$ by monotone convergence.
For the second, $M_n<x$ on $\{\sigma_x>n\}$ and $M_n\ind\{\sigma_x=\infty\}\to M_\infty=0$ a.s.\ dominated by $x$, so it tends to $0$; \eqref{eq:fp-mass} follows.
Writing $M_{\sigma_x}=x+J_x$ in \eqref{eq:fp-mass} gives $x\,\PP(\sigma_x<\infty)+\E[J_x\ind\{\sigma_x<\infty\}]=1$,
which is~\eqref{eq:fp-exact}.
\end{proof}

\begin{proof}[Proof of Lemma~\ref{lem:chain-rule}]
$\KL(Q\|P) = \E_Q[\log(\dd Q / \dd P)] = \E_Q[\sum_t \log R_t]
= \sum_t \E_Q[\log R_t]$.  Conditioning on $X_{1:t-1}$,
$\E_Q[\log R_t \mid X_{1:t-1}] = \KL(Q_t \| P_t)$ evaluated at the realized history, from which the conditional-divergence form.
\end{proof}

\begin{proof}[Proof of Theorem~\ref{thm:pathspace}]
Apply Corollary~\ref{cor:DV} (finite-measure DV) with $\mu = P|_{\cF_T}$ and $g = \prod_i \Pi_{i,T}^{\alpha_i}$; $\log g = \sum_i \alpha_i \log
\Pi_{i,T}$, and its $\KL(Q\|Q^\star)$ term is $\KL(Q\|Q_T^\alpha)$, giving the
identity~\eqref{eq:pathspace} for every $Q$; the optimizer is the Gibbs law~\eqref{eq:Qtstar}.
\end{proof}

\begin{proof}[Proof of Theorem~\ref{thm:seqMCI}]
The substitution $\Pi_{t,T} = R_t$ in Theorem~\ref{thm:pathspace} is admissible, the $R_t$ being strictly positive and $\cF_T$-measurable, and $\log L_T^{(\alpha)} = \sum_t \alpha_t \log R_t$ identifies the linear term.
\end{proof}

\begin{proof}[Proof of Corollary~\ref{cor:seqMCI-cond}]
Substitute Lemma~\ref{lem:chain-rule} into the variational form of~\eqref{eq:seqMCI}; the conditional form of $R^*_\alpha$ follows from $\dd R^*_\alpha / \dd P \propto \prod_t (q_t/p_t)^{\alpha_t}$.
\end{proof}

\begin{proof}[Proof of Proposition~\ref{prop:grad}]
Differentiating $Z_T(\alpha) = \E_P[\exp(\sum_k \alpha_k \log
\Pi_{k,T})]$ gives the expectation identity under the tilted measure
$Q_T^\alpha$; a second differentiation gives the covariance.
Positive semidefiniteness of covariance implies convexity.
\end{proof}

\begin{proof}[Proof of Proposition~\ref{prop:coalescence}]
For each $i$, let $\{X_i^{(m)}\}_{m=1}^{\alpha_i}$ be independent samples from $P_t^{(i)}$, independent also across $i$.
The event $\bigcap_i \bigcap_m \{X_i^{(m)} = \gamma\}$ has probability $\prod_i P_t^{(i)}(\gamma)^{\alpha_i}$; summing over $\gamma$ gives the result.
\end{proof}

\begin{proof}[Proof of Theorem~\ref{thm:predictive}]
Extending a prefix $\gamma$ by one state $x$,
\[
  Z_{t+1}(\alpha)
  = \sum_\gamma \prod_i P_t^{(i)}(\gamma)^{\alpha_i}
     \sum_x \prod_i P(X_{t+1}=x \mid \gamma; P^{(i)})^{\alpha_i}
  = Z_t(\alpha) \sum_\gamma Q_t^\alpha(\gamma)\kappa_t^\alpha(\gamma),
\]
giving~\eqref{eq:predictive1}.  For nonnegativity of $\cC_\alpha(t+1) -
\cC_\alpha(t)$, observe that with $\bar\alpha = \sum_i \alpha_i \geq 1$ and
$\widetilde\alpha_i = \alpha_i/\bar\alpha$, and $p_i(x) \in [0,1]$,
$\prod_i p_i(x)^{\alpha_i} = \bigl(\prod_i p_i(x)^{\widetilde\alpha_i}
\bigr)^{\bar\alpha} \leq \prod_i p_i(x)^{\widetilde\alpha_i}$.
By H\"older on the simplex,
$\kappa_t^\alpha(\gamma) \leq \sum_x \prod_i p_i(x)^{\widetilde\alpha_i}
\leq \prod_i (\sum_x p_i(x))^{\widetilde\alpha_i} = 1$.
Taking logs yields~\eqref{eq:predictive2}.
\end{proof}

\begin{proof}[Proof of Proposition~\ref{prop:markov}]
Multiplying Markov densities yields the first identity.
For the asymptotic, Perron--Frobenius gives positive left/right eigenvectors $u,v$ with eigenvalue $r(T_\alpha) > 0$ and $T_\alpha^t = r(T_\alpha)^t (v u^\top + o(1))$.
Since $\mu_\alpha$ and $\mathbf{1}$ are nonnegative and nonzero, $\langle \mu_\alpha, v\rangle
> 0$ and $\langle u, \mathbf{1}\rangle > 0$, so
$Z_t(\alpha) = r(T_\alpha)^t(\langle\mu_\alpha,v\rangle\langle u,\mathbf{1}
\rangle + o(1))$; dividing by $t$ and taking logs gives the stated spectral-radius limit.
\end{proof}

\begin{proof}[Proof of Theorem~\ref{thm:master}]
Each one-step factor of~\eqref{eq:doleans} has conditional mean one: $\psi_s(\lambda)$ is $\cF_{s-1}$-measurable, so $\E[e^{\lambda d_s - \psi_s(\lambda)} \mid \cF_{s-1}] = e^{-\psi_s(\lambda)}\,\E[e^{\lambda d_s} \mid \cF_{s-1}] = 1$.
Hence $\E[\cE_t(\lambda) \mid \cF_{t-1}] = \cE_{t-1}(\lambda)$, and iterating from $\cE_0(\lambda) = 1$ gives $\E_P[\cE_t(\lambda)] = 1$, i.e.\ $\log\E_P[\cE_t(\lambda)] = 0$.
Apply Theorem~\ref{thm:pathspace} (equivalently Corollary~\ref{cor:DV}) on $(\Omega, \cF_t, P)$ with the single factor $\Pi_t = \cE_t(\lambda)$: for every $Q \ll P|_{\cF_t}$,
$\log \E_P[\cE_t(\lambda)] = \lambda\,\E_Q[Y_t] - \E_Q[\Psi_t(\lambda)] - \KL(Q\|P|_{\cF_t}) + \KL(Q\|Q^\star_t)$ with $\dd Q^\star_t/\dd P \propto \cE_t(\lambda)$.
The left-hand side is zero, which is~\eqref{eq:master-identity}.
\end{proof}

\begin{proof}[Proof of Proposition~\ref{prop:supermart-relax}]
(i) With $\Delta D_t(\lambda) := D_t(\lambda) - D_{t-1}(\lambda) \geq 0$ predictable,
$\E[E_t(\lambda) \mid \cF_{t-1}] = E_{t-1}(\lambda)\,e^{-\Delta D_t(\lambda)}\,\E[e^{\lambda d_t - \psi_t(\lambda)} \mid \cF_{t-1}] = E_{t-1}(\lambda)\,e^{-\Delta D_t(\lambda)} \leq E_{t-1}(\lambda)$,
so $E(\lambda)$ is a nonnegative supermartingale with $E_0(\lambda) = 1$.
Since $\dd Q^\star_t/\dd P = \cE_t(\lambda)$ (unit mean by Theorem~\ref{thm:master}),
$\E_P[E_t(\lambda)] = \E_P[\cE_t(\lambda)\,e^{-D_t(\lambda)}] = \E_{Q^\star_t}[e^{-D_t(\lambda)}] \leq 1$, which is~\eqref{eq:relax-deficit}.

(ii) Substituting $\E_Q[\overline\Psi_t(\lambda)] = \E_Q[\Psi_t(\lambda)] + \E_Q[D_t(\lambda)]$ into~\eqref{eq:master-identity} gives $\KL(Q\|P|_{\cF_t}) - \bigl(\lambda\,\E_Q[Y_t] - \E_Q[\overline\Psi_t(\lambda)]\bigr) = \KL(Q\|Q^\star_t) + \E_Q[D_t(\lambda)] \geq 0$, which is~\eqref{eq:master} with the stated slack.

(iii) Apply Corollary~\ref{cor:DV} on $(\Omega, \cF_t, P)$ with $g = E_t(\lambda)$:
$\log \E_P[E_t(\lambda)] = \lambda\,\E_Q[Y_t] - \E_Q[\overline\Psi_t(\lambda)] - \KL(Q\|P|_{\cF_t}) + \KL(Q\|\overline Q^\star_t)$ for every $Q \ll P|_{\cF_t}$, with $\dd \overline Q^\star_t/\dd P \propto E_t(\lambda)$.
The supermartingale property and $E_0(\lambda) \leq 1$ give $\E_P[E_t(\lambda)] \leq 1$, so the left-hand side is $\leq 0$ and~\eqref{eq:master} follows with slack $\KL(Q\|\overline Q^\star_t) - \log\E_P[E_t(\lambda)]$.
\end{proof}

\begin{proof}[Proof of Corollary~\ref{cor:event}]
Apply Corollary~\ref{cor:DV} on $(\Omega,\cF_t,P)$ with $g = E_t(\lambda)$, which is strictly positive, and $Q = P(\cdot \mid A)$.
Then $\E_Q[\log g] = \lambda\E[Y_t \mid A] - \E[\overline\Psi_t(\lambda) \mid A]$ and $\KL(Q\|P|_{\cF_t}) = -\log P(A)$, and~\eqref{eq:DV} rearranges to~\eqref{eq:event}.
The first residual is nonnegative because it is a relative entropy, the second because $\E_P[E_t(\lambda)] \leq 1$ by the supermartingale property with $E_0(\lambda) \leq 1$; discarding both gives~\eqref{eq:event-bound}.
\end{proof}

\begin{proof}[Proof of Corollary~\ref{cor:ville}]
Set $\tau_x := \inf\{t \geq 0 : E_t \geq e^x\}$.
Optional stopping at $\tau_x \wedge n$ gives $\E[E_{\tau_x \wedge n}] \leq E_0 \leq 1$.
On $\{\tau_x \leq n\}$, $E_{\tau_x \wedge n} \geq e^x$, so $e^x \PP(\tau_x \leq n) \leq 1$, and $n \to \infty$ gives $\PP(\tau_x < \infty) \leq e^{-x}$.
Since $\{\sup_t E_t \geq e^x\} \subseteq \{\tau_y < \infty\}$ for the same construction at every level $e^y$ with $y < x$, letting $y \uparrow x$ gives Ville's inequality.
The line-crossing consequence is immediate.
\end{proof}

\begin{proof}[Proof of the conditional Hoeffding lemma of Section~\ref{sec:cumulant-majorants}]
Since $\E[X\mid\cG] = 0$ and $a \leq X \leq b$ a.s., necessarily $a \leq 0 \leq b$.
By convexity of $x \mapsto e^{\lambda x}$,
\[
  e^{\lambda X}
  \leq \frac{b - X}{b - a}e^{\lambda a}
       + \frac{X - a}{b - a}e^{\lambda b}.
\]
Taking $\cG$-conditional expectations and using $\E[X\mid\cG] = 0$,
\[
  \E[e^{\lambda X}\mid\cG]
  \leq \frac{b}{b-a}e^{\lambda a} - \frac{a}{b-a}e^{\lambda b}.
\]
Set $h := \lambda(b-a)$ and $u := -a/(b-a) \in [0,1]$; the right-hand side becomes $(1-u)e^{-uh} + u e^{(1-u)h}$.
Define $f(h) := \log\bigl((1-u)e^{-uh} + u e^{(1-u)h}\bigr)$.
A direct computation gives $f(0) = f'(0) = 0$ and
\[
  f''(h) = \frac{u(1-u) e^h}{\bigl((1-u) + u e^h\bigr)^2} \leq \tfrac14,
\]
using $x/(1+x)^2 \leq 1/4$ for $x \geq 0$.
Taylor's theorem gives $f(h) \leq h^2/8$, and exponentiating yields $\E[e^{\lambda X}\mid\cG] \leq \exp(\lambda^2(b-a)^2/8)$.
\end{proof}

\begin{proof}[Proof of Theorem~\ref{thm:azuma}]
Write $U_t := \tfrac{1}{8}\sum_{s \leq t}(b_s - a_s)^2$.
The conditional Hoeffding lemma gives the cumulant majorant $\overline\Psi_t(\lambda) = \lambda^2 U_t$, so the bound branch~\eqref{eq:event-bound} of Corollary~\ref{cor:event} on $A = \{M_t - M_0 \geq x\}$ yields $\PP(A) \leq \exp(-\lambda x + \lambda^2 U_t)$; optimizing at $\lambda^* = x/(2U_t)$ gives~\eqref{eq:azuma1}, and the same tilt in Corollary~\ref{cor:ville} gives the line-crossing form~\eqref{eq:azuma2}.
The exact form~\eqref{eq:azuma-exact} is Corollary~\ref{cor:DV} with $g = \ind_A$ (so $\log P(A) = -\KL(P(\cdot|A)\|P)$) followed by the chain rule for relative entropy (Lemma~\ref{lem:chain-rule}).
\end{proof}

\begin{proof}[Proof of Lemma~\ref{lem:bernstein}]
We first prove the scalar inequality
\begin{equation}\label{eq:scalar-bernstein}
  e^u \leq 1 + u + \frac{u^2}{2(1 - u/3)},
  \qquad u < 3.
\end{equation}
Since $1 - u/3 > 0$ throughout $u < 3$, multiplying~\eqref{eq:scalar-bernstein} through by it leaves an equivalent inequality, $T(u) := 1 + \tfrac{2u}{3} + \tfrac{u^2}{6} - (1 - u/3)e^u \geq 0$.
Here $T(0) = T'(0) = 0$ and $T''(u) = \tfrac13\bigl(1 - (1-u)e^u\bigr) \geq 0$ at every real $u$,
since $(1-u)e^u \leq 1$; thus $T$ is convex with a global minimum of $0$ at the origin, and~\eqref{eq:scalar-bernstein} holds on the whole range.
Now apply~\eqref{eq:scalar-bernstein} with $u = \lambda X$; since $X \leq b$ and $0 \leq \lambda < 3/b$, $\lambda X < 3$ a.s., so
\[
  e^{\lambda X}
  \leq 1 + \lambda X + \frac{\lambda^2 X^2}{2(1 - \lambda X /3)}
  \leq 1 + \lambda X + \frac{\lambda^2 X^2}{2(1 - \lambda b/3)}.
\]
Conditional expectations and $\E[X\mid\cG] = 0$ give $\E[e^{\lambda X}\mid\cG] \leq 1 + \frac{\lambda^2}{2(1 - \lambda b/3)}
\E[X^2\mid\cG]$; finally $1 + z \leq e^z$
yields~\eqref{eq:bernstein-lemma}.
\end{proof}

\begin{proof}[Proof of Proposition~\ref{prop:freedman}]
Lemma~\ref{lem:bernstein} gives the conditional cumulant majorant $\overline\Psi_t(\lambda) = \frac{\lambda^2}{2(1-\lambda b/3)}\E[d_t^2\mid\cF_{t-1}]$,
so $E_t(\lambda)$ is a nonnegative supermartingale and, on $A_{x,v}$,
Corollary~\ref{cor:ville} bounds $\PP(A_{x,v})$ by $\exp(-\lambda x + \frac{\lambda^2}{2(1-\lambda b/3)}v)$; evaluating at the classical tilt $\lambda^\star = x/(v + bx/3)$, admissible since $\lambda^\star < 3/b$, yields~\eqref{eq:freedman}.
\end{proof}

\begin{proof}[Proof of Proposition~\ref{prop:mixture}]
The supermartingale property follows from Tonelli's theorem.
Applying the one-factor finite-measure DV identity (Corollary~\ref{cor:DV}) on $(\Lambda,\mu)$ to $\lambda \mapsto E_t(\lambda,\omega)$ pointwise in $\omega$ gives the identity~\eqref{eq:mixturepathwise} with $\dd\rho^*_t/\dd\mu \propto E_t(\cdot,\omega)$; dropping $\KL(\rho_t\|\rho^*_t) \ge 0$ gives the lower bound, and Ville's inequality applied to $\overline E_t$ yields~\eqref{eq:mixturePAC}.
\end{proof}

\begin{proof}[Proof of Proposition~\ref{prop:mixture-exact}]
Fix $t$.  If $\overline E_t = 0$ then $E_t(\lambda) = 0$ for $\mu$-almost every $\lambda$, hence for $\rho_t$-almost every $\lambda$ since $\rho_t\ll\mu$, so $\int_\Lambda\log E_t\,\dd\rho_t = -\infty$ and the defining inequality of $\cA_x$ fails at that $t$; the same $t$ is excluded from $\cV_x$ because $e^x>1>0$.
At every $t$ with $\overline E_t > 0$,
rearranging~\eqref{eq:mixturepathwise} gives
\[
  \int_\Lambda \log E_t(\lambda)\,\rho_t(\dd\lambda) - \KL(\rho_t\|\mu)
  = \log\overline E_t - \KL(\rho_t\|\rho^*_t),
\]
so the defining inequality of $\cA_x$ at $t$ reads $\log\overline E_t - \KL(\rho_t\|\rho^*_t) \geq x$.
Since $\KL(\rho_t\|\rho^*_t)\geq0$ that forces $\overline E_t \geq e^x$, from which $\cA_x\subseteq \cV_x$; negating the same inequality at every $t$ gives the displayed form of $\cV_x\setminus \cA_x$, and $\PP(\cA_x) = \PP(\cV_x) -
\PP(\cV_x\setminus \cA_x)$ is additivity on the disjoint union
$\cV_x = \cA_x \sqcup (\cV_x\setminus \cA_x)$.
When $\rho_t \equiv \rho^*_t$ the residual $\KL(\rho_t\|\rho^*_t)$ vanishes identically and the two events coincide.
Under the added hypotheses $\overline E$ is a nonnegative martingale with $\overline E_0 = 1$ and $\overline E_t\to0$ a.s., so $\cV_x = \{\sigma<\infty\}$ for the first passage $\sigma$ to the level $e^x > 1$, and Theorem~\ref{thm:firstpassage-exact} at that level gives $\PP(\cV_x) = e^{-x}\bigl(1 - \E[J\ind\{\sigma<\infty\}]\bigr)$, which is~\eqref{eq:mixture-exact-mart}.
Both residuals are nonnegative, and $\E[J\ind\{\sigma<\infty\}] = 0$ with $J\ge0$ forces $J = 0$ a.s.\ on $\{\sigma<\infty\}$, so equality in~\eqref{eq:mixturePAC} holds exactly when each vanishes.
\end{proof}

\begin{proof}[Proof of Lemma~\ref{lem:sigma-family}]
Take first $f\in C^1$.  Integration by parts for the product of the continuous finite-variation process $f(A_t)$ with the semimartingale $X=N+A$ gives
\[
  f(A_t)X_t
  =\int_0^t f(A_u)\,\dd N_u
   +\int_0^t f(A_u)\,\dd A_u
   +\int_0^t f'(A_u)\,X_u\,\dd A_u
\]
there being no bracket term because $f\circ A$ is continuous of finite variation.
The measure $\dd A$ is carried by $\{X=0\}$, so the third integral vanishes.
Change of variables along the continuous nondecreasing $A$ with $A_0=0$ evaluates the second as $\int_0^{A_t}f(z)\,\dd z$.
Rearranging gives~\eqref{eq:sigma-family}, whose right-hand side is a stochastic integral of a locally bounded progressively measurable integrand against a local martingale, hence a local martingale null at $0$.
Both sides of~\eqref{eq:sigma-family} are linear in $f$ and stable under bounded pointwise convergence with uniformly bounded primitives, and the $C^1$ functions generate the locally bounded Borel functions under such limits, so a monotone class argument extends the identity to every locally bounded Borel $f$~\cite{Nikeghbali2006essay}.

For the converse, take $f\equiv1$: then $C_t-X_t$ is a local martingale, so $C$ is the increasing process of the Doob--Meyer decomposition of $X$ and $C=A$.
Take next $f(z)=2z$, so that $A_t^2-2A_tX_t$ is a local martingale.
Integration by parts and $\dd(A^2)=2A\,\dd A$ give
\[
  A_t^2-2A_tX_t
  =2\int_0^t A_u\,(\dd A_u-\dd X_u)-2\int_0^t X_u\,\dd A_u
\]
whose first term is already a local martingale, since $\dd A-\dd X=-\dd N$.
Hence $\int_0^t X_u\,\dd A_u$ is a continuous nondecreasing local martingale null at $0$, so it vanishes identically, and $\dd A$ is carried by $\{X=0\}$~\cite{Nikeghbali2006essay}.
\end{proof}

\begin{proof}[Proof of Theorem~\ref{thm:curved-crossing}]
\emph{Reduction.}  For Borel $\psi\ge\varphi$ pointwise, the crossing event for $\psi$ is contained in the crossing event for $\varphi$.
For $\epsilon>0$ and $n\in\N$ let $\varphi_{\epsilon,n}:=\varphi\vee\epsilon$ on $[0,n)$ and $:=+\infty$ on $[n,\infty)$, so that $1/\varphi_{\epsilon,n}\le1/\epsilon$ is bounded and $\int_0^\infty\varphi_{\epsilon,n}(x)^{-1}\dd x\le n/\epsilon$ is finite.
These functions decrease to $\varphi$ as $\epsilon\downarrow0$ and $n\to\infty$, so the crossing events increase; their union is the crossing event for $\varphi$, because $X_t>\varphi(A_t)>0$ forces $\varphi_{\epsilon,n}(A_t)=\varphi(A_t)\vee\epsilon<X_t$ once $\epsilon<X_t$ and $n>A_t$.
Monotone convergence of probabilities on the left and of $1/\varphi_{\epsilon,n}\uparrow1/\varphi$ on the right, the latter sending the divergent case to the value $1$, reduce~\eqref{eq:curved-crossing} to the case in which $1/\varphi$ is bounded with $\int_0^\infty\varphi(x)^{-1}\dd x$ finite.  Assume that from here on.

\emph{The martingale.}  Set
\[
  \zeta(x):=1-\exp\Bigl(-\int_x^\infty\varphi(z)^{-1}\dd z\Bigr),
  \qquad
  f(x):=-\varphi(x)^{-1}\bigl(1-\zeta(x)\bigr)
\]
Then $\zeta$ is continuous and nonincreasing with values in $[0,1)$, has Lebesgue derivative $f$, satisfies $\zeta(x)\to0$ as $x\to\infty$, and obeys the pointwise relation $-f(x)\varphi(x)=1-\zeta(x)$.
The function $f$ is Borel with $|f|\le1/\varphi$ bounded, so Lemma~\ref{lem:sigma-family} applies with $\int_0^{A_t}f(z)\,\dd z=\zeta(A_t)-\zeta(0)$ and
\[
  M_t:=\zeta(A_t)-f(A_t)X_t
     =\zeta(A_t)+X_t\,\varphi(A_t)^{-1}\bigl(1-\zeta(A_t)\bigr)
\]
is a local martingale with $M_0=\zeta(0)=1-\exp\bigl(-\int_0^\infty\varphi(x)^{-1}\dd x\bigr)\in(0,1]$.
Every term on the right is nonnegative, so $M\ge0$.

\emph{The running supremum of $M$ is continuous.}  By~\eqref{eq:sigma-family},
$M_t=M_0-\int_0^t f(A_u)\,\dd N_u$, so $\Delta M_t=-f(A_t)\,\Delta N_t$ at every $t$.
The process $A$ is continuous, so $\Delta N_t=\Delta X_t\le0$ by hypothesis, and $-f\ge0$; hence $\Delta M_t\le0$.
A c\`adl\`ag process with no positive jumps has a continuous running supremum.
The argument uses only the sign of $f$ and the continuity of $A$, so no regularity of $\varphi$ beyond measurability enters.

\emph{$M_\infty=0$.}  Since $A_t\to\infty$, the inverse $A^{-1}_u:=\inf\{t:A_t>u\}$ is finite for every $u$ and increases to $\infty$.
Continuity of $A$ gives $A_{A^{-1}_u}=u$, and for $t_n\downarrow A^{-1}_u$ with $A_{t_n}>u$ the measure $\dd A$ charges $(A^{-1}_u,t_n]$, so that interval meets $\{X=0\}$; right-continuity of $X$ then gives $X_{A^{-1}_u}=0$ and $M_{A^{-1}_u}=\zeta(u)$.
A nonnegative local martingale is a supermartingale and converges a.s., and $\zeta(u)\to0$, so the limit is $M_\infty=0$.

\emph{The crossing event.}  Fix $t$.
If $X_t>\varphi(A_t)$ then $\varphi(A_t)<\infty$, hence $f(A_t)<0$, and $M_t>\zeta(A_t)-f(A_t)\varphi(A_t)=\zeta(A_t)+\bigl(1-\zeta(A_t)\bigr)=1$.
Conversely, if $M_t>1$ then $f(A_t)\ne0$, since $f(A_t)=0$ would give $M_t=\zeta(A_t)\le1$; dividing $-f(A_t)X_t>1-\zeta(A_t)=-f(A_t)\varphi(A_t)$ by $-f(A_t)>0$ gives $X_t>\varphi(A_t)$.
Therefore $\{\exists\,t\ge0:X_t>\varphi(A_t)\}=\{\sup_t M_t>1\}$.

\emph{Evaluation.}  The process $M$ is a nonnegative local martingale with $M_0>0$, $M_\infty=0$ and continuous running supremum, so Proposition~\ref{prop:maximal-conditional}(i) at the level $a=1\ge M_0$ gives $\PP(\sup_t M_t>1)=(M_0/1)\wedge1=M_0$, which is~\eqref{eq:curved-crossing}.

\emph{The clock-indexed form.}  Fix $u>0$ and put $\varphi_u:=\varphi$ on $[0,u)$ and $\varphi_u:=+\infty$ on $[u,\infty)$, a Borel function with $\int_0^\infty\varphi_u(x)^{-1}\dd x=\int_0^u\varphi(x)^{-1}\dd x$.
Since $X_t>\varphi_u(A_t)$ forces $A_t<u$, the crossing event for $\varphi_u$ is the event of~\eqref{eq:curved-crossing-u}, and applying~\eqref{eq:curved-crossing} to $\varphi_u$ gives~\eqref{eq:curved-crossing-u}.
\end{proof}

\begin{proof}[Proof of Theorem~\ref{thm:curved-crossing-discrete}]
\emph{The process and the crossing event.}  Since $I<\infty$ the function $w$ takes values in $(0,1]$, so $\zeta=1-w$ takes values in $[0,1)$ and $h=w/\varphi\ge0$; hence $M_t=\zeta(A_t)+h(A_t)X_t\ge0$, and $A_0=X_0=0$ gives $M_0=\zeta(0)=1-e^{-I}$.
Fix $t$ with $\varphi(A_t)<\infty$, so $h(A_t)>0$.
Then $M_t>1$ iff $h(A_t)X_t>1-\zeta(A_t)=w(A_t)$, and dividing by $h(A_t)$ gives $X_t>w(A_t)/h(A_t)=\varphi(A_t)$.
Where $\varphi(A_t)=\infty$ neither side can hold, since there $h(A_t)=0$ and $M_t=\zeta(A_t)\le1$.
This is~\eqref{eq:discrete-crossing-event}, pointwise: no hypothesis on the increments enters.

\emph{The decomposition.}  Write $a:=A_{s-1}$.
If $\Delta A_s=0$ then $A_s=a$ and $\Delta X_s=\Delta N_s$, so $\Delta M_s=h(a)\Delta X_s=h(a)\Delta N_s$,
while $R_s=h(a)\cdot0-(w(a)-w(a))=0$.
If $\Delta A_s>0$ then $X_s=0$ by the class-$(\Sigma)$ hypothesis, so, using $\zeta=1-w$,
\[
  \Delta M_s=\bigl(\zeta(A_s)-\zeta(a)\bigr)-h(a)X_{s-1}
            =-\bigl(w(A_s)-w(a)\bigr)-h(a)X_{s-1}.
\]
On the other side $\Delta N_s=\Delta X_s-\Delta A_s=-X_{s-1}-\Delta A_s$, so
\[
  h(a)\Delta N_s+R_s
  =-h(a)X_{s-1}-h(a)\Delta A_s
   +h(a)\Delta A_s-\bigl(w(A_s)-w(a)\bigr),
\]
the two $h(a)\Delta A_s$ terms canceling to leave the same quantity.
Summing gives~\eqref{eq:discrete-crossing-decomp}.
Each $h(A_{s-1})$ is $\cF_{s-1}$-measurable and bounded, and $N$ is a martingale, so each summand is integrable and the first sum is a martingale.  Since $w'=h$,
$R_s=h(a)\Delta A_s-\int_a^{A_s}h(z)\dd z=\int_a^{A_s}\bigl(h(a)-h(z)\bigr)\dd z$, from which $|R_s|\le\Delta A_s\sup_{z\in[a,A_s]}|h(a)-h(z)|$, bounded by $\Delta A_s$ times the variation of $h$ on $[a,A_s]$.
Those intervals are adjacent and disjoint, so their variations sum to at most $\mathrm{var}(h)$, giving $\sum_s|R_s|\le\|\Delta A\|_\infty\,\mathrm{var}(h)$.

\emph{The law.}  Optional stopping on the martingale part of~\eqref{eq:discrete-crossing-decomp} at the bounded time $T\wedge n$ gives $\E[M_{T\wedge n}]=M_0+\E\bigl[\sum_{s\le T\wedge n}R_s\bigr]$.
Before the crossing $X_t\le\varphi(A_t)$, so $h(A_t)X_t\le w(A_t)$ and $M_t\le\zeta(A_t)+w(A_t)=1$; with $C:=\|\Delta A\|_\infty\,\mathrm{var}(h)<\infty$ the stopped identity splits as $\E[(1+J)\ind\{T\le n\}]+\E[M_n\ind\{T>n\}]\le M_0+C$, and since the second term is nonnegative, monotone convergence makes $J\ind\{T<\infty\}$ integrable.
Let $n\to\infty$: the first term increases to $\PP(T<\infty)+\E[J\ind\{T<\infty\}]$, the second vanishes by dominated convergence ($M_n\le1$ on $\{T>n\}$ and $M_n\to0$ on $\{T=\infty\}$), and the right side converges to $M_0+\E\bigl[\sum_{s\le T}R_s\bigr]$ by dominated convergence with dominator $\sum_s|R_s|\le C$.
Hence $\PP(T<\infty)+\E[J\ind\{T<\infty\}]=M_0+\E\bigl[\sum_{s\le T}R_s\bigr]$, which rearranges to~\eqref{eq:discrete-crossing-law}.
\end{proof}

\begin{proof}[Proof of Theorem~\ref{thm:pacbayes}]
Apply Proposition~\ref{prop:mixture} with parameter space $\Theta$, prior $\mu = \pi$, and $E_t(\theta) = \exp(\lambda Y_t^\theta -
\overline\Psi_t^\theta(\lambda))$ at the fixed $\lambda$: each $E_t(\theta)$ is a
nonnegative supermartingale with $E_0(\theta)\le1$, so $\overline E_t = \int_\Theta E_t(\theta)\,\pi(\dd\theta)$ is one as well.
Dropping the nonnegative residual in~\eqref{eq:mixturepathwise} at the adapted posterior $\rho_t$ gives $\log\overline E_t \ge \lambda\,\E_{\rho_t}[Y_t^\theta] -
\E_{\rho_t}[\overline\Psi_t^\theta(\lambda)] - \KL(\rho_t\|\pi)$,
so the event in~\eqref{eq:pacbayes-bound} is contained in $\{\exists\,t : \overline E_t \ge e^x\}$, whose probability Ville's inequality (Corollary~\ref{cor:ville}) bounds by $e^{-x}$.
\end{proof}

\begin{proof}[Proof of Theorem~\ref{thm:ville-DV}]
Let $A := \{\sup_{t} M_t \geq \lambda\}$.
Applying Corollary~\ref{cor:DV} with $g = \ind_A$,
\[
  \log P(A)
  = \max_{Q : Q(A) = 1}\{-\KL(Q\|P)\}
  = -\min_{Q : Q(A) = 1}\KL(Q\|P),
\]
with minimizer $Q^* = P(\cdot \mid A)$; this is~\eqref{eq:ville-exact}.

For the inequality $P(A) \leq 1/\lambda$, the classical route proceeds independently of the DV variational identity.
Fix $\mu < \lambda$ and define the stopping time $\sigma_\mu := \inf\{t : M_t \geq \mu\}$; then $A \subseteq
\{\sigma_\mu < \infty\}$, a supremum of at least $\lambda$ exceeding $\mu$ at some finite time.  Optional stopping
(Theorem~\ref{thm:OST}) on the nonnegative supermartingale $M$ at $\sigma_\mu \wedge n$ gives $\E[M_{\sigma_\mu \wedge n}] \leq M_0 = 1$; since $M \geq 0$ and $M_{\sigma_\mu} \geq \mu$ on $\{\sigma_\mu \leq n\}$,
\[
  \mu\,\PP(\sigma_\mu \leq n)
  \leq \E[M_{\sigma_\mu}\ind\{\sigma_\mu \leq n\}]
  \leq \E[M_{\sigma_\mu \wedge n}] \leq 1.
\]
Sending $n \to \infty$ gives $\PP(A) \leq \PP(\sigma_\mu < \infty) \leq 1/\mu$, and letting $\mu \uparrow \lambda$ yields $\PP(A) \leq 1/\lambda$.
Combined with \eqref{eq:ville-exact}, this gives the quantitative information bound $\KL(P(\cdot|A)\|P) \geq \log\lambda$ (with equality iff the classical Ville bound is tight).
\end{proof}

\begin{proof}[Proof of Theorem~\ref{thm:doob}]
This is the classical Doob $L^p$ maximal inequality; we record the route.
With $M^* := \sup_{t \leq T} M_t$, Doob's maximal lemma $\PP(M^* \geq \lambda) \leq \lambda^{-1}\E[M_T\,\ind\{M^* \geq \lambda\}]$
(optional stopping at $\inf\{t : M_t \geq \lambda\}$) and the layer-cake formula give $\E[(M^*)^p] \leq \tfrac{p}{p-1}\,\E[M_T\,(M^*)^{p-1}]$;
H\"older with exponents $p$ and $p/(p-1)$ then yields the bound of Theorem~\ref{thm:doob}.
\end{proof}

\begin{proof}[Proof of Proposition~\ref{prop:ay-certificate}]
(i) Fix $t$ and write $y:=M^\ast_{t-1}$, so that $M^\ast_t=\max(y,M_t)$.
If $M_t\le y$ the running maximum does not move, $M^\ast_t=y$, and $D_{-\Phi}(y,y)=0$, while $A^\Phi_t-A^\Phi_{t-1}=\bigl[\Phi(y)-(y-M_t)\Phi'(y)\bigr]-\bigl[\Phi(y)-(y-M_{t-1})\Phi'(y)\bigr]=(M_t-M_{t-1})\Phi'(y)$,
which is~\eqref{eq:ay-increment} with the Bregman term absent.
If $M_t>y$ the maximum advances to $M_t$, so $M^\ast_t=M_t$ and $A^\Phi_t=\Phi(M_t)$; expanding the right side of~\eqref{eq:ay-increment} gives $(M_t-M_{t-1})\Phi'(y)-\Phi(y)+\Phi(M_t)-(M_t-y)\Phi'(y)=\Phi(M_t)-\Phi(y)+(y-M_{t-1})\Phi'(y)$,
which is $A^\Phi_t-A^\Phi_{t-1}$.
The two cases exhaust the step, so~\eqref{eq:ay-increment} holds pathwise; no continuity of the maximum is used, only that a discrete-time maximum advances by taking the current value.
For the supermartingale property take $\E[\,\cdot\mid\cF_{t-1}]$ in~\eqref{eq:ay-increment}: the process $M^\ast_{t-1}$ is $\cF_{t-1}$-measurable, so the first term has conditional expectation $\Phi'(M^\ast_{t-1})\,\E[M_t-M_{t-1}\mid\cF_{t-1}]$, which vanishes for a martingale, and the second is nonnegative because $-\Phi$ is convex.

(ii) Sum~\eqref{eq:ay-increment} over $t=1,\dots,T$ and take expectations.
The transform term drops by the display above, $A^\Phi_0=\Phi(M_0)$ since $M^\ast_0=M_0$, and $A^\Phi_T=\Phi(M^\ast_T)-(M^\ast_T-M_T)\Phi'(M^\ast_T)$; rearranging gives~\eqref{eq:ay-identity}.

(iii) At $\Phi(y)=-y^{p}/(p-1)$ one has $\Phi'(y)=-q\,y^{p-1}$, so $\Phi(y)-(y-x)\Phi'(y)=-y^{p}/(p-1)+q\,y^{p}-q\,y^{p-1}x=y^{p}-q\,y^{p-1}x$,
using $q-1/(p-1)=1$; this is $U(x,y)$ of~\eqref{eq:doob-bellman}, so $A^\Phi_t=U(M_t,M^\ast_t)$.
Rearranging~\eqref{eq:ay-identity}, the optional-stopping deficit $A^\Phi_0-\E[A^\Phi_T]$ is the Bregman sum.

For a submartingale with $\Phi$ nonincreasing, $\Phi'\le0$ and $\E[M_t-M_{t-1}\mid\cF_{t-1}]\ge0$ make the conditional expectation of the transform term nonpositive, so $A^\Phi$ remains a supermartingale and $A^\Phi_0-\E[A^\Phi_T]=\E\bigl[\sum_t D_{-\Phi}(M^\ast_t,M^\ast_{t-1})\bigr]-\E\bigl[\sum_t(M_t-M_{t-1})\Phi'(M^\ast_{t-1})\bigr]$, the subtracted term being nonpositive.
\end{proof}

\begin{proof}[Proof of Corollary~\ref{cor:doob-residual}]
(i) The function $\Phi(y)=-y^{p}/(p-1)$ is $C^1$, concave, and nonincreasing on $(0,\infty)$, so the submartingale form of Proposition~\ref{prop:ay-certificate} applies; every expectation involved is finite because $\E[(M^\ast)^p]\le q^p\,\E[M_T^p]<\infty$ by Theorem~\ref{thm:doob}.
By part (iii) the certificate process is $A^\Phi_t=U(M_t,M^\ast_t)$, a supermartingale, and its optional-stopping deficit is the Bregman record sum plus the nonnegative drift term $-\E\bigl[\sum_t(M_t-M_{t-1})\Phi'(M^\ast_{t-1})\bigr]$; in particular $\delta_{\mathrm B}=U(M_0,M_0)-\E[U(M_T,M^\ast_T)]=A^\Phi_0-\E[A^\Phi_T]\ge0$.
Evaluating $U$ at the endpoints---$U(M_0,M_0)=-M_0^p/(p-1)$ and $U(M_T,M^\ast_T)=(M^\ast)^{p}-q\,(M^\ast)^{p-1}M_T$---gives the displayed form of $\delta_{\mathrm B}$.
(ii) Solving~\eqref{eq:delta-B} for $\E[(M^\ast)^p]$, substituting $\E[(M^\ast)^{p-1}M_T]=\E[(M^\ast)^p]^{(p-1)/p}\E[M_T^p]^{1/p}-\delta_{\mathrm H}$,
and dividing by $\E[(M^\ast)^p]^{(p-1)/p}$ gives~\eqref{eq:doob-residual};
$\delta_{\mathrm H}\ge0$ is H\"older with exponents $q$ and $p$ applied to $(M^\ast)^{p-1}$ and $M_T$.
\end{proof}

\begin{proof}[Proof of Proposition~\ref{prop:power-certificate}]
Hypothesis (i) makes $\delta_{\mathrm M}$ the expectation of a nonnegative random variable, and (ii) with $U_0$ deterministic gives $\E[U_T]\le U_0$, so $\delta_{\mathrm S}\ge0$.  Adding the definitions of $\delta_{\mathrm M}$ and $\delta_{\mathrm S}$,
\[
  \delta_{\mathrm M}+\delta_{\mathrm S}
  = \bigl(\E[U_T]-\E[\Gamma]+C\,\E[\Xi]\bigr)
    + \bigl(U_0-\E[U_T]\bigr)
  = C\,\E[\Xi]-\E[\Gamma]+U_0,
\]
which rearranges to~\eqref{eq:power-certificate}.
Both deficits being nonnegative, the right side of~\eqref{eq:power-certificate} is nonnegative as soon as $U_0\le0$.
\end{proof}

\begin{proof}[Proof of Lemma~\ref{lem:geometric-mixture}]
By the conditional weighted AM--GM / H\"older inequality,
$\E[\prod_i (M_t^{(i)})^{\alpha_i}\mid \cF_{t-1}]
\leq \prod_i (\E[M_t^{(i)}\mid\cF_{t-1}])^{\alpha_i}
= \prod_i (M_{t-1}^{(i)})^{\alpha_i} = M_{t-1}^{(\alpha)}$.
\end{proof}

\begin{proof}[Proof of Corollary~\ref{cor:pooling-benefit}]
By Lemma~\ref{lem:geometric-mixture}, $M^{(\alpha)}$ is a nonnegative supermartingale with $M_0^{(\alpha)} = 1$.
Optional stopping at the bounded time $\tau^{(\alpha)}_x \wedge n$ gives $\E[M_{\tau^{(\alpha)}_x \wedge n}^{(\alpha)}]
\leq 1$.  On $\{\tau^{(\alpha)}_x \leq n\}$ one has $M_{\tau^{(\alpha)}_x}^{(\alpha)} \geq x$,
so
\[
  x\,\PP(\tau^{(\alpha)}_x \leq n)
  \leq \E\bigl[M_{\tau^{(\alpha)}_x}^{(\alpha)}\ind\{\tau^{(\alpha)}_x \leq n\}\bigr];
\]
letting $n \to \infty$ (monotone convergence on the right) yields $x\,\PP(\tau^{(\alpha)}_x < \infty) \leq g_x(\alpha)$, i.e.\ the first inequality of \eqref{eq:pooling}.
The second follows from $g_x(\alpha) \leq
\E[M_{\tau^{(\alpha)}_x \wedge n}^{(\alpha)}] \leq 1$.  When the $M^{(i)}$ never
disagree pathwise, $M^{(\alpha)}$ is an exact martingale and the bound reduces to Ville's $1/x$, with $g_x(\alpha) = 1$ when the stopped mixture is uniformly integrable.
Pathwise disagreement makes $M^{(\alpha)}$ a strict supermartingale that sheds mass before $\tau^{(\alpha)}_x$, so $g_x(\alpha) < 1$ and the bound is strictly sharper.
Optional-stopping leakage can leave $g_x(\alpha) < 1$ on its own.
\end{proof}

\begin{proof}[Proof of Proposition~\ref{prop:pooling-exact}]
Optional stopping on $N$ at $\tau^{(\alpha)}_x\wedge n$ gives $1=\E[M^{(\alpha)}_{\tau^{(\alpha)}_x\wedge n}]+\E[A_{\tau^{(\alpha)}_x\wedge n}]$. Let $n\to\infty$:
$\E[M^{(\alpha)}_{\tau^{(\alpha)}_x}\ind\{\tau^{(\alpha)}_x\le n\}]\uparrow g_x(\alpha)$ (monotone convergence); $\E[M^{(\alpha)}_n\ind\{\tau^{(\alpha)}_x>n\}]\to
\E[M^{(\alpha)}_\infty\ind\{\tau^{(\alpha)}_x=\infty\}]$ (there $M^{(\alpha)}_n<x$, dominated);
and $\E[A_{\tau^{(\alpha)}_x\wedge n}]\uparrow\E[A_{\tau^{(\alpha)}_x}]$.
Rearranging gives \eqref{eq:pooling-exact}.
\end{proof}

\begin{proof}[Proof of Corollary~\ref{cor:pooling-crossing-exact}]
On $\{\tau^{(\alpha)}_x<\infty\}$ one has $M^{(\alpha)}_{\tau^{(\alpha)}_x}\ge x$, so $M^{(\alpha)}_{\tau^{(\alpha)}_x}=x+J^{(\alpha)}_x$ with $J^{(\alpha)}_x\ge0$ there and
\[
  g_x(\alpha)
  = \E\bigl[M^{(\alpha)}_{\tau^{(\alpha)}_x}\ind\{\tau^{(\alpha)}_x<\infty\}\bigr]
  = x\,\PP(\tau^{(\alpha)}_x<\infty) + \E\bigl[J^{(\alpha)}_x\ind\{\tau^{(\alpha)}_x<\infty\}\bigr].
\]
Substituting $g_x(\alpha)=1-\E[A_{\tau^{(\alpha)}_x}]-\E[M^{(\alpha)}_\infty\ind\{\tau^{(\alpha)}_x=\infty\}]$ from~\eqref{eq:pooling-exact} and dividing by $x$ gives~\eqref{eq:pooling-crossing-exact}.
Each subtracted term is nonnegative,
so each vanishes exactly where the corresponding inequality of~\eqref{eq:pooling} is an equality.
At $W=1$ the geometric mixture is $M^{(1)}$ itself, a martingale, so its compensator $A$ vanishes identically;
adding $M^{(1)}_t\to0$ a.s.\ kills $M^{(1)}_\infty$ and leaves $\PP(\tau^{(\alpha)}_x<\infty)=\bigl(1-\E[J^{(\alpha)}_x\ind\{\tau^{(\alpha)}_x<\infty\}]\bigr)/x$,
which is~\eqref{eq:fp-exact}.
\end{proof}

\begin{proof}[Proof of Proposition~\ref{prop:renyi-anticipation}]
Take the single factor $\Pi_{1,T} = \Amart{\tau}_T$ in~\eqref{eq:pathZ},
so $Z_T(\alpha) = \E[(\Amart{\tau}_T)^\alpha]$ and $\Phi_T = \log Z_T$ is convex by Proposition~\ref{prop:grad}.  At $\alpha=1$,
$\Amart{\tau}$ is a nonnegative martingale with $\Amart{\tau}_0=1$
(Theorem~\ref{thm:surv-factor}(ii)), so $Z_T(1)=1$ and $\Phi_T(1)=0$.
At $\alpha=0$, with $0^0:=0$, $Z_T(0)=P(\Amart{\tau}_T>0)\le1$.
Differentiating by~\eqref{eq:grad} at $\alpha=1$, where the Gibbs law has density $\dd Q_A/\dd P = \Amart{\tau}_T/Z_T(1) = \Amart{\tau}_T$, gives $\Phi_T'(1) = \E_{Q_A}[\log \Amart{\tau}_T]
= \E[\Amart{\tau}_T\log \Amart{\tau}_T] = \KL(Q_A\|P|_{\cF_T})$.
If $\Phi_T\equiv0$ then $\Phi_T'(1)=0$, so $\KL(Q_A\|P|_{\cF_T})=0$ and $\Amart{\tau}_T=1$ almost surely; the converse is immediate.
Finally $\Phi_T''(\alpha) = \Var_{Q_T^\alpha}(\log \Amart{\tau}_T)$ by~\eqref{eq:grad}, so $\Phi_T$ is affine exactly when $\log \Amart{\tau}_T$ is $Q_T^\alpha$-a.s.\ constant for every $\alpha$.
Each $Q_T^\alpha$ is equivalent to $P$ restricted to $B := \{\Amart{\tau}_T>0\}$, so this says $\Amart{\tau}_T = c\,\ind_B$ for a constant $c$, and $\E[\Amart{\tau}_T]=1$ forces $c = 1/P(B)$; otherwise $\Phi_T''>0$ and $\Phi_T$ is strictly convex.
\end{proof}

\begin{proof}[Proof of Proposition~\ref{prop:renyi-wealth}]
Take the single factor $\Pi_{1,T}=M_T$ in~\eqref{eq:pathZ}, so $Z_T(s)=\E_P[M_T^{\,s}]$ and $\Phi_T=\log Z_T$ is convex by Proposition~\ref{prop:grad}.
At $s=1$ the martingale property with $M_0=1$ gives $Z_T(1)=1$ and $\Phi_T(1)=0$, so the Gibbs law at $s=1$ has density $\dd Q_M/\dd P=M_T/Z_T(1)=M_T$, and~\eqref{eq:grad} gives $\Phi_T'(1)=\E_{Q_M}[\log M_T]=\E_P[M_T\log M_T]=\KL(Q_M\|P)$.
At $s=0$ the Gibbs law is $P$ itself on $\{M_T>0\}$, so $\Phi_T'(0)=\E_P[\log M_T]$, which is $-\KL(P\|Q_M)$ because $\dd P/\dd Q_M=1/M_T$ there.
Subtracting the two gives $\Phi_T'(1)-\Phi_T'(0)=\KL(Q_M\|P)+\KL(P\|Q_M)$, the Jeffreys divergence between the two laws.
For a likelihood-ratio stream $M_T=\prod_{t\le T}R_t$, peel one step at a time: $\E_P[\prod_{t\le T}R_t^{\,s}]=\E_P\bigl[\prod_{t\le T-1}R_t^{\,s}\,\E_P[R_T^{\,s}\mid\cF_{T-1}]\bigr]$ by the tower rule, and when the conditional cumulant at $s$ is deterministic the inner factor is the constant $\E_P[R_T^{\,s}]$ and pulls out of the expectation; iterating over $t$ gives $Z_T(s)=\prod_{t\le T}\E_P[R_t^{\,s}]$, the additivity claimed.
\end{proof}

\begin{proof}[Proof of Lemma~\ref{lem:Rt}]
For $A \in \cF_t$,
$R(A \times \{t\}) = \E[\ind_A P(\tau=t\mid\cF_t)]
= \E[\ind_A \Amart{\tau}_t \Delta \clock{\tau}_t] = \int_{A \times \{t\}} \Amart{\tau}_t \,\dd\widehat P$,
using Theorem~\ref{thm:surv-factor}(v).
Integrating $V_t$ against $R$ recovers~\eqref{eq:rep}.
\end{proof}

\begin{proof}[Proof of Theorem~\ref{thm:pathtime}]
By Lemma~\ref{lem:Rt},
$Z_\tau(\alpha) = \int_{\widehat\Omega} \Amart{\tau}_t(\omega)\prod_i
\Pi_{i,t}(\omega)^{\alpha_i}\,\widehat P(\dd\omega,\dd t)$.
Apply Corollary~\ref{cor:DV} on $(\widehat\Omega,\widehat P)$ with $g(\omega,t) = \Amart{\tau}_t(\omega)\prod_i \Pi_{i,t}(\omega)^{\alpha_i}$;
$\log g = \log \Amart{\tau}_t + \sum_i \alpha_i \log \Pi_{i,t}$ yields~\eqref{eq:pathtime}.
\end{proof}

\begin{proof}[Proof of Theorem~\ref{thm:peeking}]
Equation~\eqref{eq:peeking} is Theorem~\ref{thm:pathtime} with one factor $\Pi_t = E_t(\lambda)$.
For the tail bound, Markov's inequality applied to $\exp(\lambda Y_\tau - \overline\Psi_\tau(\lambda))$ gives $P(A) \leq e^{-\lambda x + c}\,\E[e^{\lambda Y_\tau - \overline\Psi_\tau(\lambda)}]
= e^{-\lambda x + c + \cP_\tau(\lambda)}$.
\end{proof}

\begin{proof}[Proof of Proposition~\ref{prop:vanish}]
\emph{(a)} Optional stopping: $\E[E_\tau] \leq E_0 \leq 1$, so $\cP_\tau \leq 0$.
\emph{(b)} Pseudo-stopping time satisfies $\E[B_\tau] = \E[B_0]$ for every bounded martingale $B$; by uniform integrability the same holds for $E_t$, so $\E[E_\tau] = \E[E_0] = 1$ and $\cP_\tau = 0$.
\end{proof}

\begin{proof}[Proof of Lemma~\ref{lem:sup-nonintegrable}]
The martingale converges a.s.\ to $E_\infty = 0$, so $\E[E_\infty] = 0
\neq 1 = E_0$ and $(E_t)$ is not closed, hence not uniformly
integrable.  Were $\E[\sup_t E_t]$ finite, $(E_t)$ would be dominated by the integrable variable $\sup_t E_t$ and therefore uniformly integrable---a contradiction.
Hence $\E[\sup_t E_t] = \infty$.
No no-overshoot or continuity hypothesis is needed: the tail bound $\PP(\sup_t E_t \geq x) \leq 1/x$ of Theorem~\ref{thm:ville-tight} is an \emph{upper} bound and does not by itself force divergence, but the failure of uniform integrability does.
\end{proof}

\begin{proof}[Proof of Proposition~\ref{prop:infinite}]
Since $E_{\tau^\star} = \sup_{t\ge1} E_t \ge \sup_{t\ge0} E_t - 1$,
Lemma~\ref{lem:sup-nonintegrable} gives $\E[E_{\tau^\star}] = \infty$ and hence $\cP_{\tau^\star} =
\infty$.  In the continuous-path idealization, under the no-overshoot
hypothesis of Theorem~\ref{thm:ville-tight}, $E_{\tau^\star}$ is Pareto$(1)$ and $\int_1^\infty x^{-1}\,\dd x = \infty$ gives the divergence directly; the lemma shows it persists for every discrete-time anticipatory time, where only $\PP(E_{\tau^\star}\geq x)\leq 1/x$ is available.
\end{proof}

\begin{proof}[Proof of Theorem~\ref{thm:composite-peeking}]
For each $P\in\cP_0$, $\E_P[E_\tau]=\E_{\widehat P_P}[E_t \Amart{\tau,P}_t]$ (Theorem~\ref{thm:rep}), and Theorem~\ref{thm:peeking} rewrites $\log\E_P[E_\tau]=\cP^P_\tau$ as the identity~\eqref{eq:composite-peeking-simple}.
Taking $\sup_{P\in\cP_0}$ of both sides gives the upper envelope.
\end{proof}

\begin{proof}[Proof of Proposition~\ref{prop:seq-PAC}]
Proposition~\ref{prop:multi-PAC} applies verbatim with $\cX$ the path space $\Omega_T$.
Decomposition~\eqref{eq:seq-coin-decomp} follows from the chain rule (Lemma~\ref{lem:chain-rule}) applied to each relative-entropy term and swapping sums (the pointwise minimizer at step $t$ depends only on the step-$t$ conditionals).

For the equality case, factor $\prod_w\pi_{w,t}^{\alpha_w}=z_t(x_{1:t-1})\,q^*_t(x_t\mid x_{1:t-1})$, in which $q^*_t$ is a probability density by the definition of $z_t$.
Multiplying over $t$ and integrating gives $Z(\alpha)=\E_{Q^*}\bigl[\prod_{t=1}^T z_t\bigr]$, so the left side of~\eqref{eq:seq-coin-decomp} is $-\log\E_{Q^*}\bigl[\prod_{t=1}^T z_t\bigr]$ and the right side is $\E_{Q^*}\bigl[-\log\prod_{t=1}^T z_t\bigr]$.
The two are related by Jensen's inequality for the strictly convex $-\log$, which holds with equality exactly when $\prod_{t=1}^T z_t$ is $Q^*$-almost surely constant.
If each $\pi_w$ is a product measure over time then $z_t$ does not depend on $x_{1:t-1}$ and the product is constant, from which $\cC_\alpha(\pi_{1:W})=\sum_{t=1}^T(-\log z_t)=\sum_{t=1}^T\cC_\alpha(\pi_{1:W,t})$.
\end{proof}

\begin{proof}[Proof of Proposition~\ref{prop:budget-relent}]
By Lemma~\ref{lem:Rt}, $\dd R/\dd\widehat P = \Amart{\tau}_t$ on the path-time space and $\E[V_\tau] = \E_R[V_t]$ for every adapted $V$ for which either side is defined.
Taking $V_t = \log \Amart{\tau}_t$,
$\E[\log \Amart{\tau}_\tau] = \E_R[\log \Amart{\tau}_t]
= \E_R[\log(\dd R/\dd\widehat P)] = \KL(R\|\widehat P)$,
the last equality being~\eqref{eq:finite-entropy} applied to the finite measure $\widehat P$.
Hence $\cT_B$ is the set of random times whose path-time law lies in the $B$-ball around $\widehat P$, and $\mathfrak{C}_{\cT_B}(E) = \sup_{\tau\in\cT_B}\log\E[E_\tau]
= \log\sup_{\tau\in\cT_B}\E_R[E_t] = \log V(B)$,
the supremum being over exactly the path-time laws the constraint admits.
\end{proof}

\begin{proof}[Proof of Theorem~\ref{thm:peeking-radius}]
Write $\pi_t := P(\tau = t \mid \cF_t)$, so that $\E[E_\tau]=\sum_{t\ge1}\E[E_t\pi_t]$.
Let $E=M-C$ be the Doob decomposition, with $C$ predictable, increasing and $C_0=0$; then $M=E+C$ is a nonnegative martingale with $M_0=E_0\le1$ and $E\le M$ pointwise.
Each $\pi_t$ is $\cF_t$-measurable, so $\E[M_t\pi_t]=\E[M_T\pi_t]$ for $t\le T$, and therefore
\[
  \sum_{t\le T}\E[E_t\pi_t]
  \;\le\; \E\Bigl[M_T\sum_{t\le T}\pi_t\Bigr]
  \;\le\; \E[M_T]\,\esssup\mathfrak{A}^\tau
  \;\le\; \esssup\mathfrak{A}^\tau ,
\]
and letting $T\to\infty$ gives $\E[E_\tau]\le\esssup\mathfrak{A}^\tau$.
For the reverse, fix $\varepsilon>0$, let $S:=\{\mathfrak{A}^\tau>\esssup\mathfrak{A}^\tau-\varepsilon\}$, which has positive probability, and take $E_t:=P(S\mid\cF_t)/P(S)$, a nonnegative martingale with $E_0=1$.
Then $\E[E_\tau]=\sum_{t\ge1}\E[\ind_S\,\pi_t]/P(S)=\E[\mathfrak{A}^\tau\ind_S]/P(S)\ge\esssup\mathfrak{A}^\tau-\varepsilon$.
\end{proof}

\begin{proof}[Proof of Corollary~\ref{cor:peeking-ball}]
The first claim is~\eqref{eq:peeking-radius} read through the logarithm; the second is the same supremum evaluated at a time of the class with $\esssup\mathfrak{A}^\tau=\beta$.
For the third, $\Amart{\tau}\equiv1$ gives $\mathfrak{A}^\tau=\sum_{t\ge1}\Delta\clock{\tau}_t=1$, since $1-\clock{\tau}_T=S_T=P(\tau>T\mid\cF_T)$ decreases to $0$ for a finite time.
Conversely $\esssup\mathfrak{A}^\tau=1$ makes $\E[E_\tau]\le1$ for every nonnegative martingale with $E_0=1$; applied to $E=1+cN$ for a bounded martingale $N$ with $N_0=0$ and $|c|$ small enough that $1+cN\ge0$, at both signs of $c$, this forces $\E[N_\tau]=0$.
\end{proof}

\begin{proof}[Proof of Theorem~\ref{thm:pathtime-ct}]
By the Az\'ema multiplicative decomposition $S_t=(1-\clock{\tau}_t)\Amart{\tau}_t$ and $\dd R/\dd\widehat P=\Amart{\tau}_t$, so $\E[g_\tau]=\E_{\widehat P}[\Amart{\tau}_t\,g_t]$ for adapted $g$.
For a finite random time $\widehat P$ is a finite positive measure of total mass $\E[\clock{\tau}_\infty]\le1$,
so the finite-measure Donsker--Varadhan formula (Corollary~\ref{cor:DV}) applies to the nonnegative factor $g_t=\Amart{\tau}_t\prod_i\Pi_{i,t}^{\alpha_i}$, whose logarithm is $\log \Amart{\tau}_t+\sum_i\alpha_i\log\Pi_{i,t}$; the supremum runs over $Q\ll\widehat P$ with $Q(\{g_t=0\})=0$, and $\int g_t\,\dd\widehat P
=\E_{\widehat P}[\Amart{\tau}_t\prod_i\Pi_{i,t}^{\alpha_i}]
=\E[\prod_i\Pi_{i,\tau}^{\alpha_i}]=Z_\tau(\alpha)$ gives~\eqref{eq:pathtime-ct}.
The factor $\Amart{\tau}_t$ may vanish on a $\widehat P$-positive set, which is the case Corollary~\ref{cor:DV} is stated to cover.
\end{proof}

\begin{proof}[Proof of Proposition~\ref{prop:no-bdg-general}]
Fix $A\in\cF$ and take $\tau:=\ind_A$.  Since $M_0=0$ and $\langle M\rangle_0=0$,
$M_\tau=M_1\ind_A$ and $\langle M\rangle_\tau^{1/2}=\langle M\rangle_1^{1/2}\ind_A$, so the hypothesis reads $\E\bigl[(|M_1|-C\,\langle M\rangle_1^{1/2})\ind_A\bigr]\le0$.
Taking $A=\{|M_1|>C\,\langle M\rangle_1^{1/2}\}$ forces $\PP(A)=0$.
\end{proof}

\begin{proof}[Proof of Theorem~\ref{thm:bdg-pseudo}]
In the progressive enlargement $\cG_t=\cF_t\vee\sigma(\tau\wedge t)$ the pseudo-stopping property makes $(M_{t\wedge\tau})_{t\ge0}$ a $\cG$-local martingale, $\tau$ is a $\cG$-stopping time, and the quadratic variation is unchanged by the enlargement.
The classical inequalities applied in $\cG$ at the stopping time $\tau$ give~\eqref{eq:bdg-pseudo} with their own constants~\cite{Nikeghbali2006essay}.
\end{proof}

\begin{proof}[Proof of Corollary~\ref{cor:honest-inflation-law}]
The Brownian filtration satisfies~(C), so with avoidance the hypotheses are~(CA), under which $Z$ is continuous and $I_\tau=\inf_{u\le\tau}Z_u$.
For an honest time under~(CA) that infimum is uniform on $[0,1]$~\cite{Nikeghbali2006essay}, and $-\log U$ is standard exponential for $U$ uniform.
Hence $\Upsilon_\tau^2=1+\log(1/I_\tau)$ has mean $2$.
\end{proof}

%=============================================================================

% ======================================================================

\section*{Acknowledgments}

Large language models were used in producing this work, for which the author is solely responsible.

\bibliographystyle{plainnat}
\bibliography{pathwise_martingale}

\end{document}